\providecommand{\keywords}[1]{\textit{Keywords --} #1}
\providecommand{\motscles}[1]{\textit{Mots-Clés --} #1}
\providecommand{\MSC}[1]{\textit{MSC-2020 --} #1}
\providecommand{\U}[1]{\protect\rule{.1in}{.1in}}
\newtheorem{Theorem}{Theorem}
\newtheorem{theorem}{Theorem}[section]
\newtheorem{corollary}[theorem]{Corollary}
\newtheorem{definition}[theorem]{Definition}
\newtheorem{lemma}[theorem]{Lemma}
\newtheorem{proposition}[theorem]{Proposition}
\newtheorem{remark}[theorem]{Remark}
\newcommand{\N}{\mathbb N}
\newcommand{\R}{\mathbb R}
\newcommand{\Z}{\mathbb Z}
\DeclareMathOperator{\Bad}{Bad}
\DeclareMathOperator{\SL}{SL}
\DeclareMathOperator{\GL}{GL}
\DeclareMathOperator{\M}{M}
\DeclareMathOperator{\SO}{SO}
\DeclareMathOperator{\dd}{d}
\begin{document}
\title{Lévy-Khintchin Theorem for best simultaneous Diophantine approximations}
\author{Yitwah Cheung, Nicolas Chevallier}
\maketitle

\begin{abstract}
We extend two results about the ordinary continued fraction expansion to best simultaneous Diophantine approximations of vectors or matrices. 
The first result is the L\'evy-Khintchin theorem about the almost sure growth rate of the denominators of the convergents. The second result is a theorem of Doeblin and of Bosma, Jager and Wiedijk about the almost sure limit distribution of the sequence of products  $q_n\dd(q_n\theta,\Z)$ where the $q_n$'s are the denominators of the convergents associated with the real number  $\theta$ by the ordinary continued fraction algorithm. Besides these two main results, we show that when $d\geq 2$, for almost all vectors $\theta\in\R^d$, $\liminf_{n\rightarrow\infty} q_{n+k}\dd(q_n\theta,\Z^d)^d=0$ for all positive integers $k$, where $(q_n)_{n\in\N}$ is the sequence of best approximation denominators of $\theta$. 
\end{abstract}

\selectlanguage{french}

\begin{abstract}
Nous étendons deux résultats sur le développement en fraction continue ordinaire aux meilleures approximations diophantiennes simultanées  de vecteurs ou de matrices.
Le premier résultat est le théorème de Lévy-Kintchin sur le taux de croissance presque sûr des dénominateurs des réduites. Le second est un théorème de Doeblin et de Bosma, Jager et Wiedijk sur la distribution presque sûre de la suite des produits $ q_n \dd (q_n \theta, \Z) $ où les $ q_n $  sont les dénominateurs des réduites associées au nombre réel $ \theta $ par l'algorithme des fractions continues ordinaires. En dehors de ces deux  résultats principaux, nous montrons que lorsque $ d \geq 2 $, pour presque tous les vecteurs $ \theta \in \R ^ d $, $ \liminf_ {n \rightarrow \infty} q_{n+k} \dd (q_n \theta, \Z^d)^{d} = 0 $ pour tous les entiers positifs $ k $, où $ (q_n) _ {n \in \N} $ est la suite des dénominateurs des meilleures approximations de $ \theta $.
\end{abstract}

\selectlanguage{english}

\noindent\keywords{Diophantine approximations, Ergodic theory, diagonal flow}\\
\motscles{approximations diophantienne, theorie ergodique, flot diagonal}\\
\MSC{11J13,11J83,11K55,37A17}

\section{Introduction}

In 1936, Aleksandr Khintchin showed that there exists a constant $\gamma$ such that the denominators $(q_{n})_{n\geq 0}$
of the convergents of the continued fraction expansions of almost all real
numbers $\theta$ satisfy
\[
\lim_{n\rightarrow\infty}q_{n}^{1/n}=\gamma
\]
(see \cite{Khin1936}). Soon afterward, in
\cite{Levy}, in the footnote page 289, Paul L\'{e}vy gave the explicit value
of the constant,
\[
\gamma=\exp\frac{\pi^{2}}{12\ln 2}.
\]

In \cite{Doe}, published in 1940, among many other results about the ordinary continued fraction expansion, Wolfgang Doeblin stated the following result: 
 for almost all real numbers $\theta,$
\[
\lim_{n\rightarrow\infty}\frac{1}{n}\operatorname*{card}\{0\leq k<n:q_{k}
\dd(q_{k}\theta,\mathbb{Z})\leq t\}=g(t)
\]
for all $t\in[0,1]$, where 
\[
g(t)=\int_0^t\frac{1}{2\ln 2}\frac{1-\left\vert
1-2s\right\vert}{s}\,ds.
\] 
Doeblin only sketched the proof of this result and it is difficult to reconstitute a complete proof from his paper. In \cite{Ios}, Iosifescu gave a very interesting account of Doeblin's paper.  Doeblin's outstanding article was not immediately noticed, so the above result was first called the Lenstra conjecture, now it is often called the Doeblin-Lenstra conjecture. A complete proof of the Doeblin-Lenstra conjecture was given in 1983 by Wieb Bosma, Hendrik Jager and Freek Wiedijk, see  \cite{BoJaWi}. Later, Jager proved variants of this result, in
particular with the quantity $q_{k+1}\dd(q_{k}\theta,\mathbb{Z})$ instead of
$q_{k}\dd(q_{k}\theta,\mathbb{Z})$ (see \cite{IoKr} for more results).

The aim of our work is to extend to the best simultaneous Diophantine approximations, both the Lévy-Khintchin result and the Jager version of the Doeblin, Bosma, Jager and Wiedijk result.
We choose the Jager version because one of the striking difference between one-dimensional and multidimensional Diophantine approximations is that the classic one-dimensional lower bound $q_{k+1}\dd(q_{k}\theta,\mathbb{Z})\geq 1/2$  no longer holds in the multidimensional setting; this explains for example that the only singular vectors are the rational numbers in dimension one.

Let $d$ and $c$ be two positive integers. Suppose $\mathbb{R}^{d}$ and
$\mathbb{R}^{c}$ are endowed with the standard Euclidean norms $\left\Vert
.\right\Vert _{\mathbb{R}^{d}}$ and $\left\Vert .\right\Vert _{\mathbb{R}^{c}
}$. We prove

\begin{Theorem}
\label{thm:levy-d} There exists a constant $L_{d,c}$ such that for almost all
matrices $\theta\in\M_{d,c}(\mathbb{R})$,
\begin{align*}
\lim_{n\rightarrow\infty}\frac{1}{n}\ln\left\Vert Q_{n}(\theta)\right\Vert
_{\mathbb R^c}  &  =L_{d,c}\,,\\
\lim_{n\rightarrow\infty}\frac{1}{n}\ln(\dd(\theta Q_{n}(\theta),\mathbb{Z}
^{d}))  &  =-\frac{c}{d}L_{d,c}
\end{align*}
where $Q_{n}(\theta)\in\mathbb{Z}^{c}$, $n\geq 0$, is the sequence of best
Diophantine approximation denominators of $\theta$ associated with the norms
$\left\Vert .\right\Vert _{\mathbb{R}^{d}}$ and $\left\Vert .\right\Vert
_{\mathbb{R}^{c}}$.
\end{Theorem}

(See Section 2.1 for the definition of best Diophantine approximation denominators).

 Theorem \ref{thm:levy-d} is stated for standard Euclidean norms but it holds for all pairs of Euclidean norms on $\R^d$ and $\R^c$ with the same constant $L_{d,c}$ (see Theorem \ref{thm:Birkhoff}). We are convinced that Theorem \ref{thm:levy-d} is valid for general norms but we do not know whether the constants $L_{d,c}$ depend on the norms  (see Section \ref{sec:question}).

For a matrix $\theta$ in $\M_{d,c}(\mathbb{R)}$, let us denote $\beta_{n}
(\theta)=\left\Vert Q_{n+1}(\theta)\right\Vert ^{c} _{\mathbb{R}^{c}}\dd(\theta
Q_{n}(\theta),\mathbb{Z}^{d})^{d}$ and for a real number $a$, let us denote  $\delta_{a}$  the Dirac measure at $a$.

\begin{Theorem}
\label{thm:jager}1. There exists a probability measure $\nu_{d,c}$ on
$\mathbb{R}$ such that, for almost all matrices $\theta\in\M_{d,c}(\mathbb{R}
)$, $\nu_{d,c}$ is the weak limit of the sequence of probability measures
\[
\frac{1}{n}\sum_{k=0}^{n-1}\delta_{\beta_{k}(\theta)}.
\]
\newline 2. The support of the
measure $\nu_{d,c}$ is included in a  bounded interval, and contains $0$ provided
that $c+d\geq 3$.
\end{Theorem}

The L\'{e}vy-Khintchin result has already been extended to multidimensional
settings. For instance, for almost all $\theta$ in $\mathbb{R}^{d}$, the
denominators $(J_{n}(\theta))_{n\geq 0}$ of the Jacobi-Perron expansion of
$\theta$ satisfy $\lim_{n\rightarrow\infty}\frac{1}{n}\ln J_{n}(\theta
)=c_{d}$ for some constant $c_{d}$ (see \cite{Broise}). The common proofs of such results use ergodic theory. The one-dimensional
L\'{e}vy-Khintchin result can be proven with the Birkhoff ergodic theorem, while
the growth rate of the Jacobi-Perron denominators can be derived from the Oseledec
multiplicative ergodic theorem. In both cases, the proof depends on the
existence of an underlying dynamical system: the Gauss map or the
Jacobi-Perron map (see \cite{Schweiger} for many examples of these kinds of
maps). However, no such map associated with best Diophantine approximations is
known when $d+c\geq 3$.  One  way to circumvent this problem is
to use the left action of the diagonal flow
\[
g_{t}=
\begin{pmatrix}
e^{ct}I_{d} & 0\\
0 & e^{-dt}I_{c}
\end{pmatrix}
 \in\SL (d+c,\mathbb{R)},\,t\in\R,
\]
on the space of unimodular lattices $\mathcal{L}_{d+c}=\SL (d+c,\mathbb{R}
)/\SL (d+c,\mathbb{Z)}$.  This idea can be traced back to Hermite \cite{Her} and has been used extensively in Diophantine approximation since the work of Dani \cite{Da1,Da2}; see for instance \cite{AnGuKl,Gho,HaMa}.
In the same vein, the parametric geometry of numbers introduced recently by Schmidt and Summerer \cite{SchSu1,SchSu2} and complemented by Roy \cite{Roy1}, studies the evolution of the successive minima of a lattice under the action of the diagonal flow. This led to many  results on Diophantine exponents; see for instance \cite{Mar,Roy2}.

In \cite{Chev-Levy}, the diagonal flow is used
to prove that the sequence of best Diophantine approximation denominators of
almost all $\theta$ in $\M_{d,c}(\mathbb{R})$ satisfies
\[
\lim\sup_{n\rightarrow\infty}\frac{1}{n}\ln\left\Vert Q_{n}(\theta)\right\Vert
_{\mathbb{R}^{c}}\leq K_{d,c}
\]
for some constant $K_{d,c}$. When $c=1$, it is also possible to derive this inequality
from a theorem of W. M. Schmidt (see
\cite{Chev-Khin}).

As in the aforementioned works, the diagonal flow $(g_{t})$ is the main tool. However, unlike e.g. \cite{Chev-Levy}, we introduce an important new ingredient: a co-dimension one submanifold transverse to the flow. It should be noticed that many works on continued fraction algorithms also use transversals, we mention two such works.

Firstly, the transformation induced on a well-chosen sub-interval by an interval
exchange transformation $T$ is an interval exchange transformation $\hat T$
defined with the same number of intervals. The Rauzy-Veech continued fraction algorithm is the map  $R:T \mapsto
\hat T$ where $T$ is an interval exchange transformation defined on an interval of length $1$ and $\hat T$ is renormalized so that it is defined on an interval of length $1$ (see \cite{Rau1,Rau2,Veech}).   In \cite{Veech}, W. Veech  proved
that $R$ admits a unique absolutely continuous invariant measure up
to a scalar multiple, using a bimeasureably invertible extension $\hat R$ of the map $R$.  In turn, this extension is constructed as the first return map of a ``diagonal flow'' on a transversal. It should be noticed that at the same time, H. Masur
proposed a very similar construction in \cite{Mas}. 

Secondly, A. Haas proved analogs of the result of Doeblin, Bosma, Jager and Wiedijk and of the Lévy-Khintchin result in the setting of excursions of geodesic into the neighborhood of a cusp on a finite area hyperbolic 2-orbifold, see \cite{Haas1,Haas2}. A. Haas used the ergodicity of the geodesic flow and a transversal in the unit tangent bundle that projects on a finite union of geodesic segments of the hyperbolic plane.
 
In our case,  up to a negligible set of flow trajectories, the transversal is the set of unimodular lattices  whose  first two  minima  are equal (see
Section 3.1). The first return map of the flow in the transversal  play the role of an invertible extension of the missing Gauss map. It is crucial to observe
that the visiting times of the transversal by the flow are given by a simple
formula involving best simultaneous Diophantine approximations; see
Lemma \ref{lem:return-best}.  Making use of the Birkhoff ergodic theorem, this observation leads to a
L\'{e}vy-Khintchin result in the space of lattices and to a formula close to
the Arnoux-Nogueira interpretation of the L\'{e}vy's constant (see \cite{Arn-Nog}):
\begin{equation}
L_{d,c}=\frac{d}{\mu_{S}(S)}\int_{S}\tau~d\mu_{S}=\frac{d\times\mu
(\mathcal{L}_{d+c})}{\mu_{S}(S)} \label{constant-levy}
\end{equation}
where $\mu$ is the invariant measure in the space of lattices, $\mu_{S}$ the
invariant measure induced by the flow on the transversal $S$ and $\tau$ the
return time to $S$, see Theorem~\ref{Birkhoff} and Corollary \ref{cor:levy-d}.

The second step of the proof of Theorem~\ref{thm:levy-d} consists in converting an
almost sure result in the space of lattices $\mathcal{L}_{d+c}$ into an almost sure result in
$\M_{d,c}(\mathbb{R})$. To achieve this goal, we prove a general result,
Theorem \ref{thm:reduction}, which might be of independent interest. At
first sight, this result might appear as an easy consequence of the following standard fact: the
set of lattices associated with the matrices $\theta$ in $\M_{d,c}
(\mathbb{R)}$, is the expanding direction of the flow $g_t$. However, an example
shows that Theorem \ref{thm:reduction} depends on some properties of the transversal; see
Section \ref{sect:reduction}.

When $d=1$ or $2$ and $c=1$, the submanifold $S$ and  the measure $\mu_{S}$ can be entirely calculated 
(see Section \ref{sec:S}). When $d=c=1$, thanks to  Siegel formula
giving the volume of the modular space $\SL(2,\R)/\SL(2,\Z)$,
computing  L\'evy's constant $L_{1,1}=\ln \gamma$ is easy; it is even
possible to determine the first return map to the transversal. It
turns out that this first return map is a $2$-fold extension of the
natural extension of the Gauss map (see Subsection \ref{sub:d=1}).
However, when $d=2$ and $c=1$, the calculation of
$L_{2,1}$ leads to a seven-tuple integral and we have only succeeded in reducing it
to a triple integral which  can be evaluated numerically (see Subsection \ref{sub:d=2}).

When $d=c=1$, the double inequality $\frac{1}{2}\leq q_{n+1}\dd(q_{n}
\theta,\mathbb{Z})\leq 1$ shows that the behaviors of the two sequences
$(\frac{1}{n}\ln q_{n})_n$ and $(\frac{-1}{n}\ln \dd(q_{n}\theta,\mathbb{Z}))_n$ are the
same and each of the limits in Theorem \ref{thm:levy-d} implies the other.
When $d$ is greater than or equal to two, no such double inequality exists. Indeed, it
has been proven in $\cite{Chev-Khin}$ that when $c=1$ and $d\geq 2$,
\[
\lim\inf_{n\rightarrow\infty}q_{n+1}\dd(q_{n}\theta,\mathbb{Z}^{d})^{d}=0
\]
for almost all $\theta$ in $\mathbb{R}^{d}$. Observe that  Theorem \ref{thm:jager} implies this latter result; it is an immediate consequence of the
fact that $0$ is in the support of the measure $\nu_{d,c}$. Hopefully for the proof of
Theorem \ref{thm:levy-d}, the weaker inequality
\[
\dd(\theta Q_{n}(\theta),\mathbb{Z}^{d})\geq\frac{1}{\left\Vert Q_{n}
(\theta)\right\Vert _{\mathbb{R}^{c}}^{c/d}\ln\left\Vert Q_{n}(\theta
)\right\Vert }
\]
which holds almost surely by the convergence part of the Khintchin-Groshev
theorem (see \cite{BeDiVe}), is enough to link both of the limits in Theorem \ref{thm:levy-d}.

In  Section 9, we extend the aforementioned result of \cite{Chev-Khin}
to best simultaneous approximations of matrices. Our proof leads to the
stronger result
\[
\lim\inf_{n\rightarrow\infty}\left\Vert Q_{n+k}(\theta)\right\Vert
_{\mathbb{R}^{c}}^{c}\dd(\theta Q_{n}(\theta),\mathbb{Z}^{d})^{d}=0
\]
for almost all $\theta$ in $\M_{d,c}(\mathbb{R)}$ and all $k\in\mathbb{N}$.

Obviously, if the matrix $\theta$ is badly approximable, then
\[
\lim\inf_{n\rightarrow\infty}\left\Vert Q_{n+k}(\theta)\right\Vert
_{\mathbb{R}^{c}}^{c}\dd(\theta Q_{n}(\theta),\mathbb{Z}^{d})^{d}>0
\]
because by definition, $\lim\inf_{n\rightarrow\infty}\left\Vert Q_{n}(\theta)\right\Vert
_{\mathbb{R}^{c}}^{c}\dd(\theta Q_{n}(\theta
),\mathbb{Z}^{d})^{d}>0$. When $d=2$ and $c=1$, we prove that the set of
$\theta$ such that, $\lim\inf_{n\rightarrow\infty}q_{n+1}\dd(q_{n}\theta
,\mathbb{Z})^{2}>0$, is not reduced to the set of badly approximable vectors; see Proposition~\ref{prop:notbad}.

{\bf Overview of the paper.} The notations are collected in Section 2. 
The third section is devoted to best Diophantine approximations, to minimal vectors and  to the relations between these two notions. 
The fourth section is devoted to the definition of the transversal $S$ and to another transversal $S'$. The key point of this section is the expression of visiting times of the transversals by the flow trajectories  with the minimal vectors and the best approximations (Lemmas \ref{lem:return-minimal} and \ref{lem:return-best}). The induced measure on the transversal is defined in Section 5. In Section 6, Theorem \ref{thm:Birkhoff}  is an important result of the paper, it is a version of Theorem \ref{thm:levy-d}  in the space of lattices. 
In Section 7, when $c=1$, a parametrization of the transversal $S$ is described  together with a formula for the induced measure. Furthermore, the two cases $d=1$ and $2$ are considered. Section 7 is not necessary for the proof of Theorems \ref{thm:levy-d} and \ref{thm:jager}.
In Section 8, we give the general result, Theorem \ref{thm:reduction}, which allows to convert the almost sure convergence in the space of  lattices proved in Theorem \ref{thm:Birkhoff}, to an almost sure convergence in the space of matrices $M_{d,c}(\R)$ needed in Theorems \ref{thm:levy-d} and \ref{thm:jager}. 
In Section 9, we study the behavior of the quantities $q_{n+k}^c(\theta)r_n^d(\theta)$ and finish the proof of Theorem \ref{thm:jager}. At last, we ask a few miscellaneous questions in Section 10.

{\bf Historical Note:} L\'{e}vy's proof does not rely on the ergodic theorem, which
was not known for non-invertible maps at that time. A proof of the Birkhoff ergodic
theorem for non-invertible maps was given by Fr\'{e}d\'{e}ric Riesz in 1945
(see \cite{Riesz}) and then a proof of L\'{e}vy's theorem using the ergodic theorem
was given in \cite{Ryll-Nardzewski}. The authors would like to thank Vitaly
Bergelson for bringing \cite{Ryll-Nardzewski} to their attention.

\section{Notation}
Let $d$ and $c$ be two positive integers.
\subsection{Vectors and distances}
For a positive integer $n$, let $\left\Vert \cdot\right\Vert _{\mathbb{R}^{n}}$ denote the usual
Euclidean norm on $\mathbb{R}^{n}$ and let $B_{\mathbb{R}^n}(u,r)$ denote the ball of center $u$ and radius $r$ associated with the norm $\|.\|_{\mathbb{R}^n}$.

Let us denote $\left\Vert .\right\Vert _{d,c}$ the norm defined on $\mathbb{R}^{d+c}$ by
 $\left\Vert (u,h)\right\Vert _{d,c}=\max\{\left\Vert
u\right\Vert _{\mathbb{R}^{d}},\left\Vert h\right\Vert _{\mathbb{R}^{c}}\}$ and let $B_{d,c}(X,r)$ denote the ball of center $X$ and radius $r$ associated with the norm $\left\Vert .\right\Vert_{d,c}$.

For $X=(u,h)$ in $\mathbb{R}^{d+c}$, let  $\left|  X\right|  _{-}=\left\|
h\right\|  _{\mathbb{R}^{c}}$  denote the \textit{height} of the vector $X$ and
let $\left|  X\right|  _{+}=\left\|  u\right\|  _{\mathbb{R}^{d}}$ denote the norm of the
projection of $X$ in the horizontal space. We also denote $X_{+}=u$ and
$X_{-}=h$ the   horizontal and vertical components of $X$.

For a vector $X$ in $\mathbb{R}^{d+c}$, let  $C(X)$ denote  the closed cylinder in $\mathbb{R}^{d+c}$
\[
C(X)=B_{\mathbb{R}^{d}}(0,\left\vert X\right\vert _{+}
)\times B_{\mathbb{R}^{c}}(0,\left\vert X\right\vert _{-}),
\]
and if $Y$ is another vector, let $C(X,Y)$ denote  the closed cylinder
\[
C(X,Y)=B_{\mathbb{R}^{d}}(0,\left\vert X\right\vert _{+}
)\times B_{\mathbb{R}^{c}}(0,\left\vert Y\right\vert _{-}).
\]
 Observe that in the basic case $d=2$ and $c=1$, $C(X)$ is a usual cylinder with height $2 \times |X|_-$ and radius $|X|_+$.
\includegraphics[width=12cm,height=5cm]{./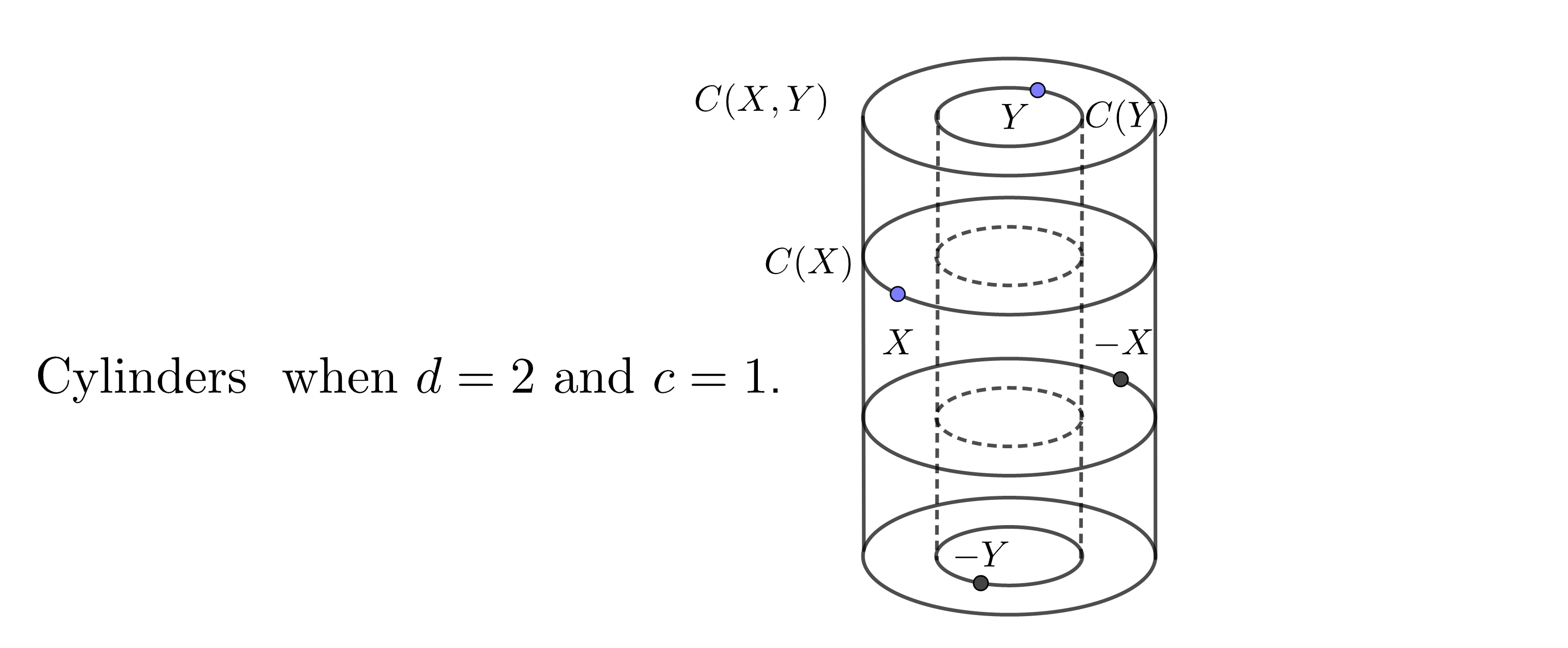} 

In all cases, we denote by $\dd(x,y)$ the distance associated with the underlying norm between the two points $x$ and $y$ and by $\dd(x,A)$ the distance between the point $x$ and the set $A$.

\subsection{Matrices}

 We fix a norm on $\M_{d+c}(\mathbb{R})$.  All the distances and balls in
the  space of matrices are associated with this norm.   This norm will be used as a tool in the proofs and when the choice of this norm is important we will specify it. When $E$ is a subset of
$\M_{d+c}(\mathbb{R})$, let $B_{E}(x,r)$ denote the set of matrices $y$ in $E$ such that $\dd(x,y)\leq r$.

Let $I_{n}$ denote the identity matrix in $\M_{n}(\mathbb{R)}$.
Let 
\[
g_{t}=
\begin{pmatrix}
e^{ct}I_{d} & 0\\
0 & e^{-dt}I_{c}
\end{pmatrix}
\in \SL (d+c,\mathbb R),
\]
$t\in\mathbb{R}$, denote the standard diagonal flow,
$E_{-}= \{0\}\times\mathbb R^c $ denote the contracting direction of the flow and
$E_{+}=\mathbb R^d \times \{0\}$ denote the expanding direction of the flow. We also refer to $E_{+}$ as the horizontal subspace and to $E_{-}$ as the vertical subspace.

Let  $\mathcal{H}_{>0}$ denote the subgroup of all matrices $M_{\theta}=\left(
\begin{array}
[c]{cc}
I_{d} & -\theta\\
0 & I_{c}
\end{array}
\right)$, $\theta\in
\M_{d,c}(\mathbb{R})$,  and let $\mathbb{T}_{d,c}$ denote its image in the space of unimodular lattices
$\mathcal{L}_{d+c}$.
In the same manner, let $\mathcal{H}_{<}$ denote the subgroup of $\SL(d+c,\mathbb{R})$ of matrices of
the form
\[
\left(
\begin{array}
[c]{cc}
I_{d} & 0\\
B & I_{c}
\end{array}
\right)
\]
and let $\mathcal{H}_{\leq}$ denote the subgroup of $\SL(d+c,\mathbb{R})$ of
matrices of the form
\[
\left(
\begin{array}
[c]{cc}
A & 0\\
B & C
\end{array}
\right)
\]
where $A\in\GL(d,\mathbb{R)}$, $B\in M_{c,d}(\mathbb{R)}$ and $C\in
\GL(c,\mathbb{R})$.

Observe that for any $M\in\mathcal H_{<}$, $g_tMg_{-t}\rightarrow I_{d+c}$ when $t$ tends to $+\infty$ and that the distance from $g_tMg_{-t}$ to $I_{d+c}$ is bounded when $t\in\mathbb R_+$ for all $M\in\mathcal H_{\leq}$.
\subsection{Lattices}\label{sec:NotationLattice}
Let  $\mathcal{L}=\mathcal{L}_{d+c}$ denote the space
of $(d+c)$-dimensional unimodular lattices in $\mathbb
R^{d+c}$. We identify $\mathcal{L}_{d+c}$ with 
$\SL (d+c,\mathbb{R})/\SL (d+c,\mathbb{Z})$.

For $\theta\in\M_{d,c}(\mathbb{R)}$, let $\Lambda_{\theta}$ be the lattice in $\mathbb R^{d+c}$ generated by the columns of the matrix
$M_{\theta}$, i.e.,  $\Lambda_{\theta}=M_{\theta}\mathbb{Z}^{d+c}$.

Suppose $\mathbb R^n$ is equipped with a norm $\Vert.\Vert$. For a lattice $\Lambda$ and an integer $i\in\{1,...,n\}$, let $\lambda_i(\Lambda,\|.\|)$ denote the $i$-th minimum of the lattice $\Lambda$ with respect to the norm $\Vert.\Vert$, i.e.,
\[
\lambda_i(\Lambda,\Vert.\Vert)=\inf\{\lambda>0:B(0,\lambda)\cap\Lambda \text{ contains at least $i$ independent vectors}\}.
\]
Observe that $\lambda_1(\Lambda)$ is the length of the shortest nonzero vector in $\Lambda$. When there is no ambiguity about the norm, we write $\lambda_i(\Lambda)$ instead of $\lambda_i(\Lambda,\Vert.\Vert)$.

We fix a measure $\mu$  on $\mathcal{L}_{d+c}$  invariant by the $\SL
(d+c,\mathbb{R)}$ left action. Recall that $\mu$ is unique up to a multiplicative constant.

\section{Best approximations}

\subsection{Best Diophantine approximations}

Best approximation vectors have been introduced since a long time inside
proofs in an non-explicit form. 
To our knowledge, C. A. Rogers in 1951 \cite{Rog} was the first to define best
Diophantine approximations. In fact, he defined the best Diophantine approximations associated with the sup norm.

Lagarias was the first to have undertaken a systematic study of the best Diophantine approximations, see  \cite{Lag1,Lag2,Lag3,Lag4,Lag5}. For instance, Lagarias proved that it is not possible to conciliate the unimodularity and the best approximation property (see \cite{Lag4} and \cite{Mosh}). This explains why a multidimensional continued fraction algorithm never satisfies the best approximation property when this algorithm is defined by a piecewise unimodular homographic generalized Gauss map.   

The connection between best simultaneous Diophantine approximations and the diagonal flow in the space of 
lattices goes back at least to Hermite, see \cite{Her}. In \cite{Lag1}, based on  Hermite's idea, Lagarias defined the Minkowski multidimensional continued fraction expansion  which gives all the ``Hermite best approximations'' to a given $\theta\in\R^d$ (see also \cite{GrLa} for a deep study of the case $d=c=1$).

For more details and the history of best Diophantine approximations see the surveys  \cite{Mosh,Chev-Mosc}.

\begin{definition}
Let $\theta\in\M_{d,c}(\mathbb{R)}$. \newline 
1. A nonzero vector
$Q\in\mathbb{Z}^{c}$ is a \textit{best simultaneous Diophantine approximation
denominator} of $\theta$ (with respect to the norms $\Vert.\Vert_{\mathbb{R}^d}$ and $\Vert.\Vert_{\mathbb{R}^c}$), if for all nonzero $U$ in $\mathbb{Z}^{c}$,
\begin{align*}
\left\Vert U\right\Vert _{\mathbb{R}^{c}}  &  <\left\Vert Q\right\Vert
_{\mathbb{R}^{c}}\Rightarrow \dd(\theta Q,\mathbb{Z}^{d})<\dd(\theta U,\mathbb{Z}^{d}) \text{ and}\\
\left\Vert U\right\Vert _{\mathbb{R}^{c}}  &  =\left\Vert Q\right\Vert
_{\mathbb{R}^{c}}\Rightarrow \dd(\theta Q,\mathbb{Z}^{d})\leq \dd(\theta U,\mathbb{Z}^{d}).
\end{align*}
2. An element $(P,Q)$ in $\mathbb{Z}^{d}\times\mathbb{Z}^{c}$ is a
\textit{best Diophantine approximation vector} of $\theta$ if $Q$ is a best
simultaneous Diophantine approximation denominator of $\theta$\ and if
\[
\left\Vert \theta Q-P\right\Vert _{\mathbb{R}^{d}}=\dd(\theta Q,\mathbb{Z}
^{d}).
\]

\end{definition}

If $Q=0$ is the only solution in $\mathbb{Z}^d$ of the equation $\theta Q=0 \mod \Z^{d}$, the set of best Diophantine approximation denominators of
$\theta$ is infinite. Numbering the set of best approximation denominators in
ascending order of the norm $q=\left\Vert Q\right\Vert _{c}$, we obtain two
sequences
\[
q_{0}=q_{0}(\theta)=\left\Vert
Q_{0}(\theta)\right\Vert _{c}=\lambda_{1}(\mathbb{Z}^{c})<q_{1}=q_{1}(\theta)=\left\Vert
Q_{1}(\theta)\right\Vert _{c}<...<q_{n}=q_{n}(\theta)=\left\Vert Q_{n}
(\theta)\right\Vert _{c}<....
\]
and 
\[
r_{0}=r_{0}(\theta)=\dd(\theta Q_0,\mathbb{Z}^{d})>r_{1}=r_{1}(\theta)=\dd(\theta Q_1,\mathbb{Z}^{d})>...>r_{n}=r_{n}(\theta)=\dd(\theta Q_n,\mathbb{Z}^{d})>....
\]

When $d=c=1$, by the best approximation property of the ordinary continued fraction expansion (see \cite{HaWr} Theorem 182), the integers $q_{0}
,q_{1},...,q_{n},...$ are the denominators of the convergents of the ordinary continued fraction
expansion of $\theta$. The only slight difference is that in the ordinary
continued fraction expansion, it can happen that $q_{0}=q_{1}=1$.   Unless $d=c=1$, the sequence $(V_n=(P_n,Q_n))_n$ of best approximation vectors does not satisfy a linear recurrence relation of the form $V_n=a_{n,1}V_{n-1}+\dots+a_{n,d+c}V_{n-d-c}$, but   satisfies weak forms of well-known properties of the one-dimensional continued fraction expansion, see Lemmas \ref{lem:Minkowski} and \ref{lem:lowerrate} below.

\subsection{Minimal vectors in lattices}

The notion of minimal vector goes
back to Voronoi. He used minimal vectors to find units in cubic fields (see \cite{Vor} and also \cite{BuchI,BuchII}). Minimal vectors are a key ingredient of the proofs of Theorems \ref{thm:levy-d} and \ref{thm:jager}, they allow to
convert statements about best simultaneous Diophantine approximations of vectors or 
matrices into statements about lattices.

\begin{definition}
Let $M\in\SL (d+c,\mathbb{R})$ and let $\Lambda=M\mathbb{Z}^{d+c}\in
\mathcal{L}_{d+c}$. A nonzero vector $X\in\Lambda$ is a minimal vector in
$\Lambda$ (with respect to the norms $\left\Vert \cdot\right\Vert
_{\mathbb{R}^{d}}$ and $\left\Vert \cdot\right\Vert _{\mathbb{R}^{c}}$) if the
only nonzero vectors $Y\in\Lambda$ in the cylinder $C(X)$ are such that
$C(X)=C(Y)$, i.e., such that
\[
\left\vert X\right\vert _{+}=\left\vert Y\right\vert _{+},\ \left\vert
X\right\vert _{-}=\left\vert Y\right\vert _{-}.
\]
Two minimal vectors $X$ and $Y$ are equivalent if they define the same cylinder.
\end{definition}

\begin{remark} 
The connection between minimal vectors and shortest vectors of a lattice in $\mathbb{R}^{d+c}$ with respect to the norm $\Vert.\Vert_{d,c}$ is simple: in a
given lattice there is at least one shortest vector of the lattice that is minimal, and if $X = (u,h)$
is minimal in the lattice, then $g_tX$ is a shortest vector of the lattice $g_t\Lambda$ with $t(X) = \tfrac{1}{d+c}
\ln\frac{\Vert h\Vert_{\mathbb{R}^c}}{\Vert u\Vert_{\mathbb{R}^d}}$
(if $u=0$ or $h=0$, it is to understand that $X$ is a shortest vector of $g_t\Lambda$ when $t$ is in a
neighborhood of $+\infty$ or $-\infty$).
Moreover if a vector $X = (u,h)$  is a ``robust'' shortest vector, {\it i.e.}, if
$g_tX$ is a shortest vector in $g_t\Lambda$ for all $t$ in a neighborhood of $t(X)$, then $X$ is a minimal vector.
\end{remark}
\begin{remark} The connection between minimal vectors and shortest vectors of a lattice in $\mathbb{R}^{d+c}$ with respect to the standard Euclidean norm is not that simple: in a given lattice, if $g_tX$ is a shortest vector in the lattice $g_t\Lambda$ for some $t$,  then $X$ is a minimal vector in $\Lambda$. However some minimal vectors are not of this shape even when $d=c=1$, this was already observed by Humbert in 1916 (see \cite{Hum} and \cite{Lag1}).
\end{remark}

The connection between minimal vectors and best Diophantine approximations is given by: 

\begin{lemma}(\cite{Cheu,Chev-Mosc}) 
\label{lem:min-best}
For all $\theta$ in $\M_{d,c}(\mathbb{R)}$, a vector $X=M_{\theta}V^T$ where $V=(P,Q)\in\mathbb{Z}^{d+c}$, is a minimal vector   with nonzero
height in the lattice $\Lambda_{\theta}$ iff $V$ is a best approximation vector of $\theta$ such that
\[
\Vert \theta Q-P\Vert_{\mathbb{R}^d}<d(0,\mathbb{Z}^{d}\setminus\{0\}).
\]
\end{lemma}

\begin{proof}
Let $V=(P,Q)$ be a non zero vector in $\mathbb{Z}^{d+c}$ and $X=M_{\theta
}V^{T}$.
\newline Suppose that $V$ is a best approximation vector of $\theta$
such that $\Vert \theta Q-P\Vert_{\mathbb{R}^d}<d(0,\mathbb{Z}^{d}\setminus\{0\})$. Let $W=(u,h)$ be
a non zero vector in $\mathbb{Z}^{d+c}$. If $Y=M_{\theta}W^{T}$ is in $C(X)$
then $\Vert h\Vert_{\mathbb{R}^c} \leq \Vert Q\Vert_{\mathbb{R}^c}$ and $\Vert \theta h-u\Vert_{\mathbb{R}^d}\leq \Vert \theta Q-P\Vert_{\mathbb{R}^d}$. The latter inequality implies
$h\neq0$. Since $V$ is a best approximation vector the two previous
inequalities must be two equalities which shows $X$ is minimal with non zero
height. Conversely suppose that $X$\ is a minimal vector in $\Lambda_{\theta}$
with non zero height. If a vector $W=(h,u)$ in $\mathbb{Z}^{d+c}$ is such that $\Vert h\Vert_{\mathbb{R}^c} < $ or $\leq\Vert Q\Vert_{\mathbb{R}^c}$, then by minimality, $\Vert \theta h-u\Vert_{\mathbb{R}^d}>$ or $\geq \Vert \theta Q-P\Vert_{\mathbb{R}^d}$ 
which shows that $V$ is a best approximation vector of $\theta$. Moreover, since $X$ is minimal with nonzero height, $\Vert \theta Q-P\Vert_{\mathbb{R}^d}<d(0,\mathbb{Z}^{d}\setminus\{0\})$.
\end{proof}

\subsubsection{The sequence of minimal vectors} Given a lattice $\Lambda$ in $\mathcal{L}_{d+c}$, we select one minimal vector
in each equivalent class of minimal vectors. We number these vectors in
ascending order of heights. Such a numbering exists because $0$ is the only
possible limit point of the set of heights of minimal vectors (see the proof of Lemma \ref{lem:lowerrate}). We obtain a
sequence
\[
...X_{-n}(\Lambda),...,\ X_{-1}\left(  \Lambda\right)  ,\;X_{0}(\Lambda
),\;X_{1}(\Lambda),...
\]
This sequence might be finite, infinite one-sided or two-sided. The sequence
$(\left\vert X_{n}(\Lambda)\right\vert _{+})_{n}$ is decreasing while
the sequence $(\left\vert X_{n}(\Lambda)\right\vert _{-})_{n}$ is 
increasing. Such a numbering is not unique but two such numberings differ only by a constant. Although this is not essential, we will set such a numbering in the Section \ref{sec:visit} about return
times.\bigskip

We shall  use the following notations
\[
q_{n}(\Lambda)=\left|  X_{n}(\Lambda)\right|  _{-}\quad\text{ and }\quad
r_{n}(\Lambda)=\left|  X_{n}(\Lambda)\right|  _{+}.
\]

Lemma \ref{lem:min-best} above is very important. It shows that for $\theta
\in\M_{d,c}(\mathbb{R})$, the sequences $(q_{n}(\Lambda_{\theta}))_{n}$
and $(q_{n}(\theta))_{n}$ are deduced one another by a shift. Therefore, if one of the two limits
\[
\lim_{n\rightarrow\infty}\frac{1}{n}\ln q_{n}(\theta),\ \text{and }\lim_{n\rightarrow\infty}\frac{1}{n}\ln q_{n}(\Lambda
_{\theta})
\]
exists, then the other exists and have the same value. The same results holds with the sequences $(r_n(\theta))_n$ and $(r_n(\Lambda_{\theta}))_n$.

 The classical inequality 
 \[
 q_{n+1}r_n\leq 1
 \]
 which holds for the one-dimensional continued fraction expansion of a real number, can be extended  to minimal vectors of lattices or to best approximation vectors. This fact is well known but it is worth stating it.

\begin{lemma}
\label{lem:Minkowski}
There is a constant $C_{d,c}$ depending only on $c$ and $d$ such that for all lattice $\Lambda \in \mathcal L_{d+c}$, all matrices $\theta \in \M_{d,c}$  and all integers $n$, we have 
\begin{align*}
q_{n+1}^c(\Lambda)r_n^d(\Lambda)\leq C_{d,c}\text{ and }
q_{n+1}^c(\theta)r_n^d(\theta)\leq C_{d,c}.
\end{align*}
\end{lemma}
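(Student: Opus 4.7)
The plan is to apply Minkowski's first theorem (the convex body theorem) to the open symmetric convex body
\[
\Omega_n = \{X \in \R^{d+c} : |X|_+ < r_n(\Lambda),\ |X|_- < q_{n+1}(\Lambda)\},
\]
whose Lebesgue volume is $V_d V_c\, r_n^d q_{n+1}^c$, where $V_k$ denotes the volume of the Euclidean unit ball in $\R^k$. The crux is to show that $\Omega_n$ contains no nonzero point of $\Lambda$. Granted this, Minkowski's theorem (together with $\Lambda$ being unimodular) gives $\operatorname{vol}(\Omega_n) \leq 2^{d+c}$, whence $q_{n+1}^c(\Lambda)\, r_n^d(\Lambda) \leq 2^{d+c}/(V_d V_c) =: C_{d,c}$.

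For the exclusion claim, I would argue by contradiction: suppose $Z \in \Lambda \setminus \{0\}$ satisfies $|Z|_+ < r_n$ and $|Z|_- < q_{n+1}$. The finite, nonempty set $\Lambda \cap C(Z) \setminus \{0\}$ admits a vector $Y$ minimizing $(|Y|_-, |Y|_+)$ lexicographically. Such a $Y$ must be a minimal vector of $\Lambda$, since any alleged witness $W \in C(Y) \setminus \{0\}$ to non-minimality would lie in $C(Z)$ with strictly smaller lexicographic pair, contradicting our choice. Therefore $Y$ is equivalent to some $X_k(\Lambda)$, giving $|Y|_- = q_k$ and $|Y|_+ = r_k$. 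From $q_k = |Y|_- \leq |Z|_- < q_{n+1}$ and the strict monotonicity of $(q_k)$, I infer $k \leq n$, hence $r_k \geq r_n$, which contradicts $|Y|_+ \leq |Z|_+ < r_n$.

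The matrix version then follows from the lattice version via Lemma~\ref{lem:min-best}: the sequences $(q_n(\theta), r_n(\theta))$ and $(q_n(\Lambda_\theta), r_n(\Lambda_\theta))$ coincide up to a common index shift, so the bound transfers, after possibly enlarging $C_{d,c}$ to absorb finitely many initial indices.

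The only delicate point is the strict monotonicity $q_n < q_{n+1}$ underlying the contradiction above. This rests on the observation that two minimal vectors with equal heights must also have equal horizontal norms (otherwise the one with larger $|\cdot|_+$ would contain the other as a non-equivalent element of its cylinder), so they are equivalent and share a single index $X_k$ in the enumeration. Beyond this, everything reduces to a standard application of Minkowski's convex body theorem, and I do not anticipate any further obstacle.
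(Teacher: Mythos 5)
Your lattice argument is correct and matches the paper's intended one-liner (apply Minkowski's first theorem to the open cylinder spanned by two consecutive minimal vectors). The lexicographic-minimum device to exhibit a minimal vector inside $C(Z)$, and the check that $(q_k(\Lambda))_k$ is \emph{strictly} increasing because distinct minimal vectors with equal heights would force one to lie non-trivially in the other's cylinder, are both sound and fill in exactly what the paper's terse proof omits.

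The matrix transfer, however, is slightly too quick. The correspondence furnished by Lemma~\ref{lem:min-best} between best approximation vectors of $\theta$ and minimal vectors of $\Lambda_\theta$ holds only for indices with $r_n(\theta) < \lambda_1(\Z^d) = 1$ (this is the explicit hypothesis in part 2 of that lemma). Since $r_0(\theta)$ can be as large as $\sqrt{d}/2$, which exceeds $1$ for $d\geq 5$, the two sequences agree only after a $\theta$-dependent initial segment — the shift is eventual, not global. Consequently, ``enlarging $C_{d,c}$ to absorb finitely many initial indices'' is not automatic: you need a \emph{uniform} bound on $q_{n+1}^c(\theta)r_n^d(\theta)$ over that segment, and as written you have none. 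It is true, but requires an extra observation: for instance, your lattice bound applied to the pair of consecutive minimal vectors of $\Lambda_\theta$ straddling the purely horizontal one (which has $|{\cdot}|_-=0$, $|{\cdot}|_+=1$) gives $q_{n_0}^c(\theta)\leq C_{d,c}$ where $n_0$ is the first index with $r_{n_0}(\theta)<1$; combined with the universal bound $r_n(\theta)\leq \sqrt{d}/2$ this closes the gap for all $n<n_0$ with a dimension-dependent enlargement of the constant. Equivalently, and closer to the paper's phrase ``or best approximation vectors,'' one can apply Minkowski directly to $\Lambda_\theta$ with horizontal radius $\min(r_n(\theta),1)$, which automatically excludes the horizontal lattice vectors. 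Either way, this step should be spelled out; as stated, the absorption argument tacitly assumes a uniformity it has not established.
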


\begin{proof}
Just use the Minkowski convex body theorem with the cylinder defined by two consecutive minimal vectors or best approximation vectors.
\end{proof}

The classical inequality
\[
q_{n+2}\geq 2q_{n}
\]
which holds for the denominators of the one-dimensional continued fraction expansion
of a real number, can be extended to minimal vectors of lattices. This
inequality has already been extended to best simultaneous Diophantine
approximations, see \cite{Lag3},\ \cite{Lag4} and \cite{Chev-Mosc}. The
extension to minimal vectors of lattices is straightforward.

\begin{lemma}
\label{lem:lowerrate}There is a positive integer constant $A=A(d,c)$ such that for
any $\Lambda$ in $\mathcal{L}_{d+c}$ and any $n\in\mathbb{Z}$, if
$X_{n}(\Lambda),\ X_{n+1}(\Lambda),....,X_{n+A}(\Lambda)$ exist, then
\begin{align*}
q_{n+A}(\Lambda)  &  \geq 2q_{n}(\Lambda),\\
r_{n+A}(\Lambda)  &  \leq\frac{1}{2}r_{n}(\Lambda).
\end{align*}

\end{lemma}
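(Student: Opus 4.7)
The plan is to prove both inequalities by contradiction through a single idea: a pigeonhole argument on the $A+1$ pairs $((X_i)_+,(X_i)_-)$, $n\leq i\leq n+A$, showing that otherwise two of these points are so close that their difference violates the minimality of either $X_n$ (for the first inequality) or $X_{n+A}$ (for the second). The constant $A(d,c)$ will simply be the number of cells in a product covering of a box in $\R^{d+c}$, which by scale invariance depends only on $d$ and $c$.

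For the first inequality, I would assume $q_{n+A}<2q_n$ (the case $q_n=0$ being trivial). Then every $X_i$ with $n\leq i\leq n+A$ lies in the box $B_{\R^d}(0,r_n)\times B_{\R^c}(0,2q_n)$. Cover the horizontal factor by $N_1(d)$ closed balls of radius $r_n/3$ and the vertical factor by $N_2(c)$ closed balls of radius $q_n/3$. Choosing $A\geq N_1(d)N_2(c)$ forces two indices $i<j$ for which $(X_i)_+,(X_j)_+$ lie in the same small horizontal ball and $(X_i)_-,(X_j)_-$ in the same small vertical ball. The difference $Y=X_j-X_i$ is a nonzero lattice vector (the heights $q_i<q_j$ are distinct by the numbering convention) satisfying $|Y|_+\leq 2r_n/3<r_n$ and $|Y|_-\leq 2q_n/3<q_n$. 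Hence $Y$ lies strictly inside $C(X_n)$ with $C(Y)\neq C(X_n)$, contradicting the minimality of $X_n$.

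The second inequality is proved in exactly the same spirit with $X_{n+A}$ as the distinguished anchor. Suppose $r_{n+A}>r_n/2$, so that all $X_i$ lie in $B_{\R^d}(0,r_n)\times B_{\R^c}(0,q_{n+A})$; since $r_n<2r_{n+A}$, the horizontal factor can be covered by a bounded number $N_1'(d)$ of balls of radius $r_{n+A}/3$, while the vertical factor needs $N_2'(c)$ balls of radius $q_{n+A}/3$. Pigeonhole again produces $i<j$ with $|X_j-X_i|_+<r_{n+A}$ and $|X_j-X_i|_-<q_{n+A}$, contradicting the minimality of $X_{n+A}$. Setting $A(d,c)=\max(N_1N_2,N_1'N_2')$ handles both parts at once. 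There is no real obstacle beyond choosing the cover with the right scales; the whole content of the lemma is that the hypothesis $q_{n+A}<2q_n$ (resp.\ $r_{n+A}>r_n/2$) traps all intermediate minimal vectors in a box whose side lengths are within a factor $2$ of those of the anchoring cylinder, which is incompatible with having too many of them.
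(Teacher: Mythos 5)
Your proof is correct and follows essentially the same pigeonhole argument as the paper: the paper likewise chooses $A$ so that any $A$ points in a product of balls $B_{\R^d}(0,r_1)\times B_{\R^c}(0,r_2)$ must contain a pair whose difference has horizontal and vertical parts at most $r_1/2$ and $r_2/2$, and then applies this with $(r_1,r_2)=(r_n,2q_n)$ (and symmetrically for the other inequality) to produce a nonzero lattice vector strictly inside the cylinder $C(X_n)$ (resp.\ $C(X_{n+A})$), contradicting minimality. The only cosmetic differences are the choice of radii in the covering ($r/3$ vs.\ $r/4$) and that you treat the two inequalities with separate covers and take a maximum, whereas the paper packages both into one choice of constant.
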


\begin{proof}
Let $A$ be an integer constant such that if $A$ points $(x_1,y_1),...,(x_A,y_A)$ are in the product  of balls $B_{\mathbb{R}^{d}}(0,r_1)\times B_{\mathbb{R}^{c}}(0,r_2)$ with $r_1,r_2\geq 0$ then
there exist two indices $i\neq j$ such that $\left\Vert x_{i}-x_{j}\right\Vert
_{\mathbb{R}^{d}}\leq\frac{1}{2}r_1$ and $\left\Vert y_{i}-y_{j}\right\Vert
_{\mathbb{R}^{c}}\leq\frac{1}{2}r_2$. With this choice of the constant $A$, if $k\geq A$ is a positive
integer such that $q_{n+k}(\Lambda)\leq 2q_{n}(\Lambda)$, then there are two
integers $0\leq i<j\leq k$ such that the vector $X_{n+j}(\Lambda
)-X_{n+i}(\Lambda)$ satisfies both conditions
\[
\left\{
\begin{array}
[c]{c}
\left\vert X_{n+j}(\Lambda)-X_{n+i}(\Lambda)\right\vert _{+}\leq\frac{1}
{2}r_{n}(\Lambda)\\
\left\vert X_{n+j}(\Lambda)-X_{n+i}(\Lambda)\right\vert _{-}\leq \frac{1}{2}2q_{n}
(\Lambda)
\end{array}
\right.
\]
which contradicts the definition of $X_{n}(\Lambda)$. The same way of
reasoning leads to the other inequality.
\end{proof}

We shall use several times the following very simple lemma which is a
consequence of the following observation. For any minimal vector $X$ of a
lattice $\Lambda$ in $\mathbb{R}^{d+c}$ and any $t\in\R$, $g_{t}X$ is a minimal vector of the
lattice $g_{t}\Lambda$. It follows that

\begin{lemma}
\label{lem:flow-minimal}Let $\Lambda$ be in $\mathcal{L}_{d+c}$ and let $t$ be
in $\mathbb{R}$. The  sequence of minimal vectors of the lattice $g_t\Lambda$ is  $(g_{t}
(X_{n}(\Lambda)))_{n}$.
\end{lemma}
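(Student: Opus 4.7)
The key observation is that the diagonal flow $g_t$ acts as a scalar on each of the two factors in the splitting $\mathbb{R}^{d+c}=E_+\oplus E_-$: it multiplies horizontal components by $e^{ct}$ and vertical components by $e^{-dt}$. Hence $|g_tX|_+=e^{ct}|X|_+$ and $|g_tX|_-=e^{-dt}|X|_-$ for every $X$, and consequently $g_t$ maps the cylinder $C(X)$ bijectively onto $C(g_tX)$.

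First, I would use this to show that $X\in\Lambda$ is a minimal vector of $\Lambda$ if and only if $g_tX$ is a minimal vector of $g_t\Lambda$. Indeed, because $g_t$ is a linear bijection sending $\Lambda$ onto $g_t\Lambda$ and $C(X)$ onto $C(g_tX)$, the nonzero vectors of $g_t\Lambda$ lying in $C(g_tX)$ are exactly the images under $g_t$ of the nonzero vectors of $\Lambda$ lying in $C(X)$. Moreover, the equivalence $C(Y)=C(X)$ (which means $|Y|_\pm=|X|_\pm$) transports to $C(g_tY)=C(g_tX)$ (which means $|g_tY|_\pm=|g_tX|_\pm$), since the scaling factors $e^{ct}$ and $e^{-dt}$ are uniform. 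Therefore the defining property of a minimal vector is preserved by $g_t$, in both directions.

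Next, I would check that the numbering is respected. Since $|g_tX_n(\Lambda)|_-=e^{-dt}|X_n(\Lambda)|_-$ and $e^{-dt}>0$, applying $g_t$ preserves the strict ordering of heights among the chosen representatives of equivalence classes of minimal vectors. As $g_t$ also induces a bijection between equivalence classes of minimal vectors of $\Lambda$ and those of $g_t\Lambda$, the sequence $(g_tX_n(\Lambda))_n$ is exactly the sequence of representatives of minimal vectors of $g_t\Lambda$ ordered by increasing heights, which is the defining property of $(X_n(g_t\Lambda))_n$ up to the same shift convention used for $\Lambda$.

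There is no substantive obstacle here; the lemma is really a direct consequence of the fact that $g_t$ preserves the product structure of cylinders. The only point that warrants a sentence of care is the shift in numbering, which is harmless since the lemma is stated up to such a shift (the convention for choosing $X_0$ is deferred to the discussion of return times).
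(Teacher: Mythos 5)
Your argument is correct and coincides with the paper's: the authors simply record the observation that $g_t$ sends minimal vectors of $\Lambda$ to minimal vectors of $g_t\Lambda$ (because it scales $|\cdot|_+$ and $|\cdot|_-$ uniformly, hence maps cylinders to cylinders), and state the lemma as an immediate consequence, leaving the bookkeeping about ordering of heights implicit. You have spelled out exactly that reasoning, including the harmless remark about the numbering shift.
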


\section{The transversals $S$ and $S^{\prime}$}

We assume that $\mathbb{R}^{d+c}$ is endowed with the norm
\[
\left\|  (x,y)\right\|  _{d,c}=\max\{\left\|  x\right\|
_{\mathbb{R}^{d}},\left\|  y\right\|  _{\mathbb{R}^{c}}\}.
\]
The main idea of the proofs of Theorems \ref{thm:levy-d} and \ref{thm:jager}
is to induce the flow $g_{t}$ on the co-dimension one subset
\[
S=\{\Lambda\in\mathcal{L}_{d+c}:\lambda_{2}(\Lambda)=\lambda_{1}(\Lambda)\}
\]
where $\lambda_{1}(\Lambda)$ and $\lambda_{2}(\Lambda)$ are the first two minima of the lattice $\Lambda$ associated with the above norm. 
The reason for introducing  $S$ is that for a generic lattice $\Lambda$, one can use the flow to transform any cylinder $C(X,Y)$ defined by two consecutive minimal vectors $X$ and $Y$, into a cylinder with equal horizontal and vertical sizes which brings the lattice $\Lambda$ into $S$ (see Subsection \ref{sec:visit}).
And then, to use the Birkhoff ergodic theorem with the first return map associated with the flow $g_t$. For
technical reason it is better to slightly restrict the definition of $S$. For instance the
above set is not a submanifold of $\mathcal{L}_{d+c}$. It could have some ``branching
points'' while a slightly smaller set is clearly a submanifold, see Lemma
\ref{lem:submanifold}. It will be convenient to use two transversals $S$ and
$S^{\prime}$ for the proof of Theorem \ref{thm:levy-d}. These two transversals are
very similar; we state all the results we need for both transversals but we only perform 
the proofs for the first transversal $S$.

\subsection{Definition of $S$}\label{sec:defS}

The transversal $S=S_{d,c}$ is the set of lattices $\Lambda$ in $\mathcal{L}_{d+c}$ such
that there exist two independent vectors $v_0^S(\Lambda)$ and $v_1^S(\Lambda)$ in
$\Lambda$ such that:

\begin{itemize}
\item $\left\vert v_1^S(\Lambda)\right\vert _{+}$ and $\left\vert
v_0^S(\Lambda)\right\vert _{-}$ are $<\left\vert v_1^S(\Lambda)\right\vert
_{-}=\left\vert v_0^S(\Lambda)\right\vert _{+}$,

\item the only nonzero points of $\Lambda$ in the ball $B_{d,c}(0,\lambda_{1}(\Lambda))$ are $\pm v_0^S(\Lambda)$ and $\pm v_1^S(\Lambda)$.
\end{itemize}

Observe that for $\Lambda$ in $S$, $v_0^S(\Lambda)$ and $v_1^S(\Lambda)$ are
unique up to sign and are consecutive minimal vectors of $\Lambda$.

Since $v_0^S(\Lambda)$ and $ v_1^S(\Lambda)$ are linearly independent
and are both in the ball $B_{d,c}(0,\lambda_{1}(\Lambda
))$, $S$ is included in the set
\[
\{\Lambda\in\mathcal{L}_{d+c}:\lambda_{1}(\Lambda)=\lambda_{2}(\Lambda)\}.
\]

\subsection{Definition of $S^{\prime}$}

The transversal $S^{\prime}$ is the set of lattices in $\mathcal{L}_{d+c}$ such
that there exists a vector $w_{0}^{S'}(\Lambda)$ in $\Lambda$ such that:

\begin{itemize}
\item the only nonzero points
of $\Lambda$ in the ball $B_{d,c}(0,\lambda_{1}(\Lambda))$ are $\pm w_{0}^{S'}(\Lambda)$,

\item the ball $B_{d,c}(0,\lambda_{1}(\Lambda))$ is equal to the cylinder $C(w_{0}^{S'}
(\Lambda))$, i.e., $|w_0^{S'}|_+=|w_0^{S'}|_-$.
\end{itemize}

Observe that $w_{0}^{S'}(\Lambda)$ is unique up to sign and is a minimal vector of
$\Lambda$.

We gather some technical results in the next three subsections. Then,  the subsection \ref{sec:visit} about the visiting times and the subsection \ref{sec:function-S}    about two functions defined on $S$, explain the relations between  the transversal $S$ and the sequences of minimal vectors of a lattice.

\subsection{Lattices bases and minima}

We shall need the following results about lattices. Recall that a sub-lattice $\Gamma$ of a lattice $\Lambda$ is primitive if $\Gamma_{\mathbb R}\cap\Lambda=\Gamma$ where $\Gamma_{\mathbb R}$ is the vector subspace generated by $\Gamma$.

\begin{lemma}
Suppose $\mathbb{R}^{n}$ is equipped with any norm $\left\Vert .\right\Vert $.
Let $\Lambda$ be a lattice in $\mathbb{R}^{n}$ and let $v_{1}$, $v_{2}$ be two
independent vectors of $\Lambda$ such that $\left\Vert v_{1}\right\Vert =\lambda
_{1}(\Lambda)$ and $\left\Vert v_{2}\right\Vert =\lambda_{2}(\Lambda)$. Then
$\ \mathbb{Z}v_{1}+\mathbb{Z}v_{2}$ is a primitive sub-lattice of $\Lambda$
unless $\frac{1}{2}(v_{1}+v_{2})\in\Lambda$ and $\left\Vert v_{1}\right\Vert
=\left\Vert v_{2}\right\Vert =\left\Vert \frac{1}{2}(v_{1}+v_{2})\right\Vert $.
\end{lemma}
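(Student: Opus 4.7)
The plan is to argue by contradiction: assume $\mathbb{Z}v_{1}+\mathbb{Z}v_{2}$ is not primitive in $\Lambda$. Then the rank two sublattice $L:=\Lambda\cap\mathrm{span}(v_{1},v_{2})$ strictly contains $\mathbb{Z}v_{1}+\mathbb{Z}v_{2}$, so one can pick $w\in L\setminus(\mathbb{Z}v_{1}+\mathbb{Z}v_{2})$ and write $w=\alpha v_{1}+\beta v_{2}$. After subtracting the nearest integer combination of $v_{1}$ and $v_{2}$, one arranges $|\alpha|,|\beta|\leq 1/2$ while keeping $(\alpha,\beta)\notin\mathbb{Z}^{2}$ (and hence $w\neq 0$).

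Next I would rule out the subcases where exactly one of $\alpha,\beta$ vanishes. If $\beta=0$, then $w=\alpha v_{1}$ is a nonzero lattice vector of norm at most $\tfrac12\|v_{1}\|=\tfrac12\lambda_{1}<\lambda_{1}$, contradicting the definition of $\lambda_{1}$. If $\alpha=0$, then $w=\beta v_{2}$ is nonzero and independent from $v_{1}$; the constraint $\|w\|\geq\lambda_{1}$ combined with $\|w\|\leq\tfrac12\lambda_{2}$ first forces $\lambda_{1}\leq\tfrac12\lambda_{2}$, and then applying the definition of $\lambda_{2}$ to the independent pair $\{v_{1},w\}$ gives $\lambda_{2}\leq\max(\lambda_{1},\tfrac12\lambda_{2})=\tfrac12\lambda_{2}$, a contradiction.

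In the remaining case, both $\alpha$ and $\beta$ are nonzero, so $w$ and $v_{1}$ are independent. Since $\|w\|\geq\lambda_{1}=\|v_{1}\|$, the definition of $\lambda_{2}$ yields $\|w\|\geq\lambda_{2}=\|v_{2}\|$; on the other hand the triangle inequality gives
\[
\|w\|\leq|\alpha|\,\|v_{1}\|+|\beta|\,\|v_{2}\|\leq\tfrac12(\|v_{1}\|+\|v_{2}\|)\leq\|v_{2}\|,
\]
the last step using $\|v_{1}\|\leq\|v_{2}\|$. Every inequality must therefore be an equality, which forces $|\alpha|=|\beta|=1/2$ and $\|v_{1}\|=\|v_{2}\|$. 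Thus $w=\pm\tfrac12(v_{1}\pm v_{2})$; since $\Lambda$ is symmetric under negation and contains $v_{2}$, the four inclusions $\pm\tfrac12(v_{1}\pm v_{2})\in\Lambda$ are all equivalent, so in particular $\tfrac12(v_{1}+v_{2})\in\Lambda$. A final application of the same sandwich $\lambda_{2}\leq\|\tfrac12(v_{1}+v_{2})\|\leq\tfrac12(\|v_{1}\|+\|v_{2}\|)=\|v_{2}\|$ yields $\|\tfrac12(v_{1}+v_{2})\|=\|v_{1}\|=\|v_{2}\|$, which is precisely the exceptional configuration of the lemma.

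The main obstacle is the degenerate subcase $\alpha=0$ in step two: unlike the symmetric $\beta=0$, the short vector $w$ is parallel to $v_{2}$ instead of $v_{1}$, so one cannot violate $\lambda_{1}$ directly but must instead combine both minima by considering the independent pair $\{v_{1},w\}$. Everything else follows from the triangle inequality and the successive minima definitions.
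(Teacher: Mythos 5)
Your proof is correct, and the broad strategy matches the paper's: reduce a putative extra lattice point to small coefficients in the basis $\{v_1,v_2\}$ and play the triangle inequality against the definitions of the successive minima. The case decomposition is organized differently, though. The paper places the extra lattice point in the interior of the parallelogram spanned by $v_1,v_2$ and splits on whether it lies off the diagonal, on the diagonal away from the midpoint, or exactly at the midpoint, obtaining a strict inequality $<\lambda_2$ in the first two cases from $t_1+t_2<1$ or $t<\tfrac12$. You reduce instead to the symmetric box $|\alpha|,|\beta|\le\tfrac12$ and split on which coefficient vanishes. Your parametrization surfaces the degenerate parallel cases explicitly; in particular your $\alpha=0$ subcase, with its combined use of $\lambda_1$ and $\lambda_2$, handles the boundary configuration of a lattice point proportional to $v_2$, which the paper's restriction to the interior of the parallelogram silently sets aside. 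The cost is that in the main case you only get the weak inequality $\le\lambda_2$, so you must chase equality in the triangle inequality, and you pick up the extra possibility $w=\pm\tfrac12(v_1-v_2)$, which you correctly dispose of via lattice symmetry and one more sandwich to recover $\bigl\|\tfrac12(v_1+v_2)\bigr\|=\|v_1\|=\|v_2\|$. Both proofs are valid and of comparable length: the paper's fundamental domain hands it strict inequalities slightly earlier, while yours is a touch more mechanical but also treats explicitly the boundary degeneracies that the paper leaves implicit.
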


\begin{proof}
Consider the parallelogram $\mathcal{P}$ defined by the vectors $v_{1}$ and
$v_{2}$. Let $v$ be an element of $\Lambda$ that belongs to the interior of
$\mathcal{P}$. If $v$ is not in the segment joining $v_{1}$ and $v_{2}$ then
the distance from $v$ to either $0$ or $v_{1}+v_{2}$ is of the form
$\left\Vert t_{1}v_{1}+t_{2}v_{2}\right\Vert $ for some positive real numbers
$t_{1}$ and $t_{2}$ with $t_{1}+t_{2}<1$. Hence this distance is
$\leq t_{1}\left\Vert v_{1}\right\Vert +t_{2}\left\Vert v_{2}\right\Vert
<\lambda_{2}(\Lambda)$ which contradicts the definition of $\lambda
_{2}(\Lambda)$. If $v$ is in the segment joining $v_{1}$ and $v_{2}$ but
is not the point $\frac{1}{2}(v_{1}+v_{2})$ then the distance from $v$ to
either $v_{1}$ or $v_{2}$ is of the form $\left\Vert t(v_{1}-v_{2})\right\Vert
$ with $t<\frac{1}{2}$ which implies that this distance is $<\lambda
_{2}(\Lambda)$, again a contradiction. If $v=\frac{1}{2}(v_{1}+v_{2})$, we
have $\left\Vert v\right\Vert \leq\frac{1}{2}(\left\Vert v_{1}\right\Vert
+\left\Vert v_{2}\right\Vert )$ which is $<\lambda_{2}(\Lambda)$ unless
$\left\Vert v_{1}\right\Vert =\left\Vert v_{2}\right\Vert=\left\Vert v \right\Vert .$
\end{proof}

It follows that when the norm is strictly convex, the sub-lattice $\mathbb{Z}
v_{1}+\mathbb{Z}v_{2}$ is always primitive. In our setting despite that the
norm is not strictly convex it is possible to use the above lemma. With our
choice of the norm on $\mathbb{R}^{d+c}$, the triangle inequality is strict
for two vectors one inside the ``top'' of the cylinder $B_{d,c}(0,r)$ and one inside the lateral side of $B_{d,c}(0,r)$. Therefore,

\begin{corollary}
\label{cor:lattice}Let $\Lambda$ be in $S$. Then the vectors $v_0^S(\Lambda)$
and $v_1^S(\Lambda)$ associated with $\Lambda$ are the first two vectors of a
basis of $\Lambda$.
\end{corollary}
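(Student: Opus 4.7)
The plan is to apply the preceding lemma with $v_{1}=v_{0}^{S}(\Lambda)$ and $v_{2}=v_{1}^{S}(\Lambda)$ and rule out the exceptional case. The first preliminary check is that both $v_{0}^{S}$ and $v_{1}^{S}$ are shortest nonzero vectors of $\Lambda$: by definition of $S$ each of them lies in $B_{\R^{d+c}}(0,\lambda_{1}(\Lambda))$, and since $\|\,\cdot\,\|_{\R^{d+c}}=\max\{|\,\cdot\,|_{+},|\,\cdot\,|_{-}\}$ one gets $\|v_{0}^{S}\|_{\R^{d+c}}=|v_{0}^{S}|_{+}$ and $\|v_{1}^{S}\|_{\R^{d+c}}=|v_{1}^{S}|_{-}$, which are equal to the common value $|v_{0}^{S}|_{+}=|v_{1}^{S}|_{-}$. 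This common value must then be exactly $\lambda_{1}(\Lambda)=\lambda_{2}(\Lambda)$, because the two vectors are independent (their heights differ). So the pair $(v_{0}^{S},v_{1}^{S})$ is admissible in the hypothesis of the lemma.

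Next I would verify that the exceptional alternative $\tfrac{1}{2}(v_{0}^{S}+v_{1}^{S})\in\Lambda$ with $\|\tfrac{1}{2}(v_{0}^{S}+v_{1}^{S})\|_{\R^{d+c}}=\lambda_{1}(\Lambda)$ cannot occur. This is the key geometric point and it uses that $v_{0}^{S}$ lies on the lateral side of the cylindrical ball $B_{\R^{d+c}}(0,\lambda_{1}(\Lambda))$ while $v_{1}^{S}$ lies on its top: by the definition of $S$ we have $|v_{1}^{S}|_{+}<\lambda_{1}(\Lambda)$ and $|v_{0}^{S}|_{-}<\lambda_{1}(\Lambda)$, so
\[
\bigl|\tfrac{1}{2}(v_{0}^{S}+v_{1}^{S})\bigr|_{+}\leq\tfrac{1}{2}\bigl(|v_{0}^{S}|_{+}+|v_{1}^{S}|_{+}\bigr)=\tfrac{1}{2}\bigl(\lambda_{1}(\Lambda)+|v_{1}^{S}|_{+}\bigr)<\lambda_{1}(\Lambda)
\]
and symmetrically
\[
\bigl|\tfrac{1}{2}(v_{0}^{S}+v_{1}^{S})\bigr|_{-}\leq\tfrac{1}{2}\bigl(|v_{0}^{S}|_{-}+|v_{1}^{S}|_{-}\bigr)<\lambda_{1}(\Lambda).
\]
Taking the maximum gives $\|\tfrac{1}{2}(v_{0}^{S}+v_{1}^{S})\|_{\R^{d+c}}<\lambda_{1}(\Lambda)$, so the exceptional equality $\|\tfrac{1}{2}(v_{0}^{S}+v_{1}^{S})\|=\lambda_{1}(\Lambda)$ is impossible.

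Applying the lemma therefore yields that $\mathbb{Z}v_{0}^{S}(\Lambda)+\mathbb{Z}v_{1}^{S}(\Lambda)$ is a primitive sublattice of $\Lambda$. By the standard extension theorem for primitive sublattices of a free $\mathbb{Z}$-module of finite rank, any basis of this rank-$2$ primitive sublattice can be completed to a $\mathbb{Z}$-basis of $\Lambda$, which is precisely the conclusion of the corollary. I do not expect any serious obstacle: the only delicate point is recognizing that the rectangular cylinder norm is strictly convex in the mixed direction used above, which is exactly what the remark preceding the corollary records.
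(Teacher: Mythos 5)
Your proposal is correct and follows the same route as the paper: invoke the preceding lemma on $\lambda_1=\lambda_2$, rule out the exceptional case $\frac{1}{2}(v_1+v_2)\in\Lambda$ via the strict triangle inequality between a vector on the top of the cylinder and one on its lateral side, and conclude primitivity hence basis extension. You simply spell out the computation that the paper dispatches in one sentence before the corollary.
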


\subsection{Geometric properties of $S$ and $S^{\prime}$}

\begin{lemma}
\label{lem:submanifold}$S$ and $S^{\prime}$ are submanifolds of
$\mathcal{L}_{d+c}$ of dimension $(d+c)^{2}-2$, transverse to the diagonal
flow $g_{t}$.
\end{lemma}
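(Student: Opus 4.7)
The plan is to reduce both statements to the fact that each surface is locally the zero set of a single smooth function whose derivative in the flow direction is non-zero. This yields codimension $1$ (hence dimension $(d+c)^2-2$, since $\dim\mathcal{L}_{d+c}=(d+c)^2-1$) and, simultaneously, transversality to $g_t$.

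Fix $\Lambda_0\in S$. First I would use Corollary~\ref{cor:lattice} to extend $v_0^S(\Lambda_0),v_1^S(\Lambda_0)$ to a basis $(v_0^S(\Lambda_0),v_1^S(\Lambda_0),w_3,\dots,w_{d+c})$ of $\Lambda_0$, giving a matrix $M_0\in\SL(d+c,\R)$ whose first two columns are the distinguished minimal vectors. Because the quotient map $\SL(d+c,\R)\to\mathcal{L}_{d+c}$ is a local diffeomorphism, a small neighborhood $U$ of $\Lambda_0$ is smoothly parameterized by a neighborhood of $M_0$. For $\Lambda\in U$, let $v_0(\Lambda),v_1(\Lambda)$ be the first two columns of the corresponding matrix; these are smooth $\R^{d+c}$-valued functions extending $v_0^S(\Lambda_0),v_1^S(\Lambda_0)$. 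Since at $\Lambda_0$ we have $|v_0(\Lambda_0)|_+=|v_1(\Lambda_0)|_-=\lambda_1(\Lambda_0)>0$, after shrinking $U$ the Euclidean norms $|v_0(\cdot)|_+$ and $|v_1(\cdot)|_-$ remain smooth.

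Next I would set
\[
\phi(\Lambda)=|v_0(\Lambda)|_+-|v_1(\Lambda)|_-,\qquad \Lambda\in U,
\]
and compute the flow derivative. Since $g_t$ acts by scaling the horizontal component by $e^{ct}$ and the vertical by $e^{-dt}$,
\[
\phi(g_t\Lambda_0)=e^{ct}|v_0(\Lambda_0)|_+-e^{-dt}|v_1(\Lambda_0)|_-,
\]
so
\[
\frac{d}{dt}\bigg|_{t=0}\phi(g_t\Lambda_0)=c|v_0(\Lambda_0)|_++d|v_1(\Lambda_0)|_-=(c+d)\lambda_1(\Lambda_0)\neq 0.
\]
This single computation delivers both conclusions: $d\phi_{\Lambda_0}\neq 0$, so $\phi^{-1}(0)$ is locally a smooth codimension-$1$ submanifold of $\mathcal{L}_{d+c}$, and the infinitesimal generator of $g_t$ is transverse to $\phi^{-1}(0)$ at $\Lambda_0$.

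To finish, I would check that after a further shrinking of $U$ one has $S\cap U=\phi^{-1}(0)\cap U$. The remaining conditions in the definition of $S$ --- namely the two strict inequalities $|v_1(\Lambda)|_+<|v_1(\Lambda)|_-$ and $|v_0(\Lambda)|_-<|v_0(\Lambda)|_+$, together with the absence of other lattice points in $B_{\R^{d+c}}(0,\lambda_1(\Lambda))$ --- are open conditions at $\Lambda_0$: the strict inequalities persist by continuity, and the last condition persists because, for $\Lambda$ close to $\Lambda_0$, the finitely many lattice vectors of bounded norm vary continuously, so no new short vector can enter the ball. For $S'$ the argument is formally identical: extend $w_0^{S'}(\Lambda_0)$ to a basis, define $w_0(\Lambda)$ smoothly on a neighborhood, and set $\psi(\Lambda)=|w_0(\Lambda)|_+-|w_0(\Lambda)|_-$; the same flow computation gives $\frac{d}{dt}|_{t=0}\psi(g_t\Lambda_0)=(c+d)\lambda_1(\Lambda_0)\neq 0$. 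The only real bookkeeping obstacle is justifying the smooth local selection of the distinguished minimal vectors and verifying that the open conditions defining $S$ and $S'$ are stable under small perturbations, both of which are routine given Corollary~\ref{cor:lattice} and local discreteness of lattices.
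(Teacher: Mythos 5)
Your proof is correct and follows the same basic strategy as the paper: locally parametrize $\mathcal{L}_{d+c}$ by matrices, realize $S$ as a level set of a smooth function, and check that the differential is nonzero; the transversality is verified by the same flow-derivative computation $c|v_0|_+ + d|v_1|_- > 0$ that appears in the paper (the paper uses squared norms, giving $2c|v_1|_+^2 + 2d|v_2|_-^2$).

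There is one genuine economy in your version worth noting. The paper works in $\GL(d+c,\R)$ and defines a two-component map $f(M) = (\det M, |v_1|_+^2 - |v_2|_-^2)$, so it must separately verify that $Df_1$ and $Df_2$ are linearly independent (it shows $Df_2$ depends only on $w_1, w_2$ while $Df_1$ involves $w_3$, using that some minor $\Delta_{i,3}\neq 0$), and then, in a second step, checks transversality by plugging in the flow direction. You instead work on a local slice where the quotient map $\SL(d+c,\R)\to\mathcal{L}_{d+c}$ is already a diffeomorphism, so the determinant condition is built in; with the single function $\phi = |v_0|_+ - |v_1|_-$, the one flow-derivative calculation simultaneously shows $d\phi\neq 0$ (regular value) and transversality. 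This eliminates the linear-independence step entirely. Both routes require the same open-condition bookkeeping at the end to identify $S\cap U$ with $\phi^{-1}(0)\cap U$, which you handle correctly.
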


\begin{proof}
Let $\Lambda_{0}$ be in $S$ and call $v_{0}^S(\Lambda_{0})$ and $v_{1}^S(\Lambda_{0})$
the two vectors provided by the definition of $S$. By Corollary
\ref{cor:lattice},  $v_{0}^S(\Lambda_{0})$ and $v_{1}^S(\Lambda_{0})$
are the first two vectors of a basis $(b_{1},...,b_{d+c})$ of $\Lambda_{0}$.
We can find a small enough positive real number $\varepsilon$ such that for
any $(v_{1},...,v_{d+c})$ in the open set
\[
W=B_{\mathbb{R}^{d+c}}(b_{1},\varepsilon)\times...\times B_{\mathbb{R}^{d+c}
}(b_{d+c},\varepsilon),
\]
\begin{itemize}
\item the matrix $M=M(v_{1},...,v_{d+c})$, the columns of which are the $v_{i}
$, is in $\GL (d+c,\mathbb R)$ and the sets $WP$, $P\in\SL (d+c,\mathbb  Z)$ are disjoint,
\item the vectors $\pm v_{1}$ and $\pm v_{2}$ are the only nonzero vectors of
the lattice $\Lambda=M\mathbb Z^{d+c}$ in the cylinder $C(v_{1},v_{2})$,
\item $\left\Vert v\right\Vert_{d,c} >\left\Vert v_{1}\right\Vert_{d,c} $ and $\left\Vert
v_{2}\right\Vert_{d,c} $ for all $v$ in $\Lambda\setminus\{0,\pm v_{1},\pm v_{2}\}$,
\item $|v_1|_{+}>|v_1|_{-}$ and $|v_2|_{-}>|v_2|_{+}$.
\end{itemize}
Consider the map
\begin{align*}
f  &  :W\rightarrow\mathbb{R}^{2}\\
&  :M=(v_{1},...,v_{d+c})\rightarrow(f_{1}(M)=\det M,f_{2}(M)=\left\vert
v_{1}\right\vert _{+}^{2}-\left\vert v_{2}\right\vert _{-}^{2}).
\end{align*}
Then a lattice $\Lambda=M\mathbb{Z}^{d+1}$ with $M\in W$, is in $S$ iff
$f(M)=(1,0)$. To prove that $S$ is a submanifold, it is enough to show that the differential
$Df(M)$ is onto at every point $M$ in $W$. The differential of $f_{2}$ is
given by
\[
Df_{2}(M).(w_{1},...,w_{d+c})=2v_{1}^{+}.w_{1}^{+}-2v_{2}^{-}.w_{2}^{-}.
\]
The linear map $Df_{2}(M)$ depends only on $w_{1}$ and $w_{2}$ and since
$v_{1}^{+}\neq 0$ for all $M$ in $W$, $Df_{2}(M)$ is never the zero map. The
differential of $f_{1}$ is given by
\[
Df_{1}(M).(w_{1},...,w_{d+c})=\sum_{i,j=1}^{d+c}(-1)^{i+j}\Delta_{i,j}w_{i,j}
\]
where $w_{j}=(w_{i,j})_{i=1,...,d+c}$, $j=1,...,d+c$ and $\Delta_{i,j}$ is the
$(i,j)$-minor of the matrix $M$. Since $\det M\neq 0$, one at least of the
minors $\Delta_{i,3}$, $i\leq d+c$, is not zero. Therefore the linear
$Df_{1}(M)$ is not zero and depends on $w_{3}$. It follows that the two linear
maps $Df_{1}(M)$ and $Df_{2}(M)$ are linearly independent for all $M$ in $W$
which implies that $S$ is a submanifold of $\mathcal{L}_{d+c}$.
To show that the flow is transverse to $S$, we have to check that for a matrix
$M=M(v_1,\dots,v_{d+c})$ in $W$ such that $f(M)=0$ we have $Df(M).(w_{1},...,w_{d+c})\neq 0$ when
$w_{i}=(cv_{i}^{+},-dv_{i}^{-})$. Now, for such $w_{i}$, $Df_{2}
(M).(w_{1},...,w_{d+c})=2c\left\vert v_{1}\right\vert _{+}^{2}$ $+2d\left\vert
v_{2}\right\vert _{-}^{2}>0$, hence $Df(M).(w_{1},...,w_{d+c})$ is not zero.
\end{proof}

\subsection{Negligible sets}
An important ingredient of the proof of Theorem \ref{thm:levy-d} is that, for a given lattice $\Lambda$, the visiting times $t$, i.e. the times $t$ such that $g_t\Lambda\in S$, can be read from the sequence $(X_n(\Lambda))_n$ of minimal vectors.
However, this reading is straightforward only for generic lattices, a small subset of lattices has to be avoided.
\subsubsection{A negligible set $\mathcal N$ in the space of lattices }

Let $\mathcal{N}=\mathcal{N}_{d,c}$ be the set of lattices $\Lambda$ in
$\mathcal{L}_{d+c}$ such that either 
\begin{itemize}
\item there exist two vectors $v_{1}$, and $v_2$, such that
$v_{1}\neq\pm v_{2}$ and $\left\vert v_{1}\right\vert _{+}=\left\vert
v_{2}\right\vert _{+}>0$ or $\left\vert v_{1}\right\vert _{-}=\left\vert
v_{2}\right\vert _{-}>0$,
\item or there exists a nonzero vector in $\Lambda$ lying in
the vertical subspace $\{0\}\times\mathbb{R}^{c}$, 
or in the horizontal subspace
$\mathbb{R}^{d}\times\{0\}$.
\end{itemize}

\begin{remark}
Any  lattice $\Lambda_{\theta}=M_{\theta}\mathbb Z^{d+c}$  (see Section \ref{sec:NotationLattice}) is in $\mathcal{N}$ because it contains $\mathbb Z^d\times\{0\}$.
\end{remark}

Recall that we have fixed a measure $\mu$  on $\mathcal{L}_{d+c}$  invariant by the $\SL
(d+c,\mathbb{R)}$ action. 

\begin{lemma}
$\mathcal{N}$ is $\mu$-negligible and $g_{t}$ invariant.
\end{lemma}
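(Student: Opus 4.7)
My plan is to treat the two assertions separately. The $g_t$-invariance is immediate from the scaling action of the flow on the semi-norms $|\cdot|_\pm$, while the negligibility follows by writing (the preimage of) $\mathcal N$ as a countable union of positive-codimension real algebraic subsets of $\SL(d+c,\R)$.

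For the invariance, by the very definition of the flow, for every $v=(v_+,v_-)\in\R^{d+c}$ one has $|g_tv|_+ = e^{ct}|v|_+$ and $|g_tv|_- = e^{-dt}|v|_-$. Hence the equality of two horizontal (resp.\ vertical) norms of vectors of $\Lambda$ is preserved after applying $g_t$, and the horizontal and vertical subspaces are themselves $g_t$-invariant. Since $g_t$ is a bijection of $\mathcal L_{d+c}$, both bullets defining $\mathcal N$ hold for $\Lambda$ if and only if they hold for $g_t\Lambda$, giving $g_t\mathcal N = \mathcal N$.

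For the negligibility, identify $\mathcal L_{d+c}$ with $\SL(d+c,\R)/\SL(d+c,\Z)$ endowed with the push-forward of Haar measure; it suffices to show that the preimage of $\mathcal N$ in $\SL(d+c,\R)$ is a null set. Any vector of $M\Z^{d+c}$ is of the form $Me$ for some $e\in\Z^{d+c}$, and $Me_1 = \pm Me_2$ iff $e_1 = \pm e_2$. Consequently, the preimage of $\mathcal N$ equals the union, indexed by the countable set of triples $(e,e_1,e_2)\in(\Z^{d+c})^3$ with $e\neq 0$ and $e_1\neq\pm e_2$, of the four algebraic loci
\[
\{M:\|(Me_1)_+\|_{\R^d}^2=\|(Me_2)_+\|_{\R^d}^2\},\ \{M:\|(Me_1)_-\|_{\R^c}^2=\|(Me_2)_-\|_{\R^c}^2\},
\]
\[
\{M:(Me)_+=0\},\quad \{M:(Me)_-=0\}.
\]
Each is the vanishing locus of a polynomial in the entries of $M$ that is not identically zero: for the equal-norm loci, choosing $M$ so that $Me_1$ is largely horizontal while $Me_2$ is largely vertical (or the analogous perturbation when $e_1$ and $e_2$ are parallel with different lengths) forces the two norms to differ, using $e_1\neq\pm e_2$; for the subspace loci, the condition reduces to an independent system of $d$ (resp.\ $c$) linear equations in the rows of $M$ because $e\neq 0$. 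Each locus is therefore a proper real algebraic subvariety of $\SL(d+c,\R)$ of positive codimension, hence of Haar measure zero. A countable union of null sets is null, so $\mathcal N$ is negligible.

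The only point requiring some verification is the non-triviality of the polynomial equations; this is routine given the hypotheses $e\neq 0$ and $e_1\neq\pm e_2$, and presents no real obstacle.
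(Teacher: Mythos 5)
Your approach is essentially the same as the paper's: express the preimage of $\mathcal N$ in $\SL(d+c,\R)$ as a countable union over integer vector data of zero sets of polynomials, and argue each is null; the $g_t$-invariance argument is likewise identical in substance and correct. The one step that needs repair is the inference ``the polynomial $P$ is not identically zero on $M_{d+c}(\R)$, hence its zero locus meets $\SL(d+c,\R)$ in a proper subvariety of Haar measure zero.'' As stated this implication is false in general: the polynomial $\det M - 1$ is nonzero on $M_{d+c}(\R)$ yet vanishes on all of $\SL(d+c,\R)$. The paper closes exactly this gap with a homogeneity remark: the polynomials $\|(Me_1)_\pm\|^2 - \|(Me_2)_\pm\|^2$ are homogeneous of degree two in the entries of $M$, so if one vanished identically on $\SL(d+c,\R)$ it would vanish on $\R_{>0}\,\SL(d+c,\R) = \GL^+(d+c,\R)$, a Zariski-dense subset of $M_{d+c}(\R)$, forcing it to vanish identically on $M_{d+c}(\R)$. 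Inserting that observation makes the rest of your witness constructions (both the generic case and the parallel case $e_1 = \lambda e_2$, $\lambda\neq\pm 1$) and the countable-union conclusion carry through exactly as in the paper. (The loci $(Me)_\pm = 0$ are defined by homogeneous linear equations, so the same remark covers them; your independent-linear-system observation already handles it.)
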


\begin{proof}
Clearly $\mathcal{N}$ is $g_{t}$ invariant and the set of lattices with a
nonzero vector in the vertical subspace or in the horizontal subspace is
negligible. So we are reduced to prove that if $X\neq\pm Y$ are two nonzero
vectors in $\mathbb{Z}^{d+c}$ the set of matrices $M$ in $\SL (d+c,\mathbb{R)}
$ satisfying one of the equations
\[
\left\vert MX\right\vert _{+}^{2}-\left\vert MY\right\vert _{+}^{2}=0
\]
or
\[
\left\vert MX\right\vert _{-}^{2}-\left\vert MY\right\vert _{-}^{2}=0
\]
is of zero measure. Firstly, by symmetry, it is enough to deal with one of the
equations, say the first one. Secondly, by homogeneity it is equivalent to prove
that the set of matrices in $M_{d+c}(\mathbb{R)}$ that satisfy this equation
is of zero measure. Since this is an algebraic equation, it is enough to prove
that there exists at least one matrix $M$ such that $\left\vert MX\right\vert
_{+}^{2}-\left\vert MY\right\vert _{+}^{2}\neq 0$. If $X$ and $Y$ are
proportional just choose a matrix $M$ such that $|MX|_{+}\neq 0$. Otherwise,
first choose a vector $Z$ in the subspace spanned by $X$ and $Y$ that is
orthogonal to $X$. Observe that $Z.Y\neq 0$. Next choose a $d$-dimensional
subspace $V$ of $\mathbb{R}^{d+c}$ containing $Z$ and orthogonal to $X$. A
matrix $M$, the first $d$ rows of which are a basis of $V$, is such that
$\left\vert MX\right\vert _{+}=0$ and $\left\vert MY\right\vert _{+}\neq 0$.
\end{proof}

\begin{remark}
A lattice $\Lambda$ that is not in $\mathcal{N}$ has a
bi-infinite sequence of minimal vectors and is in $S$ iff $\lambda_{1}
(\Lambda)=\lambda_{2}(\Lambda)$. In fact if $u$ and $v$ are two linearly independent shortest vectors in $\Lambda$ then $|u|_+\neq|v|_+$ and $|u|_-\neq |v|_-$. Therefore $\Lambda\in S$.
\end{remark}

\subsubsection{A negligible set $\mathcal M$ in the space of matrices $\M_{d,c}(\mathbb R)$}

Let $C$ be a positive real constant and let $\mathcal{M}=\mathcal{M}_{d,c}=\mathcal M_{d,c}(C)$ be the set of matrices $\theta \in \M_{d,c}(\R)$
 such that either 
 \begin{itemize}
 \item there exist two nonzero vectors $X\neq \pm Y$ in $\Z^{d+c}$ with nonzero heights such that $\left\vert M_{\theta}X\right\vert _{+}=\left\vert M_{\theta}Y\right\vert _{+}$
 \item or there exist infinitely many pairs $X\neq \pm Y$ in $\Z^{d+c}$  such that $\left\vert X\right\vert _{-}=\left\vert Y\right\vert _{-}\neq 0$ and 
 $\left\vert M_{\theta}X\right\vert _{+},\,\left\vert M_{\theta}Y\right\vert _{+}\leq C\left\vert X \right\vert _{-}^{-\frac{c}{d}}$. 
 \end{itemize}
 Observe that the second item only occurs when $c>1$.
 The set $\mathcal M$ depends on the constant $C$. Actually, we will only use the value  $C=C_{d,c}$ where $C_{d,c}$ is given by Lemma \ref{lem:Minkowski}.

  \begin{lemma}
  $\mathcal M$ is negligible with respect to the Lebesgue measure on $\M_{d,c}(\mathbb R)$.
  \end{lemma}

 \begin{proof} We prove that $\mathcal M$ is included in a countable union of negligible sets.
 
Given $X\neq \pm Y$ two nonzero vectors in $\Z^{d+c}$ with nonzero heights, consider the set $\mathcal M(X,Y)$ of matrices $\theta \in \M_{d,c}(\R)$  such that
 $\left\vert M_{\theta}X\right\vert _{+}=\left\vert M_{\theta}Y\right\vert _{+}$. In order to show that $\mathcal M(X,Y)$ has zero measure it is enough to show that the polynomial 
\begin{align*}
f(\theta)&=\left\vert M_{\theta}X\right\vert _{+}^2-\left\vert M_{\theta}Y\right\vert _{+}^2\\
&=\left\Vert X_{+}\right\Vert _{\R^d}^2-\left\Vert  Y_{+}\right\Vert_{\R^d}^2
-2(X_{+}.\theta X_{-}-Y_{+}.\theta Y_{-})
 +\left\Vert\theta X_{-}\right\Vert _{\R^d}^2-\left\Vert \theta Y_{-}\right\Vert_{\R^d}^2
\end{align*}
 is not the zero polynomial.

If $X_{-}\neq \pm Y_{-}$, we can choose $\theta_0$ such that $\left\Vert\theta_0 X_{-}\right\Vert _{\R^d}^2-\left\Vert \theta_0 Y_{-}\right\Vert_{\R^2} ^2\neq 0$. With this choice, the one variable polynomial $P(t)= f(t\theta_0)$ has a nonzero degree two monomial which implies that the polynomial $f$ is not the zero polynomial. 

If $X_{-}=Y_{-}$ (the case $X_{-}=-Y_{-}$ is similar), $f(\theta)=\left\vert
X\right\vert_{+}^2-\left\vert Y\right\vert_{+}^2-2(X_{+}-Y_{+}).\theta X_{-}.$ Since $X_{-}\neq
0$, the map $\varphi:\theta\in\M_{d,c}(\R)\rightarrow \theta X_{-}\in \R^d$ is onto. It follows that we can
choose $\theta$ such that $\theta X_{-}=X_{+}-Y_{+}$. With this value of $\theta$, we obtain $f(\theta)-f(-\theta)=-4\left\vert X_{+}-Y_{+}\right\vert^2 \neq 0$ which implies that the
polynomial $f$ is not the zero polynomial.  It follows that $\mathcal M(X,Y)$ is negligible.\\

Consider now, for a positive integer $n$, the set $\mathcal M_n$ of matrices $\theta \in
\M_{d,c}(\R)$  such that there is a pair of linearly independent vectors $(X, Y)$  in $\Z^{d+c}\times \Z^{d+c}$  such 
that $n\leq\left\vert X\right\vert _{-},\,\left\vert Y\right\vert _{-}<n+1$ and 
\[ 
\left\vert M_{\theta}X\right\vert _{+},\,\left\vert M_{\theta}Y\right\vert _{+}\leq C\left\vert
X \right\vert _{-}^{-\frac{c}{d}}.
\]
We want to prove that the set of matrices $\theta$
that are in infinitely many $\mathcal M_n$ is negligible. We can move into the space $\M_{d,c}(\R/\Z)$ and consider instead  the set $\mathcal T_n$ of $\theta \in \M_{d,c}(\R/\Z)$ such that there exist $q,q' \in \Z^c$ linearly independent with $n\leq\left\Vert q \right\Vert_{\R^c},\,\left\Vert q' \right\Vert_{\R^c}<n+1$ and 
\[
d(\theta q,\Z^d),\,d(\theta q',\Z^d)\leq Cn^{-\frac{c}{d}}.
\]
For $q$ fixed, the measure of the set of $\theta \in \M_{d,c}(\R/\Z)$ such that
$d(\theta q,\Z^d)\leq Cn^{-\frac{c}{d}}$ is $a_{d,c} n^{-c}$ where the constant $a_{d,c}$ depends only on $C$ and the dimensions $d$ and $c$. When the inequality holds simultaneously for two linearly independent integer vectors $q$ and $q'$, the measure is bounded above by the square of $a_{d,c} n^{-c}$. 
It follows that the measure of $\mathcal T_n$ is bounded above by 
\[
u_n=\text{card}\{(q,q')\in\Z^c\times \Z^c:n\leq\left\Vert q \right\Vert_{\R^c},\,\left\Vert q' \right\Vert_{\R^c}<n+1\}\times a_{d,c}^{2} n^{-2c}.
\]
By Borel-Cantelli, it is enough to prove that $\Sigma_n u_n<\infty$. Now, $\text{card}\{q\in\Z^c:n\leq\left\Vert q \right\Vert_{\R^c}<n+1\}\ll n^{c-1}$, hence
\[
u_n\ll n^{-2} 
\]
and we are done.

\end{proof}

\subsection{Visiting times and return maps}\label{sec:visit}

Let $\Lambda$ be in $\mathcal{L}_{d+c}$. By definition of $S$, when
$g_{t}\Lambda$ is in $S$, $v_{0}^S(g_{t}\Lambda)$ and $v_{1}^S(g_{t}\Lambda)$
are two consecutive minimal vectors of $g_{t}\Lambda$. Therefore, 
\[
\left\{
\begin{array}
[c]{l}
v_{0}^S(g_{t}\Lambda)=g_{t}X_{k}(\Lambda)\\
v_{1}^S(g_{t}\Lambda)=g_{t}X_{k+1}(\Lambda)
\end{array}
\right.  
\]
for an integer $k$.
Hence $e^{ct}\left\vert X_{k}(\Lambda)\right\vert _{+}=e^{-dt}\left\vert
X_{k+1}(\Lambda)\right\vert _{-}$ which implies
\[
t=\frac{1}{d+c}\ln\frac{q_{k+1}(\Lambda)}{r_{k}(\Lambda)}.
\]
It follows that the set of real numbers $t$ such that $g_{t}\Lambda\in S$ is
included in the discrete set
\[
V_{\Lambda}(S)=\{t_{k}=\frac{1}{d+c}\ln\frac{q_{k+1}(\Lambda)}{r_{k}(\Lambda
)}:\ k\in\mathbb{Z\}}.
\]
It can happen that some values $t_{k}$ are skipped, but in that case, $\Lambda$
must be in $\mathcal{N}$. So, when $\Lambda$ is not in $\mathcal{N}$,
$g_{t}\Lambda\in S$ iff $t\in V_{\Lambda}(S)$. For the transversal $S^{\prime}$, the
same results hold with
\[
V_{\Lambda}(S')=\{t_{k}^{\prime}=\frac{1}{d+c}\ln\frac{q_{k}(\Lambda
)}{r_{k}(\Lambda)}:\ k\in\mathbb{Z\}}.
\]
It follows that for almost all $\Lambda$, both the backward trajectory
$(g_{t}\Lambda)_{t\leq 0}$ and the forward trajectory $(g_{t}\Lambda)_{t\geq 0}$
visit the two tranversals  $S$ and $S^{\prime}$  infinitely often. 
Therefore the first return/entrance times in
$S$ and $S^{\prime}$,
\begin{align*}
\tau(\Lambda)  &  =\inf\{t>0:g_{t}(\Lambda)\in S\}\in\mathbb{R}_{>0}
\cup\{\infty\},\\
\tau^{\prime}(\Lambda)  &  =\inf\{t>0:g_{t}(\Lambda)\in S^{\prime}
\}\in\mathbb{R}_{>0}\cup\{\infty\},
\end{align*}
are finite almost everywhere and 
the first return/entrance maps 
\begin{align*}
R(\Lambda)  &  =g_{\tau(\Lambda)}\Lambda,\\
R^{\prime}(\Lambda)  &  =g_{\tau^{\prime}(\Lambda)}\Lambda
\end{align*}
are defined for all $\Lambda$ that are not in $\mathcal{N}$.

For an integer $n\geq 1$, let us denote $\tau_{n}$ the $n$-th return (or entrance)
time in $S$, i.e.
\[
\tau_{n}(\Lambda)=\sum_{k=0}^{n-1}\tau(R^{k}(\Lambda))
\]
($R^{0}(\Lambda)=\Lambda$ for all $\Lambda$ in $\mathcal{L}_{d+1}$).
It will be convenient to choose the numbering of the sequence of minimal vectors $(X_n(\Lambda))$ in order to have simple formulas for the return time and the return map.
\medskip

\textbf{Numbering convention}: \textit{For a lattice $\Lambda \in \mathcal L_{d+c}$, $n=0$
is the smallest integer $n\in\mathbb{Z}$ such that
\[
\left\vert X_{n+1}(\Lambda)\right\vert _{-}\geq\left\vert X_{n}(\Lambda
)\right\vert _{+}
\] 
when the set of such integers is non empty.}
\medskip

With this numbering convention, for all $\Lambda$ in $S$, we have 
\[
X_0(\Lambda)=v_0^S(\Lambda)
\]
and for all $\Lambda$ in $S'$, we have 
\[
X_0(\Lambda)=w_0^{S'}(\Lambda).
\]
Moreover when $\Lambda\notin S$ is not in $\mathcal N$,
\begin{align*}
&\left\vert X_{1}(\Lambda)\right\vert _{-}-\left\vert X_{0}(\Lambda
)\right\vert _{+}> 0,\\
&\tau(\Lambda)=\frac{1}{d+c}\ln\left(\frac{\left\vert X_{1}(\Lambda)\right\vert_{-}}
{\left\vert X_{0}(\Lambda
)\right\vert_{+}}\right),
\end{align*}
and
\[
g_{\tau(\Lambda)}X_{0}(\Lambda)=\pm v_{0}^S(R(\Lambda))=\pm X_{0}(R(\Lambda)).
\]

Let us summarize the above.
\begin{lemma}
\label{lem:return-minimal}
Let $\Lambda$ be a lattice in $\mathcal L_{d+c}$.\newline
1. The set of visiting times in $S$ is included in $V_{\Lambda}(S)=\{t_{k}=\frac{1}{d+c}\ln\frac{q_{k+1}(\Lambda)}{r_{k}(\Lambda
)}:\ k\in\mathbb{Z\}}$ and the set of visiting times in $S'$ is included in $V_{\Lambda}(S')=\{t_{k}^{\prime}=\frac{1}{d+c}\ln\frac{q_{k}(\Lambda
)}{r_{k}(\Lambda)}:\ k\in\mathbb{Z\}}$. \newline
2. Suppose $\Lambda$ is not in $\mathcal{N}$. The set of visiting times in $S$ is equals to $V_{\Lambda}(S)$ and the set of visiting times in $S'$ is equals to $V_{\Lambda}(S')$.

\end{lemma}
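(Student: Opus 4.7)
Part 1 is essentially contained in the paragraph preceding the lemma: if $g_t\Lambda\in S$, then the witnessing vectors $v_0^S(g_t\Lambda)$ and $v_1^S(g_t\Lambda)$ are two inequivalent minimal vectors of $g_t\Lambda$ with positive height, hence by Lemma~\ref{lem:flow-minimal} they equal $g_tX_k(\Lambda)$ and $g_tX_{k+1}(\Lambda)$ for some $k\in\mathbb Z$; the defining equality $|v_0^S|_+=|v_1^S|_-$ rearranges to $t=t_k$, so $t\in V_\Lambda(S)$. For $S'$, the requirement $B(0,\lambda_1(g_t\Lambda))=C(w_0^{S'}(g_t\Lambda))$ forces $|w_0^{S'}|_+=|w_0^{S'}|_-$, and writing $w_0^{S'}=g_tX_k(\Lambda)$ yields $t=t_k'\in V_\Lambda(S')$.

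For Part 2, I assume $\Lambda\notin\mathcal N$, fix $k\in\mathbb Z$, set $t=t_k$ and $v_0:=g_tX_k(\Lambda)$, $v_1:=g_tX_{k+1}(\Lambda)$; the plan is to show these vectors certify $g_t\Lambda\in S$. A direct computation gives $|v_0|_+=|v_1|_-=(r_k^d q_{k+1}^c)^{1/(d+c)}$. Strict monotonicity of $(q_n(\Lambda))_n$ and $(r_n(\Lambda))_n$ — valid since $\Lambda\notin\mathcal N$ rules out any two minimal vectors sharing a positive height or projection norm — yields $|v_0|_-<|v_0|_+$ and $|v_1|_+<|v_0|_+$. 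Linear independence of $v_0,v_1$ follows because two inequivalent representatives $X_k,X_{k+1}$ cannot be proportional: a relation $X_{k+1}=\alpha X_k$ scales the cylinder by $|\alpha|$, which after using minimality forces $|\alpha|=1$ and hence $C(X_{k+1})=C(X_k)$, contradicting inequivalence.

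The crux is that no other nonzero vector of $g_t\Lambda$ lies in the ball $B(0,|v_0|_+)$. Writing such a vector as $w=g_tY$, the ball condition becomes $|Y|_+\leq r_k$ and $|Y|_-\leq q_{k+1}$. In the strict sub-case $|Y|_+<r_k$ and $|Y|_-<q_{k+1}$, I would choose $Y^*\in\Lambda\setminus\{0\}$ minimizing $|Z|_++|Z|_-$ among $Z$ with $|Z|_+\leq|Y|_+$ and $|Z|_-\leq|Y|_-$; this minimum exists by discreteness, and the minimizer is automatically a minimal vector of $\Lambda$ (any strict improvement on one coordinate at the cost of keeping the other would lower the sum). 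Then $Y^*$ is a minimal vector with $|Y^*|_+<r_k=|X_k|_+$ and $|Y^*|_-<q_{k+1}=|X_{k+1}|_-$, so it must be inserted strictly between $X_k$ and $X_{k+1}$ in the height-ordered enumeration — contradicting their consecutiveness. In the boundary sub-case $|Y|_+=r_k$, the hypothesis $\Lambda\notin\mathcal N$ (no two lattice vectors distinct up to sign with equal positive $|\cdot|_+$) forces $Y=\pm X_k$, hence $w=\pm v_0$; the case $|Y|_-=q_{k+1}$ yields $w=\pm v_1$ symmetrically.

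The treatment of $S'$ runs in parallel: at $t=t_k'$ the vector $w_0:=g_tX_k(\Lambda)$ satisfies $|w_0|_+=|w_0|_-$, so $C(w_0)=B(0,|w_0|_+)$, and the same minimizer-plus-boundary analysis (now with $X_{k-1}$ and $X_{k+1}$ bounding the possibilities for a would-be interpolating minimal vector) shows only $\pm w_0$ populate the ball. I expect the main obstacle to be the "minimizer is minimal" step fused with the boundary case, where the precise generic-position content of $\mathcal N$ must be deployed: without it some $t_k$ legitimately can be skipped, which is why the reverse inclusion in Part 1 need not hold on $\mathcal N$.
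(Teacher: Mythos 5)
Your proof is correct, and it supplies the details the paper only asserts: the lemma appears in the text as a summary of the visiting-time discussion of Section~4.5, and for the reverse inclusion in Part~2 the paper says merely ``It can happen that some values $t_{k}$ are skipped, but in that case, $\Lambda$ must be in $\mathcal{N}$,'' with no further justification. Your minimizer-over-cylinder device correctly shows that for $\Lambda\notin\mathcal N$ the closed cylinder $C(X_k,X_{k+1})$ contains no nonzero lattice points other than $\pm X_k,\pm X_{k+1}$: in the interior sub-case the minimal vector $Y^*$ you extract would have $r(Y^*)<r_k$ forcing index $>k$ while $q(Y^*)<q_{k+1}$ forces index $\le k$, and the boundary sub-cases are exactly where the $\mathcal N$-hypothesis is deployed (equality of a positive $|\cdot|_+$ or $|\cdot|_-$ pins $Y$ to $\pm X_k$ or $\pm X_{k+1}$, using also that $\Lambda\notin\mathcal N$ excludes $r_k=0$ and $q_{k+1}=0$). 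One small simplification you could make in the $S'$ case: at $t=t'_k$ the ball condition on $w=g_tY$ becomes $|Y|_+\le r_k$ and $|Y|_-\le q_k$, which places $Y$ directly in the cylinder $C(X_k)$; minimality of $X_k$ together with $\Lambda\notin\mathcal N$ then immediately gives $Y=\pm X_k$, so no appeal to $X_{k-1},X_{k+1}$ or to the minimizer construction is needed there.
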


For $\theta$  in $\M_{d,c}(\R)$, we also need to connect the visiting times of the transversal $S$ with the best approximation vectors of $\theta$. 

\begin{lemma}
\label{lem:return-best}
Let $\theta$  be in $\M_{d,c}(\R)\setminus \mathcal M$. Then for all large enough integers $n$, 

\[
t_{n}(\theta)
=\frac{1}{d+c}\ln\frac{q_{n+1}(\theta)}{r_{n}(\theta)}
\] 
and 
\[
t'_{n}(\theta)
=\frac{1}{d+c}\ln\frac{q_{n}(\theta)}{r_{n}(\theta)}
\]
are visiting times for the transversals $S$ and $S'$ respectively.

\end{lemma}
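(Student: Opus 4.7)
I plan to verify directly that $g_{t_n(\theta)}\Lambda_\theta\in S$ and $g_{t'_n(\theta)}\Lambda_\theta\in S'$ for all sufficiently large $n$. Lemma~\ref{lem:return-minimal}(2) cannot be invoked out of the box, because the lattice $\Lambda_\theta$ always lies in $\mathcal{N}$ (the horizontal subspace $\R^d\times\{0\}$ contains every vector $(p,0)$ with $p\in\Z^d$); the stronger matrix-side genericity $\theta\notin\mathcal{M}$ is used precisely at the points where $\Lambda\notin\mathcal{N}$ would otherwise have sufficed.

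Fix $n$ large enough that $r_n(\theta)<\lambda_1(\Z^d)=1$ and $r_n<1/2$. By Lemma~\ref{lem:min-best}(2), the vectors $X_k:=M_\theta(P_k,Q_k)$, $k\in\{n,n+1\}$, are minimal vectors of $\Lambda_\theta$ with $|X_k|_+=r_k(\theta)$ and $|X_k|_-=q_k(\theta)$. Set $v_0=g_{t_n}X_n$ and $v_1=g_{t_n}X_{n+1}$. The defining formula $t_n=\frac{1}{d+c}\ln(q_{n+1}/r_n)$ gives at once $|v_0|_+=|v_1|_-=r_n^{d/(d+c)}q_{n+1}^{c/(d+c)}$, and the monotonicity $q_n<q_{n+1}$, $r_{n+1}<r_n$ supplies the strict inequalities $|v_0|_-<|v_0|_+$ and $|v_1|_+<|v_1|_-$ required by the first bullet defining $S$.

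The substantive step is the uniqueness bullet: $\pm v_0,\pm v_1$ must be the only nonzero points of $g_{t_n}\Lambda_\theta$ in the ball $B(0,\lambda_1)$, where $\lambda_1=|v_0|_+$. Writing an arbitrary lattice vector as $g_{t_n}M_\theta Y$ with $Y=(Y_+,Y_-)\in\Z^{d+c}$, the ball condition reads $\|Y_+-\theta Y_-\|\le r_n$ and $\|Y_-\|\le q_{n+1}$. I case-split on $\|Y_-\|$: the case $Y_-=0$ is empty because $r_n<1$; when $0<\|Y_-\|<q_{n+1}$, the best approximation property forces $\dd(\theta Y_-,\Z^d)\ge r_n$, hence equality, so $Y_+$ is the nearest integer to $\theta Y_-$, and if $Y_-\neq\pm Q_n$ then $Y$ itself is a vector in $\Z^{d+c}$ inequivalent to $(P_n,Q_n)$ up to sign, with nonzero height and $|M_\theta Y|_+=r_n=|M_\theta(P_n,Q_n)|_+$, violating the first bullet defining $\mathcal{M}$; thus $Y=\pm Y_n$. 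When $Y_-=\pm Q_{n+1}$, the fact that distinct points of $\Z^d$ are at Euclidean distance $\ge 1$, combined with $r_n+r_{n+1}<1$, forces $Y_+=\pm P_{n+1}$, so $Y=\pm Y_{n+1}$.

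The residual and main case is $\|Y_-\|=q_{n+1}$ with $Y_-\neq\pm Q_{n+1}$; this is where the second bullet defining $\mathcal{M}$ does the work. Indeed, if this configuration occurred for infinitely many $n$, then $Y$ together with $(P_{n+1},Q_{n+1})$ would produce two vectors in $\Z^{d+c}$, inequivalent up to sign, with common nonzero height $q_{n+1}$ and, by Lemma~\ref{lem:Minkowski}, with horizontal norms both bounded by $C_{d,c}^{1/d}q_{n+1}^{-c/d}\le C_{d,c}\,q_{n+1}^{-c/d}$, contradicting $\theta\notin\mathcal{M}$. Hence this sub-case is vacuous for large $n$ and $g_{t_n}\Lambda_\theta\in S$, as required. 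For $S'$ the argument is parallel with $w_0^{S'}=g_{t'_n}X_n$; the choice $t'_n=\frac{1}{d+c}\ln(q_n/r_n)$ makes $|w_0^{S'}|_+=|w_0^{S'}|_-$ so the ball coincides with the cylinder $C(w_0^{S'})$ automatically, and the uniqueness analysis only involves $\|Y_-\|\le q_n$, so only the first bullet defining $\mathcal{M}$ is needed.
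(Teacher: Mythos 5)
Your proof is correct and takes essentially the same route as the paper's: relate best approximation vectors to minimal vectors via Lemma~\ref{lem:min-best}, then use $\theta\notin\mathcal{M}$ together with Lemma~\ref{lem:Minkowski} to rule out extra lattice points in the relevant cylinder, hence verify membership in $S$ (resp.\ $S'$) directly. The paper states the exclusion step as a one-line assertion, whereas you spell out the case analysis on $\|Y_-\|$; one tiny imprecision is that in the sub-case $0<\|Y_-\|<q_{n+1}$ you treat only $Y_-\neq\pm Q_n$, but the identical appeal to the first bullet of $\mathcal{M}$ also forces $Y_+=\pm P_n$ when $Y_-=\pm Q_n$, so the conclusion $Y=\pm Y_n$ does hold.
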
 

\begin{proof}
Let $\theta$  be in $\M_{d,c}(\R)\setminus \mathcal M$. Consider the sequence of all best approximation vectors $(Y_n(\theta))_{n\in\N}$ of $\theta$. By Lemma \ref{lem:min-best}, there are integers $n_1$ and $k$ such that $X_{n+k}(\Lambda_{\theta})=M_{\theta}Y_{n}(\theta)$ for all $n\geq n_1$. Since $\theta$ is not in $\mathcal M$, by Lemma \ref{lem:Minkowski} there is another integer $n_2$ such that for all $n\geq n_2$, the only nonzero vector of $\Lambda_{\theta}$ in the box $\mathcal C(X_{n+k}(\theta),X_{n+k+1}(\theta))$ are $\pm X_{n+k}(\theta)$ and $\pm X_{n+k+1}(\theta)$. This means that for all $n$ large enough, the times 
\[
t_n(\theta)=\frac{1}{d+c}\ln\frac{q_{n+1}(\theta)}{r_{n}(\theta)}
\text{ and }
t'_{n}(\theta))
=\frac{1}{d+c}\ln\frac{q_{n}(\theta)}{r_{n}(\theta)}
\] 
are  visting times for the transversals $S$ and $S'$.
\end{proof}

\subsection{Functions defined on $S$}\label{sec:function-S}

Let $\Lambda$ be in $S$. By definition of $S$, the functions
\[
\rho,\rho^{\ast}:\Lambda\in S\rightarrow\ln\frac{\left\vert v_1^S(\Lambda)\right\vert _{-}}{\left\vert v_0^S(\Lambda)\right\vert _{-}}
,\;\ln\frac{\left\vert v_0^S(\Lambda)\right\vert _{+}}{\left\vert
v_1^S(\Lambda)\right\vert _{+}}\in\mathbb{R}_{>0}\cup\{+\infty\}
\]
are well defined on $S$. The next lemma is easy, its proof is close to the beginning of
the proof of Lemma \ref{lem:submanifold} and is omitted.

\begin{lemma}
The functions $\Lambda\in\mathcal S\rightarrow\, |v_0^S(\Lambda)|_-,\,|v_0^S(\Lambda)|_+,\,|v_1^S(\Lambda)|_-,\,|v_1^S(\Lambda)|_+$ are continuous and thus the functions $\rho$ and $\rho^{\ast}$ are continuous.
\end{lemma}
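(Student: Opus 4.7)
The plan is to upgrade the local coordinates built in the proof of Lemma~\ref{lem:submanifold} into a continuous local selection of the vectors $v_0^S(\Lambda)$ and $v_1^S(\Lambda)$ on $S$, and then to exploit the fact that $|\cdot|_{+}$ and $|\cdot|_{-}$ are even functionals.

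I would fix an arbitrary $\Lambda_0 \in S$ and, using Corollary~\ref{cor:lattice}, extend $v_0^S(\Lambda_0), v_1^S(\Lambda_0)$ to a basis $(b_1,\ldots,b_{d+c})$ of $\Lambda_0$. I then recycle the open product neighborhood $W$ of this basis built in the proof of Lemma~\ref{lem:submanifold}, whose defining properties were: (i) the translates $WP$ for $P \in \SL(d+c,\mathbb{Z})$ are pairwise disjoint, and (ii) for any $M = (v_1,\ldots,v_{d+c}) \in W$, the only nonzero vectors of $M\mathbb{Z}^{d+c}$ in the cylinder $C(v_1,v_2)$ are $\pm v_1, \pm v_2$, and these are strictly shorter (with respect to $\|\cdot\|_{\mathbb{R}^{d+c}}$) than any other nonzero vector of $M\mathbb{Z}^{d+c}$. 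Property~(i) ensures that $M \mapsto M\mathbb{Z}^{d+c}$ is a continuous injection from $W$ into $\mathcal{L}_{d+c}$, and since the projection $\SL(d+c,\mathbb{R}) \to \mathcal{L}_{d+c}$ is a local homeomorphism, the inverse $\Lambda \mapsto M(\Lambda)$ is continuous on the image.

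Property~(ii) then pins down the distinguished vectors: whenever $M(\Lambda) = (v_1,\ldots,v_{d+c}) \in W$ and $\Lambda \in S$, the uniqueness (up to sign) built into the definition of $S$ forces $\{\pm v_0^S(\Lambda), \pm v_1^S(\Lambda)\} = \{\pm v_1, \pm v_2\}$. Composing the continuous local inverse with column extraction, I obtain continuous maps $\Lambda \mapsto \pm v_i^S(\Lambda) \in \mathbb{R}^{d+c}/\{\pm 1\}$ for $i = 0,1$ on a neighborhood of $\Lambda_0$ in $S$. Since $X \mapsto |X|_{+} = \|X_{+}\|_{\mathbb{R}^{d}}$ and $X \mapsto |X|_{-} = \|X_{-}\|_{\mathbb{R}^{c}}$ are invariant under $X \mapsto -X$, they descend to continuous functions on the quotient, which yields the continuity of the four stated functions at $\Lambda_0$; since $\Lambda_0 \in S$ was arbitrary, they are continuous on $S$.

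For $\rho$ and $\rho^{\ast}$, I would observe that on $S$ one has $|v_0^S(\Lambda)|_{+} = |v_1^S(\Lambda)|_{-} = \lambda_1(\Lambda) > 0$, so the only way for $\rho$ or $\rho^{\ast}$ to take the value $+\infty$ is for one of the denominators $|v_0^S(\Lambda)|_{-}$ or $|v_1^S(\Lambda)|_{+}$ to vanish; with the standard topology on $\mathbb{R}_{>0} \cup \{+\infty\}$, the continuity of $\rho$ and $\rho^{\ast}$ then follows at once from the first part. The only subtle point, and what I would flag as the main (mild) obstacle, is reconciling the sign ambiguity of $v_0^S, v_1^S$ with continuity; it is resolved cleanly by working modulo $\{\pm 1\}$ and invoking the evenness of $|\cdot|_{\pm}$.
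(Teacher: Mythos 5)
Your proof is correct and is essentially the argument the paper has in mind: the authors explicitly omit the proof, saying it is ``close to the beginning of the proof of Lemma~\ref{lem:submanifold},'' and you flesh out exactly that idea by reusing the neighborhood $W$ built there and the disjointness of the translates $WP$ to get a continuous local section $\Lambda\mapsto M(\Lambda)$, then reading off the columns. The device of working in $\mathbb{R}^{d+c}/\{\pm1\}$ to absorb the sign ambiguity, together with the evenness of $|\cdot|_{\pm}$, is clean and correct, and your remark on the topology of $\mathbb{R}_{>0}\cup\{+\infty\}$ (with $|v_1^S(\Lambda)|_{-}=|v_0^S(\Lambda)|_{+}=\lambda_1(\Lambda)>0$ keeping the numerators bounded away from zero) properly handles the possibly infinite values of $\rho$ and $\rho^*$.
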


The following lemma is important. On the one hand, it will imply that the functions $\rho$ and $\rho^\ast$ are integrable. On the other hand, it will explain the connection between the Lévy's constant
$L_{d,c}$ and the average return times on $S$.

\begin{lemma}
\label{lem:return-rho}Let $\Lambda$ be a lattice in $S\setminus\mathcal{N}$.
Then
\[
\tau(\Lambda)=\frac{1}{d+c}(\rho(R(\Lambda))+\rho^{\ast}(\Lambda)).
\]

\end{lemma}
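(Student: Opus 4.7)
The plan is to unwind both sides of the claimed identity using the numbering convention of Section~\ref{sec:visit} together with Lemma~\ref{lem:flow-minimal} and Lemma~\ref{lem:return-minimal}; after that a single elementary cancellation finishes the proof.

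First, because $\Lambda\in S\setminus \mathcal N$, the numbering convention forces $X_0(\Lambda)=v_0^S(\Lambda)$ and consequently $X_1(\Lambda)=v_1^S(\Lambda)$, as $v_0^S(\Lambda)$ and $v_1^S(\Lambda)$ are consecutive minimal vectors by the definition of $S$. The defining relation of $S$ then yields the key identity $r_0(\Lambda)=|X_0(\Lambda)|_+=|X_1(\Lambda)|_-=q_1(\Lambda)$. By the second part of Lemma~\ref{lem:return-minimal} the visiting times of $S$ coincide with $V_\Lambda(S)=\{t_k:k\in\Z\}$; the equality $r_0=q_1$ gives $t_0=0$, and the strict monotonicity (valid off $\mathcal N$) of the sequences $(q_n(\Lambda))$ and $(r_n(\Lambda))$ shows that $t_1>0$ is the first positive visiting time. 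Hence $\tau(\Lambda)=t_1=\tfrac{1}{d+c}\ln\tfrac{q_2(\Lambda)}{r_1(\Lambda)}$.

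Next I identify $v_0^S(R(\Lambda))$ and $v_1^S(R(\Lambda))$. By Lemma~\ref{lem:flow-minimal}, the vectors $g_{\tau(\Lambda)}X_1(\Lambda)$ and $g_{\tau(\Lambda)}X_2(\Lambda)$ are two consecutive minimal vectors of $R(\Lambda)$. The choice $\tau(\Lambda)=t_1$ arranges exactly $|g_{\tau(\Lambda)}X_1(\Lambda)|_+=|g_{\tau(\Lambda)}X_2(\Lambda)|_-$, so this pair realises the configuration required by the definition of $S$. Since $g_t$ preserves $\mathcal N$, we have $R(\Lambda)\notin\mathcal N$, and the uniqueness up to sign of the pair associated with a lattice in $S$ gives $\pm v_0^S(R(\Lambda))=g_{\tau(\Lambda)}X_1(\Lambda)$ and $\pm v_1^S(R(\Lambda))=g_{\tau(\Lambda)}X_2(\Lambda)$.

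Applying the definitions of $\rho$ and $\rho^\ast$ now gives
\begin{align*}
\rho(R(\Lambda))&=\ln\frac{|g_{\tau(\Lambda)}X_2(\Lambda)|_-}{|g_{\tau(\Lambda)}X_1(\Lambda)|_-}=\ln\frac{q_2(\Lambda)}{q_1(\Lambda)},\\
\rho^{\ast}(\Lambda)&=\ln\frac{|X_0(\Lambda)|_+}{|X_1(\Lambda)|_+}=\ln\frac{r_0(\Lambda)}{r_1(\Lambda)},
\end{align*}
and adding these, with the identity $r_0(\Lambda)=q_1(\Lambda)$ furnished by $\Lambda\in S$, yields
\[
\rho(R(\Lambda))+\rho^{\ast}(\Lambda)=\ln\frac{q_2(\Lambda)\,r_0(\Lambda)}{q_1(\Lambda)\,r_1(\Lambda)}=\ln\frac{q_2(\Lambda)}{r_1(\Lambda)}=(d+c)\,\tau(\Lambda).
\]
The only real subtlety is bookkeeping: tracking the index shift produced by $R$ (effectively $X_{n+1}(\Lambda)\mapsto X_n(R(\Lambda))$) and checking that the numbering convention picks out the defining vectors of $S$ at both $\Lambda$ and $R(\Lambda)$. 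Once that alignment is secured, the identity $r_0=q_1$ forced by $\Lambda\in S$ is what collapses the three quantities into the stated formula.
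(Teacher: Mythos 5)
Your proof is correct and follows essentially the same route as the paper: both rely on the identity $q_1(\Lambda)=r_0(\Lambda)$ forced by $\Lambda\in S$, the formula $\tau(\Lambda)=\tfrac{1}{d+c}\ln\tfrac{q_2(\Lambda)}{r_1(\Lambda)}$ from Lemma~\ref{lem:return-minimal}, and the identifications $\rho(R(\Lambda))=\ln\tfrac{q_2(\Lambda)}{q_1(\Lambda)}$, $\rho^\ast(\Lambda)=\ln\tfrac{r_0(\Lambda)}{r_1(\Lambda)}$. You merely spell out the index bookkeeping (the alignment of $X_n$ with $v_0^S$, $v_1^S$ at both $\Lambda$ and $R(\Lambda)$) that the paper leaves implicit.
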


\begin{proof}
Let $\Lambda$ be in $S\setminus\mathcal{N}$. By definition of $S$,
$q_{1}(\Lambda)=r_{0}(\Lambda)$. Hence, by Lemma \ref{lem:return-minimal},
\begin{align*}
(d+c)\tau(\Lambda)  &  =\ln\frac{q_{2}(\Lambda)}{r_{1}(\Lambda)}\times
\frac{r_{0}(\Lambda)}{q_{1}(\Lambda)}\\
&  =\ln\frac{q_{2}(\Lambda)}{q_{1}(\Lambda)}+\ln\frac{r_{0}(\Lambda)}
{r_{1}(\Lambda)}\\
&  =\rho(R(\Lambda))+\rho^{\ast}(\Lambda).
\end{align*}
\end{proof}

\section{Induced measures on $S$ and $S^{\prime}$}
\subsection{Definition of the induced measures $\mu_S$ and $\mu_{S'}$}

Recall that we have fixed a measure $\mu$  on $\mathcal{L}_{d+c}$  invariant by the action of $\SL
(d+c,\mathbb{R)}$. Let us explain how the flow $g_t$ and the measure $\mu$ induce a measure $\mu_S$ on $S$ which is invariant by the first return map $R$. The construction of such a measure is standard but we have not found any reference suitable for our case. 

Let $X=(S\setminus\mathcal N)\times\R$, let $\pi:X\rightarrow\mathcal L_{d+c}\setminus \mathcal N$ be the map defined by $\pi(\Lambda,t)=g_t\Lambda$, let $g_t^X:X\rightarrow X$ be the flow defined by $g_t^X(\Lambda,s)=(\Lambda,s+t)$, let $\overline R:X\rightarrow X$ be the map defined by $\overline R(\Lambda,t)=(R(\Lambda),t-\tau(\Lambda))$ and let $D=\{(\Lambda,t)\in X:0\leq t<\tau(\Lambda)\}$.
\begin{itemize}
\item
Since the first return map $R$ is a bijection from $S\setminus\mathcal N$ onto itself, $\overline R$ is a bijection with inverse $\overline{R}^{-1}(\Lambda,t)=(R^{-1}(\Lambda),t+\tau(R^{-1}(\Lambda))$. It is easy to check that the sets $\overline R^n(D)$, $n\in\Z$, are disjoint.
Since the infinite sequence $\tau_n(\Lambda)$ strictly increases from $-\infty$ to $+\infty$, $X=\cup_{n\in\Z}\overline R^n(D)$. 
\item
We can lift the measure $\mu$ from $\mathcal L_{d+c}$ to $X$: we define the  measure $\overline{\mu}$ on $X$ by 
\[
\overline{\mu}(B)=\sum_{n\in\Z}\mu(\pi(D\cap \overline R^{\,-n} B))
\]
for all Borel sets $B$ in $X$. By definition $\overline{\mu}$ is $\overline R$-invariant.
\item
Let us see that $\overline{\mu}$ is  $g_t^X$-invariant for all $t\in\R$. If $B$ is a Borel set in $X$, we have the partition $B=\cup_{m,n\in\Z}B_{m,n}$ where $B_{m,n}=B\cap \overline R^mD\cap g_{-t}^X \overline{R}^nD$. 
On the one hand, since $B_{m,n}\subset \overline{R}^mD$, we have $\overline{\mu}(B_{m,n})=\mu(\pi(D\cap\overline{R}^{-m}B_{m,n}))=\mu(\pi\overline{R}^{-m}B_{m,n})$, hence $\overline{\mu}(B_{m,n})=\mu(\pi B_{m,n})$
because $\pi\circ\overline R=\pi$. On the other hand, since $g_t^XB_{m,n}\subset \overline R^nD$,
\[
\overline{\mu}(g_t^XB_{m,n})=\overline{\mu}(\overline{R}^{-n}g_t^XB_{m,n})=\mu(\pi\overline{R}^{-n}g_t^XB_{m,n})=\mu(\pi g_t^X\overline{R}^{-n}B_{m,n})=\mu(g_t\pi\overline{R}^{-n}B_{m,n}),
\]
and since $\mu$ is $g_t$-invariant, we obtain, $\overline{\mu}(g_t^XB_{m,n})=\mu(B_{m,n})$.
\item
Since $\overline{\mu}$ is $g_t^X$-invariant, there exists a measure $\mu_S$ on $S\setminus\mathcal N$ such that
\[
\overline{\mu}=\mu_S\otimes \operatorname{Lebesgue}_{\R}.
\]
\item
By definition for any Borel set $B\subset S$ and any $\delta>0$ such that $B\subset\{\Lambda:\tau(\Lambda)>\delta\}$,
\[
\mu_S(B)\times\delta=\overline{\mu}(B\times[0,\delta])=\mu(\cup_{ t\in[0,\delta]}g_tB).
\]
\item
Let us show that $\mu_S$ is $R$-invariant. Let $B$ be a Borel set in $S\setminus \mathcal N$. Since $\overline{\mu}$ is $\overline{R}$-invariant,
\begin{align*}
    \mu_S(B)&=\overline{\mu}(B\times[0,1])=\overline{\mu}(\overline{R}(B\times[0,1]))\\
    &=\int_{R(B)}d\mu_S(\Lambda)\int_{-\tau(R^{-1}\Lambda)}^{1-\tau(R^{-1}\Lambda)}dt=\mu_S(R(B)).
\end{align*}
\item
We extend $\mu_S$ to $S$ by $\mu_S(S\cap\mathcal N)=0$.
\end{itemize}

With our definition of $\mu_S$, and since $\pi(D)=\mathcal L_{d,c}\setminus \mathcal N$, the Kac return-time theorem reduces to
\[
\mu(\mathcal L_{d+c})=\mu(\pi(D))=\overline{\mu}(D)=\int_S\tau(\Lambda)d\mu_S(\Lambda).
\]

Finally, the measure $\mu_S$ is $R$-ergodic because any $R$-invariant set of positive measure in $S$ gives rise to a set of  flow trajectories of positive measure and it is known that the action of the flow $(g_t)$ on $(\mathcal L_{d+c},\mu)$ is ergodic (see \cite{BeMa}).

The flow induces a measure $\mu_{S'}$ on $S'$ as well. 
\subsection{Finiteness of the induced measures} 
Let us prove that the induced measures $\mu_S$ and $\mu_{S'}$ are finite. This is a simple consequence of the next lemma which will be very important in the proof of Theorem 
\ref{thm:reduction} about the almost sure convergence in $M_{d,c}(\mathbb R)$.

\begin{lemma}
\label{lem:nb-returntimes} 1. There exists an integer constant $A$ such that
$\tau_{A}(\Lambda)\geq 1$ for all $\Lambda$ in $S$
 and  $\tau_{A}^{\prime}(\Lambda)\geq 1$ for all $\Lambda$ in $S'$.
\end{lemma}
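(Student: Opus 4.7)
The plan is to combine two ingredients: (i) the doubling statement of Lemma~\ref{lem:lowerrate}, which gives an integer $A_0=A(d,c)$ with $q_{n+A_0}(\Lambda)\geq 2q_n(\Lambda)$ and $r_{n+A_0}(\Lambda)\leq \tfrac12 r_n(\Lambda)$ whenever the relevant minimal vectors exist; and (ii) the fact that the numbering convention of Section~\ref{sec:visit} forces $t_0(\Lambda)=0$ for $\Lambda\in S$ and $t'_0(\Lambda)=0$ for $\Lambda\in S'$, so that the $n$-th potential visiting time is read off directly from the ratio $q_{n+1}/r_n$ (respectively $q_n/r_n$). Since the actual sequence of return times is, by Lemma~\ref{lem:return-minimal}(1), a subset of the list $V_\Lambda(S)$ (resp.\ $V_\Lambda(S')$), a lower bound on $t_n$ immediately yields one on $\tau_n$.

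For $\Lambda\in S$, the definition of $S$ gives $|v_0^S(\Lambda)|_+=|v_1^S(\Lambda)|_-$, and the numbering convention (as checked directly from the monotonicity of $|X_n|_+$ and $|X_n|_-$) then forces $X_0(\Lambda)=v_0^S(\Lambda)$, $X_1(\Lambda)=v_1^S(\Lambda)$, hence $r_0(\Lambda)=q_1(\Lambda)$ and $t_0(\Lambda)=0$. Iterating Lemma~\ref{lem:lowerrate} $k$ times yields, whenever $X_0,\dots,X_{kA_0+1}$ exist,
\[
\frac{q_{kA_0+1}(\Lambda)}{r_{kA_0}(\Lambda)}\;\geq\; 4^k\,\frac{q_1(\Lambda)}{r_0(\Lambda)}\;=\;4^k,
\]
so that
\[
\tau_{kA_0}(\Lambda)\;\geq\; t_{kA_0}(\Lambda)\;=\;\frac{1}{d+c}\ln\frac{q_{kA_0+1}(\Lambda)}{r_{kA_0}(\Lambda)}\;\geq\;\frac{k\ln 4}{d+c}.
\]
Choosing $k=\lceil (d+c)/\ln 4\rceil$ and setting $A=kA_0$ gives $\tau_A(\Lambda)\geq 1$, as required.

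The argument for $S'$ is parallel: for $\Lambda\in S'$, the cylinder condition $B_{\mathbb R^{d+c}}(0,\lambda_1(\Lambda))=C(w_0^{S'}(\Lambda))$ forces $|w_0^{S'}(\Lambda)|_+=|w_0^{S'}(\Lambda)|_-=\lambda_1(\Lambda)$, and the numbering convention yields $X_0(\Lambda)=w_0^{S'}(\Lambda)$, hence $q_0(\Lambda)=r_0(\Lambda)$ and $t'_0(\Lambda)=0$. The same iteration gives $\tau'_{kA_0}(\Lambda)\geq t'_{kA_0}(\Lambda)=\tfrac{1}{d+c}\ln(q_{kA_0}(\Lambda)/r_{kA_0}(\Lambda))\geq k\ln 4/(d+c)$, and the same choice of $A$ works. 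The only mildly delicate point is that for lattices in $\mathcal{N}\cap S$ the forward sequence of minimal vectors could terminate or some potential visiting times $t_k$ could be skipped; however, in both situations $\tau_A(\Lambda)$ can only become larger (or equal to $+\infty$), so the conclusion $\tau_A(\Lambda)\geq 1$ holds trivially. I do not expect any real obstacle here: the proof is essentially a bookkeeping translation of Lemma~\ref{lem:lowerrate} into the time parametrisation of the flow.
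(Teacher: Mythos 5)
Your proof is correct and follows essentially the same route as the paper's: both use Lemma~\ref{lem:lowerrate} to bound from below the gap between successive potential visiting times $t_k$, combined with Lemma~\ref{lem:return-minimal} (the visiting times form a subset of $V_\Lambda(S)$) to pass to the genuine return times. The only cosmetic difference is that you iterate both the $q$ and $r$ doubling estimates, getting a $\ln 4$ increment per $A_0$ steps, whereas the paper discards the $r$ contribution and works with $\ln 2$; both give the same final choice of $A$ up to an unimportant constant.
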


\begin{proof}
Let $A$ be the constant given by Lemma \ref{lem:lowerrate} about the growth rate of the sequences $(q_n(\Lambda))_n$ and $(r_n(\Lambda))_n$. For all integers $k$, we have 
\begin{align*}
\frac{1}{d+c}\left(\ln\frac{q_{k+A+1}(\Lambda)}{r_{k+A}(\Lambda)}-
\ln\frac{q_{k+1}(\Lambda)}{r_{k}(\Lambda)}\right)
&\geq 
\frac{1}{d+c}\ln\frac{q_{k+A+1}(\Lambda)}{q_{k+1}(\Lambda)}\\
&\geq\frac{1}{d+c}\ln 2.
\end{align*}
Since by Lemma \ref{lem:return-minimal}, the set of visiting times of $\Lambda$ is included in
\[
V_{\Lambda}(S)=\{t_{k}=\frac{1}{d+c}\ln\frac{q_{k+1}(\Lambda)}{r_{k}(\Lambda
)}:\ k\in\mathbb{Z\}},
\]
$\tau_{A}(\Lambda)\geq \frac{1}{d+c}\ln 2
$. Multiplying $A$ by the smallest integer larger than $\frac{d+c}{\ln 2}$ we are done.
\end{proof}

\begin{proposition}
$\mu_{S}(S)$ and $\mu_{S^{\prime}}(S^{\prime})$ are finite and nonzero.
\end{proposition}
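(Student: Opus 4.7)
The plan is to sandwich $\mu_S(S)$ between zero and the finite total mass $\mu(\mathcal{L}_{d+c})$, using Lemma~\ref{lem:nb-returntimes} for the upper bound and the transversality established in Lemma~\ref{lem:submanifold} for the lower bound. The argument will be identical for $S'$.

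For finiteness, I would introduce
\[
S_{\star}=\{\Lambda\in S:\tau(\Lambda)\geq 1/A\}
\]
with $A$ the constant from Lemma~\ref{lem:nb-returntimes}. Since $\tau_A(\Lambda)=\sum_{k=0}^{A-1}\tau(R^k\Lambda)\geq 1$ on $S\setminus\mathcal{N}$, a pigeonhole argument forces $R^k\Lambda\in S_{\star}$ for some $k\in\{0,\dots,A-1\}$, so that $S\subseteq\bigcup_{k=0}^{A-1}R^{-k}(S_{\star})$ up to a $\mu_S$-null set. The $R$-invariance of $\mu_S$ then yields $\mu_S(S)\leq A\,\mu_S(S_{\star})$. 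Next, on $S_{\star}$ the flowbox map $(t,\Lambda)\in[0,1/A)\times S_{\star}\mapsto g_t\Lambda$ is injective: if $g_{t_1}\Lambda_1=g_{t_2}\Lambda_2$ with $0\leq t_1<t_2<1/A$, then $g_{t_2-t_1}\Lambda_2\in S$ with $0<t_2-t_1<1/A\leq\tau(\Lambda_2)$, contradicting the definition of $\tau$. Patching the purely local identity $\mu(\bigcup_{t\in[0,\varepsilon]}g_tV)=\varepsilon\mu_S(V)$ over a countable cover of $S_{\star}$ by sufficiently small Borel pieces would then give
\[
\tfrac{1}{A}\mu_S(S_{\star})=\mu\Bigl(\bigcup_{t\in[0,1/A)}g_tS_{\star}\Bigr)\leq\mu(\mathcal{L}_{d+c})<\infty,
\]
the last inequality being the classical Minkowski--Siegel finiteness of the space of unimodular lattices. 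Combining the two bounds one obtains $\mu_S(S)\leq A^{2}\mu(\mathcal{L}_{d+c})<\infty$.

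For positivity, I would fix any $\Lambda_0\in S$ and choose a neighborhood $W$ of $\Lambda_0$ in $S$ together with the corresponding $\varepsilon>0$ supplied by the definition of $\mu_S$. By Lemma~\ref{lem:submanifold} the slab $\bigcup_{t\in[0,\varepsilon]}g_tW$ is a nonempty open subset of $\mathcal{L}_{d+c}$, hence has strictly positive $\mu$-measure, and the defining identity $\mu(\bigcup_{t\in[0,\varepsilon]}g_tW)=\varepsilon\mu_S(W)$ then forces $\mu_S(S)\geq\mu_S(W)>0$. The same chain of reasoning, with $\tau$ and $R$ replaced by $\tau'$ and $R'$, handles $S'$. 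The only nontrivial point, which I expect to be routine but slightly tedious, is the patching step that promotes the local flowbox identity to a global equality on $S_{\star}$; this requires only the injectivity established above together with second-countability of $S$ and $\sigma$-finiteness of $\mu$.
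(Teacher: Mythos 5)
Your proof is correct but takes a genuinely different route from the paper's. The paper invokes Kac's return time theorem directly to get the exact identity $\int_S \tau\,d\mu_S = \mu(\mathcal{L}_{d+c})$, then uses $R$-invariance of $\mu_S$ together with $\tau_A \geq 1$ to conclude $\mu_S(S) \leq A\,\mu(\mathcal{L}_{d+c})$. You instead isolate the set $S_\star$ of points with return time at least $1/A$, show by pigeonhole and $R$-invariance that $\mu_S(S) \leq A\,\mu_S(S_\star)$, and then bound $\mu_S(S_\star)$ by an explicit flowbox injection of $[0,1/A)\times S_\star$ into $\mathcal{L}_{d+c}$. The tradeoff: the paper gets the sharper constant $A\,\mu(\mathcal{L}_{d+c})$ at the cost of appealing to Kac/Ambrose--Kakutani (whose statement in this generality, where finiteness of the transversal measure is not known a priori, itself requires a flowbox unwinding of essentially the type you carry out), whereas your argument is entirely self-contained and makes the geometric mechanism explicit, at the cost of the weaker bound $A^2\,\mu(\mathcal{L}_{d+c})$ --- which is of course immaterial for a finiteness statement. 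The positivity argument you give is the same one-line observation the paper uses. One minor point: the "slab" $\bigcup_{t\in[0,\varepsilon]}g_tW$ is not itself open, but its interior $\bigcup_{t\in(0,\varepsilon)}g_tW$ is a nonempty open set by transversality, which suffices.
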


\begin{proof}
By Lemma \ref{lem:submanifold}, $S$ is submanifold of co-dimension one, transverse to the flow, and nonempty, hence  $\mu_S(S)>0$.

Since $\mu_{S}$ is $R$-invariant,  for all $k$
\[
\int_{S}\tau(R^{k}\Lambda)d\mu_{S}(\Lambda)=\int_{S}\tau(\Lambda)d\mu
_{S}(\Lambda),
\]
and by the Kac return time theorem,
\[
\int_{S}\tau(\Lambda)d\mu
_{S}(\Lambda)=\mu(\mathcal{L}_{d+1}),
\]
therefore,
\[
 \int_{S}\tau_{A}(\Lambda)d\mu_{S}(\Lambda)=\int_{S}\sum_{k=0}^{A-1}\tau
(R^{k}\Lambda)d\mu_{S}(\Lambda)=A\mu(\mathcal{L}_{d+c}).
\]
By the above lemma, $\tau_A\geq 1$ on $S$, hence $\mu_{S}(S)\leq A\, \mu(\mathcal{L}_{d+c})$ which is finite by Siegel's theorem.
\end{proof}

\section{Almost sure convergence in the space of lattices}

\subsection{Consequence of  the Birkhoff ergodic theorem}
Recall that for a lattice $\Lambda\in\mathcal L_{d+c}$, $q_{n}(\Lambda)=\left|  X_{n}(\Lambda)\right|  _{-}$ and $r_{n}(\Lambda)=\left|  X_{n}(\Lambda)\right|  _{+}$ where $(X_n(\Lambda))_n$ is the sequence of minimal vectors of $\Lambda$ with the numbering convention of Section \ref{sec:visit}. Recall also that $\tau_n(\Lambda)$ is the $n$-th return time in $S$.
\begin{theorem}\label{thm:Birkhoff}
\label{Birkhoff}There exist two positive constants $L_{d,c}$ and
$L_{d,c}^{\ast}$ such that for almost all lattices $\Lambda$ in $\mathcal{L}_{d+c}$,
\begin{align*}
\lim_{n\rightarrow\infty}\frac{1}{n}\ln q_{n}(\Lambda)  &  =\frac{1}{\mu
_{S}(S)}\int_{S}\rho d\mu_{S}=L_{d,c}>0,\\
\lim_{n\rightarrow\infty}\frac{-1}{n}\ln r_{n}(\Lambda)  &  =\frac{1}{\mu
_{S}(S)}\int_{S}\rho^{\ast}d\mu_{S}=L_{d,c}^{\ast}>0,\\
\lim_{n\rightarrow\infty}\frac{1}{n}\tau_{n}(\Lambda)  &  =\frac{1}
{d+c}(L_{d,c}+L_{d,c}^{\ast})=\frac{\mu(\mathcal{L}_{d+c})}{\mu_S(S)}.
\end{align*}
Moreover, given two new  Euclidean norms on the horizontal space $\mathbb{R}^{d}\times\{0\}$ and the vertical space $\mathbb{R}^{c}\times\{0\}$,  if we define the minimal vectors and the transversal $S$ with these two new Euclidean norms, then the conclusion of the theorem remains valid with the same constant $L_{d,c}$ and $L^*_{d,c}$.
\end{theorem}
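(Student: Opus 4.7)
The plan is to apply Birkhoff's ergodic theorem to the first return map $R:S\to S$ with the $R$-invariant measure $\mu_S$ and then to transfer the resulting almost sure statements on $S$ to almost sure statements on $\mathcal L_{d+c}$.

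The first step would be a telescoping cocycle identity. For $\Lambda\in S$ the visiting time $t_0=\frac{1}{d+c}\ln(q_1(\Lambda)/r_0(\Lambda))$ vanishes, because $|v_1^S(\Lambda)|_-=|v_0^S(\Lambda)|_+$; by Lemma~\ref{lem:return-minimal} the next visiting time is $t_1=\frac{1}{d+c}\ln(q_2(\Lambda)/r_1(\Lambda))$, so $\tau(\Lambda)=t_1$. Combined with Lemma~\ref{lem:flow-minimal} this forces the index shift $X_n(R\Lambda)=\pm g_{\tau(\Lambda)}X_{n+1}(\Lambda)$, whence $q_n(R^m\Lambda)=e^{-d\tau_m(\Lambda)}q_{n+m}(\Lambda)$ and $r_n(R^m\Lambda)=e^{c\tau_m(\Lambda)}r_{n+m}(\Lambda)$. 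The ratio $q_1(R^m\Lambda)/q_0(R^m\Lambda)$ therefore equals $q_{m+1}(\Lambda)/q_m(\Lambda)$, so $\rho(R^m\Lambda)=\ln q_{m+1}(\Lambda)-\ln q_m(\Lambda)$; telescoping yields, for every $\Lambda\in S$,
\[
\ln q_n(\Lambda)=\ln q_0(\Lambda)+\sum_{k=0}^{n-1}\rho(R^k\Lambda),\qquad -\ln r_n(\Lambda)=-\ln r_0(\Lambda)+\sum_{k=0}^{n-1}\rho^{\ast}(R^k\Lambda).
\]

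Next I would check the hypotheses of Birkhoff. By the definition of $S$, both $\rho$ and $\rho^{\ast}$ are strictly positive. Lemma~\ref{lem:return-rho} rewrites $(d+c)\tau=\rho\circ R+\rho^{\ast}$, and the Kac formula used in the previous section gives $\int_S\tau\,d\mu_S=\mu(\mathcal L_{d+c})<\infty$, placing $\rho,\rho^{\ast}$ in $L^1(S,\mu_S)$. Ergodicity of $R$ on $(S,\mu_S)$ follows from the $\mu$-ergodicity of the flow $g_t$ (a classical consequence of Howe--Moore) via the standard cross-section principle. Birkhoff applied to the telescoped sums then gives, for $\mu_S$-a.e.\ $\Lambda\in S$,
\[
\frac{1}{n}\ln q_n(\Lambda)\to L_{d,c}:=\frac{1}{\mu_S(S)}\int_S\rho\,d\mu_S>0,\qquad -\frac{1}{n}\ln r_n(\Lambda)\to L_{d,c}^{\ast}:=\frac{1}{\mu_S(S)}\int_S\rho^{\ast}\,d\mu_S>0,
\]
and from $\tau_n=\frac{1}{d+c}\sum_{k=0}^{n-1}[\rho(R^{k+1}\Lambda)+\rho^{\ast}(R^k\Lambda)]$ one obtains $\frac{1}{n}\tau_n\to\frac{1}{d+c}(L_{d,c}+L_{d,c}^{\ast})$, a limit also equal to $\mu(\mathcal L_{d+c})/\mu_S(S)$ by Kac.

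Finally I would promote the conclusion from $\mu_S$-a.e.\ $\Lambda\in S$ to $\mu$-a.e.\ $\Lambda\in\mathcal L_{d+c}$: for almost every $\Lambda$ the forward $g_t$-orbit meets $S$ at some first time $t_0(\Lambda)<\infty$, and the minimal vector sequences of $\Lambda$ and $g_{t_0(\Lambda)}\Lambda$ agree via $g_{t_0(\Lambda)}$ up to a bounded shift of indices, so the three normalized limits coincide for the two lattices. Independence of the Euclidean norms will follow by observing that a change of inner product on $\mathbb R^d$ (resp.\ $\mathbb R^c$) is implemented by left multiplication by a block-diagonal $A\in\mathrm{GL}(d+c,\mathbb R)$ commuting with $g_t$ and preserving $\mu$ after unimodular normalization; the $g_t$-invariant, $\mu$-a.e.\ constant limits are therefore unchanged. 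The main obstacle I anticipate is establishing ergodicity of $R$ on $(S,\mu_S)$ and the clean transfer of the a.e.\ statement off of $S$ onto the full space of lattices, both of which depend on the cross-section construction of Section~\ref{sec:visit}.
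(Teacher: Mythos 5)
Your proof is correct and follows essentially the same route as the paper: apply Birkhoff to the return map $R$ on $(S,\mu_S)$ with the integrand identified via Lemma~\ref{lem:return-rho}, telescope $\rho\circ R^k$ and $\rho^{\ast}\circ R^k$ into $\ln q_n$ and $-\ln r_n$, and then transfer the $\mu_S$-a.e.\ statement to a $\mu$-a.e.\ statement using the index-shift relation between the minimal vectors of $\Lambda$ and its first entrance to $S$. The only cosmetic difference is that you deduce $L_{d,c},L_{d,c}^{\ast}>0$ directly from strict positivity of $\rho,\rho^{\ast}$ on $S$, whereas the paper invokes Lemma~\ref{lem:lowerrate}; both are valid.
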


The above theorem is an important result in the direction of Theorem \ref{thm:levy-d}.  The big difference between the two theorems is the reference measure. However, the second statement in Theorem \ref{thm:Birkhoff} explains why Theorem \ref{thm:levy-d} remains valid with the same constant $L_{d,c}$ and $L^*_{d,c}$ for any pair of Euclidean norms.  

\begin{proof}
Let $\Lambda$ be in $S\setminus\mathcal{N}$. By Lemma \ref{lem:return-rho},
$\tau(\Lambda)=\frac{1}{d+c}(\rho(R(\Lambda))+\rho^{\ast}(\Lambda))$. Because
the space of lattices has finite measure, the return time $\tau$ is in
$\mathcal{L}^{1}(S)$ and since  the non negative functions $\rho\circ R$
and $\rho^{\ast}$  are bounded above by $(d+c)\tau$, they are also in $\mathcal{L}^{1}(S)$. Making use of the
Birkhoff ergodic theorem with the functions $\rho$ and $\rho^{\ast}$, we obtain the
almost everywhere convergence of the sums
\[
\frac{1}{N}\sum_{n=0}^{N-1}\rho\circ R^{n},\ \frac{1}{N}\sum_{n=0}^{N-1}
\rho^{\ast}\circ R^{n}
\]
on $S$ to $R$-invariant functions. Now the ergodicity of the flow $g_{t}$ (see \cite{BeMa})
implies the ergodicity of the return map $R$. Therefore $\frac{1}{N}\sum_{k=0}^{N-1}\rho\circ
R^{n}$ and $\frac{1}{N}\sum_{n=0}^{N-1}\rho^{\ast}\circ R^{n}$ converge almost
everywhere on $S$ to the constants 
\[L_{d,c}=\frac{1}{\mu_{S}(S)}\int_{S}\rho d\mu_{S} \text{ and }
L_{d,c}^{\ast}=\frac{1}{\mu_{S}(S)}\int_{S}\rho^{\ast}d\mu_{S}.
\]
We would like to see that the Birkhoff sums converge to the same limits almost everywhere in the whole space of lattices. Let $\Lambda$ be in
$\mathcal{L}_{d+c}\setminus(S\cup\mathcal{N})$ and let $n$ be \ a positive integer. By
Lemma \ref{lem:return-minimal} and the numbering convention, for all $n\geq 1$,
\begin{align*}
\rho\circ R^{n}(\Lambda)  &  =\ln\frac{\left\vert X_{1}(R^{n}(\Lambda
))\right\vert _{-}}{\left\vert X_{0}(R^{n}(\Lambda))\right\vert _{-}}\\
&  =\ln\frac{\left\vert g_{\tau_{n}(\Lambda)}(X_{n}(\Lambda))\right\vert
_{-}}{\left\vert g_{\tau_{n}(\Lambda)}(X_{n-1}(\Lambda))\right\vert _{-}}\\
&  =\ln\frac{\left\vert X_{n}(\Lambda)\right\vert _{-}}{\left\vert
X_{n-1}(\Lambda)\right\vert _{-}}
\end{align*}
and $\rho^{\ast}\circ R^{n}(\Lambda)    =\ln\frac{\left\vert X_{0}(R^{n}
(\Lambda))\right\vert _{+}}{\left\vert X_{1}(R^{n}(\Lambda))\right\vert _{+}
}
  =\ln\frac{\left\vert X_{n-1}(\Lambda)\right\vert _{+}}{\left\vert
X_{n}(\Lambda)\right\vert _{+}}$
as well. It follows that if the Birkhoff sums $\frac{1}{N}\sum_{n=1}^{N}
\rho\circ R^{n}(\Lambda)$ and $\frac{1}{N}\sum_{n=1}^{N}\rho^{\ast}\circ
R^{n}(\Lambda)$ converge to $L_{d,c}$ and
$L_{d,c}^{\ast}$, then
\begin{align*}
\lim_{N\rightarrow\infty}\frac{1}{N}\ln q_{N}(\Lambda)    =L_{d,c}
\text{ and }
\lim_{N\rightarrow\infty}\frac{-1}{N}\ln r_{N}(\Lambda)   =L_{d,c}^{\ast}.
\end{align*}
Now the image by the map $R:\mathcal{L}_{d+c}\rightarrow S$ of a subset of
nonzero measure in $\mathcal{L}_{d+c}$ is a set of nonzero measure in $S$, therefore the sums $\frac{1}{N}\sum_{n=1}^{N}\rho\circ R^{n}$ and $\frac{1}
{N}\sum_{n=1}^{N}\rho^{\ast}\circ R^{n}$ converge almost everywhere in $\mathcal{L}_{d+c}$ to
$L_{d,c}$ and $L_{d,c}^{\ast}$.

By Lemma \ref{lem:lowerrate}, we know that the sequences $(q_{n}(\Lambda))_{n}$
and $(r_{n}(\Lambda)^{-1})_{n}$ have at least exponential growth rate;
therefore, the constants $L_{d,c}$ and $L_{d,c}^{\ast}$ are $>0$.

By Lemma \ref{lem:return-rho}, for all $\Lambda$ in $S\setminus\mathcal{N}$
and $k\in\mathbb{N}$,
\[
\tau_{k+1}(\Lambda)-\tau_{k}(\Lambda)=\frac{1}{d+c}(\rho(R^{k+1}
(\Lambda))+\rho^{\ast}(R^{k}(\Lambda))),
\]
hence
\[
\lim_{n\rightarrow\infty}\frac{1}{n}\tau_{n}(\Lambda)=\frac{1}{d+c}(L_{d,c}+L_{d,c}^{\ast})
\]
almost everywhere.

Finally, let us proof that the constants $L_{d,c}$ and $L_{d,c}^{\ast}$
do not depend on the Euclidean norm on $\R^d$ and $\R^c$.  For a matrix $M_d$ in 
$\SL (d,\R)$ and a matrix $M_c$ in 
$\SL (c,\R)$,
let us denote $M$ the matrix
\[
M=\left(
\begin{array}
[c]{cc}
M_d & 0\\
0 & M_{c}
\end{array}
\right) \in\SL(d+c,\mathbb R) .
\]
Since the action of $M$ on
$\mathcal{L}_{d+c}$ is measure preserving,
\begin{align*}
\lim_{n\rightarrow\infty}\frac{1}{n}\ln q_{n}(A\Lambda)  &  =L_{d,c}\\
\lim_{n\rightarrow\infty}\frac{-1}{n}\ln r_{n}(A\Lambda)  &  =L_{d,c}^{\ast}
\end{align*}
almost everywhere in $\mathcal{L}_{d+c}$. Now, a vector $AX$ in the lattice
$A\Lambda$ is  minimal  iff $X$ is a minimal vector of the  lattice $\Lambda$ with respect to
the new Euclidean norms $\left\Vert .\right\Vert _{A_d,\mathbb{R}^{d}}$  and $\left\Vert .\right\Vert _{A_c,\mathbb{R}^{c}}$ where
\[
\left\Vert u\right\Vert _{M_d,\mathbb{R}^{d}}=\left\Vert M_du\right\Vert
_{\mathbb{R}^{d}}, \text{ and }
\left\Vert v\right\Vert _{M_c,\mathbb{R}^{c}}=\left\Vert M_cv\right\Vert
_{\mathbb{R}^{c}}.
\]
Since up to  multiplicative constants, all the Euclidean norms are of the above form, the constants $L_{d,c}$ and $L_{d,c}^{\ast}$ do not depend on the Euclidean norms on $\mathbb R^d$ and $\R^c$.
\end{proof}

\subsection{A consequence of the Borel-Cantelli lemma}
For any lattice $\Lambda\in\mathcal L_{d,c}\setminus\mathcal N$ and any integer $n$, we have $q_{n+1}(\Lambda)r_n(\Lambda)=\lambda_1(g_{t_n}\Lambda)\lambda_2(g_{t_n}\Lambda)$
where $t_n=\tau_n(\Lambda)=\tfrac1{d+c}\ln\frac{q_{n+1}(\Lambda)}{r_n(\Lambda)}$ is the $n$-th return time in $S$.   
When $d=c=1$,   $\lambda_1(g_{t_n}\Lambda)\lambda_2(g_{t_n}\Lambda)\asymp 1$ by the Minkowski second convex body theorem. It implies that $L_{1,1}=L^*_{1,1}$. When $d$ or $c\geq 2$, the product of the first two minima of a lattice in $\mathcal L_{d,c}$ can be arbitrarily small, so it is not clear whether there is a relation between $L_{d,c}$ and $L^*_{d,c}$. However,

\begin{proposition}
\label{prop:borel-cantelli}With the notation of Theorem \ref{Birkhoff}, we have
\[
cL_{d,c}=dL_{d,c}^{\ast}.
\]

\end{proposition}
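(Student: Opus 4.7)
The plan is to sandwich $cL_{d,c}$ and $dL_{d,c}^{*}$ between matching bounds, using Lemma \ref{lem:Minkowski} for one direction and Borel-Cantelli for the other. The upper bound is essentially free: Lemma \ref{lem:Minkowski} gives $q_{n+1}^{c}(\Lambda)\, r_{n}^{d}(\Lambda) \leq C_{d,c}$, so taking logarithms, dividing by $n$, and invoking Theorem \ref{thm:Birkhoff} immediately yields $cL_{d,c} - dL_{d,c}^{*} \leq 0$. The substance of the proposition is therefore the reverse inequality, which amounts to showing that $r_{n}^{d}q_{n+1}^{c}$ does not decay exponentially along a generic orbit.

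The bridge to Borel-Cantelli is a geometric identity coming from the surface $S$. Since the quantity $|X|_{+}^{d}|X|_{-}^{c}$ is invariant under $g_{t}$, and since at each positive visiting time $t_{k} = \frac{1}{d+c}\ln(q_{k+1}(\Lambda)/r_{k}(\Lambda))$ the lattice $g_{t_{k}}\Lambda$ lies in $S$ with $v_{0}^{S}(g_{t_{k}}\Lambda) = g_{t_{k}}X_{k}(\Lambda)$, $v_{1}^{S}(g_{t_{k}}\Lambda) = g_{t_{k}}X_{k+1}(\Lambda)$ and common length $|v_{0}^{S}|_{+} = |v_{1}^{S}|_{-} = \lambda_{1}(g_{t_{k}}\Lambda)$, one obtains the identity
\[
\lambda_{1}(g_{t_{k}}\Lambda)^{d+c} = r_{k}^{d}(\Lambda)\, q_{k+1}^{c}(\Lambda).
\]
Thus the proposition reduces to showing $\lim_{t \to \infty}\frac{1}{t}\ln \lambda_{1}(g_{t}\Lambda) = 0$ for almost every $\Lambda$.

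For this limit, I would apply Siegel's integral formula to get $\mu\{\Lambda \in \mathcal{L}_{d+c} : \lambda_{1}(\Lambda) < \epsilon\} \leq C\epsilon^{d+c}$. Fixing $\delta > 0$ and using $g_{t}$-invariance of $\mu$, $\sum_{n \in \mathbb{N}}\mu\{\Lambda : \lambda_{1}(g_{n}\Lambda) < e^{-\delta n}\} < \infty$, so the Borel-Cantelli lemma gives $\lambda_{1}(g_{n}\Lambda) \geq e^{-\delta n}$ for all but finitely many integers $n$, almost surely. Since $\lambda_{1}(g_{t}\Lambda)$ varies by at most a factor $e^{\max(c,d)}$ over unit time intervals, the same bound extends to continuous time, giving $\liminf_{t\to\infty}\frac{1}{t}\ln \lambda_{1}(g_{t}\Lambda) \geq -\delta$; letting $\delta \to 0$ and combining with Minkowski's uniform upper bound on $\lambda_{1}$ over $\mathcal{L}_{d+c}$ delivers the almost-sure limit $0$.

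To close the loop, I evaluate the identity at $t = \tau_{n}(\Lambda)$, which outside the negligible set $\mathcal{N}$ coincides with a visiting time $t_{n+n_{0}}$ for a bounded shift $n_{0}$. Theorem \ref{thm:Birkhoff} gives $\tau_{n}/n \to (L_{d,c}+L_{d,c}^{*})/(d+c)$, hence
\[
\frac{1}{n}\ln(r_{n}^{d} q_{n+1}^{c}) = \frac{(d+c)\tau_{n}}{n} \cdot \frac{1}{\tau_{n}}\ln \lambda_{1}(g_{\tau_{n}}\Lambda) \longrightarrow 0
\]
almost surely, whence $-dL_{d,c}^{*} + cL_{d,c} = 0$. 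The main delicate point is the bookkeeping between the $k$-indexing of visiting times, the $n$-indexing of return times, and the continuous parameter $t$; avoiding the negligible set $\mathcal{N}$ makes each of these identifications routine, so the only genuinely new input beyond Theorem \ref{thm:Birkhoff} is the Siegel estimate for $\mu\{\lambda_{1}<\epsilon\}$.
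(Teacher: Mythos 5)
Your proof is correct and takes a genuinely different route from the paper, though both hinge on Borel--Cantelli. You first isolate the clean geometric identity
\[
\lambda_{1}(g_{t_{k}}\Lambda)^{d+c}=r_{k}^{d}(\Lambda)\,q_{k+1}^{c}(\Lambda),
\]
which converts the proposition into the statement that $\lambda_{1}(g_{t}\Lambda)$ decays sub-exponentially along a generic orbit, and you then prove that statement by Borel--Cantelli on the sets $\{\lambda_{1}(\Lambda)<e^{-\delta n}\}$ along integer times, using the Siegel-type estimate $\mu\{\lambda_{1}<\varepsilon\}\ll\varepsilon^{d+c}$ and invariance of $\mu$. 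The paper never writes down this identity; instead it chooses a summable function $\varphi$, builds explicit level sets $A_{n}=\{\lambda_{1}\leq K\varphi(n)\}$, transports them to $B_{n}=g_{t_{n}}A_{n}$, and tracks the specific vectors $g_{t_{k_{n}}}^{-1}X_{n}(\Lambda)$ to show that $q_{n}^{c}r_{n}^{d}\leq\varphi(n)^{d+c}$ infinitely often forces $\Lambda$ into $B_{k_{n}}$ infinitely often, a contradiction. The paper only needs Markov's inequality with the integrability of $\lambda_{1}^{-1}$, whereas you invoke the sharper volume estimate; both are standard. What your route buys is conceptual clarity: it makes visible that the whole proposition is equivalent to the logarithm-law-type fact $\lim_{t\to\infty}\frac{1}{t}\ln\lambda_{1}(g_{t}\Lambda)=0$, and it avoids the paper's somewhat opaque choice of the time shift $t_{n}=\frac{1}{d}(\ln\varphi(n)-n)$ and the index $k_{n}=\lfloor\ln q_{n}(\Lambda)\rfloor$. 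The only point to keep straight is the one you flag yourself, namely the small index shift between $\tau_{n}$ and the visiting times $t_{k}$; since the shift is bounded (it is $0$ or $1$ once one fixes the numbering convention and whether $\Lambda\in S$), it does not affect the Ces\`aro limit, and your conclusion $cL_{d,c}-dL_{d,c}^{\ast}=0$ follows.
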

\begin{lemma}\label{lem:Siegel}
The function $\Lambda\in\mathcal L_{d+c}\rightarrow 1/\lambda_1(\Lambda)$ is in $L^1(\mu)$.
\end{lemma}
\begin{proof}
By Siegel formula (see \cite{SiegelAnnals}) used with the function 
$
f=1_{B_{\R^{d+c}}}(0,r)$, we have for some positive constants $C_1$ and $C_2$ depending on the dimension $d+c$ and on the normalization of the Haar measure, 
\[
C_1r^{d+c}=\int_{\R^{d+c}}f(x)dx=C_2\int_{\mathcal L_{d+c}}\sum_{x\in\Lambda\setminus\{0\}}f(x)d\mu(\Lambda)
\geq C_2\mu(\{\Lambda:\lambda_1(\Lambda)\leq r \}).
\] 
Therefore $\int_{\mathcal L_{d+c}}\frac{1}{\lambda_1}d\mu=\int_0^{\infty}\mu(\{\Lambda:\frac{1}{\lambda_1(\Lambda)}\geq t \})dt\leq\mu(\{\mathcal L_{d+c}\}) +\frac{C_1}{C_2} \int_1^{\infty}\frac{1}{t^{d+c}}dt<\infty$.
\end{proof}
\begin{proof}[Proof of the proposition]
The inequality $cL_{d,c}\leq dL_{d,c}^{\ast}$ is easy to prove. By Lemma \ref{lem:Minkowski}, for all lattices $\Lambda$ in $\mathcal{L}_{d+c}$ and all $n$, we have
$q_{n+1}^{c}(\Lambda)r_{n}^{d}(\Lambda)\leq C_{d,c}$. Hence for a lattice $\Lambda$
such that
\begin{align*}
\lim_{n\rightarrow\infty}\frac{1}{n}\ln q_{n}(\Lambda)    =L_{d,c}
\text{ and }
\lim_{n\rightarrow\infty}\frac{-1}{n}\ln r_{n}(\Lambda)    =L_{d,c}^{\ast},
\end{align*}
we have,
\[
cL_{d,c}-dL_{d,c}^{\ast}=c\lim_{n\rightarrow\infty}\frac{\ln q_{n}(\Lambda)}
{n}+d\lim_{n\rightarrow\infty}\frac{\ln r_{n}(\Lambda)}{n}=\lim_{n\rightarrow
\infty}\frac{\ln q_{n}^{c}(\Lambda)r_{n}^{d}(\Lambda)}{n}\leq 0.
\]
The converse inequality uses the Borel-Cantelli lemma. Let $\varphi:]0,\infty
[\rightarrow]0,\infty[$ be a decreasing function such that
$\sum_{n\geq 1}\varphi(n)<\infty$, for instance $\varphi(t)=\frac{1}{t^{\alpha
}}$ with $\alpha>1$. Since for such a function $\varphi$, $\lim\inf
_{n\rightarrow\infty}\frac{1}{n}\ln\varphi(n)=0$, the inequality
$cL_{d}-dL_{d}^{\ast}\geq 0$ holds, provided that for almost all lattices
$\Lambda$, we have $q_{n}^{c}(\Lambda)r_{n}^{d}(\Lambda)\geq\varphi(n)^{d+c}$
for $n$ large enough.
Let $K$ be a constant that will be chosen later. For each integer $n\geq 1$,
consider the set $A_{n}$ of lattices $\Lambda$ in $\mathcal{L}_{d+c}$ such
that
\[
\lambda_{1}(\Lambda)\leq K\varphi(n)
\]
and the set $B_{n}=g_{t_{n}}A_{n}$ where $t_{n}=\frac{1}{d}(\ln\varphi(n)-n)$.
By the above lemma $\left\Vert \lambda_{1}^{-1}\right\Vert _{1}<\infty$, hence using  Markov inequality, we obtain
\[
\mu(B_{n})=\mu(A_{n})\leq\frac{\left\Vert \lambda_{1}^{-1}\right\Vert _{1}
}{\frac{1}{K\varphi(n)}}\ll\varphi(n).
\]
Therefore, by the Borel-Cantelli lemma, the set $\mathcal{B}$ of lattices
$\Lambda$ in $\mathcal{L}_{d+c}$ such that $\Lambda\in B_{n}$ for infinitely
many integers $n$, is negligible.
Suppose now that $\Lambda$ is a lattice such that $q_{n}^{c}(\Lambda)r_{n}
^{d}(\Lambda)\leq\varphi(n)^{d+c}$ for infinitely many $n$. For each integer
$n\geq 1$, let $k_{n}=k_{n}(\Lambda)=$ $\lfloor\ln q_{n}(\Lambda)\rfloor$. By
Theorem \ref{Birkhoff}, for almost all lattices, we have $k_{n}\leq
(L_{d,c}+1)n$ for $n$ large enough. Therefore, for almost all lattices $\Lambda$, for $n$
large enough, if $q_{n}^{c}(\Lambda)r_{n}^{d}(\Lambda)\leq\varphi(n)^{d+c}$
then the vector $g_{t_{k_{n}}}^{-1}(X_{n}(\Lambda))$ satisfies both
\begin{align*}
\left\vert g_{t_{k_{n}}}^{-1}(X_{n}(\Lambda))\right\vert _{+}  &
=r_{n}(\Lambda)e^{\frac{c}{d}(k_{n}-\ln\varphi(k_{n}))}\\
&  \leq r_{n}(\Lambda)q_{n}^{\frac{c}{d}}(\Lambda)\varphi(k_{n})^{-\frac{c}
{d}}\\
&  \leq\varphi(n)^{\frac{d+c}{d}}\varphi(k_{n})^{-\frac{c}{d}}\\
&  =\varphi(k_{n})\times\left(  \frac{\varphi(n)}{\varphi(k_{n})}\right)
^{\frac{d+c}{d}}\\
&  \leq\varphi(k_{n})\times\left(  \frac{\varphi(n)}{\varphi((L_{d,c}
+1)n)}\right)  ^{\frac{d+c}{d}}\leq K\varphi(k_{n})
\end{align*}
for some constant $K$ depending only on $\varphi$ (we use that $\frac
{\varphi(t)}{\varphi((L_{d,c}+1)t)}$ is bounded above which is obviously true
when $\varphi(t)=\frac{1}{t^{\alpha}}$) and
\[
\left\vert g_{t_{k_{n}}}^{-1}(X_{n}(\Lambda))\right\vert _{-}=q_{n}
(\Lambda)e^{-k_{n}+\ln\varphi(k_{n})}\leq e\varphi(k_{n})\leq K\varphi
(k_{n}).
\]
Thus there are infinitely many $n$ such that $\Lambda\in B_{k_{n}}$. Since the
sequence $(k_{n})_{n}$ goes to infinity, $\Lambda\in\mathcal{B}$. It follows
that for almost all lattices $\Lambda$, $q_{n}^{c}(\Lambda)r_{n}^{d}
(\Lambda)\geq\varphi(n)$ for $n$ large enough and we are done.
\end{proof}

As an immediate consequence of the previous proposition and of Theorem \ref{thm:Birkhoff}, we have
\begin{corollary}
\label{cor:levy-d}
\[
L_{d,c}=\frac{d}{\mu_{S}(S)}\int_{S}\tau~d\mu_{S}=\frac{d\times\mu
(\mathcal{L}_{d+c})}{\mu_{S}(S)}.
\]

\end{corollary}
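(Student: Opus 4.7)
The strategy is to combine Lemma~\ref{lem:return-rho}, the $R$-invariance of $\mu_S$, Kac's return time theorem, and the two earlier inputs that express $L_{d,c}$ and $L_{d,c}^\ast$ as integrals. Concretely, I would first integrate the identity $\tau(\Lambda)=\frac{1}{d+c}(\rho(R(\Lambda))+\rho^{\ast}(\Lambda))$ from Lemma~\ref{lem:return-rho} over $S$ with respect to $\mu_S$. Since $\mu_S$ is $R$-invariant (as recalled at the beginning of Section~5), $\int_S \rho\circ R\, d\mu_S=\int_S \rho\, d\mu_S$, so
\[
\int_S \tau\, d\mu_S \;=\; \frac{1}{d+c}\left(\int_S \rho\, d\mu_S+\int_S \rho^{\ast}\, d\mu_S\right).
\]

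Next I would plug in the identifications from Theorem~\ref{thm:Birkhoff}, namely $\int_S \rho\, d\mu_S=L_{d,c}\,\mu_S(S)$ and $\int_S \rho^{\ast}\, d\mu_S=L_{d,c}^{\ast}\,\mu_S(S)$, together with Proposition~\ref{prop:borel-cantelli} which gives $L_{d,c}^{\ast}=\frac{c}{d}L_{d,c}$. This collapses the right-hand side to
\[
\int_S \tau\, d\mu_S \;=\; \frac{\mu_S(S)}{d+c}\,(L_{d,c}+L_{d,c}^{\ast}) \;=\; \frac{\mu_S(S)}{d+c}\cdot\frac{d+c}{d}L_{d,c} \;=\; \frac{\mu_S(S)}{d}\,L_{d,c},
\]
which rearranges to the first equality of the corollary, $L_{d,c}=\frac{d}{\mu_S(S)}\int_S \tau\, d\mu_S$.

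For the second equality I would invoke Kac's return time theorem, exactly as done in the proof that $\mu_S(S)$ is finite at the end of Section~4: that theorem gives $\int_S \tau\, d\mu_S=\mu(\mathcal{L}_{d+c})$. Substituting this into the formula just obtained yields $L_{d,c}=\frac{d\,\mu(\mathcal{L}_{d+c})}{\mu_S(S)}$, completing the chain. There is no serious obstacle: all inputs are already established in the excerpt, and the argument is a short algebraic manipulation; the only thing to be mildly careful about is the application of Fubini/dominated-convergence implicit in integrating Lemma~\ref{lem:return-rho} (legitimate because $\tau\in \mathcal L^1(S,\mu_S)$ by Kac, and hence $\rho,\rho^{\ast}\ge 0$ are in $\mathcal L^1(S,\mu_S)$ as already noted in the proof of Theorem~\ref{thm:Birkhoff}).
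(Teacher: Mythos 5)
Your proof is correct and follows essentially the same route as the paper: both integrate the identity of Lemma \ref{lem:return-rho} over $S$, use the $R$-invariance of $\mu_S$ and the identifications of $L_{d,c}$, $L_{d,c}^\ast$ from Theorem \ref{Birkhoff} together with Proposition \ref{prop:borel-cantelli}, and then invoke Kac's return-time formula $\int_S\tau\,d\mu_S=\mu(\mathcal{L}_{d+c})$. The only difference is cosmetic (you start from $\int_S\tau\,d\mu_S$ and solve for $L_{d,c}$, whereas the paper starts from $L_{d,c}$), so the argument is essentially the paper's.
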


\section{Parametrization of $S$ when $c=1$ \label{sec:S}}

This section is not necessary neither for the proofs of Theorems
\ref{thm:levy-d} and \ref{thm:jager}, nor for Sections \ref{sect:reduction} and \ref{sec:qnrn}.
The aim is to show that the computation of the constant $L_{d,c}$ is theoretically feasible in the case $c=1$. However, if the case $d=1$ is easy (see below), the case $d=2$ is already difficult. It is possible to give an integral formula for $L_{2,1}$. But, we are not able to compute the integral, only a numerical estimation of the integral has been carried out. 
An exact description of $S$ when $d\geq 3$ seems to be rather difficult.
In this section we assume $c=1$.
\subsection{$rkN$ decomposition }

In this subsection we give a parametrization of a set of lattices that contains $S$.

Let $\Lambda$ be a lattice in $S$ and let $u_1=v_{0}^S(\Lambda)$ and
$u_2=v_{1}^S(\Lambda)$ be the two vectors associated with $\Lambda$ by the
definition of $S$ (see the definition of $S$ in Section \ref{sec:defS}). When $d\geq 2$, we suppose these two
vectors have non negative heights and when $d=1$, we only suppose that $u_2$ has a non negative height. Since $u_1$ and $u_2$ are independent shortest vectors, by Corollary \ref{cor:lattice}, they
 are the first two vectors of a basis of $\Lambda$. Thus, there is a
matrix $M\in \SL(d+1,\mathbb{R)}$ defining $\Lambda$ the first two columns of
which are the vectors $u_{1}$ and $u_{2}$. 

When $d=1$, using the scaling factor $r=\left\vert u_{1}\right\vert_{+}=\left\vert u_2\right\vert_{-}>0$, we can write
$
M =rN,
$
where $N$ is in the set $U_1$ of $2\times 2$ matrices such that
\begin{align*}
n_{1,1}=n_{2,2}=1 \text{ and }
\left\vert n_{2,1}\right\vert,\,\left\vert n_{1,2}\right\vert&<1.
\end{align*}

When $d\geq 2$, let us denote $(e_1,e_2,...,e_{d+1})$ the standard basis of $\mathbb R^{d+1}$. Using the same scaling factor 
$r=\left\vert u_{1}\right\vert_{+}=\left\vert u_2\right\vert_{-}>0$
and an orthogonal matrix $k$ that fixes $e_{d+1}$  and sends $e_{1}$ to $\frac{1}{r}u_{1,+}$, we can find a matrix
$N=(n_{i,j})_{1\leq i,j\leq d+1}$ such that $ M    =rkN$,  $\det N    >0$ and
\begin{gather}
n_{1,1}=n_{d+1,2}=1>n_{d+1,1}=\left\vert u_1\right\vert_{-}\geq 0,\label{decomposition1}\\
\left\Vert (n_{1,2},...,n_{d,2})\right\Vert _{\mathbb{R}^{d}}<1,
\label{decomposition2}\\
n_{2,1}=...=n_{d,1}=0. \label{decomposition3}
\end{gather}

When $d\geq 2$, $k$ is chosen in the group
\[
K_{d}=\{k\in \SO(d+1):ke_{d+1}=e_{d+1}\}.
\]
and using the decomposition of a $d\times d$ matrix in a product of an orthogonal 
matrix with positive determinant and of an upper triangular matrix, we can even suppose that
\begin{equation}
n_{i,j}=0,\text{\ for all }1\leq j<i\leq d. \label{decomposition4}
\end{equation}

For $d\geq 2$, let us denote $U_d$ the set of $(d+1)\times(d+1)$-matrices such that
(\ref{decomposition1}), (\ref{decomposition2}),  and
(\ref{decomposition4}) hold ((\ref{decomposition4} implies (\ref{decomposition3})). 

Since $\det M=1$, the scaling factor $r$ must be equal to $(\det N)^{-\frac{1}{d+1}}$.
Puting $K_1=\{I_2\}$, for all $d\geq 1$, the map
\[
(k,N)\in K_d\times U_d\rightarrow (\det N)^{-\frac{1}{d+1}}kN
\]
provides a natural parametrization of a subset $\Sigma$ in
$\SL(d+1,\mathbb{R)}$ whose  projection
in $\mathcal{L}_{d+1}$  contains $S$. The main problem is now to find
which of these couples $(k,N)$ are such that $rkN\mathbb{Z}^{d+1}\in S$ and
to select a fundamental domain in this set of couples. This problem reduces to
\begin{itemize}
	\item finding the set of matrices $N\in U_d$ such that the first two columns $u_{1}$ and $u_{2}$ of $N$ are in the unit ball
	$B_{d,1}(0,1)$ and are the only nonzero vectors of the lattice
	$N\mathbb{Z}^{d+1}$ in this ball,
	\item then select a fundamental domain in this set of matrices $N$.
\end{itemize}
This is easy when $d=1$ and doable when $d=2$. When
$d=1$, it is even possible to find the first return map $R$.

Another issue is to find the measure $\mu_{S}$ on $S$ induced by the flow
$g_{t}$ and the invariant measure $\mu$ of $\mathcal{L}_{d+1}$. This comparatively easier
issue can be performed for all $d$ without knowing explicitly $S$.

\subsection{The induced measure $\mu_S$}

Consider the manifold $V_{d}=\mathbb{R}_{>0}
\times\mathbb{R\times}K_{d}\times U_{d}$ and the  submanifold
\[
W=\{(\Delta,t,k,N)\in V_{d}: \Delta=1,\,t=0\}=\{1\}\times\{0\}\times
K_{d}\times U_{d}
\]
together with the map $F  :V_{d}\rightarrow \GL(d+1,\mathbb{R})$ defined by
\begin{align*}
F\left(\Delta,t,k,N\right)= \left(\frac{\Delta}{\det N}\right)
^{\frac{1}{d+1}}g_tkN
\end{align*}
and  the map $\overline F:V_d\rightarrow\GL(d+1,\mathbb R)/\SL(d+1,\mathbb Z)$ defined by
\[
\overline F(\Delta,t,k,N)=F(\Delta,t,k,N)\mathbb Z^{d+1}.
\]
By the discussion of the previous subsection, $\overline F$ provides a parametrization of $S$:
$
S\subset \overline F(W).
$
We would like to compute the measure $\mu_S$ in the coordinates $(1,0,k,N)$. The submanifold $W$ is equipped with the reference measure 
$
\mu_{K_d}\otimes \lambda_{U_d}
$
where $\lambda_{U_d}$ is the Lebesgue measure on $U_d$ and  $\mu_{K_d}$ is the invariant measure on $K_d$ associated with the invariant volume form $\gamma$
on $K_{d}$ that is dual to the exterior product of 
the	invariant vector fields generated by the standard skew symmetric matrices
$(A_{i,j}=E_{ij}-E_{ji})_{1\leq j<i\leq d}$.

We have to precise the normalization of the Haar measure that we use on $\SL(d+1,\R)$.  Consider the standard volume form $\omega$ on $\M_{d+1}(\mathbb{R)}$ seen as $\mathbb{R}^{(d+1)^{2}}$ and the
vector fields $X$ and $Y$ on $\M_{d+1}(\mathbb{R)}$ defined by
\[
Y(M)=M \text{ and } X(M)=\frac{\partial(g_{t}M)}{\partial t}\Big|_{t=0}.
\]
The invariant measure in $\SL(d+1,\mathbb{R)}$ is given by the volume form
$\frac{1}{(d+1)^2}i_{Y}\omega$. With this
normalization of the volume form we have the beautiful Siegel formula
\begin{equation*}
\mu(\mathcal L_{d+1})=\operatorname*{vol}(\SL(d+1,\mathbb{R})/\SL(d+1,\mathbb{Z))=}\frac{1}{d+1}\prod_{k=2}
^{d+1}\zeta(k) 
\end{equation*}
(see \cite{Siegel}, p. 152). The induced measure $\mu_{S}$ on $S$ is
given by the restriction to $S$ of the projection in the space of lattices of the differential form
\begin{equation*}
\frac{1}{(d+1)^2}i_{X}i_{Y}\omega. 
\end{equation*}
Using this differential form it is possible to compute the induced measure $\mu_S$ with the parametrization $\overline{F}$, we give without proof an explicit formula in next proposition.
\begin{proposition} \label{prop:invariant-measure}
 Assume $d \geq 2$. Suppose that $\mathcal D$ is an open subset of $W$ such that $\overline F(\mathcal D)\subset S$ and the restriction  of $\overline F$ to $\mathcal D$ is one to one. Then the image by $\overline F$ of the measure 
\[
1_{\mathcal{D}}\,\left(\frac{1}{\det N}\right)^{d+1}\left(
	{\displaystyle\frac{1}{d+1}\prod\limits_{j=2}^{d-1}}
	n_{j,j}^{d-j}\right)\,\mu_{K_d}\otimes \lambda_{U_d}
\]
is the restriction of $\mu_S$ to $\overline F(\mathcal D)$.
\end{proposition}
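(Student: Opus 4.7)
The plan is to read off $\mu_S$ by extending the parametrization $\overline F$ from $W$ in the flow direction and using the $g_t$-invariance of $\mu$. The map $(t, k, N) \in \R \times K_d \times U_d \mapsto \overline F(1, t, k, N) \in \mathcal L_{d+1}$ is a local diffeomorphism onto a neighborhood of $\overline F(\mathcal D)$ transverse to $S$. Since $g_t$ preserves $\mu$ and acts by $t$-translation in this chart, the pullback of $\mu$ has the form $h(k, N)\,dt\,d\mu_{K_d}(k)\,d\lambda_{U_d}(N)$ with density $h$ independent of $t$. By the defining property of the transverse measure $\mu_S$ (see Section 4), this forces $d\mu_S = h(k, N)\,d\mu_{K_d}(k)\,d\lambda_{U_d}(N)$ on $\overline F(\mathcal D)$, so it suffices to identify $h$.

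To compute $h$, I pull back the Haar measure $|\det M|^{-(d+1)}\,dM$ on $\GL(d+1, \R)$ through the full map $F:V_d \to \GL(d+1, \R)$. Computing matrix-valued differentials gives
\[
dM\cdot M^{-1} = \tfrac{1}{d+1}\bigl(\tfrac{d\Delta}{\Delta} - \mathrm{tr}(N^{-1}dN)\bigr) I + dt\cdot D + k(k^{-1}dk)k^{-1} + k(dN\cdot N^{-1})k^{-1},
\]
where $D = \mathrm{diag}(1, \ldots, 1, -d)$ generates $g_t$ (for $c=1$). Taking the trace recovers $d\Delta/\Delta$, confirming the factorization of Haar on $\GL$ as $d\Delta/\Delta$ times Haar on $\SL$ and producing $\SL$-Haar as the $\Delta = 1$ slice. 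Since $k \in \SO(d+1)$ acts by orthogonal conjugation (an isometry of $\M_{d+1}(\R)$) and $D$ commutes with $K_d$, conjugation by $k^{-1}$ absorbs the $k$-dependence, and the Jacobian of the slice reduces to $|\det L|$ for the linear map
\[
L: (dt, dA, dN)\in \R\oplus \mathrm{Lie}(K_d)\oplus T_N U_d \longmapsto dt\cdot D + dA + dN\cdot N^{-1} - \tfrac{1}{d+1}\mathrm{tr}(N^{-1}dN)\,I \in \mathfrak{sl}(d+1, \R).
\]
Thus $h(k, N) = C\cdot |\det L(N)|$ for a normalization constant $C$ depending only on the choice of $\mu$.

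The final task is to evaluate $|\det L|$ explicitly. Writing $N = \bigl(\begin{smallmatrix}U & w\\ v^T & z\end{smallmatrix}\bigr)$ with $U$ a $d\times d$ upper triangular matrix (and $U_{1,1} = 1$), the row structure of $T_N U_d$ translates directly via right multiplication by $N^{-1}$: for $i \leq d$, the free entries of row $i$ of $dN$ are in columns $j \geq i$ together with $j = d+1$, so row $i$ of $dN\cdot N^{-1}$ lies in the span of rows $j \geq i$ and row $d+1$ of $N^{-1}$. Performing a row reduction from $i = d$ upward, and using the fact that $U^{-1}$ inherits an upper triangular structure with diagonal $1/n_{j,j}$, one triangularizes $L$ with pivots that accumulate the diagonal entries of $U$: at the step where row $i$ is reduced, the relevant pivot block is governed by the lower-right submatrix of $U^{-1}$, which contributes $\prod_{j<i} n_{j,j}$; summed over $i$ this yields the factor $\prod_{j=1}^{d-1} n_{j,j}^{d-j}$. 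The overall weight $(\det N)^{-(d+1)}$ arises from the $(\det N)^{-1/(d+1)}$ rescaling in the definition of $F$ raised to the $(d+1)^2$ matrix entries, combined with the $|\det M|^{-(d+1)}$ factor in Haar.

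The main obstacle will be carrying out this row reduction cleanly: the image of $dN \cdot N^{-1}$ is not transverse to $\R D \oplus \mathrm{Lie}(K_d)$ inside $\mathfrak{sl}(d+1, \R)$, so a well-chosen complement (and a careful ordering of the basis of $T_N U_d$ respecting the triangular structure) is required to isolate the pivots $n_{j,j}$ with the correct multiplicities $d-j$. Once the pivoting is set up, the resulting determinant is a straightforward product, and the absorption of the constant $C$ into the overall normalization of $\mu$ completes the identification of $h$, hence of $\mu_S$.
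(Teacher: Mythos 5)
The paper states this Proposition explicitly \emph{without proof} (``we give without proof an explicit formula in next Proposition''), so there is no argument of the authors' to compare yours against; I can only judge whether the plan is internally sound. The framework you set up is correct: extend $\overline F$ in the flow direction, use $g_t$-invariance of $\mu$ so the pulled-back density $h(k,N)$ is $t$-independent, invoke the defining property of $\mu_S$ to get $d\mu_S = h\,d\mu_{K_d}\,d\lambda_{U_d}$, absorb the $K_d$-dependence by orthogonal conjugation, and reduce $h$ to the Jacobian $|\det L|$ of the linear map $L$ you write down. That is the right reduction and $L$ is the right map.

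The gap is in the last step, and it is not cosmetic: you are double-counting the factor $(\det N)^{-(d+1)}$. Having pulled back via the right-invariant form $dM\cdot M^{-1}$, whose $(d+1)^2$-fold wedge of entries already \emph{is} Haar on $\GL(d+1,\R)$, you have discharged both the scaling Jacobian $r^{(d+1)^2}$ and the weight $|\det M|^{-(d+1)}$; indeed $\det M=\Delta=1$ on the slice, and the $r$-powers cancel entirely in $dM\cdot M^{-1}$. Hence $h = C\,|\det L|$ with nothing left over, so the whole target $(\det N)^{-(d+1)}\prod_{j=1}^{d-1}n_{j,j}^{d-j}$ must come out of $|\det L|$ itself, through the $\tfrac{1}{\det N}$ denominators carried by $dN\cdot N^{-1}$; your row reduction, as described, is only designed to extract $\prod_{j=1}^{d-1}n_{j,j}^{d-j}$. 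The discrepancy is already visible for $d=1$: in the bases $(\partial_t,\partial_{n_{1,2}},\partial_{n_{2,1}})$ and the $(1,1),(1,2),(2,1)$ entries of $\mathfrak{sl}(2,\R)$, the matrix of $L$ is upper triangular with diagonal $\bigl(1,\ \tfrac{1}{\det N},\ \tfrac{1}{\det N}\bigr)$, so $|\det L|=(\det N)^{-2}$ — agreeing with the $d=1$ density $(1-n_{2,1}n_{1,2})^{-2}$ recorded in the paper's Remark — whereas your rule gives the empty product $1$. So either the pivoting must also produce the full $(\det N)^{-(d+1)}$, which is a substantially harder bookkeeping task than the one you sketch, or you should instead compute the Jacobian of $dM$ directly, i.e.\ of $(d\Delta,dt,dA,dN)\mapsto \tfrac{1}{d+1}\bigl(d\Delta-\mathrm{tr}(N^{-1}dN)\bigr)N + dt\,DN + dA\,N + dN$ (this is $dM$ with $rk$ factored out on the left). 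In that formulation the rescaling genuinely contributes $(\det N)^{-(d+1)}$ as an external prefactor and the remaining determinant is the pure product $\prod_{j=1}^{d-1}n_{j,j}^{d-j}$ — but that determinant still has to be established, and you acknowledge leaving the pivoting, ``the main obstacle,'' un-executed.
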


\subsection{Determination of the transversal $S$, $c=1,\,d=1$ \label{sub:d=1}}
We already have a map 
\begin{align*}
& U_1\rightarrow \mathcal L_2\\
& N\rightarrow (\det N)^{-\frac{1}{2}}N\mathbb Z^2
\end{align*}
that sends $U_1$ onto a set that contains $S$.
By definition of $S$, the image of a matrix
\[
N=\left(
\begin{array}
[c]{cc}
1& n_{1,2} \\
n_{2,1}& 1
\end{array}
\right)\in U_1
\]
is in $S$ iff the only nonzero vectors of the lattice $\Lambda=N\mathbb Z^2$ in the ball $B_{1,1}(0,1)$ are the two columns of $N$ up to sign.  We obtain that  $\Lambda\in S$ iff
\begin{itemize}
	\item $0<\left\vert n_{2,1}\right\vert,\,\left\vert n_{1,2}\right\vert <1$,
	\item the signs of $n_{1,2}$ and $n_{2,1}$ are opposite.
\end{itemize}
So the map $F$ defined on $]0,1[^2\times\{-1,1\}$ defined by 
\[
(x,y,\varepsilon)\rightarrow \frac{1}{(1+xy)^{1/2}}\left(
\begin{array}
[c]{cc}
1& -\varepsilon x \\
\varepsilon y& 1
\end{array}
\right)\mathbb Z^2
\]
provide a parametrization of $S$ and it is easy to see that $F$ is a bijection. An easy computation of a $4\times 4$ determinant gives that 
\[
i_Xi_Y\tfrac14\omega(\tfrac{\partial F}{\partial x},\tfrac{\partial F}{\partial y})=\tfrac14\omega(X,Y,\tfrac{\partial F}{\partial x},\tfrac{\partial F}{\partial y})=\frac{1}{2(1+xy)^2},
\]
hence
\[
f(x,y,\varepsilon)=\frac{1}{2(1+xy)^2}
\]  
is the density the measure $\mu_S$ with respect to the Lebesgue measure.
Therefore $\mu_S(S)=\ln 2$.
With the Siegel formula (\cite{Siegel}, p. 152) and Corollary  \ref{cor:levy-d}, we obtain the L\'evy's constant
\[
L_{1,1}=\frac{\mu(\mathcal{L}_{d+c})}{\mu_{S}(S)}=\frac{\zeta(2)}{2\ln 2}
=\frac{\pi^2}{12\ln 2}.
\] 

\subsubsection{Determination of the first return map, $c=1,\ d=1$}
Let 
\[
\Lambda=F(x,y,\varepsilon)=\frac{1}{(1+xy)^{1/2}}\left(
\begin{array}
[c]{cc}
1& -\varepsilon x \\
\varepsilon y& 1
\end{array}
\right)\mathbb Z^2
\]
be in $S$.   By Lemma \ref{lem:return-minimal}, to find the first return map $R(\Lambda)$ in $S$, it is enough to find the minimal vector $X_2(\Lambda)$. Then $R(\Lambda)$ is given by 
$g_{\tau(\lambda)}(\Lambda)$ with 
\[
\tau(\Lambda)=\frac{1}{2}\ln\left(\frac{\left\vert X_{2}(\Lambda)\right\vert_{-}}
{\left\vert X_{1}(\Lambda
	)\right\vert_{+}}\right).
\]
By corollary \ref{cor:lattice}, the first minimal vectors
$X_0(\Lambda)$ and $X_1(\Lambda)$ form a basis of $\Lambda$. The
minimal vector $X_2(\Lambda)$ is the vector of the form
$X=aX_0(\Lambda)+bX_1(\Lambda)$  in the strip $\left\vert X
\right\vert_{+}<\left\vert X_1(\Lambda)\right\vert_{+}=x$
with $a,b\in \mathbb Z$, and with the smallest height. It is not difficult to see that 
\[
X_2(\Lambda)=\varepsilon X_0(\Lambda)+\lfloor\tfrac{1}{x}\rfloor X_1(\Lambda).
\]
So we obtain $R(\Lambda)=F(x',y',\varepsilon')$ where
\begin{align*}
\varepsilon'=-\varepsilon ,\,\,
x'=\{\tfrac{1}{x}\} \text{ and }
y'=\frac{1}{y+\lfloor\tfrac{1}{x}\rfloor}.
\end{align*}
It follows that the return map $R$ is a two-fold extension of the natural extension of the Gauss map.

\subsection{Value of L\'evy's constant when $d=2$ and $c=1$, \label{sub:d=2}}
An exact description of $S$ is possible when $d=2$ and $c=1$. Together with the expression of the measure $\mu_S$ in Proposition \ref{prop:invariant-measure}, this leads to a closed formula for L\'evy's constant as a seven-tuple integral of an algebraic function over a union of domains the boundaries of which are algebraic surfaces of degree at most two. We are not able to compute this seven-tuple integral. However using Octave, Seraphine Xieu (see \cite{Xieu},\cite{CheCheComp}) has computed a numerical approximation of Levy's constant
\[
L_{2,1}=1.135256974\dots
\]
This can be compared with the one dimensional Levy's constant
\[
L_{1,1}=1.186569111\dots
\]

\section{Almost sure convergence in M$_{d,c}(\mathbb{R)}\label{sect:reduction}
$}
We want to prove that   $\lim_{n\rightarrow\infty}\frac{1}{n}\ln q_{n}(\Lambda_{\theta})=L_{d,c}$ for Lebesgue-almost all  $\theta\in M_{d,c}(\R)$. By Theorem \ref{thm:Birkhoff}, we already know that $\lim_{n\rightarrow\infty}\frac{1}{n}\ln q_{n}(\Lambda)=L_{d,c}$ for $\mu$-almost all lattices $\Lambda\in\mathcal L_{d+c}$, but we have to change the  reference measure. Furthermore,   the set $\mathcal H_>$ of lattices of the shape $\Lambda_{\theta}$, $\theta\in M_{d,c}(\R)$, is $\mu$-negligible. Actually, we will first prove a general result,  Theorem \ref{thm:reduction} (see below): the Birkhoff sums $\frac{1}{N}\sum_{n=0}^{N-1}\varphi\circ R^{n}(\Lambda_{\theta})$ converge for almost all $\theta$ when the function $\varphi$ satisfies some regularity assumptions. The main ingredient of the proof of this general result is that $\mathcal H_>$ contains the expanding direction of the flow $g_t$.   However, Theorem \ref{thm:reduction} also requires that Lemma \ref{lem:boundary} below holds for the transversal. This is proven in Subsections \ref{sec:example} and \ref{sec:example2} with an example. These two subsections are not necessary for the proof of Theorem \ref{thm:reduction}.  

\subsection{A general result}

Recall that $\mathcal{H}_{\leq}$ is the subgroup of $\SL(d+c,\mathbb{R)}$
defined by
\[
\mathcal{H}_{\leq}=\{h\in\SL(d+c,\mathbb{R)}:h=\left(
\begin{array}
[c]{cc}
A & 0\\
B & C
\end{array}
\right)  \}
\]
with $ A\in\GL(d,\mathbb{R)}$, $B\in\M_{c,d}(\mathbb{R)}$ and $C\in
\GL(c,\mathbb{R})$. We say that a function $f:\mathcal{L}_{d+c}\rightarrow
\mathbb{R}$ is \textit{uniformly continuous in the }$\mathcal{H}_{\leq }
$\textit{-direction} if for all $\varepsilon$ there exists $\beta>0$ such that
for all $\Lambda\in\mathcal{L}_{d+c}$ and all $h\in B_{\mathcal{H}_{\leq }}
(I_{d+c},\beta)$, $\left\vert f(h\Lambda)-f(\Lambda)\right\vert \leq\varepsilon$.

\begin{theorem}
\label{thm:reduction} 1. Let $\varphi:S\rightarrow\mathbb{R}$ be a function
continuous almost everywhere on $S$. Suppose there exists a non negative function $f:\mathcal{L}_{d+c}\rightarrow\mathbb{R}_{\geq 0}$ that is continuous,
uniformly continuous in the $\mathcal{H}_{\leq}$-direction, integrable and
  such that $\left\vert
\varphi\right\vert \leq f$ on $S$. Then,
\[
\int_{S}fd\mu_{S}<+\infty
\]
and for almost all $\theta$ in $\M_{d,c}(\mathbb{R})$,
\[
\lim_{n\rightarrow\infty}\frac{1}{n}\sum_{k=0}^{n-1}\varphi\circ R^{k}
(\Lambda_{\theta})=\frac{1}{\mu_{S}(S)}\int_{S}\varphi d\mu_{S}.
\]
2. The same result holds for $S^{\prime}$ instead of $S$
\end{theorem}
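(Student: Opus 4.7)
The argument splits into (i) the integrability $\int_S f\,d\mu_S<+\infty$ and (ii) the Birkhoff-type convergence on $\mathbb T_{d,c}$. The key ingredient in (ii) is the uniform $\mathcal H_{\leq}$-continuity of the envelope $f$ combined with the fact that the flow $g_t$ lies in, and contracts, the subgroup $\mathcal H_{\leq}$.

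For (i), Lemma~\ref{lem:nb-returntimes} gives $\tau_A\geq 1$ on $S$, so the flow map $\Phi\colon S\times[0,1]\to \mathcal L_{d+c}$, $(\Lambda,t)\mapsto g_t\Lambda$, has multiplicity at most $A$; the defining property of $\mu_S$ then yields $\int_0^1\!\!\int_S f(g_t\Lambda)\,d\mu_S(\Lambda)\,dt\leq A\int_{\mathcal L_{d+c}} f\,d\mu<+\infty$. Since $g_t\in\mathcal H_{\leq}$, the uniform $\mathcal H_{\leq}$-continuity of $f$ produces $t_0>0$ such that $f(g_t\Lambda)\geq f(\Lambda)-1$ for $|t|\leq t_0$, uniformly in $\Lambda$. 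Integrating over $[0,t_0]$ gives $t_0\bigl(\int_S f\,d\mu_S-\mu_S(S)\bigr)\leq A\int f\,d\mu$, hence $\int_S f\,d\mu_S<+\infty$.

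For (ii), Birkhoff applied to the ergodic action of $R$ on $(S,\mu_S)$, as in the proof of Theorem~\ref{thm:Birkhoff}, yields a $\mu$-full-measure set $G\subset\mathcal L_{d+c}$ on which $\tfrac1n\sum_{k=0}^{n-1}\varphi\circ R^k\to \bar\varphi:=\mu_S(S)^{-1}\int_S\varphi\,d\mu_S$. The goal is to show $\Lambda_\theta\in G$ for Lebesgue-a.e. $\theta$. Near a generic base point, $\mathcal L_{d+c}$ decomposes locally as $\mathcal H_{\leq}\cdot \mathbb T_{d,c}$ and $\mu$ is equivalent to the product of Haar measure on $\mathcal H_{\leq}$ with Lebesgue measure in $\theta$, so Fubini yields, for Lebesgue-a.e. $\theta$, that Haar-a.e. $h$ in a fixed small $\mathcal H_{\leq}$-neighborhood $U$ of $I$ satisfies $h\Lambda_\theta\in G$. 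The crucial geometric input is the conjugation formula
\[
g_t\bigl(\begin{smallmatrix}A&0\\B&C\end{smallmatrix}\bigr)g_{-t}=\bigl(\begin{smallmatrix}A&0\\e^{-(c+d)t}B&C\end{smallmatrix}\bigr),
\]
showing that once $h\in B_{\mathcal H_{\leq}}(I,\beta)$, $g_thg_{-t}$ remains in the same ball for all $t\geq 0$. Consequently $g_th\Lambda_\theta=(g_thg_{-t})\,g_t\Lambda_\theta$ stays $\mathcal H_{\leq}$-close to $g_t\Lambda_\theta$ for all $t\geq 0$, and uniform $\mathcal H_{\leq}$-continuity of $f$ gives $|f(g_th\Lambda_\theta)-f(g_t\Lambda_\theta)|\leq\varepsilon$ uniformly in $t$, provided $\beta$ is small enough in terms of $\varepsilon$.

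The main obstacle is converting this orbit-wise closeness into agreement of the two discrete Birkhoff sums for $\varphi$. Working outside the negligible sets $\mathcal N$ and $\mathcal M$ (which rule out coincidences of minima and degenerate transversal crossings, cf.\ Lemmas~\ref{lem:return-minimal}--\ref{lem:return-best}), the visit-time sequences to $S$ of the two orbits differ by at most $A$ per unit time, so they match index-by-index up to a bounded shift, with $R^k(h\Lambda_\theta)$ being a $B_{\mathcal H_{\leq}}(I,\beta)$-perturbation of some $R^{k+O(1)}(\Lambda_\theta)$. The bounded index shift contributes a Cesàro error tending to $0$ via $f(R^n\Lambda_\theta)/n\to 0$, itself a consequence of the Birkhoff convergence of $\tfrac1n\sum f\circ R^k(\Lambda_\theta)$, established first by the same contraction-plus-Fubini argument applied to the continuous envelope $f$. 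A Lusin-type approximation of the almost everywhere continuous $\varphi$ by continuous functions bounded by $f$, with $L^1(\mu_S)$-tail controlled via the envelope, reduces the remaining comparison to continuous approximants, for which uniform $\mathcal H_{\leq}$-continuity directly yields the pointwise bound. Letting $\varepsilon\downarrow 0$ along a countable sequence concludes; the proof for $S'$ is analogous, with $w_0^{S'}$ replacing the pair $(v_0^S,v_1^S)$.
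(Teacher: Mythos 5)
Your integrability argument (part (i)) is essentially the paper's Step~1 and is fine: bounded multiplicity of the flow map plus the one-sided bound $f(g_t\Lambda)\geq f(\Lambda)-1$ from $\mathcal H_{\leq}$-continuity. The Fubini/thickening device that produces, for a.e.~$\theta$, a good $h\in B_{\mathcal H_{\leq}}(I,\beta)$ with $h\Lambda_\theta$ Birkhoff-generic, and the observation that $g_thg_{-t}$ stays bounded in $\mathcal H_{\leq}$ for $t\geq0$, are also exactly what the paper uses.

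The gap is in the sentence ``the visit-time sequences to $S$ of the two orbits differ by at most $A$ per unit time, so they match index-by-index up to a bounded shift.'' That implication is false in general, and the paper builds an explicit counterexample to drive this home: take $V\subset S$ open, constructed so that every $\Lambda_\theta$-orbit visits $V$ whenever it visits $S$ yet $\mu_S(V)<\mu_S(S)$. For $V$, both orbits have $\leq A$ visits per unit time, your claimed index-matching would go through, and the conclusion of the theorem would fail. The obstruction is precisely that a positive fraction of the visits of one orbit can land in a thin ``boundary'' region of $S$ (where the local time-correction of Lemma~\ref{lem:K-S} is unavailable), while the perturbed orbit misses $S$ there entirely. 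The paper's proof is organized around killing this obstruction: Lemma~\ref{lem:boundary} (for every $\varepsilon$, a compact $K\subset S$ and $\delta$ with $\mu(U(K,\delta))\leq\varepsilon$) combined with a Birkhoff ergodic average of $1_{U(K,\delta)}$ (and, in Step~3, of $f\cdot1_{U(K,\delta)}$) shows that the fraction of visits to $S\setminus K$ is $O(\varepsilon)$ along both orbits, and only then does Lemma~\ref{lem:K-S} supply an index-by-index bijection between the $K$-visits with a uniformly small time correction $t'\in[-\alpha/4,\alpha/4]$. Your proposal contains none of this boundary control, and it cannot be supplied by the $\mathcal N,\mathcal M$-negligibility alone: those sets concern degenerate lattices, not the geometry of $\partial S$. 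A secondary imprecision: uniform $\mathcal H_{\leq}$-continuity is a hypothesis on $f$ (defined on all of $\mathcal L_{d+c}$), not on $\varphi$ or its continuous approximants (defined only on $S$); the paper instead gets the needed $\varepsilon$-closeness of $\varphi$-values from compactness of $K$ together with plain continuity of $\varphi$ and of the local section map $\pi$ of Lemma~\ref{lem:K-S}.
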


We
can formulate Theorem \ref{thm:reduction} for  a general transversal $S$. The assumptions about $S$ are:

\begin{itemize}
\item $S$ is a co--dimension one submanifold transverse to the flow,

\item the number of visiting times in a time interval of length $1$ is bounded
above by a universal constant $A$ (Lemma \ref{lem:nb-returntimes}),

\item Lemma \ref{lem:boundary} below holds for $S$.
\end{itemize}

The other assumptions and the conclusion are the same as in Theorem \ref{thm:reduction}.

\medskip

For a compact subset $K$ of the submanifold $S$ and $\delta>0$, let us denote
\[
U(K,\delta)=\{g_{t}h\Lambda:t\in[0,1],\ h\in B_{\mathcal{H}_{\leq}}
(I_{d+c},\delta),\ \Lambda\in S\setminus K\}.
\]

\begin{lemma}
\label{lem:boundary} For all $\varepsilon>0$, there exist a compact subset
$K$ in $S$ and $\delta>0$ such that $\mu(U(K,\delta))\leq\varepsilon$.
\end{lemma}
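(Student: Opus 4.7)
The strategy is to decompose $U(K,\delta)$ as first a time-flow thickening of $S\setminus K$ and then an $\mathcal{H}_{\leq}$-thickening, and to bound each piece using, respectively, the finiteness of the induced measure $\mu_S(S)$ and the finiteness of the ambient measure $\mu(\mathcal{L}_{d+c})$.

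\medskip

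Since $\mu_S(S)<+\infty$ by the preceding proposition, inner regularity of $\mu_S$ produces a compact $K\subset S$ with $\mu_S(S\setminus K)<\varepsilon/2$; a harmless adjustment also ensures that the topological boundary $\partial_S K$ is $\mu_S$-negligible. Setting $V_0:=\bigcup_{t\in[0,1]}g_t(S\setminus K)$, the standard Ambrose--Kac suspension representation of $\mu$ over the transversal $(S,\mu_S)$ gives the bound
\[
\mu(V_0) \;\le\; \mu_S(S\setminus K) \;<\; \varepsilon/2,
\]
since $1_{V_0}(\Lambda)\le\#\{t\in[-1,0]:g_t\Lambda\in S\setminus K\}$ and the integrated right-hand side equals $1\cdot\mu_S(S\setminus K)$.

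\medskip

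For the horizontal perturbation I use the conjugation identity $g_t h=(g_t h g_{-t})\,g_t$. A direct block computation with $h=\bigl(\begin{smallmatrix}A&0\\B&C\end{smallmatrix}\bigr)\in\mathcal{H}_{\leq}$ shows that $g_t h g_{-t}$ is again in $\mathcal{H}_{\leq}$ and is obtained from $h$ by multiplying the lower-left block $B$ by the factor $e^{-(c+d)t}\le 1$ for $t\in[0,1]$. Equivalence of norms on $\M_{d+c}(\R)$ then yields a constant $C_0=C_0(d,c)$ with $g_t\,B_{\mathcal{H}_{\leq}}(I,\delta)\,g_{-t}\subset B_{\mathcal{H}_{\leq}}(I,C_0\delta)$ for every $t\in[0,1]$, and therefore
\[
U(K,\delta)\;\subset\;V_\delta \;:=\;B_{\mathcal{H}_{\leq}}(I,C_0\delta)\cdot V_0.
\]

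\medskip

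It remains to let $\delta\to 0$ in $\mu(V_\delta)$. The family $(V_\delta)_{\delta>0}$ is decreasing and is dominated by $\mathcal{L}_{d+c}$, whose total $\mu$-mass is finite by Siegel's theorem. A diagonal argument identifies $\bigcap_{\delta>0}V_\delta$ with the closure $\overline{V_0}$: any $\Lambda$ in the intersection is a limit $h_n\Lambda_n\to\Lambda$ with $h_n\to I$ and $\Lambda_n\in V_0$, whence $\Lambda_n=h_n^{-1}\Lambda\to\Lambda$ and $\Lambda\in\overline{V_0}$. Dominated convergence then gives $\lim_{\delta\to 0}\mu(V_\delta)=\mu(\overline{V_0})$. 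Moreover $\overline{V_0}\setminus V_0$ lies in the union of the two time-slices $g_0(\overline{S\setminus K})$, $g_1(\overline{S\setminus K})$ and $\bigcup_{t\in[0,1]}g_t(\partial_S K)$, each a codimension-one set of $\mu$-measure zero (using $\mu_S(\partial_S K)=0$ and the transversality of $S$ to the flow), so $\mu(\overline{V_0})=\mu(V_0)<\varepsilon/2$, and choosing $\delta$ small enough gives $\mu(U(K,\delta))\le\mu(V_\delta)<\varepsilon$. The delicate step is this last dominated-convergence argument: absent the finiteness of $\mu(\mathcal{L}_{d+c})$, the $\mathcal{H}_{\leq}$-thickening of a set of small measure could a priori have much larger measure, and it is precisely Siegel's finiteness that supplies the integrable envelope which controls the limit.
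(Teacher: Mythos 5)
Your first three steps are sound: the bound $\mu(V_0)\le\mu_S(S\setminus K)$ by the coarea identity for the induced measure, the block computation $g_t\bigl(\begin{smallmatrix}A&0\\B&C\end{smallmatrix}\bigr)g_{-t}=\bigl(\begin{smallmatrix}A&0\\e^{-(c+d)t}B&C\end{smallmatrix}\bigr)$, and the resulting inclusion $U(K,\delta)\subset V_\delta$ are all correct. The gap is in the limiting step. Continuity from above gives $\lim_{\delta\to 0}\mu(V_\delta)=\mu\bigl(\bigcap_{\delta>0}V_\delta\bigr)\le\mu(\overline{V_0})$, so everything reduces to $\mu(\overline{V_0}\setminus V_0)=0$, and your description of $\overline{V_0}\setminus V_0$ is wrong. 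Since $[0,1]$ is already closed, the endpoints $t=0,1$ contribute nothing: a limit $g_{t_n}\Lambda_n\to\Lambda$ with $t_n\to t\in[0,1]$ and $\Lambda_n\to\Lambda'=g_{-t}\Lambda$ escapes $V_0$ exactly when $\Lambda'\in\overline{S\setminus K}\setminus(S\setminus K)$, for \emph{any} $t\in[0,1]$. That set contains not only $\partial_S K$ (which you make $\mu_S$-negligible) but also $\overline{S}\setminus S$, the accumulation points of $S$ in $\mathcal{L}_{d+c}$ that are not in $S$. Thus $\overline{V_0}\setminus V_0$ contains $\bigcup_{t\in[0,1]}g_t(\overline{S}\setminus S)$, a set about which your argument says nothing and which can perfectly well have positive $\mu$-measure.

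This omission is fatal, not cosmetic. Your proof invokes only that $S$ is a finite-$\mu_S$-measure, codimension-one transversal; if it were correct it would establish Lemma~\ref{lem:boundary} for every such transversal. But the paper's own counterexample in Section~\ref{sect:reduction} builds an open transversal $V\subset S$ with $\mu_S(\partial_S V)>0$ precisely so that Lemma~\ref{lem:boundary} fails for $V$. For that $V$ one has $\overline V\setminus V\supset\partial_S V$, a subset of $S$ of positive $\mu_S$-measure, and its flow-thickening $\bigcup_{t\in[0,1]}g_t(\partial_S V)$ has positive $\mu$-measure; your argument, run verbatim with $V$ in place of $S$, would then ``prove'' the lemma for $V$ and contradict the paper. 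What is actually needed is a geometric control of $\overline S\setminus S$ specific to this $S$, which is exactly what the paper's proof supplies: it exhausts $S$ by the explicit compacta $K=S\setminus V(\lambda,\eta,\rho)$, shows the complement of $V(\lambda,\eta,\rho)$ is closed (so limits of ``non-degenerate'' configurations stay non-degenerate), uses Mahler compactness for the compactness of $K$, and shows that the $(t,h)$-thickening of $V(\lambda,\eta,\rho)$ stays inside a slightly enlarged $V(\lambda',\eta',\rho')$ of small measure, thereby bypassing the limit $\delta\to 0$ entirely. That concrete boundary analysis is the content your abstract continuity-of-measure argument leaves out.
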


 This is an important lemma because it explains that the
part of $S$ near its \textquotedblleft boundary\textquotedblright\ is not
relevant. This lemma  holds  for $S'$ as well but it will be proven below only for $S$.

We shall prove first a particular case of Theorem \ref{thm:reduction} which also requires Lemma \ref{lem:boundary}:

\begin{proposition}
\label{prop:reduction} Let $\varphi:S\rightarrow\mathbb{R}$ be a bounded
continuous function. Then for almost all $\theta$ in $\M_{d,c}(\mathbb{R})$,
\[
\lim_{n\rightarrow\infty}\frac{1}{n}\sum_{k=0}^{n-1}\varphi\circ R^{k}
(\Lambda_{\theta})=\frac{1}{\mu_{S}(S)}\int_{S}\varphi d\mu_{S}.
\]

\end{proposition}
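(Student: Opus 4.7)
The plan is to combine Birkhoff's theorem for the induced map $R$ with a Fubini argument in a local product chart and a perturbation estimate using the $\mathcal{H}_{\le}$-direction. First, since $g_t$ is ergodic on $(\mathcal{L}_{d+c},\mu)$, the induced system $(R,\mu_{S})$ is ergodic, and Birkhoff's theorem produces a set $G_{0}\subseteq S$ with $\mu_{S}(S\setminus G_{0})=0$ on which $\frac{1}{n}\sum_{k=0}^{n-1}\varphi(R^{k}\Lambda)\to\bar{\varphi}:=\frac{1}{\mu_{S}(S)}\int_{S}\varphi\,d\mu_{S}$. The pullback $\tilde{G}:=R^{-1}(G_{0})$ has full $\mu$-measure in $\mathcal{L}_{d+c}\setminus\mathcal{N}$, and the convergence persists at every $\Lambda\in\tilde{G}$ since $R^{k}\Lambda=R^{k-1}(R\Lambda)$ for $k\ge 1$.

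Next, because $\mathcal{H}_{\le}\cap\mathcal{H}_{>0}=\{I_{d+c}\}$, the map $(h,\theta)\mapsto h\Lambda_{\theta}$ is a local diffeomorphism near $(I_{d+c},0)$ from $\mathcal{H}_{\le}\times\M_{d,c}$ onto an open subset of $\mathcal{L}_{d+c}$, and under this chart the Haar measure pulls back to an absolutely continuous measure with respect to the product of Haar on $\mathcal{H}_{\le}$ and Lebesgue on $\M_{d,c}$. Applying Fubini to $\tilde{G}$ (intersected with a full-measure set on which flow-Birkhoff holds for a suitable countable family of continuous functions needed below), one obtains, for Lebesgue-a.e. $\theta\in\M_{d,c}$, that $\{h\in\mathcal{H}_{\le}:h\Lambda_{\theta}\in\tilde{G}\}$ has full Haar measure in a neighborhood of $I_{d+c}$; hence one can choose $h_{n}\to I_{d+c}$ in $\mathcal{H}_{\le}$ with $h_{n}\Lambda_{\theta}\in\tilde{G}$, so that $\frac{1}{n}\sum_{k=0}^{n-1}\varphi(R^{k}(h_{n}\Lambda_{\theta}))\to\bar{\varphi}$.

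The main step is to transfer this convergence from $h_{n}\Lambda_{\theta}$ to $\Lambda_{\theta}$. Fix $\varepsilon>0$ and use Lemma~\ref{lem:boundary} to choose a compact $K\subseteq S$ and $\delta>0$ with $\mu(U(K,\delta))\le\varepsilon$; pick $h=h_{n}$ with $d(h,I_{d+c})<\delta$ and $h\Lambda_{\theta}\in\tilde{G}$. Writing $h=h_{<}\cdot h_{0}$ with $h_{<}\in\mathcal{H}_{<}$ and $h_{0}$ block-diagonal, the conjugate $g_{t}hg_{-t}=(g_{t}h_{<}g_{-t})h_{0}$ remains in $B_{\mathcal{H}_{\le}}(I_{d+c},\delta)$ for all $t\ge 0$ because $g_{t}h_{<}g_{-t}\to I_{d+c}$ exponentially; thus $g_{t}h\Lambda_{\theta}\in B_{\mathcal{H}_{\le}}(I_{d+c},\delta)\cdot g_{t}\Lambda_{\theta}$ for every $t\ge 0$, and after a bounded reindexing, $R^{k}(h\Lambda_{\theta})=h'_{k}\cdot g_{s_{k}}(R^{k}\Lambda_{\theta})$ with $h'_{k}\in B_{\mathcal{H}_{\le}}(I_{d+c},\delta)$ and $s_{k}\to 0$. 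Split the Birkhoff sum at $\Lambda_{\theta}$ according to whether $R^{k}\Lambda_{\theta}\in K$: on $K$, compactness gives $\varphi$ effectively uniformly continuous, so the terms at $\Lambda_{\theta}$ and $h\Lambda_{\theta}$ differ by a modulus $\omega(\delta)\to 0$; on the tail, each term is bounded by $\|\varphi\|_{\infty}$, and every "bad" index $k$ with $R^{k}\Lambda_{\theta}\notin K$ produces a passage of $(g_{t}(h\Lambda_{\theta}))_{t\in[t'_{k},t'_{k}+1]}$ through $U(K,\delta)$. By Lemma~\ref{lem:nb-returntimes} and flow-Birkhoff at $h\Lambda_{\theta}$ applied to continuous approximations of $\mathbf{1}_{U(K,\delta)}$, the density of such indices is bounded by a constant multiple of $\mu(U(K,\delta))\le\varepsilon$. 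Combining the two pieces yields $\limsup_{n}\bigl|\frac{1}{n}\sum_{k=0}^{n-1}\varphi(R^{k}\Lambda_{\theta})-\bar{\varphi}\bigr|\ll\omega(\delta)+\|\varphi\|_{\infty}\varepsilon$, and letting $\varepsilon,\delta\to 0$ closes the argument; the proof for $S'$ is identical.

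The principal obstacle lies in this final step, because $\varphi$ is only continuous on $S$ rather than uniformly continuous in an $\mathcal{H}_{\le}$-direction (as in Theorem~\ref{thm:reduction}), so the pointwise comparison between $\varphi(R^{k}\Lambda_{\theta})$ and $\varphi(R^{k}(h\Lambda_{\theta}))$ can only be controlled on the compact subset $K$; Lemma~\ref{lem:boundary} is precisely what makes the tail contribution measure-theoretically small, and circularity is avoided by estimating the density of bad indices through the already-generic orbit of $h\Lambda_{\theta}$ rather than through $\Lambda_{\theta}$ itself.
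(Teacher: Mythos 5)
Your proposal is correct and follows essentially the same route as the paper: choose via a Fubini/transversality argument a small $h\in\mathcal{H}_{\le}$ so that $h\Lambda_{\theta}$ is Birkhoff-generic, then compare the return orbits of $\Lambda_{\theta}$ and $h\Lambda_{\theta}$ by splitting according to visits to a compact $K\subset S$ furnished by Lemma~\ref{lem:boundary}, using compactness of $K$ for the matched terms and flow-Birkhoff for $1_{U(K,\delta)}$ together with Lemma~\ref{lem:nb-returntimes} to bound the density of mismatched returns. One phrasing is inexact: the correspondence between the two return sequences is not a ``bounded reindexing'' with $s_{k}\to 0$ (the index drift can grow like $O(T\varepsilon)$ and the time offsets merely stay uniformly small, of size $O(\alpha)$), but your subsequent accounting of the bad-index density already controls exactly this drift, so the overall argument matches the paper's.
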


\subsubsection{Auxiliary lemmas}
The proof of Theorem \ref{thm:reduction} needs three more lemmas.
The first one only uses  that $S$ and $S'$ are transverse to the flow together with the inverse function theorem. In the remaining of the section \ref{sect:reduction}, the balls $B(I_{d+c},r)$ are in $\SL(d+c,\R)$.
\begin{lemma}
	\label{lem:K-S} For all compact subset $K$ in $S$ (or in $S'$), there exist $\alpha$ and
	$\eta>0$ such that 
	\newline- the map $(t,\Lambda)\rightarrow g_{t}\Lambda$ is
	one to one on $[-\alpha,\alpha]\times K$,
	\newline- for all $h\in
	B(I_{d+c},\eta)$ and all $\Lambda$ in $K$, there exists an unique
	$t=t(h,\Lambda)\in[-\alpha,\alpha]$ such that $g_{-t}h\Lambda\in S$.
	\newline- the maps $\sigma:(h,\Lambda)\rightarrow t=t(h,\Lambda)$ and $\pi
	:(h,\Lambda)\rightarrow g_{-t}h\Lambda$ are continuous on $B(I_{d+c}
	,\eta)\times K$ and the values of $\sigma$ are in $[-\alpha/4,\alpha/4]$.
\end{lemma}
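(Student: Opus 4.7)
The plan is to deduce all three conclusions from transversality of $S$ (and $S'$) to the flow $(g_t)$, given by Lemma~\ref{lem:submanifold}, combined with the inverse function theorem and a compactness argument on $K$. Note that the last clause presumably refers to $\sigma$ rather than $\tau$, so I produce $\sigma$ with values in $[-\alpha/4,\alpha/4]$.

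First, I would set up the local picture. Fix $\Lambda_0 \in S$. Since $\dim S+1=\dim\mathcal L_{d+c}$ and the flow velocity $\frac{d}{dt}\big|_{t=0}g_t\Lambda_0$ is not in $T_{\Lambda_0}S$, the map
\[
\Phi:(t,\Lambda)\in \mathbb R\times S\longrightarrow g_t\Lambda\in\mathcal L_{d+c}
\]
has invertible differential at $(0,\Lambda_0)$. The inverse function theorem yields $\epsilon_{\Lambda_0}>0$ and an open neighborhood $V_{\Lambda_0}$ of $\Lambda_0$ in $S$ such that $\Phi$ restricts to a diffeomorphism from $(-\epsilon_{\Lambda_0},\epsilon_{\Lambda_0})\times V_{\Lambda_0}$ onto an open subset of $\mathcal L_{d+c}$.

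Next, I would uniformize over $K$. Cover $K$ by finitely many such sets $V_{\Lambda_1},\dots,V_{\Lambda_N}$; let $\alpha_0=\min_i\epsilon_{\Lambda_i}$, and use a Lebesgue-number argument to get a radius $\rho>0$ with the property that every $\Lambda\in K$ is contained, together with its $\rho$-neighborhood in $S$, in some $V_{\Lambda_i}$. Set $\alpha=\alpha_0/4$. For the first bullet, I would then argue global injectivity of $\Phi$ on $[-\alpha,\alpha]\times K$ (after possibly shrinking $\alpha$) by contradiction: otherwise there would exist sequences of distinct pairs $(t_n,\Lambda_n),(s_n,\Lambda'_n)\in[-1/n,1/n]\times K$ with $g_{t_n}\Lambda_n=g_{s_n}\Lambda'_n$; compactness of $K$ extracts subsequential limits $t_n,s_n\to 0$, $\Lambda_n\to\Lambda_\infty$, $\Lambda'_n\to\Lambda'_\infty$ in $K$, and continuity of the flow forces $\Lambda_\infty=\Lambda'_\infty$, contradicting local injectivity of $\Phi$ at $\Lambda_\infty$.

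For the second and third bullets I would use the local inverses of $\Phi$. By continuity of these inverses and finiteness of the cover, there exists $\eta>0$ such that whenever $\Lambda\in K$ and $\|h-I_{d+c}\|<\eta$, the perturbed lattice $h\Lambda$ lies in $\Phi\bigl((-\alpha/4,\alpha/4)\times V_{\Lambda_i}\bigr)$ for some $V_{\Lambda_i}$ containing $\Lambda$. Writing $h\Lambda=g_t\pi$ uniquely in that chart defines $\sigma(h,\Lambda)=t\in[-\alpha/4,\alpha/4]$ and $\pi(h,\Lambda)=g_{-t}h\Lambda\in S$; uniqueness of $t$ in the larger interval $[-\alpha,\alpha]$ is inherited from the first bullet, and continuity of $\sigma$ and $\pi$ is inherited from continuity of the local inverses.

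The main obstacle is purely technical: making the local constants $\epsilon_{\Lambda_0}$ and the allowed perturbation size uniform over $K$. This requires a careful Lebesgue-number / finite-cover argument together with a check that the patches overlap in a way compatible with global injectivity. There is no conceptual difficulty beyond the inverse function theorem once transversality of $S$ (from Lemma~\ref{lem:submanifold}) is in hand; the argument for $S'$ is identical, since $S'$ is likewise a codimension-one submanifold transverse to $g_t$.
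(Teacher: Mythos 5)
Your overall approach—transversality of $S$ (Lemma~\ref{lem:submanifold}) plus the inverse function theorem and a compactness argument—is exactly what the paper indicates (the proof is omitted in the paper, which only says it "only uses that $S$ and $S'$ are transverse to the flow together with the inverse mapping Theorem"). The flow-box construction, the finite cover, and the contradiction argument for injectivity of $\Phi$ on $[-\alpha,\alpha]\times K$ are all sound, and your observation that the final $\tau$ in the statement should read $\sigma$ is correct.

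However, the step "uniqueness of $t$ in the larger interval $[-\alpha,\alpha]$ is inherited from the first bullet" is a non-sequitur as written. The first bullet gives injectivity of $\Phi$ on $[-\alpha,\alpha]\times K$, i.e. it controls coincidences among points $g_t\Lambda$ with $\Lambda\in K$. For uniqueness of the hitting time, you must rule out the existence of two distinct $t,t'\in[-\alpha,\alpha]$ with $g_{-t}h\Lambda\in S$ \emph{and} $g_{-t'}h\Lambda\in S$; the points $g_{-t}h\Lambda$ and $g_{-t'}h\Lambda$ lie in $S$ but not necessarily in $K$, so the first bullet does not apply to them. What is needed instead is either (i) the standard "single-slice" refinement of the flow boxes — shrink each $U_{\Lambda_i}=\Phi\bigl((-\epsilon_i,\epsilon_i)\times V_{\Lambda_i}\bigr)$ further so that $U_{\Lambda_i}\cap S=V_{\Lambda_i}$, which is always possible since $S$ is an embedded submanifold — after which uniqueness of the hitting time inside each box is automatic; or (ii) a separate sequential compactness argument, parallel to your proof of the first bullet: if uniqueness failed for all $\alpha,\eta$, one would produce $\Lambda_n\in K$, $h_n\to I_{d+c}$, $t_n\ne t_n'$, both tending to $0$, with $g_{-t_n}h_n\Lambda_n$ and $g_{-t_n'}h_n\Lambda_n$ both in $S$; both converge in $S$ to a common limit $\Lambda_\infty\in K$, so for large $n$ both lie in the flow box $V_{\Lambda_\infty}$, and injectivity of $\Phi$ in that chart forces $t_n=t_n'$, a contradiction. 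Either remedy lies well within your framework, but it is a genuinely missing step rather than a restatement of the first bullet.
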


\begin{proof}
Let $\psi:(t,\Lambda)\in\R\times S\rightarrow g_t\Lambda\in\mathcal L_{d+c}$.	
Since $S$ is a submanifold and since the flow is transverse to $S$, thanks to the  inverse function theorem, we see that for each $\Lambda\in S$, there exists 	an open neighborhood $W_{\Lambda}$ of $\Lambda$ in $S$ and $a_{\Lambda}>0$ such that $\psi$ is a diffeomorphism from $]-a_{\Lambda},a_{\Lambda}[\times W_{\Lambda}$ onto the open set $\psi(]-a_{\Lambda},a_{\Lambda}[\times W_{\Lambda})$ in $\mathcal L_{d+c}$ and such that
$
\psi(]-a_{\Lambda},a_{\Lambda}[\times W_{\Lambda})\cap S=W_{\Lambda}
$. Since $K$ can be covered  by finitely many $W_{\Lambda}$, 	we can find an open set $W$ in $S$ containing $K$ and $\alpha>0$ such that 
\[
\psi(]-2\alpha,2\alpha[\times W)\cap S=W.
\]
We can suppose that  $W$ is relatively compact w.l.o.g.. Since $\psi$ is one to one on the compact set $\{0\}\times \overline W$, reducing $\alpha$, we can suppose that $\psi$ is one to one on $]-2\alpha,2\alpha[\times\overline W$. It follows that $\psi$ is a diffeomorphism from $U=]-2\alpha,2\alpha[\times W$ onto $\psi(U)$.
We can choose $\eta>0$ such that $B(I_{d+c},\eta)K\subset \psi([-\alpha/4,\alpha/4]\times W)$. With this choice of $\eta$, for each $(h,\Lambda)\in B(I_{d+c},\eta)\times K$, there exists  a pair $(t,\Gamma)\in [-\alpha/4,\alpha/4]\times W$ such that $g_t\Gamma=\psi(t,\Gamma)=h\Lambda$.
If $(t',\Gamma')\in [-\alpha,\alpha]\times S$ is such that $g_{t'}\Gamma'=h\Lambda=g_t\Gamma$, then $\Gamma'=g_{t-t'}\Gamma\in \psi(]-2\alpha,2\alpha[\times W)\cap S=W$. Hence $\Gamma'\in W$ and since $\psi$ is one to one, $t'-t=0$ and $\Gamma'=\Gamma$.
Finally, $(\sigma(h,\Lambda),\pi(h,\Lambda))=(\psi_{|_U})^{-1}(h\Lambda)$ is a continuous function of $(h,\Lambda)\in B(I_{d+c},\eta)\times K$.
\end{proof}

 The second lemma is
a purely measure-theoretic result.

\begin{lemma}
	\label{lem:measure-image} Let $X$ and $Y$ be locally compact second countable
	metric spaces. Let $\mu_{X}$ and $\mu_{Y}$ be two measures on $X$ and $Y$
	finite on compact subsets. Suppose $\psi:X\rightarrow Y$ is a continuous map
	such that every $y$ in $Y$ has at most $N$ preimages and such that for all $x$ in $X$
	there exists a compact neighborhood $\omega_{x}$ of $x$ with the following
	property:
	\newline- $\psi$ is one to one on $\omega_{x}$,
	\newline- the image by
	$\psi$ of the measure $1_{\omega_{x}}\mu_{X}$ is the measure $1_{\psi
		(\omega_{x})}\mu_{Y}$. 
	\newline Then for all nonnegative measurable functions
	$f:Y\rightarrow\mathbb{R}$,
	\[
	\int_{X}f\circ\psi\, d\mu_{X}\leq N\int_{Y}f\,d\mu_{Y}.
	\]
	
\end{lemma}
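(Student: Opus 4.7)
The plan is to decompose $X$ into a countable disjoint Borel partition whose pieces $\psi$ sends homeomorphically and measure-preservingly onto their images in $Y$, then use the bound on fibre cardinalities to control the multiplicity with which these images cover $Y$.

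First, by second countability of $X$ I would extract from the family of neighborhoods $(\omega_x)_{x\in X}$ a countable subfamily $(\omega_n)_{n\in\mathbb N}$ still covering $X$ (each $\omega_n$ is compact, hence closed). Set
$$A_n := \omega_n \setminus \bigcup_{k<n} \omega_k,$$
so the $A_n$ are pairwise disjoint Borel sets partitioning $X$ with $A_n \subset \omega_n$. The restriction $\psi|_{\omega_n}$ is a continuous bijection from a compact space to a Hausdorff space, hence a homeomorphism onto the compact set $\psi(\omega_n)$; in particular $\psi(A_n)$ is a Borel subset of $Y$. From the local hypothesis $\psi_{\ast}(1_{\omega_n}\mu_X)=1_{\psi(\omega_n)}\mu_Y$ and the bijectivity of $\psi|_{\omega_n}$, one deduces the refined identity $\psi_{\ast}(1_{A_n}\mu_X)=1_{\psi(A_n)}\mu_Y$ (use $\psi(A_n\cap\psi^{-1}(B))=\psi(A_n)\cap B$ for Borel $B\subset Y$). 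By the standard approximation of nonnegative measurable functions, this yields
$$\int_{A_n} f\circ\psi \, d\mu_X = \int_{\psi(A_n)} f \, d\mu_Y$$
for every nonnegative Borel $f:Y\to\mathbb R$.

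Summing this identity over $n$ and applying Tonelli gives
$$\int_X f\circ\psi \, d\mu_X = \sum_n \int_{\psi(A_n)} f \, d\mu_Y = \int_Y f(y)\,\Bigl(\sum_n 1_{\psi(A_n)}(y)\Bigr)\,d\mu_Y.$$
The conclusion follows from the pointwise inequality $\sum_n 1_{\psi(A_n)}(y)\le N$: since the $A_n$ are disjoint and each $\psi|_{\omega_n}$ is injective, every index $n$ with $y\in\psi(A_n)$ contributes a distinct element of $\psi^{-1}(\{y\})$, which by assumption has at most $N$ elements. The only step that really requires care is the promotion of the local pushforward identity from the whole $\omega_x$ to its Borel sub-pieces $A_n$ (and, relatedly, the Borel measurability of $\psi(A_n)$); both rest on the fact that $\psi|_{\omega_n}$ is a homeomorphism onto its image, which is precisely where the compactness of $\omega_n$ is used.
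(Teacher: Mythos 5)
Your proof is correct. The paper states this lemma among ``three Lemmas the proofs of which are omitted,'' so there is no proof in the text to compare against; your argument is the natural one. You extract a countable subcover $(\omega_n)$ by Lindel\"of (each $\omega_x$ is a neighborhood, so the interiors cover $X$), disjointify to $A_n=\omega_n\setminus\bigcup_{k<n}\omega_k$, and use that $\psi|_{\omega_n}$ is a homeomorphism onto its compact image (continuous injection from a compact space to a Hausdorff space) to conclude both that $\psi(A_n)$ is Borel and that the local pushforward identity on $\omega_n$ descends to $\psi_*(1_{A_n}\mu_X)=1_{\psi(A_n)}\mu_Y$. Summing over $n$ and applying Tonelli reduces everything to the multiplicity bound $\sum_n 1_{\psi(A_n)}(y)\le N$, which you justify correctly by disjointness of the $A_n$ and injectivity of each $\psi|_{\omega_n}$: distinct indices $n$ with $y\in\psi(A_n)$ yield distinct preimages of $y$. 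The chain of steps is sound and, as you note, the compactness of the $\omega_x$ is what makes the Borel-isomorphism argument work.
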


\begin{proof}
Since $X$ is a countable union of
compact sets, making use of the monotone convergence theorem, we can reduce to $X$
compact. Next, since for any positive Radon measure $\mu$ on $Y$ and  for any Borel set $B\subset Y$, $\mu(B)=\sup\{\mu(K):K\text{ compact }\subset B\}$, we can assume that $f=1_{K}$ is
the indicator function of a compact subset $K$ of $Y$. With these assumptions, $L=\psi^{-1}(K)$ is a
compact subset of $X$ and therefore is included in a finite union of
$\omega_{x}$, 
$
L\subset\cup_{i=1}^{n}\omega_{x_{i}}.
$
For each $i$, let $\omega_{x_{i}}^{\prime}=\omega_{x_{i}}\setminus\cup
_{j<i}\omega_{x_{j}}$, $L_{i}=\omega_{x_{i}}^{\prime}\cap L$ and $K_{i}
=\psi(L_{i})$. Since $L$ is the disjoint union of the $L_{i}$, $\mu
_{X}(L)=\sum_{i=1}^{n}\mu_{X}(L_{i})$. Since $L_{i}$ is included in
$\omega_{x_{i}}$, $\mu_{X}(L_{i})=\mu_{Y}(K_{i})$. Since the $L_{i}$ are
disjoint, the number of $\psi$-preimage of any $y$ is at least $F(y)=\sum
_{i=1}^{n}1_{K_{i}}(y)$ therefore $F\leq N$. It follows that
\begin{align*}
N\mu_{Y}(K)  &  \geq\int_{K}F(y)d\mu_{Y}(y)=\sum_{i=1}^{n}\mu_{Y}(K_{i})
=\sum_{i=1}^{n}\mu_{X}(L_{i})=\mu_{X}(L).
\end{align*}
\end{proof}

The last lemma is an easy consequence of the previous lemma and of  the definition of the induced measure $\mu_S$.

\begin{lemma}
	\label{lem:induce-measure} Let $U$ be a Borel subset in $\mathcal{L}_{d+c}$
	such that for all $\Lambda$ in $U$, $g_{t}\Lambda\in U$ for all $t$ in a time
	interval $I_{\Lambda}$ of length $1$ containing $0$. Then
	\[
	\mu_{S}(U\cap S)\leq 4A\mu(U)
	\]
	where $A$ is the maximum number of entrance times in $S$ of a flow trajectory
	during a time interval of length $1$ (see Lemma \ref{lem:nb-returntimes}).
\end{lemma}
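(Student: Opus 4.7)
The plan is to use the flow–box definition of $\mu_S$ together with the uniform bound $A$ on the number of $S$–crossings in any time interval of length $1$ (Lemma \ref{lem:nb-returntimes}), after first splitting $U$ into the two pieces where we have room to flow in each direction.

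Since $0\in I_\Lambda$ and $|I_\Lambda|=1$, write $I_\Lambda=[-a_\Lambda,b_\Lambda]$ with $a_\Lambda,b_\Lambda\ge 0$ and $a_\Lambda+b_\Lambda=1$. Then at least one of $b_\Lambda\ge 1/2$ or $a_\Lambda\ge 1/2$ holds. Accordingly, define
\[
U^{+}=\{\Lambda\in U:g_t\Lambda\in U\ \forall t\in[0,1/2]\},\qquad
U^{-}=\{\Lambda\in U:g_t\Lambda\in U\ \forall t\in[-1/2,0]\},
\]
so that $U=U^{+}\cup U^{-}$ and consequently $U\cap S\subset (U^{+}\cap S)\cup (U^{-}\cap S)$.

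Next I would apply Lemma \ref{lem:measure-image} to the map
\[
\psi:[0,1/2]\times (S\cap U^{+})\longrightarrow \mathcal L_{d+c},\qquad
\psi(t,\Lambda)=g_t\Lambda,
\]
with source measure $dt\otimes d\mu_S$ and target measure $\mu$. The flow–box property defining $\mu_S$ provides, around every $(t_0,\Lambda_0)$, a compact neighborhood on which $\psi$ is one–to–one and pushes $dt\otimes d\mu_S$ to $\mu$; and by Lemma \ref{lem:nb-returntimes} a trajectory meets $S$ at most $A$ times in any interval of length $\le 1$, so $\psi$ has multiplicity at most $A$. Taking $f=\mathbf{1}_U$ in Lemma \ref{lem:measure-image}, and noting that $\psi([0,1/2]\times (S\cap U^{+}))\subset U$ by the very definition of $U^{+}$, I get
\[
\tfrac12\,\mu_S(S\cap U^{+})=\int_{[0,1/2]\times(S\cap U^{+})}\mathbf{1}_U\circ\psi\;d(dt\otimes d\mu_S)\le A\int_{\mathcal L_{d+c}}\mathbf{1}_U\,d\mu=A\,\mu(U),
\]
hence $\mu_S(S\cap U^{+})\le 2A\,\mu(U)$. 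The same argument applied to the interval $[-1/2,0]$ and the set $U^{-}$ yields $\mu_S(S\cap U^{-})\le 2A\,\mu(U)$. Adding the two bounds gives $\mu_S(U\cap S)\le 4A\,\mu(U)$.

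The one point that deserves a moment of care (and is the only real content beyond bookkeeping) is verifying the hypotheses of Lemma \ref{lem:measure-image}, namely that the local injectivity/measure–matching neighborhoods provided by the definition of $\mu_S$ at points $(0,\Lambda_0)$ extend to neighborhoods of general $(t_0,\Lambda_0)\in[0,1/2]\times S$; this follows by shifting by $g_{t_0}$, using that $g_{t_0}$ preserves $\mu$ and that the flow–box description is local in $t$. Everything else is a clean packaging of these ingredients.
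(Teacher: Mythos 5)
Your proof is correct, and it follows the route the paper indicates (the paper omits the proof of this lemma, remarking only that it is ``an easy consequence'' of Lemma~\ref{lem:measure-image} and of the defining property of $\mu_S$). The decomposition of $U$ into $U^+$ and $U^-$, the multiplicity bound $A$ for $\psi$ coming from Lemma~\ref{lem:nb-returntimes} together with Lemma~\ref{lem:return-minimal}, and the verification that the flow-box data at $(0,\Lambda_0)$ transports to $(t_0,\Lambda_0)$ via the $\mu$-invariance of $g_{t_0}$ are all exactly the ingredients the paper had in mind. Two small remarks for completeness. First, to invoke Lemma~\ref{lem:measure-image} you implicitly need $X=[0,1/2]\times(S\cap U^+)$ to be locally compact, hence $U^+$ to be open in $\mathcal{L}_{d+c}$; this does hold (tube lemma applied to the continuous map $(\Lambda,t)\mapsto g_t\Lambda$ and the compact factor $[0,1/2]$), but it is worth saying, and one can also sidestep it entirely by taking $X=[0,1/2]\times S$ with the same $f=\mathbf 1_U$ and bounding the left side from below by $\frac12\mu_S(S\cap U^+)$. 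Second, the observation that $I_\Lambda$ contains one of $[0,1/2]$ or $[-1/2,0]$ is exactly right for closed intervals $I_\Lambda$, which is how the lemma is used in the paper; no issue arises. The constant $4A$ is reproduced, consistent with the paper's remark that it is not optimal.
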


The constant $4A$ is certainly not the best one.

\begin{proof}
Take $X=[-1,1]\times S$,
$Y=\mathcal{L}_{d+c}$, $\psi:X\rightarrow Y$ the map
defined by $\psi(t,\Lambda)=g_{t}\Lambda$, the measures $\mu_{X}=dt\otimes
\mu_{S}$ and $\mu_{Y}=\mu$ and the function $f=1_{U}$. By definition of the
induced measure, we know that the image of the restriction of $dt\otimes
\mu_{S}$ to a small enough compact neighborhood $\omega$ of any $x\in X$ is the restriction to
$\psi(\omega)$ of the invariant measure $\mu$ on $\mathcal{L}_{d+c}$.
Therefore by Lemma \ref{lem:measure-image},
\[
N\mu(U)\geq\int_{S}\int_{-1}^{1}1_{U}(g_{t}\Lambda)dt\ d\mu_{S}
(\Lambda)
\]
where $N$ is the maximum number of preimages. By assumption $\int_{-1}
^{1}1_{U}(g_{t}\Lambda)dt\geq 1$ for all $\Lambda$ in $U$ and it is easy to see
that $N\leq 4A$. Therefore $4A\mu(U)\geq\mu_{S}(U\cap S)\times 1$.
\end{proof}

\subsubsection{An example}\label{sec:example}

We want to construct a co-dimension one submanifold $V$ in $\mathcal{L}_{d+c}$
transverse to the flow $g_{t}$ together with a bounded continuous function
$\varphi:V\rightarrow\mathbb{R}$ such that for a set  of
$\theta\in M_{d,c}(\R)$ of positive measure, \ the sequence $\frac{1}{n}\sum_{k=0}^{n-1}\varphi\circ R_{V}
^{k}(\Lambda_{\theta})$ does not converge to $\frac{1}{\mu_{V}(V)}\int
_{V}\varphi d\mu_{V}$ where $R_{V}$ is the first return map in $V$ and
$\mu_{V}$ is the invariant measure induced by the flow. The idea is the
following. Take $V$ an open set in $S$. Then $\mu_{V}$ is the restriction of
$\mu_{S}$ to $V$. Suppose that the open set $V$ can be chosen in order that
for all $\theta\in M_{d,c}(\R)$, and all $k\geq 1$,$\ $
\[
R_{V}^{k}(\Lambda_{\theta})=R_{S}^{k}(\Lambda_{\theta})\,(=R^{k}(\Lambda
_{\theta})).
\]
Then if $\varphi:S\rightarrow\mathbb{R}$ is a non negative continuous function
not identically zero with support included in $V$, the sequences
\[
\frac{1}{n}\sum_{k=0}^{n-1}\varphi\circ R_{V}^{k}(\Lambda_{\theta}),\ \frac
{1}{n}\sum_{k=0}^{n-1}\varphi\circ R_{S}^{k}(\Lambda_{\theta})
\]
converge to the same limit which cannot be equal to both $\frac{1}{\mu_{S}
(S)}\int_{S}\varphi d\mu_{S}$ and $\frac{1}{\mu_{V}(V)}\int_{V}\varphi
d\mu_{V}=\frac{1}{\mu_{S}(V)}\int_{S}\varphi d\mu_{S}$ provided that $\mu
_{S}(V)<\mu_{S}(S)$. So we are reduced to constructing $V$.

Observe that Theorem \ref{thm:reduction} implies that for such a $V$, $\varphi=1_{V}$
is not almost everywhere continuous on $S$ which means that the boundary of
$V$ in $S$ has positive measure. Moreover, it shows that the assumption about the continuity of the function $\varphi$ in Theorem \ref{thm:reduction}, cannot be dropped.

\subsubsection{Construction of $V$}\label{sec:example2}

Consider the set $\mathbb{T}$ of lattices $\Lambda_{\theta}$ such that the
coefficients of $\theta$ are all in $[0,1]$. It is a compact subset in
$\mathcal{L}_{d+c}$ containing all the lattices $\Lambda_{\theta}$. Denote
$W_{\varepsilon}$ the open ball $B(I_{d+c},\varepsilon)$ in $\SL(d+c,\mathbb{R)}$.
We consider the open sets
\[
U_{n}(\varepsilon)=\bigcup_{t\in[n,n+1]}g_{t}(W_{\varepsilon
}\mathbb{T})
\]
and for a sequence $(\varepsilon_{n})_{n\in\mathbb{N}}$ of positive real
numbers, we consider the open set
\[
U=U((\varepsilon_{n})_{n\in\mathbb{N}})=
\bigcup_{n\in\mathbb{N}}
U_{n}(\varepsilon_{n}).
\]
Take $V=S\cap U$. For all $t\geq 0$ and all $\theta$, $g_{t}\Lambda_{\theta
}=g_{t}I_{d+c}\Lambda_{\theta}$ is in $U$, hence for all $k\in\mathbb{N}$, we have
\[
R_{S}^{k}(\Lambda_{\theta})=R_{V}^{k}(\Lambda_{\theta}).
\]
So we are reduced to showing that when the sequence $(\varepsilon_{n})_{n}$ is
small enough,
\[
\mu_{S}(V)<\mu_{S}(S).
\]

By definition of $U_{n}(\varepsilon)$, if $\Lambda=g_{t}g\Lambda_{\theta}$ with
$t\in[n,n+1]$, $g\in W_{\varepsilon}$ and $\Lambda_{\theta}\in\mathbb{T},$ then
\[
g_{s}g_{t}g\Lambda_{\theta}\in U_{n}(\varepsilon)
\]
for all $s$ in the interval $[n-t,n+1-t]$. So $U$ satisfies the assumption of the
Lemma \ref{lem:induce-measure} and therefore
\[
\mu_{S}(U\cap S)\leq 4A\sum_{n\in\mathbb{N}}\mu(U_{n}(\varepsilon_{n})).
\]
Using that $g_{t}g\Lambda=g_{n}(g_{t-n}gg_{-(t-n)})g_{t-n}\Lambda$, we see
that
\begin{align*}
U_{n}(\varepsilon)  &  =\{g_{t}\Lambda:t\in[n,n+1],\ \Lambda\in W_{\varepsilon
}\mathbb{T}\}\\
&  \subset g_{n}W_{\varepsilon^{\prime}}\{g_{s}\Lambda:s\in[0,1],\ \Lambda
\in\mathbb{T\}}
\end{align*}
where $\varepsilon^{\prime}$ is such that $g_{s}W_{\varepsilon}g_{-s}\subset
W_{\varepsilon^{\prime}}$ for all $s$ in $[0,1]$. Furthermore, the compact set
$\{g_{s}\Lambda:s\in[0,1],\ \Lambda\in\mathbb{T\}}$ has zero measure because
it has dimension $cd+1$ which is $<(c+d)^{2}-1$. Therefore
\[
\lim_{\varepsilon^{\prime}\rightarrow 0}\mu(W_{\varepsilon^{\prime}}
\{g_{s}\Lambda:s\in[0,1],\ \Lambda\in\mathbb{T}\})=0,
\]
which implies
\[
\lim_{\varepsilon\rightarrow 0}\mu(U_{n}(\varepsilon))=0.
\]
So there exists a sequence $(\varepsilon_{n})_{n\in\mathbb{N}}$ such that
\[
\sum_{n\in\mathbb{N}}\mu(U_{n}(\varepsilon_{n}))<\frac{1}{4A}\mu(S)
\]
and for such a sequence, we have $\mu_S(V)<\mu_S(S)$.

\subsection{Proof of Proposition \ref{prop:reduction}}

Let $\varphi:S\rightarrow\mathbb{R}$ be a bounded continuous  function.

Let $\varepsilon$ be a positive real number, let $K$ and $\delta$ be
associated with $\varepsilon$ by Lemma \ref{lem:boundary}, and $\alpha$ and
$\eta$ associated with $K$ by Lemma \ref{lem:K-S}.

{\it Preliminary observation. }
Let $(a_{n})_{n\in\N}$ be a decreasing sequence of reals numbers in $]0,\eta[$
tending to zero and let $L_{a_{n}}=B(I_{d+c},a_{n})\times K$. Since
the intersection of all the compact sets $L_{a_{n}}$, $n\in\N$, is $L_{0}=\{I_{d+c}\}\times
K$ and since the map $\psi(g,\Lambda)=\varphi(\pi(g,\Lambda))-\varphi
(\Lambda)$ is continuous, we have
\[
\cap_{n\geq 0}\psi(L_{a_{n}})=\psi(\cap_{n\geq 0}L_{a_{n}})=\psi(L_{0})=\{0\}.
\]
Therefore, for $n$ large enough, $\psi(L_{a_{n}})\subset]-\varepsilon
,\varepsilon[$ which implies there exists $\beta>0$ such that for all
$\Lambda\in K$ and all $g\in B(I_{d+c},\beta)$,
\begin{equation}
\left\vert \varphi(\pi(g,\Lambda))-\varphi(\Lambda)\right\vert \leq
\varepsilon. \label{uniform}
\end{equation}

Finally, let $\gamma>0$ be such that for all $s\geq 0$ and all $h\in
B_{\mathcal{H}_{\leq}}(I_{d+c},\gamma)B_{\mathcal{H}_{\leq}}^{-1}
(I_{d+c},\gamma)$, 
\[
\dd(g_{s}hg_{-s},I_{d+c})\leq\min(\delta,\beta,\eta).
\]

For $T\geq 0$, $\Lambda\in\mathcal L_{d+c}$, and $E\subset S$, denote
\[
I(T,\Lambda,E)=\{t\in[0,T]:g_{t}\Lambda\in E\}.
\]

For almost all $\theta\in M_{d,c}(\R)$, we can fix $h_{\theta}\in B_{\mathcal{H}_{\leq}
}(I_{d+c},\gamma)$ such that the conclusion of the Birkhoff ergodic theorem holds for the flow $g_t$ or the first return map in $S$ and the 
lattice $h_{\theta}\Lambda_{\theta}$. Observe that $h_{\theta}=h_{\theta,\varepsilon}$ 
depends on $\varepsilon$.
It is understood that we shall use the Birkhoff ergodic theorem in  countably many 
situations. We fix a sequence $(\varepsilon_{n})$ going to zero and for each
$\varepsilon=\varepsilon_{n}$, we use three times Birkhoff ergodic theorem and the ergodicity of the flow: for
almost all $\theta$,
\begin{equation}
\lim_{T\rightarrow\infty}\frac{1}{T}\int_{0}^{T}1_{U(K,\delta)}(g_{t}
h_{\theta}\Lambda_{\theta})dt
=\frac{1}{\mu(\mathcal{L}_{d+c})}\mu(U(K,\delta))\leq\varepsilon,
\label{birkhoff1}
\end{equation}
\begin{equation}
\lim_{T\rightarrow\infty}\frac{1}{T}\operatorname{card}I(T,h_{\theta}
\Lambda_{\theta},S)=\frac{1}{\mu_{S}(S)}\int_{S}\tau_S d\mu_{S}=\frac
{\mu(\mathcal{L}_{d+c})}{\mu_{S}(S)}, \label{birkhoff2}
\end{equation}
where $\tau_S$ is the first return time in $S$,
and
\begin{equation}
\lim_{T\rightarrow\infty}\frac{1}{\operatorname*{card}I(T,h_{\theta}
\Lambda_{\theta},S)}\sum_{t\in I(T,h_{\theta}\Lambda_{\theta},S)}\varphi
(g_{t}h_{\theta}\Lambda_{\theta})=\frac{1}{\mu_{S}(S)}\mu_{S}(\varphi).
\label{birkhoff3}
\end{equation}

Let $T$ be positive and let $s_{1}<...<s_{m}$ be the elements of $I(T,\Lambda_{\theta},S\setminus K)$,
we have
\[
g_{t}h_{\theta}\Lambda_{\theta}=g_{t-s_{i}}(g_{s_{i}}h_{\theta}g_{-s_{i}
})g_{s_{i}}\Lambda_{\theta}\in U(K,\delta)
\]
for all $t\in[ s_{i},s_{i}+1]$ and we can extract a subsequence $s_{n_{1}
},...,s_{n_{p}}$ defined by $n_{1}=1$ and $n_{i+1}=\min\{j:s_{j}\geq s_{n_{i}
}+1\}$. Now by Lemma \ref{lem:nb-returntimes}, there is an absolute constant $A$ such that there are at most
$A$ elements of $I(T,\Lambda_{\theta},S\setminus K)$ ($\subset I(T,\Lambda
_{\theta},S)$) in an interval of length $1$, hence $Ap\geq m$. Therefore, by
(\ref{birkhoff1})
\[
\frac{m}{A}\leq p\leq\int_{0}^{T+1}1_{U(K,\delta)}(g_{t}h_{\theta}
\Lambda_{\theta})dt\leq 2(T+1)\varepsilon
\]
and hence
\begin{equation}
\operatorname*{card}I(T,\Lambda_{\theta},S\setminus K)\leq 3AT\varepsilon
\label{nbvisitingSK1}
\end{equation}
for $T$ large enough: $T\geq T(\Lambda_{\theta},\varepsilon)$. We will also
need to bound above the number of elements in $I(T,h_{\theta}\Lambda_{\theta
},S\setminus K)$. Making use of (\ref{birkhoff1}), the same way of
reasoning leads to the same result
\begin{equation}
I(T,h_{\theta}\Lambda_{\theta},S\setminus K)\leq 3AT\varepsilon
\label{nbvisitingSK2}
\end{equation}
for $T\geq T(\Lambda_{\theta},\varepsilon)$.

{\it Heart of the proof. }We want to compare
\[
\Sigma_{1}=\frac{1}{\operatorname*{card}I(T,\Lambda_{\theta},S)}\sum_{t\in
I(T,\Lambda_{\theta},S)}\varphi(g_{t}\Lambda_{\theta})
\]
with
\[
\Sigma_{2}=\frac{1}{\operatorname*{card}I(T,h_{\theta}\Lambda_{\theta},S)}
\sum_{t\in I(T,h_{\theta}\Lambda_{\theta},S)}\varphi(g_{t}h_{\theta}
\Lambda_{\theta})
\]
because by (\ref{birkhoff3}), this latter sum tends to $\frac{1}{\mu_{S}(S)}
\int_{S}\varphi d\mu_{S}$ when $T$ goes to infinity. We split $\sum_{t\in
I(T,\Lambda_{\theta},S)}$ into two sums $\sum_{t\in I(T,\Lambda_{\theta},K)}$
and $\sum_{t\in I(T,\Lambda_{\theta},S\setminus K)}$. Observe that for $t\in
I(T,\Lambda_{\theta},K)$, $g_{t}h_{\theta}\Lambda_{\theta}=(g_{t}h_{\theta
}g_{-t})g_{t}\Lambda_{\theta}$ is of the form $g\Lambda$ with $g\in
B(I_{d+c},\eta)$ and $\Lambda\in K$, this allows to use Lemma \ref{lem:K-S}.
We use the notations of Lemma \ref{lem:K-S} and for $t$ in $I(T,\Lambda
_{\theta},K)$, we denote $t^{\prime}=\sigma(g_{t}h_{\theta}g_{-t},g_{t}
\Lambda_{\theta})$. By (\ref{uniform}), we have
\[
\left\vert \sum_{t\in I(T,\Lambda_{\theta},K)}\varphi(g_{t}\Lambda_{\theta
})-\sum_{t\in I(T,\Lambda_{\theta},K)}\varphi(\pi(g_{t}h_{\theta}g_{-t}
,g_{t}\Lambda_{\theta}))\right\vert \leq\varepsilon\operatorname{card}
I(T,\Lambda_{\theta},K).
\]
Now,
\[
\pi(g_{t}h_{\theta}g_{-t},g_{t}\Lambda_{\theta})=g_{-t^{\prime}}g_{t}
h_{\theta}g_{-t}g_{t}\Lambda_{\theta}=g_{t-t^{\prime}}h_{\theta}
\Lambda_{\theta}
\]
hence
\[
\left\vert \sum_{t\in I(T,\Lambda_{\theta},K)}\varphi(g_{t}\Lambda_{\theta
})-\sum_{t\in I(T,\Lambda_{\theta},K)}\varphi(g_{t-t^{\prime}}h_{\theta
}\Lambda_{\theta})\right\vert \leq\varepsilon\operatorname{card}
I(T,\Lambda_{\theta},K).
\]
Observe that the map $t\in I(T,\Lambda_{\theta},K)\rightarrow t-t^{\prime}$ is one to one because $t^{\prime}\in
[-\frac{\alpha}{4},\frac{\alpha}{4}]$ and the gap between two visiting times
of $K$ is $\geq\alpha$. Observe also that $t-t'\in
I(T,h_{\theta}\Lambda_{\theta},S)$ except possibly for the first and the last element of $t\in I(T,h_{\theta}\Lambda_{\theta},S)$. On the one hand, it follows that
\begin{multline*}
\left\vert \sum_{t\in I(T,\Lambda_{\theta},S)}\varphi(g_{t}\Lambda_{\theta
})-\sum_{t\in I(T,h_{\theta}\Lambda_{\theta},S)}\varphi(g_{t}h_{\theta}
\Lambda_{\theta})\right\vert \\
\leq\varepsilon\operatorname{card}I(T,\Lambda_{\theta},K)\\
+\left\Vert \varphi\right\Vert _{\infty}(\operatorname{card}I(T,\Lambda
_{\theta},K\setminus S)+\operatorname*{card}I(T,h_{\theta}\Lambda_{\theta
},S)-\operatorname*{card}I(T,\Lambda_{\theta},K)+2).
\end{multline*}
On the other hand, it follows that
\[
\operatorname*{card}I(T,h_{\theta}\Lambda_{\theta},S)\geq\operatorname*{card}
I(T,\Lambda_{\theta},K)-2
\]
and the same way of reasoning leads to
\[
\operatorname*{card}I(T,\Lambda_{\theta},S)\geq\operatorname*{card}
I(T,h_{\theta}\Lambda_{\theta},K) -2.
\]
Making use of (\ref{nbvisitingSK1}) and (\ref{nbvisitingSK2}), we obtain
\begin{align*}
-2  &  \leq\operatorname*{card}I(T,h_{\theta}\Lambda_{\theta}
,S)-\operatorname*{card}I(T,\Lambda_{\theta},K)=\\
&  \operatorname*{card}I(T,h_{\theta}\Lambda_{\theta},S)-\operatorname*{card}
I(T,h_{\theta}\Lambda_{\theta},K)\\
&  +\operatorname*{card}I(T,h_{\theta}\Lambda_{\theta},K)-\operatorname*{card}
I(T,\Lambda_{\theta},S)\\
&  +\operatorname*{card}I(T,\Lambda_{\theta},S)-\operatorname*{card}
I(T,\Lambda_{\theta},K)\\
&  \leq 3 AT\varepsilon+2+3AT\varepsilon=6AT\varepsilon+2,
\end{align*}
hence (using (\ref{nbvisitingSK1}) once again)
\[
\left\vert \sum_{t\in I(T,\Lambda_{\theta},S)}\varphi(g_{t}\Lambda_{\theta
})-\sum_{t\in I(T,h_{\theta}\Lambda_{\theta},S)}\varphi(g_{t}h_{\theta}
\Lambda_{\theta})\right\vert \leq\varepsilon\operatorname{card}I(T,\Lambda
_{\theta},K)+(9A\varepsilon T+2)\left\Vert \varphi\right\Vert _{\infty}
\]
for $T\geq T(\Lambda_{\theta},\varepsilon)$. We obtain
\begin{align*}
\left\vert \operatorname*{card}I(T,h_{\theta}\Lambda_{\theta}
,S)-\operatorname*{card}I(T,\Lambda_{\theta},S)\right\vert  &  \leq\left\vert
\operatorname*{card}I(T,h_{\theta}\Lambda_{\theta},S)-\operatorname*{card}
I(T,\Lambda_{\theta},K)\right\vert \\
&  +\left\vert \operatorname*{card}I(T,\Lambda_{\theta},S\setminus
K)\right\vert \\
&  \leq 9AT\varepsilon+2
\end{align*}
as well. Relation (\ref{birkhoff2}) implies that $\operatorname{card}
I(T,h_{\theta}\Lambda_{\theta},S)\geq aT$ for $T\geq T(\Lambda_{\theta
},\varepsilon)$ where $a=\frac{1}{2}\frac{\mu(\mathcal{L)}}{\mu_{S}(S)}$. All
together, for $T\geq T(\Lambda_{\theta},\varepsilon)$, we obtain
\begin{align*}
\left\vert \Sigma_{2}-\Sigma_{1}\right\vert  &  \leq\left\vert \Sigma
_{2}-\frac{\operatorname{card}I(T,\Lambda_{\theta},S)}{\operatorname*{card}
I(T,h_{\theta}\Lambda_{\theta},S)}\Sigma_{1}\right\vert +\left\vert
\frac{\operatorname*{card}I(T,\Lambda_{\theta},S)-\operatorname*{card}
I(T,h_{\theta}\Lambda_{\theta},S)}{\operatorname*{card}I(T,h_{\theta}
\Lambda_{\theta},S)}\right\vert \left\vert \Sigma_{1}\right\vert \\
&  \leq\frac{1}{\operatorname{card}I(T,h_{\theta}\Lambda_{\theta}
,S)}\left\vert \sum_{t\in I(T,h_{\theta}\Lambda_{\theta},S)}\varphi(g_{t}
h_{\theta}\Lambda_{\theta})-\sum_{t\in I(T,\Lambda_{\theta},S)}\varphi(g_{t}\Lambda
_{\theta})\right\vert \\
&  +\left\vert \frac{\operatorname*{card}I(T,\Lambda_{\theta}
,S)-\operatorname*{card}I(T,h_{\theta}\Lambda_{\theta},S)}
{\operatorname*{card}I(T,h_{\theta}\Lambda_{\theta},S)}\right\vert \left\Vert
\varphi\right\Vert _{\infty}\\
&  \leq\frac{\varepsilon\operatorname{card}I(T,\Lambda_{\theta}
,K)+(9AT\varepsilon+2)\left\Vert \varphi\right\Vert _{\infty}}{\operatorname{card}
I(T,h_{\theta}\Lambda_{\theta},S)}+\frac{(9AT\varepsilon+2)\left\Vert \varphi\right\Vert
_{\infty}}{\operatorname{card}I(T,h_{\theta}\Lambda_{\theta},S)}\\
&  \leq\varepsilon(1+\frac{18AT\left\Vert \varphi\right\Vert _{\infty}}
{aT})+\frac{4\|\varphi\|_{\infty}}{aT}
\end{align*}
which is $\ll \varepsilon$ when $T$ is large enough.

\subsection{Proof of Theorem \ref{thm:reduction}}

\textbf{Step 1. }Let us show that the restriction of $f$ to $S$ is integrable
with respect to $\mu_{S}$.

We use Lemma \ref{lem:measure-image} with $X=]0,1[\times S$, $Y=\mathcal{L}
_{d+c}$, the map $\psi:]0,1[\times S\rightarrow\mathcal{L}$ defined by
$\psi(t,\Lambda)=g_{t}\Lambda$, the measures $\mu_{X}=dt\otimes\mu_{S}$ and
$\mu_{Y}=\mu$, and the function $f$. By definition of the induced measure, we
know that the image of the restriction of $dt\otimes\mu_{S}$ to any small enough
open subset $\omega$ is the restriction to $\psi(\omega)$ of the invariant
measure $\mu$ on $\mathcal{L}_{d+c}$. Now, by Lemma \ref{lem:nb-returntimes}, each element of $\mathcal{L}_{d+c}$ has at most
$A$ $\psi$-preimages, therefore by Lemma \ref{lem:measure-image},
\[
\int_{0}^{1}\int_{S}f(g_{t}\Lambda)d\mu_{S}dt\leq A\int_{\mathcal{L}_{d+c}}fd\mu.
\]
Since $f$ is uniformly continuous in the $\mathcal{H}_{\leq}$ direction,
there exists $\Delta>0$ such that for all $\Lambda$, and all $t\in
[0,\Delta],\ f(g_{t}\Lambda)\geq f(\Lambda)-1.$ Therefore
\[
\int_{0}^{\Delta}\int_{S}(f(\Lambda)-1)d\mu_{S}dt\leq A\int_{\mathcal{L}_{d+c}}
fd\mu,
\]
which implies $\int_{S}f(\Lambda)d\mu_{S}\leq\mu_{S}(S)+\frac{A}{\Delta}
\int_{\mathcal{L}_{d+c}}fd\mu<+\infty$.

\textbf{Step 2.} It is enough to prove the theorem for non negative continuous functions
$\varphi$.

Indeed, since $\varphi$ is continuous almost everywhere and since $\left\vert
\varphi\right\vert \leq f$ with $f$ continuous and in $L^{1}(\mu_{S})$, for all positive
integer $p$, there exist two continuous functions $\varphi_{p}^{-}$ and
$\varphi_{p}^{+}$ such that
\[
-f\leq\varphi_{p}^{-}\leq\varphi\leq\varphi_{p}^{+}\leq f
\]
and
\[
\int_{S}\varphi d\mu_{S}-\frac{1}{p}\leq\int_{S}\varphi_{p}^{-}d\mu_{S}
\leq\int_{S}\varphi_{p}^{+}d\mu_{S}\leq\int_{S}\varphi d\mu_{S}+\frac{1}{p}.
\]
Therefore, if the convergence holds for almost every $\theta$, we have for all the functions $\varphi_p^-$ and $\varphi_p^+$, 
\begin{align*}
\int_{S}\varphi_{p}^{-}d\mu_{S}  &  =\lim_{n\rightarrow\infty}\frac{1}{n}
\sum_{k=0}^{n-1}\varphi_{p}^{-}\circ R^{k}(\Lambda_{\theta})\leq\lim
\inf_{n\rightarrow\infty}\frac{1}{n}\sum_{k=0}^{n-1}\varphi\circ R^{k}
(\Lambda_{\theta})\\
&  \leq\lim\sup_{p\rightarrow\infty}\frac{1}{n}\sum_{k=0}^{n-1}\varphi\circ
R^{k}(\Lambda_{\theta})\leq\lim_{n\rightarrow\infty}\frac{1}{n}\sum
_{k=0}^{n-1}\varphi_{p}^{+}\circ R^{k}(\Lambda_{\theta})=\int_{S}\varphi
_{p}^{+}d\mu_{S}
\end{align*}
which implies that for almost all $\theta$,
\[
\lim_{n\rightarrow\infty}\frac{1}{n}\sum_{k=0}^{n-1}\varphi\circ R^{k}
(\Lambda_{\theta})=\int_{S}\varphi d\mu_{S}.
\]
Thus, we are reduced to proving the theorem for $\varphi$ continuous.
Writing $\varphi=\varphi^+-\varphi^-$, we can also suppose $0\leq\varphi\leq f$.

\textbf{Step 3. }We prove that $
\lim_{n\rightarrow\infty}\inf\frac{1}{n}\sum_{k=0}^{n-1}\varphi\circ
R^{k}(\Lambda_{\theta})\geq\frac{1}{\mu_{S}(S)}\int_{S}\varphi d\mu_{S}
$ for almost all $\theta$.

 Using Proposition \ref{prop:reduction} with the
minimum of $\varphi$ and of a constant $M$, we obtain for almost all $\theta$,
\[
\lim_{n\rightarrow\infty}\inf\frac{1}{n}\sum_{k=0}^{n-1}\varphi\circ
R^{k}(\Lambda_{\theta})\geq\lim_{n\rightarrow\infty}\frac{1}{n}\sum
_{k=0}^{n-1}\min(\varphi,M)\circ R^{k}(\Lambda_{\theta})=\frac{1}{\mu_{S}
(S)}\int_{S}\min(\varphi,M) d\mu_{S},
\]
hence, letting $M$ going to infinity, we obtain
\[
\lim_{n\rightarrow\infty}\inf\frac{1}{n}\sum_{k=0}^{n-1}\varphi\circ
R^{k}(\Lambda_{\theta})\geq\frac{1}{\mu_{S}(S)}\int_{S}\varphi d\mu_{S}.
\]

\textbf{Step 4.} Main step, we  bound above the sums $\sum_{k=0}^{n-1}\varphi\circ R^{k}
(\Lambda_{\theta})$.

Since $f$ is in $L^{1}(\mu)$, there exists $\varepsilon^{\prime}>0$ such that
for any measurable subset $B$ in $\mathcal{L}_{d+c}$, we have
\[
\mu(B)\leq\varepsilon^{\prime}\Rightarrow\int_{B}fd\mu\leq\varepsilon.
\]
This allows to strengthen Lemma \ref{lem:boundary}:

\begin{lemma}
For all $\varepsilon>0$, there exists a compact subset $K$ in $S$ and
$\delta>0$ such that $\frac{1}{\mu(\mathcal{L}_{d+c})}\int_{U(K,\delta)}fd\mu$
and $\frac{1}{\mu(\mathcal{L}_{d+c})}\mu(U(K,\delta))$ are $\leq\varepsilon$.
\end{lemma}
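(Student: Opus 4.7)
The plan is to combine the measure bound from Lemma~\ref{lem:boundary} with the $L^1$-absolute-continuity statement already displayed just above the lemma. Since the original Lemma~\ref{lem:boundary} lets us make $\mu(U(K,\delta))$ as small as we wish, and absolute continuity of the integral lets us turn any sufficiently small measure into a small integral of $f$, there is essentially no new idea required: we just have to choose thresholds correctly.

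More precisely, I would start by fixing $\varepsilon>0$ and setting $\eta=\varepsilon\,\mu(\mathcal{L}_{d+c})>0$. The $L^{1}$-integrability of $f$ yields (by the standard absolute continuity of the integral, recalled in the sentence preceding the lemma) some $\varepsilon'>0$ such that for every Borel set $B\subset\mathcal{L}_{d+c}$,
\[
\mu(B)\leq\varepsilon'\ \Longrightarrow\ \int_{B}f\,d\mu\leq\eta.
\]
Now set $\delta_{0}=\min(\varepsilon',\eta)$. Applying Lemma~\ref{lem:boundary} with $\delta_{0}/\mu(\mathcal{L}_{d+c})$ (equivalently, with $\delta_{0}$ after absorbing the normalizing factor) in place of $\varepsilon$ produces a compact $K\subset S$ and $\delta>0$ such that $\mu(U(K,\delta))\leq\delta_{0}$.

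With this choice, $\mu(U(K,\delta))\leq\eta$, so $\frac{1}{\mu(\mathcal{L}_{d+c})}\mu(U(K,\delta))\leq\varepsilon$, and simultaneously $\mu(U(K,\delta))\leq\varepsilon'$, so by the choice of $\varepsilon'$ we obtain $\int_{U(K,\delta)}f\,d\mu\leq\eta$, hence $\frac{1}{\mu(\mathcal{L}_{d+c})}\int_{U(K,\delta)}f\,d\mu\leq\varepsilon$, as required.

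There is no real obstacle here: the only subtlety is the bookkeeping with the normalizing factor $\mu(\mathcal{L}_{d+c})$ and making sure we apply Lemma~\ref{lem:boundary} at a small enough threshold to control \emph{both} the measure and the integral simultaneously. In particular, finiteness of $\mu(\mathcal{L}_{d+c})$ (Siegel) and integrability of $f$ on $\mathcal{L}_{d+c}$ (hypothesis of Theorem~\ref{thm:reduction}) are used implicitly, but no additional input about $S$ beyond Lemma~\ref{lem:boundary} is needed.
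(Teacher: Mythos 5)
Your proposal is correct and follows exactly the route the paper intends: it combines the absolute continuity of the integral of $f\in L^1(\mu)$ (stated in the sentence preceding the lemma in the paper) with Lemma~\ref{lem:boundary}, choosing the threshold small enough to control both the measure of $U(K,\delta)$ and the integral of $f$ over it simultaneously. The parenthetical ``equivalently, with $\delta_0$ after absorbing the normalizing factor'' is a slight muddle — invoking Lemma~\ref{lem:boundary} with $\delta_0$ directly already gives $\mu(U(K,\delta))\leq\delta_0$, and invoking it with $\delta_0/\mu(\mathcal{L}_{d+c})$ is not literally equivalent (it gives a possibly different bound, depending on whether $\mu(\mathcal{L}_{d+c})$ is larger or smaller than $1$) — but since both choices yield $\mu(U(K,\delta))\leq\delta_0$ in the end, the argument as a whole is sound.
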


We keep all the choices and the notations of the proof of Proposition
\ref{prop:reduction}, and we use the Birkhoff ergodic theorem with one more function:
\begin{equation}
\lim_{T\rightarrow\infty}\frac{1}{T}\int_{0}^{T}f(g_{t}h_{\theta}
\Lambda_{\theta})1_{U(K,\delta)}(g_{t}h_{\theta}\Lambda_{\theta})dt=\frac
{1}{\mu(\mathcal{L}_{d+c})}\int_{U(K,\delta)}fd\mu\leq\varepsilon\label{L1}
\end{equation}
so that (\ref{birkhoff1}), (\ref{birkhoff2}), (\ref{birkhoff3}) and (\ref{L1})
hold for almost all $\theta$ (for (\ref{birkhoff3}) observe that $0\leq\varphi\leq f\in L^1(\mu_S)$).

Since the function $f$ is uniformly continuous in the $\mathcal{H}_{\leq }
$-direction, there exists $\kappa>0$ such that $f(h\Lambda)\geq f(\Lambda)-\frac
{1}{2}$ for all $\Lambda$ and all $h\in B_{\mathcal{H}_{\leq }}(I_{d+c}
,\kappa)$. By choosing $\gamma$ small enough we can suppose that
$(g_{s}hg_{-s})\in B_{\mathcal{H}_{\leq}}(I_{d+c},\kappa)$ for all $s\geq 0$
and all $h\in B_{\mathcal{H}_{\leq}}(I_{d+c},\gamma)$. Furthermore, there
exists a positive constant $\Delta=\Delta(\kappa)$ such that $g_{t}\in
B_{\mathcal{H}_{\geq 0}}(I_{d+c},\kappa)$ for all $t\in[0,\Delta]$. Therefore,
for all lattices $\Lambda$, all non negative real number $s$, all $h\in
B_{\mathcal{H}_{\leq}}(I_{d+c},\gamma)$ and all $t\in [0,\Delta]$, we have
\[
f(g_{t}(g_{s}hg_{-s})g_{s}\Lambda)\geq f((g_{s}hg_{-s})g_{s}\Lambda)-\frac
{1}{2}\geq f(g_{s}\Lambda)-1
\]
and hence
\begin{equation}
f(g_{t+s}h\Lambda)=f(g_{t}(g_{s}hg_{-s})g_{s}\Lambda)\geq f(g_{s}\Lambda)-1.
\label{uniform-f}
\end{equation}

As in the proof of Proposition \ref{prop:reduction}, let $s_{1}<...<s_{m}$ be
the elements of $I(T,\Lambda_{\theta},S\setminus K)$. On the one hand $g_{t}h_{\theta
}\Lambda_{\theta}\in U(K,\delta)$ for all $t\in[ s_{i},s_{i}+1]$, and on the other hand, for almost
all $\theta$,  (\ref{nbvisitingSK1}) and (\ref{nbvisitingSK2}) hold for $T\geq
T(\Lambda_{\theta},\varepsilon)$. We can suppose $\Delta<1$ and since there
are at most $A$ elements of $I(T,\Lambda_{\theta},S\setminus K)$ ($\subset
I(T,\Lambda_{\theta},S)$) in an interval of length $1$, by
(\ref{uniform-f}) we obtain
\begin{align*}
\sum_{s\in I(T,\Lambda_{\theta},S\setminus K)}\varphi(g_{s}\Lambda_{\theta})
&  \leq\sum_{s\in I(T,\Lambda_{\theta},S\setminus K)}f(g_{s}\Lambda_{\theta
})\\
&  \leq\sum_{i=1}^{m}\frac{1}{\Delta}\int_{s_{i}}^{s_{i}+\Delta}
(1+1_{U(K,,\delta)}(g_{t}h_{\theta}\Lambda_{\theta})f(g_{t}h_{\theta}
\Lambda_{\theta}))dt\\
&  \leq m+\frac{A}{\Delta}\int_{0}^{T+1}1_{U(K,\delta)}(g_{t}h_{\theta
}\Lambda_{\theta})f(g_{t}h_{\theta}\Lambda_{\theta})dt
\end{align*}
and with (\ref{nbvisitingSK1}) and (\ref{L1}), this gives
\[
\sum_{s\in I(T,\Lambda_{\theta},S\setminus K)}f(g_{s}\Lambda_{\theta}
)\leq 3AT\varepsilon+\frac{A}{\Delta}3T\varepsilon\leq 6\frac{A}{\Delta
}T\varepsilon
\]
for all $T\geq T(\Lambda_{\theta},\varepsilon)$.

We want to bound above
\[
\Sigma_{1}(T)=\frac{1}{\operatorname*{card}I(T,\Lambda_{\theta},S)}\sum_{t\in
I(T,\Lambda_{\theta},S)}\varphi(g_{t}\Lambda_{\theta})
\]
with
\[
\Sigma_{2}(T)=\frac{1}{\operatorname*{card}I(T,h_{\theta}\Lambda_{\theta},S)}
\sum_{t\in I(T,h_{\theta}\Lambda_{\theta},S)}\varphi(g_{t}h_{\theta}
\Lambda_{\theta})
\]
because this latter sum tends to $\frac{1}{\mu_{S}(S)}\int_{S}\varphi d\mu_{S}$
when $T$ goes to infinity. We split $\sum_{t\in I(T,\Lambda_{\theta},S)}$ into
two sums $\sum_{t\in I(T,\Lambda_{\theta},K)}$ and $\sum_{t\in I(T,\Lambda
_{\theta},S\setminus K)}$. As in the proof of Proposition \ref{prop:reduction}, for $T$ large enough, we
have
\[
\left\vert I(T,h_{\theta}\Lambda_{\theta},S)-I(T,\Lambda_{\theta
},S)\right\vert \leq 9AT\varepsilon+2,
\]
\[
I(T,h_{\theta}\Lambda_{\theta},S)\geq aT,
\]
and
\[
\left\vert \sum_{t\in I(T,\Lambda_{\theta},K)}\varphi(g_{t}\Lambda_{\theta
})-\sum_{t\in I(T,\Lambda_{\theta},K)}\varphi(g_{t-t^{\prime}}h_{\theta
}\Lambda_{\theta})\right\vert \leq\varepsilon\operatorname{card}
I(T,\Lambda_{\theta},K)
\]
where $t^{\prime}=\sigma(g_{t}h_{\theta}g_{-t},g_{t}\Lambda_{\theta})$ is
defined in Lemma \ref{lem:K-S}. Taking into account  the first element $t_{min}$ and  the last element in $I(T,\Lambda_{\delta},K)$, the latter inequality implies that
\begin{align*}
\sum_{t\in I(T,\Lambda_{\theta},K)}\varphi(g_{t}\Lambda_{\theta})  &  \leq \varphi(g_{t_{min}}\Lambda_{\theta})+
\sum_{t\in I(T,\Lambda_{\theta},K)\setminus\{t_{min}\}}\varphi(g_{t-t^{\prime}}h_{\theta}
\Lambda_{\theta})+\varepsilon\operatorname{card}I(T,\Lambda_{\theta},K)\\
&  \leq\varphi(R\Lambda_{\theta})+\sum_{t\in I(T+1,h_{\theta}\Lambda_{\theta},S)}\varphi(g_{t}h_{\theta
}\Lambda_{\theta})+\varepsilon\operatorname{card}I(T,\Lambda_{\theta},K).
\end{align*}
All together, we obtain (recall that $\varphi\geq 0$)
\begin{align*}
\Sigma_{1}(T)  &  \leq
\frac{1}{\operatorname{card}I(T,\Lambda_{\theta},S)}\varphi(R\Lambda_{\theta})+
\frac{\operatorname{card}I(T+1,h_{\theta}\Lambda_{\theta}
,S)}{\operatorname{card}I(T,\Lambda_{\theta},S)}\Sigma_{2}(T+1)+\varepsilon
\\
&\hspace{0.5cm}+\frac{1}{\operatorname*{card}I(T,\Lambda_{\theta},S)}\sum_{s\in
I(T,\Lambda_{\theta},S\setminus K)}f(g_{s}\Lambda_{\theta})\\
&  \leq\frac{1}{\operatorname{card}I(T,\Lambda_{\theta},S)}\varphi(R\Lambda_{\theta})+
\left(  1+\left\vert \frac{\operatorname*{card}I(T+1,h_{\theta}
\Lambda_{\theta},S)-\operatorname*{card}I(T,\Lambda_{\theta},S)}
{\operatorname*{card}I(T,\Lambda_{\theta},S)}\right\vert \right)  \Sigma
_{2}(T+1)+\varepsilon\\
&  \hspace{0.5cm}+\frac{1}{\operatorname*{card}I(T,\Lambda_{\theta},S)}6\frac{A}{\Delta
}T\varepsilon\\
&  \leq
\Sigma_{2}(T+1)+\frac{1}{aT-9AT\varepsilon-2}\varphi(R\Lambda_{\theta})+\frac{9AT\varepsilon+2+A}{aT-9AT\varepsilon-2}\Sigma_{2}(T+1)+\left(1+\frac{6A}
{(aT-9AT\varepsilon-2)\Delta}\right)\varepsilon
\end{align*}
and we are done. $\square\bigskip$

\subsection{Proofs of Lemma \ref{lem:boundary}}

We need an auxiliary lemma.

\begin{lemma}
Let $E(\lambda,\eta)$ be the set of lattices $\Lambda$ in $\mathcal{L}_{d,c}$ such
that there exist two nonzero vectors $X\neq\pm X^{\prime}$ of $\Lambda$ in the
open ball $\overset{o}B_{d,c}(0,\lambda)\ $with $\frac{1}{1+\eta}
<\frac{\left\vert X\right\vert _{\mathbb{\pm}}}{\left\vert X^{\prime
}\right\vert _{\mathbb{\pm}}}<1+\eta$ or a nonzero vector $X$ in the open ball
$\overset{o}B_{d,c}(0,\lambda)$ with $\left\vert X\right\vert _{\pm}<\eta$.
For all $\lambda>0$, we have $\lim_{\eta\rightarrow 0}\mu(E(\lambda,\eta))=0$.
\end{lemma}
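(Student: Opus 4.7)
I will argue by a decreasing-intersection argument, pinning the limit set inside the negligible set $\mathcal N$ already constructed above. First observe that for $0<\eta<\eta'$ the defining conditions on $E(\lambda,\eta)$ are strictly more restrictive than those on $E(\lambda,\eta')$, so $E(\lambda,\eta)\subset E(\lambda,\eta')$; the family $(E(\lambda,\eta))_{\eta>0}$ is monotone decreasing. Since $\mu(\mathcal L_{d+c})<\infty$ by Siegel's formula, continuity of measure from above along any sequence $\eta_n\downarrow 0$ gives
\[
\lim_{\eta\to 0}\mu(E(\lambda,\eta))=\mu\!\left(\bigcap_{n\in\N}E(\lambda,1/n)\right).
\]
It therefore suffices to show that this intersection is contained in $\mathcal N$.

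Pick $\Lambda$ in the intersection and set $F=(\Lambda\setminus\{0\})\cap B_{\R^{d+c}}(0,\lambda)$. Because $\Lambda$ is discrete and the ball is bounded, $F$ is finite. For each $n$, membership of $\Lambda$ in $E(\lambda,1/n)$ produces one of two pieces of data: either (a) a pair $(X_n,X_n')\in F\times F$ with $X_n\neq\pm X_n'$ and a sign $\varepsilon_n\in\{+,-\}$ such that
\[
\frac{1}{1+1/n}<\frac{|X_n|_{\varepsilon_n}}{|X_n'|_{\varepsilon_n}}<1+\tfrac{1}{n},
\]
or (b) a single vector $X_n\in F$ and a sign $\varepsilon_n$ with $|X_n|_{\varepsilon_n}<1/n$. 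Since $F\times F\times\{+,-\}$ is finite, a pigeonhole extraction gives a subsequence $(n_k)$ along which either one fixed triple $(X,X',\varepsilon)$ witnesses case (a) for every $k$, or one fixed pair $(X,\varepsilon)$ witnesses case (b) for every $k$. Letting $k\to\infty$ yields, in the first scenario, $|X|_{\varepsilon}=|X'|_{\varepsilon}>0$ with $X\neq\pm X'$ (the positivity being forced by the form of the intervals), and in the second scenario $|X|_{\varepsilon}=0$, i.e., $X$ lies in the horizontal or vertical coordinate subspace. Either conclusion places $\Lambda$ in $\mathcal N$ by its definition, and the previous lemma gives $\mu(\mathcal N)=0$.

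The only technical item to dispatch is measurability of $E(\lambda,\eta)$: writing $\Lambda=M\Z^{d+c}$ with $M\in\SL(d+c,\R)$, one checks that $E(\lambda,\eta)$ is a countable union, indexed by pairs $(n,n')\in\Z^{d+c}\times\Z^{d+c}$ with $n\neq\pm n'$ (resp. by $n\in\Z^{d+c}\setminus\{0\}$), of manifestly Borel subsets of the quotient $\SL(d+c,\R)/\SL(d+c,\Z)$. There is no real analytic obstacle here; the content of the lemma is the structural observation that the ``defect at $\eta=0$'' of the conditions defining $E(\lambda,\eta)$ coincides exactly with the defining conditions of $\mathcal N$ restricted to the ball $B_{\R^{d+c}}(0,\lambda)$, so that the whole statement reduces to a re-use of the earlier negligibility result.
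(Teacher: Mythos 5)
Your proof is correct, and it takes a genuinely different route from the paper's. The paper argues directly: it discards the thin part $\{\lambda_1<\rho\}$, fixes a Siegel reduction domain, observes that the potential short vectors $MY$ with $\left\Vert MY\right\Vert\le\lambda$ must come from a fixed finite set $F_\rho\subset\Z^{d+c}$ once $\lambda_1(M\Z^{d+c})\ge\rho$, and then for each fixed $Y,Y'\in F_\rho$ lets $\eta\to0$ in the resulting (finitely many) measure estimates. You instead exploit the monotonicity $E(\lambda,\eta)\subset E(\lambda,\eta')$ for $\eta<\eta'$, use continuity of the (finite, by Siegel) measure from above to reduce the statement to $\mu\bigl(\bigcap_n E(\lambda,1/n)\bigr)=0$, and then identify the intersection as a subset of the already-established negligible set $\mathcal N$: discreteness makes $F=(\Lambda\setminus\{0\})\cap B(0,\lambda)$ finite, pigeonhole over witnesses pins down a fixed triple or pair, and passing to the limit in the defining inequalities produces either $|X|_\varepsilon=|X'|_\varepsilon>0$ with $X\neq\pm X'$, or $|X|_\varepsilon=0$, precisely the defining conditions of $\mathcal N$. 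Your approach is more conceptual — it turns the lemma into a re-use of the negligibility of $\mathcal N$, making transparent the structural fact that $E(\lambda,\eta)$ shrinks exactly onto (the $B(0,\lambda)$-local part of) $\mathcal N$ — while the paper's Siegel-domain argument is more explicit and would be the one to use if one wanted an effective rate for $\mu(E(\lambda,\eta))$ in terms of $\eta$. Both reductions (yours to $\mathcal N$, the paper's to a finite index set $F_\rho$) serve the same purpose of taming the a priori infinitely many candidate lattice vectors, just at different layers of the argument.

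One small point worth making explicit: in the limiting step of case (a) you implicitly need $|X'|_\varepsilon\neq0$ for the ratio to be meaningful; but if $|X'|_\varepsilon=0$ the lattice is already in $\mathcal N$ via the horizontal/vertical-vector clause, so the conclusion holds regardless. Also, the continuity-from-above step does quietly require measurability of $E(\lambda,\eta)$, which you correctly flag; the paper leaves this implicit as well.
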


\begin{proof}
Since $\lim_{\rho\rightarrow 0}\mu(\{\lambda_{1}(\Lambda)\leq\rho\})=0$, it is
enough to show that for all $\rho>0$, $\mu(E(\lambda,\eta)\cap\{\lambda
_{1}(\Lambda)\geq\rho\})\rightarrow 0$ when $\eta$ goes to $0$.
Choose a Siegel reduction domain $\mathcal{S}\subset \SL(d+c,\mathbb{R)}$ (see \cite{BeMa}). There is a
constant $c=c(\mathcal{S})>0$ such that for all matrices $M$ in $\mathcal{S}$
and all vectors $Y$ in $\mathbb{R}^{d+c}$, we have
\[
\left\Vert MY\right\Vert _{d,c}\geq c\lambda_{1}(\Lambda
)\left\Vert Y\right\Vert _{d,c}.
\]
where $\Lambda=M\mathbb{Z}^{d+c}$ (this inequality holds for all norms with a
constant $c$ depending only on the norm, just use the norm equivalence). It
follows that we can find a finite subset $F_{\rho}$ of $\mathbb{Z}^{d+c}$ such
that for all matrices $M$ in $\mathcal{S}$ with $\lambda_{1}(M\mathbb{Z}
^{d+c})\geq\rho$, the only $Y$ in $\mathbb{Z}^{d+c}$ such that $\left\Vert
MY\right\Vert _{d,c}\leq\lambda$, are in $F_{\rho}$. Therefore,
if a matrix $M$ in $\mathcal{S}$ is such that $\Lambda=M\mathbb{Z}^{d+c}$
belongs to $E(\lambda,\eta)\cap\{\lambda_{1}(\Lambda)\geq\rho\}$, then there
exist a nonzero $Y$ in $F_{\rho}$ or two nonzero vectors $Y\neq\pm Y^{\prime}$ in
$F_{\rho}$ with
\[
\left\vert MY\right\vert _{\pm}\leq\eta
\]
or
\[
\frac{1}{1+\eta}<\frac{\left\vert MY\right\vert _{\mathbb{\pm}}}{\left\vert
MY^{\prime}\right\vert _{\mathbb{\pm}}}<1+\eta.
\]
For a fixed $Y$ or a fixed pair $Y\neq\pm Y^{\prime}$ of nonzero vectors in
$F_{\rho}$, the measure of the set of $M$ in $\mathcal{S}$ for which the above
inequalities hold, goes to $0$ as $\eta$ goes to $0$. Since $F_{\rho}$ is
finite and since a Siegel domain contains a fundamental domain, we are done.
\end{proof}

\begin{proof}
[Proof of Lemma \ref{lem:boundary} ]Consider the set $V(\lambda,\eta
,\rho)=E(\lambda,\eta)\cup\{\lambda_{1}(\Lambda)<\rho\}$.

\textbf{Step 1: }\textit{ The complementary $V^{C}$ of $V(\lambda,\eta,\rho)$ is a
closed subset of $\mathcal{L}_{d,c}$.}
\newline Let $(\Lambda_{n})_{n\in\mathbb{N}}$
be a sequence of points of $V^{C}$ which converges to $\Gamma$ in $\mathcal{L}_{d,c}
$. Firstly, since $\lambda_{1}$ is continuous, $\lambda_{1}(\Gamma)=\lim
\lambda_{1}(\Lambda_{n})\geq\rho$. Secondly,
there is a sequence of matrices $(M_{n})_{n\in\mathbb{N}}$ such that $\Lambda
_{n}=M_{n}\mathbb{Z}^{d+c}$ for all $n\in\mathbb{N}$ and such that
$(M_{n})_{n\in\mathbb{N}}$ converges to $M$ with $\Gamma=M\mathbb{Z}^{d+c}$.
We have to show
that $\Gamma$ is not in $E(\lambda,\eta)$.
Let $X=MY$ and $X^{\prime}=MY^{\prime}$ be two nonzero vectors in $\Gamma$
with $X\neq\pm X^{\prime}$ and $\left\Vert X\right\Vert _{d,c}
$,\ $\left\Vert X^{\prime}\right\Vert _{d,c}<\lambda$. When $n$
is large enough, $X_{n}=M_{n}Y$ and $X_{n}^{\prime}=M_{n}Y^{\prime}$ are in
the open ball $\overset{o}B_{d,c}(0,\lambda)$ and since $\Lambda_{n}=M_{n}\mathbb{Z}^{d+c}$ is
not in $E(\lambda,\eta)$, we have both
\[
\left\vert M_{n}Y\right\vert _{\pm}\geq\eta \,\text{ and }\,
\frac{\left\vert M_{n}Y\right\vert _{\pm}}{\left\vert M_{n}Y^{\prime
}\right\vert _{\pm}}\notin]\frac{1}{1+\eta},1+\eta[.
\]
Passing through the limit we obtain
\[
\left\vert X\right\vert _{\pm}\geq\eta
\,\text{ and }\,
\frac{\left\vert X\right\vert _{\pm}}{\left\vert X^{\prime}\right\vert _{\pm}
}=\frac{\left\vert MY\right\vert _{\pm}}{\left\vert MY^{\prime}\right\vert
_{\pm}}\notin]\frac{1}{1+\eta},1+\eta[.
\]
Therefore $\Gamma$ is not in $E(\lambda,\eta)$.

\textbf{Step 2: }\textit{$F=S\setminus V(\lambda,\eta,\rho)$ is a compact subset of
$S$ when $\lambda\geq 2\max\{\lambda_{1}(\Lambda):\Lambda\in
\mathcal{L}_{d,c}\}$.}
\newline Thanks to the Malher compactness theorem, it is enough to prove that
$F$ is a closed subset of $\mathcal{L}_{d,c}$. Let $(\Lambda_{n})_{n\in\mathbb{N}}$
be a sequence of points in $F$ which converges to $\Gamma$ in $\mathcal{L}_{d,c}$. We
want to prove that $\Gamma$ is in $F$. By the first step, it is enough to prove
that $\Gamma$ is in $S$. Choose a Siegel domain $\mathcal{S}$. There is a
sequence of matrices $M_{n}\in\mathcal{S}$ such that $\Lambda_{n}
=M_{n}\mathbb{Z}^{d+c}$ for all $n\in\mathbb{N}$ and such that $(M_{n}
)_{n\in\mathbb{N}}$ converges to $M\in\mathcal{S}$. For each $n$, there are
two vectors $Y_{1,n}$ and $Y_{2,n}$ in $\mathbb{Z}^{d+c}$ such that
$X_{1,n}=M_{n}Y_{1,n}$ and $X_{2,n}=M_{n}Y_{2,n}$ are the two vectors
associated with $\Lambda_{n}$ by the definition of $S$. Since the matrices
$M_{n}$ are all in the Siegel domain $\mathcal{S}$ and since $\left\Vert
M_{n}Y_{i,n}\right\Vert _{d,c}=\lambda_{1}(\Lambda_{n})$,
$i=1,2$, the sequences $(Y_{i,n})_{n\in\mathbb{N}}$, $i=1,2$, are bounded
sequences in $\mathbb{Z}^{d+c}$. Therefore, extracting subsequences, we can
suppose that the two sequences $(Y_{i,n})_{n\in\mathbb{N}}$ are constant:
$Y_{i,n}=Y_{i}$ for all $n$, $i=1,2$. It follows that $\left\Vert
MY_{i}\right\Vert _{d,c}=\lim_{n\rightarrow\infty}\left\Vert
M_{n}Y_{i}\right\Vert _{d,c}=\lim_{n\rightarrow\infty}\lambda
_{1}(\Lambda_{n})=\lambda_{1}(\Gamma)$. Moreover
\[
\left\vert MY_{1}\right\vert _{+}=\lim_{n\rightarrow\infty}\left\vert
M_{n}Y_{1}\right\vert _{+}=\lim_{n\rightarrow\infty}\lambda_{1}(\Lambda
_{n})=\lambda_{1}(\Gamma)
\]
and
\[
\left\vert MY_{2}\right\vert _{-}=\lim_{n\rightarrow\infty}\left\vert
M_{n}Y_{2}\right\vert _{-}=\lim_{n\rightarrow\infty}\lambda_{1}(\Lambda
_{n})=\lambda_{1}(\Gamma).
\]
Suppose now that $\lambda\geq 2\max\{\lambda_{1}(\Lambda):\Lambda\in
\mathcal{L}_{d,c}\}$, then making use of the first step, we conclude that $\Gamma$ is
in $S$.

\textbf{Step 3. }For a neighborhood $W$ of $I_{d+c}$ in $\SL(d+c,\mathbb{R)}$,
let
\[
U_{W}=\{g_{t}h\Lambda:t\in[0,1],\ h\in W,\ \Lambda\in V(\lambda
,\eta,\rho)\}.
\]
Let us show that we can choose $W$ in order that $U_{W}\subset V(2e^{d+c}
\lambda,5e^{d+c}\eta,2e^{d+c}\rho)$. It will finish the proof of Lemma \ref{lem:boundary}.
Indeed, we first fix $\lambda\geq 2\max\{\lambda_{1}(\Lambda):\Lambda
\in\mathcal{L}_{d,c}\}$, next we take $\eta$ and $\rho$ such that $\mu
(E(2e^{d+c}\lambda,5e^{d+c}\eta))\leq\frac{\varepsilon}{2}$ and $\mu
(\{\lambda_{1}<2e^{d+c}\rho\})\leq\frac{\varepsilon}{2}$, then we take $W$
such that $U_{W}\subset V(2e^{d+c}\lambda,5e^{d+c}\eta,2e^{d+c}\rho)$ and
$\delta$ such that $B(I_{d+c},\delta)\subset W$. Now by the second step
$K=S\setminus V(\lambda,\eta,\rho)$ is compact and since $U(K,\delta)\subset
U_{W}\subset V(2e^{d+c}\lambda,5e^{d+c}\eta,2e^{d+c}\rho)$, we have
$\mu(U(K,\delta))\leq\varepsilon$.

Let $\Lambda$ be in $V(\lambda,\eta,\rho)$, $h$ in $W$ and $t\in[0,1]$.
We explain how to successively reduce $W$ in order to obtain the above inclusion.

\textbf{Case 1. }Suppose $\lambda_{1}(\Lambda)<\rho$. We can choose $W$ small
enough in order that $\left\Vert hX\right\Vert _{d,c}
\leq 2\left\Vert X\right\Vert _{d,c}$ for all $h$ in $W$ and all
$X$ in $\mathbb{R}^{d+c}$. This implies that $\lambda_{1}(g_{t}h\Lambda
)\leq 2e^{d+c}\lambda_{1}(\Lambda)<2e^{d+c}\rho$, hence $g_{t}h\Lambda\in
V(2e^{d+c}\lambda,5e^{d+c}\eta,2e^{d+c}\rho)$.

\textbf{Case 2. }Suppose there exists a nonzero vector $X$ in $\Lambda\cap
\overset{o}B_{d,c}(0,\lambda)$ with $\left\vert X\right\vert _{-}<\eta$ (the case $\left\vert
X\right\vert _{+}<\eta$ is similar). Call $p_{\pm}$ the projections on the
subspaces $E_{\pm}$ and $\left\Vert u\right\Vert $ the norm of the linear
operator $u$ associated with the norm $\left\Vert .\right\Vert _{d,c}$. The vector $g_{t}hX$ is in the open ball $\overset{o}B_{d,c}(0,2\lambda e^{d+c})$ and
we have
\[
p_{-}g_{t}hX=g_{t}p_{-}hp_{-}X+g_{t}p_{-}hp_{+}X,
\]
hence
\[
\left\vert g_{t}hX\right\vert _{-}<e^{d+c}\left\Vert p_{-}h\right\Vert
\eta+e^{d+c}\left\Vert p_{-}hp_{+}\right\Vert \lambda.
\]
We can choose $W$ in order that $\left\Vert p_{-}hp_{+}\right\Vert <\frac
{\eta}{\lambda}$ and $\left\Vert p_{-}h\right\Vert \leq 2$. Then we have $\left\vert g_{t}hX\right\vert _{-}
<3e^{d+c}\eta$ which implies that $g_{t}h\Lambda\in V(2e^{d+c}\lambda
,5e^{d+c}\eta,2e^{d+c}\rho)$.

\textbf{Case 3. }Suppose there exist two distinct nonzero vectors $X$ and
$X^{\prime}$ in $\Lambda\cap \overset{o}B_{d,c}(0,\lambda)$ such that
\[
\left\vert  X\right\vert  _{-},\left\vert  X^{\prime}\right\vert  _{-}\geq\eta\, \text{ and }\, \frac
{1}{1+\eta}<\frac{\left\vert  X\right\vert  _{-}}{\left\vert  X^{\prime}\right\vert  _{-}
}<1+\eta.
\]
The case with $\left\vert  .\right\vert  _{+}$ is similar. \newline As above,
\begin{align*}
\left\vert  hX\right\vert  _{-}  &  <\left\|  p_{-}h\right\|  \left\vert  X\right\vert
_{-}+\left\|  p_{-}hp_{+}\right\|  \lambda\\
&  \leq(\left\|  p_{-}h\right\|  +\left\| p_{-}hp_{+}\right\|  \frac{\lambda
}{\eta})\left\vert  X\right\vert  _{-}.
\end{align*}
We can choose $W$ in order that $\left\|  p_{-}h\right\|  +\left\|
p_{-}hp_{+}\right\|  \frac{\lambda}{\eta}\leq 1+$ $\eta$. We  also have
\[
\left\vert  hX^{\prime}\right\vert  _{-}\geq\left\Vert p_{-}hp_{-}X^{\prime}\right\Vert_{d,c}
-\left\|  p_{-}hp_{+}\right\|  \left\|  X^{\prime}\right\|_{d,c}  .
\]
We can choose $W$ in order that $\left\|  p_{-}hp_{+}\right\|  \frac{\lambda
}{\eta}\leq\eta$ and 
$\left\Vert p_{-}-p_{-}hp_{-}\right\Vert\leq \eta$. With this choice, we have
\[
\left\vert  hX^{\prime}\right\vert  _{-}\geq\left\vert  X^{\prime}\right\vert  _{-}
-\eta\left\vert  X^{\prime}\right\vert  _{-}-\eta\left\vert  X^{\prime}\right\vert  _{-}.
\]
It follows that
\[
\frac{\left\vert  g_{t}hX\right\vert  _{-}}{\left\vert  g_{t}hX^{\prime}\right\vert  _{-}
}=\frac{\left\vert  hX\right\vert  _{-}}{\left\vert  hX^{\prime}\right\vert  _{-}}
\leq\frac{\left\vert  X\right\vert  _{-}}{\left\vert  X^{\prime}\right\vert  _{-}}
\times\frac{1+\eta}{1-2\eta}\leq\frac{(1+\eta)^{2}}{1-2\eta}\leq 1+5\eta
\]
when $\eta$ is small enough. Inverting the role of $X$ and $X^{\prime}$, we get
the inequality $\frac{\left\vert  hX^{\prime}\right\vert  _{-}}{\left\vert  hX\right\vert
_{-}}\leq 1+5\eta$ and we are done.
\end{proof}

\medskip

\subsection{Proofs of Theorems \ref{thm:levy-d} and \ref{thm:jager}.1.}

We begin by the proof of Theorem \ref{thm:levy-d} which is more difficult.
We want to prove that for almost all $\theta$ in $\M_{d,c}(\mathbb{R})$,
\[
\lim_{n\rightarrow\infty}\frac{1}{n}\ln q_{n}(\theta)=L_{d,c}=\frac{1}{\mu
_{S}(S)}\int_{S}\rho(\Lambda)~d\mu_{S}(\Lambda)
\]
and that
\[
\lim_{n\rightarrow\infty}\frac{-1}{n}\ln r_{n}(\theta)=\frac{c}{d}L_{d,c}.
\]
Let us prove that the convergence almost everywhere of $\frac
{1}{n}\ln q_{n}(\theta)$ to $L_{d,c}=\frac{1}{\mu_{S}(S)}\int_{S}\rho
(\Lambda)~d\mu_{S}(\Lambda)$, implies the convergence almost everywhere of
$\frac{-1}{n}\ln r_{n}(\theta)$ to $\frac{c}{d}L_{d,c}$. By Dirichlet's theorem we know that for all $\theta$ and all integers $n$, $q_n(\theta)^cr_{n+1}(\theta)^d\leq C $ where $C$ is some universal constant (depending on the norms). Next, by the divergence part of the Khintchin-Groshev theorem (see \cite{BeDiVe}), we know that for almost all $\theta$, there exist only finitely many $Q\in\mathbb Z^c$ such that 
\[
d(\theta Q, \mathbb Z^d)\leq \psi(\|\theta\|_{\mathbb R^c})
\]
where $\psi(q)=\tfrac{1}{(q^c\ln q)^{1/d}}$. Therefore, for almost all $\theta$, $\tfrac{1}{\ln q_n(\theta)}\leq q_n(\theta)^cr_n(\theta)^d\leq C$ for all $n$ large enough. It follows that for almost all $\theta$, 
\[
 \frac{c}{n}\ln q_n(\theta)+\frac{\ln \ln q_n(\theta)}{n}\geq -\frac{d}{n}\ln r_n(\theta)^d\geq \frac{c}{n}\ln q_n(\theta)-\frac{\ln C}{n}
\]
for all large enough $n$. Since the left hand side of the first inequality and the right hand side of the second inequality both converge to $cL_{d,c}$ when $\lim_{n\rightarrow\infty}\frac{1}{n}\ln q_{n}(\theta)=L_{d,c}$, the sequence $(-\frac{d}{n}\ln r_n(\theta)^d)_n$ converges to the same limit for almost all $\theta$. 
Therefore, the proof
of Theorem \ref{thm:levy-d} reduces to prove the first almost everywhere limit.

Let us now prove the first almost everywhere limit. We need two lemmas. The
first one is clear.

\begin{lemma}
For all compact sets $K$ in $\mathbb{R}^{d+c}$ and all $\varepsilon>0$, there
exists $\alpha>0$ such that for all $g\in B(I_{d+c},\alpha)$ and all $x\in K$,
$\dd(gx,x)\leq\varepsilon$.
\end{lemma}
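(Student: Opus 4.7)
The plan is to exploit the linear structure: for any matrix $g$ and vector $x$, we have $gx - x = (g-I_{d+c})x$, so $\dd(gx,x) = \|(g-I_{d+c})x\|_{\mathbb{R}^{d+c}}$. This reduces the claim to a bound on the size of $(g-I_{d+c})x$ in terms of how close $g$ is to the identity and how large $x$ is.

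First I would fix the compact set $K$ and set $R = \sup_{x\in K} \|x\|_{\mathbb{R}^{d+c}}$, which is finite by compactness. Since the fixed norm on $\M_{d+c}(\mathbb{R})$ (chosen in Section~2.2) is equivalent to the operator norm with respect to $\|\cdot\|_{\mathbb{R}^{d+c}}$, there is a constant $C>0$ such that $\|Ax\|_{\mathbb{R}^{d+c}} \leq C \|A\| \cdot \|x\|_{\mathbb{R}^{d+c}}$ for every matrix $A$ and every vector $x$. Applying this with $A = g - I_{d+c}$, for any $g\in B(I_{d+c},\alpha)$ and any $x\in K$,
\[
\dd(gx,x) = \|(g-I_{d+c})x\|_{\mathbb{R}^{d+c}} \leq C\,\|g-I_{d+c}\|\cdot\|x\|_{\mathbb{R}^{d+c}} \leq C\alpha R.
\]

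It then suffices to take $\alpha = \varepsilon/(CR+1)$ (with the convention that when $R=0$, i.e.\ $K\subset\{0\}$, any $\alpha>0$ works since $gx = x = 0$). The only subtlety is the norm equivalence, which is automatic on the finite-dimensional space $\M_{d+c}(\mathbb{R})$; there is no genuine obstacle in this lemma, which is why the authors call it clear.
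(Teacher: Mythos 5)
Your proof is correct and is exactly the standard argument one would use: write $gx-x=(g-I_{d+c})x$, bound by the operator norm times $\sup_{x\in K}\|x\|$, and invoke finite-dimensional norm equivalence to pass to the paper's fixed norm on $\M_{d+c}(\mathbb{R})$. The paper does not supply a proof at all (it labels the lemma ``clear''), so there is nothing to compare against; your write-up simply makes the obvious argument explicit.
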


\begin{lemma}
Let $\Lambda$ be in $S^{\prime}\setminus\mathcal{N}$. Then the return map $R=R_{S}$ is defined on some neighborhood of $\Lambda$ and is continuous at $\Lambda$.
\end{lemma}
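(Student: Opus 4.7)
My strategy is to pin down $\tau(\Lambda)$ via the visiting-time formula of Lemma~\ref{lem:return-minimal}, show the same formula applies to all nearby lattices after continuously tracking the relevant minimal vectors, and deduce continuity of $R=R_{S}$ at $\Lambda$.

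\textbf{Step 1 (Identify $\tau(\Lambda)$).} Since $\Lambda\in S'\setminus\mathcal{N}$, the numbering convention gives $X_0(\Lambda)=w_0^{S'}(\Lambda)$, so $q_0(\Lambda)=r_0(\Lambda)=\lambda_1(\Lambda)$. By Lemma~\ref{lem:return-minimal}, the visiting times of $S$ from $\Lambda$ are exactly $t_k(\Lambda)=\frac{1}{d+c}\ln\frac{q_{k+1}(\Lambda)}{r_k(\Lambda)}$, $k\in\mathbb{Z}$. The hypothesis $\Lambda\notin\mathcal{N}$ makes $(q_k)$ strictly increasing and $(r_k)$ strictly decreasing, hence $(t_k)$ is strictly increasing. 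A direct calculation gives $t_{-1}<0<t_0$ from $r_{-1}>r_0=q_0<q_1$, so $\tau(\Lambda)=t_0(\Lambda)$.

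\textbf{Step 2 (Track minimal vectors continuously).} Fix a large $N$ and small $\varepsilon>0$. Because $\Lambda\notin\mathcal{N}$, the heights $|X_k(\Lambda)|_-$ and the projections $|X_k(\Lambda)|_+$ for $|k|\le N+1$ are pairwise distinct (modulo sign), and no nonzero vector of $\Lambda$ lies in $E_+$ or $E_-$. These strict separations and the minimality of each $\pm X_k(\Lambda)$ are open conditions on the lattice, so on a sufficiently small neighborhood $U$ of $\Lambda$ in $\mathcal{L}_{d+c}$ each $X_k(\Lambda)$ extends to a continuous family $Y_k\colon U\to\mathbb{R}^{d+c}$ with $Y_k(\Lambda)=X_k(\Lambda)$ and $Y_k(\Lambda')$ a minimal vector of $\Lambda'$.

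\textbf{Step 3 (Exclude spurious visiting times and conclude).} It remains to check that no \emph{untracked} minimal vector of $\Lambda'\in U$ can produce a visiting time of $S$ in $(0,t_0(\Lambda)+\varepsilon)\setminus\{t_0(\Lambda')\}$. Such a visiting time would come from consecutive minimal vectors $Z',Z$ of $\Lambda'$ with $|Z|_-$ bounded above and $|Z'|_+$ bounded below by positive constants independent of $\Lambda'\in U$. By Mahler compactness, after shrinking $U$, all minimal vectors of $\Lambda'$ in this height/projection range must be among the tracked $Y_k(\Lambda')$; strict monotonicity of $(t_k(\Lambda))$ then shows that the only pair producing a visiting time near $t_0(\Lambda)$ is $(Y_0(\Lambda'),Y_1(\Lambda'))$. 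Therefore $\tau(\Lambda')=\frac{1}{d+c}\ln\frac{|Y_1(\Lambda')|_-}{|Y_0(\Lambda')|_+}$ on $U$, a continuous function of $\Lambda'$, and so $R(\Lambda')=g_{\tau(\Lambda')}\Lambda'$ is continuous at $\Lambda$. The main obstacle is this exclusion step: it requires uniform compactness control over minimal vectors of \emph{all} lattices close to $\Lambda$ in a fixed height range, which relies essentially on the openness of the non-degeneracy conditions encoded in $\Lambda\notin\mathcal{N}$.
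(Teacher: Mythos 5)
Your plan follows essentially the same route as the paper: identify $\tau(\Lambda)=t_0(\Lambda)$ via the visiting-time formula, observe that the minimality and strict-separation conditions on the neighboring minimal vectors are open (so $X_0,X_1$ persist under small perturbations), and rule out spurious short vectors. The only cosmetic differences are that the paper tracks just $X_0,X_1$ and excludes spurious vectors by a direct distance estimate ($\varepsilon$, $\delta/3$ bookkeeping) rather than by citing Mahler compactness, and it restricts the perturbed lattice $\Gamma$ to $S'$ so that the numbering convention immediately gives $X_0(\Gamma)=gX_0$; but these are variants of the same argument, not a genuinely different approach.
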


\begin{proof}
 Consider the minimal
vectors $X_{0}=X_{0}(\Lambda)$ and $X_{1}=X_{1}(\Lambda)$. By definition of
$S^{\prime}$ the only nonzero vectors $B_{d,c}(0,\lambda_{1}(\Lambda))=\mathcal{C}
(X_{0})$ are $\pm X_{0}$. Therefore, there exists $\varepsilon>0$ such that all
$X$ in $\Lambda\setminus\{0,\pm X_{0}\}$ are at a distance $\geq\varepsilon$
from $\mathcal{C}(X_{0})$. Since $\Lambda$ is not in $\mathcal{N}$, $\pm
X_{0}$ and $\pm X_{1}$ are the only nonzero vectors of $\Lambda$ in the
cylinder $\mathcal{C}(X_{0},X_{1})$. Therefore reducing $\varepsilon$ if
necessary, all $X$ in $\Lambda\setminus\{0,\pm X_{0},\pm X_{1}\}$ are at a
distance $\geq\varepsilon$ from $\mathcal{C}(X_{0},X_{1})$. By the above lemma
we can choose $\delta>0$ such that
$\forall g\in B(I_{d+c},\delta),\ \forall X\in\mathcal{C}(X_{0},X_{1})+B_{d,c}(0,1), $
\[
 \max(\dd(g^{-1}X,X),\dd(gX,X))\leq\varepsilon/3\text{. }
\]
It follows that for all $g\in B(I_{d+c},\delta)$, $\pm gX_{0}$ are the only
nonzero vectors of $g\Lambda$ in $\mathcal{C}(gX_{0})$ and that $\pm gX_{0}$
and $\pm gX_{1}$ are the only nonzero vectors of $\Lambda$ in $\mathcal{C}
(gX_{0},gX_{1})$. It follows that if the lattice $\Gamma=g\Lambda$ is in the set of lattices $B(I_{d+c}
,\delta)\Lambda\cap S^{\prime}$ then $X_{0}(\Gamma)=gX_{0}$ and $X_{1}
(\Gamma)=gX_{1}$. By definition of $S$, the return times  from $\Lambda$ and $\Gamma$ are well defined and we have
\begin{align*}
\tau(\Lambda)   =\frac{1}{d+1}\ln\frac{\left|  X_{1}\right|  _{-}}{\left|
X_{0}\right|  _{+}} \,\text{ and }
\tau(\Gamma)    =\frac{1}{d+1}\ln\frac{\left|  gX_{1}\right|  _{-}}{\left|
gX_{0}\right|  _{+}},
\end{align*}
hence $R(\Lambda)$ and $R(\Gamma)$ are defined and
\begin{align*}
\left|  \tau(\Lambda)-\tau(\Gamma)\right|   &  =\frac{1}{d+1}\left|  \ln
\frac{\left|  X_{1}\right|  _{-}}{\left|  X_{0}\right|  _{+}}\frac{\left|
gX_{0}\right|  _{+}}{\left|  gX_{1}\right|  _{-}}\right| \\
&  \leq\frac{1}{d+1}(\left|  \ln\frac{\left|  X_{1}\right|  _{-}}{\left|
gX_{1}\right|  _{-}}\right|  +\left|  \ln\frac{\left|  gX_{0}\right|  _{+}
}{\left|  X_{0}\right|  _{+}}\right|  ).
\end{align*}
which goes to zero when $\delta$ goes to zero.
\end{proof}

\begin{proof}[End of proof of Theorem \ref{thm:levy-d}]
We use Theorem \ref{thm:reduction} with $S^{\prime}$ and the function
$\varphi:S^{\prime}\rightarrow\mathbb{R}_{\geq 0}$ defined by $\varphi
(\Lambda)=\rho\circ R(\Lambda)=\ln\frac{q_{1}(\Lambda)}{q_{0}(\Lambda)}$ when $R(\Lambda)$ is defined and by $\varphi(\Lambda)=0$ otherwise.
Since $\rho$ is continuous on $S$, the above lemma implies that $\varphi$ is almost everywhere continuous on
$S^{\prime}$. We want to find a uniformly continuous function $f:\mathcal{L}
\rightarrow\mathbb{R}$ such that $\left\vert \varphi\right\vert \leq f$.
Observe that $\varphi$ is non negative. By Lemma \ref{lem:Minkowski}, for
all lattice $\Lambda\in\mathcal{L}$,
\[
q_{1}(\Lambda)^{c}r_{0}(\Lambda)^{d}\leq C=C_{d,c}
\]
where $C_{d,c}$ depends only on $c$ and $d$. It follows that for all $\Lambda$ in
$S^{\prime}$, we have
\begin{align*}
\varphi(\Lambda) &  =\ln\frac{q_{1}(\Lambda)}{q_{0}(\Lambda)}\\
&  =\ln\frac{q_{1}(\Lambda)r_{0}(\Lambda)^{d/c}}{q_{0}(\Lambda)r_{0}
(\Lambda)^{d/c}}\\
&  \leq\ln C^{d/c}-\ln q_{0}(\Lambda)r_{0}(\Lambda)^{d/c}.
\end{align*}
Now for $\Lambda$  in $S^{\prime}$, we have $q_{0}(\Lambda)=r_{0}(\Lambda)=\lambda_1(\Lambda)$,
thus
\[
\varphi(\Lambda)\leq\ln C^{d/c}-\frac{d+c}{c}\ln\lambda_{1}(\Lambda).
\]
It is well known (see \cite{Chev-Levy}) that the function $\ln\lambda_{1}$ is uniformly continuous
and it is integrable on $\mathcal{L}_{c+d}$ by Lemma \ref{lem:Siegel}, consequently we can use Theorem
\ref{thm:reduction} with $S^{\prime}$ and $\varphi$. It follows that for
almost all $\theta$ in $\M_{d,c}(\mathbb{R})$, we have
\[
\lim_{n\rightarrow\infty}\frac{1}{n}\sum_{k=0}^{n-1}\varphi\circ R_{S^{\prime}
}^{k}(\Lambda_{\theta})=\frac{1}{\mu_{S^{\prime}}(S^{\prime})}\int_{S^{\prime}}
\varphi~d\mu_{S^{\prime}}
\]
where $R_{S^{\prime}}$ is the first return map to $S^{\prime}$. Now by Lemmas \ref{lem:min-best} and \ref{lem:return-best}, for almost
all $\theta$, there is an integer $k_0$ such that
\[
\varphi\circ R_{S^{\prime}}^{k}(\Lambda_{\theta})=\ln\frac{q_{k+k_0+1}(\theta)}{q_{k+k_0}(\theta
)}
\]
for all large enough $k$, where $k_0$ depends only on $\theta$. It follows that for almost all $\theta$ 
\[
\lim_{n\rightarrow\infty}\frac{1}{n}\sum_{k=0}^{n-1}\varphi\circ R_{S^{\prime}
}^{k}(\Lambda_{\theta})=
\lim_{n\rightarrow\infty}\frac{1}{n}\sum_{k=0}^{n-1}\ln\frac{q_{k+k_0+1}(\theta)}{q_{k+k_0}(\theta
)}=
\lim_{n\rightarrow\infty}\frac{1}{n}\ln q_{n}(\theta)
\]
So that, the only thing left is the equality
\[
\int_{S}\rho~d\mu_{S}=\int_{S^{\prime}}\varphi~d\mu_{S^{\prime}}.
\]
Now, the image of $\mu_{S^{\prime}}$ by $R$ is $\mu_{S}$, hence
\begin{align*}
\int_{S^{\prime}}\varphi~d\mu_{S^{\prime}}   =\int_{S^{\prime}}\rho\circ
R\ d\mu_{S^{\prime}}
 =\int_{S}\rho\ d\mu_{S}.
\end{align*}
\end{proof}

\begin{proof}[Proof of Theorem \ref{thm:jager}.1]
Consider the map $F:S\rightarrow \R$ defined by 
\[
F(\Lambda)=q_1^c(\Lambda)r_0^d(\Lambda)=|v_1^S(\Lambda)|_-^c|v_0^S(\Lambda)|^d_+
\]
and call $\nu=\nu_{d,c}$ the image of the measure $\frac{1}{\mu_S(S)}\mu_S$ by $F$.
Let $\varphi:\R\rightarrow \R$ be a continuous and bounded function. We want to prove that 
\[
\lim_{n\rightarrow\infty}\frac{1}{n}\sum_{k=0}^{n-1}\varphi(\beta_k(\theta))
=\int_{\mathbb R} \varphi(x) \ d\nu(x)
\] 
for almost all $\theta \in \M_{d,c}(\mathbb R)$. 

Now by Lemmas  \ref{lem:min-best} and \ref{lem:return-best}, for almost all $\theta$, 
\[
F(R^k(\Lambda_{\theta}))=q_{k+k_0+1}^c(\theta)r_{k+k_0}^d(\theta)=\beta_{k+k_0}(\theta)
\]
for all $k$ large enough. Now the function $\varphi\circ F$ is bounded and continuous, thus by Theorem \ref{thm:reduction} (or Proposition \ref{prop:reduction}) we have for almost all $\theta$,
\begin{align*}
\lim_{n\rightarrow\infty}\frac{1}{n}\sum_{k=0}^{n-1}\varphi\circ F(R^k(\Lambda_\theta))
&=\frac{1}{\mu_S(S)}\int_{S}\varphi\circ F(\Lambda)\, d\mu_S(\Lambda)\\
&=\int_{\mathbb R} \varphi(x) \, d\nu(x)
\end{align*}
which  implies that
\[
\lim_{n\rightarrow\infty}\frac{1}{n}\sum_{k=0}^{n-1}\varphi(\beta_k(\theta))
=\lim_{n\rightarrow\infty}\frac{1}{n}\sum_{k=0}^{n-1}\varphi\circ F(R^k(\Lambda_\theta))
=\int_{\mathbb R} \varphi(x) \ d\nu(x)
\]
and finishes the proof of Theorem \ref{thm:jager}.1.
The proof of Theorem \ref{thm:jager}.2. is postponed  at the end of Section 9. 
\end{proof}

\section{On $\lim\inf q_{n+k}^{c}r_{n}^{d}$ \label{sec:qnrn}}

For each $\theta$ in $\M_{d,c}(\mathbb{R)}$, we consider the sequence of best
approximation denominators $(Q_{n}(\theta))_{n\in\mathbb{N}}$,  their norms
$q_{n}=\left\Vert Q_{n}(\theta)\right\Vert _{\mathbb{R}^{c}}$, and the sequence
$(r_{n})_{n\geq 0}$ defined by
\[
r_{n}=d_{\mathbb{R}^{d}}(\theta Q_{n},\mathbb{Z}^{d}).
\]
For a nonnegative integer $k$, call $Bad_{k}$ the subset of $\M_{d,c}(\mathbb{R})$
defined by
\[
\Bad_{k}(d,c)=\Bad_{k}=\left\{\theta\in\M_{d,c}(\mathbb{R})\setminus\M_{d,c}(\mathbb{Q}):\inf_{n\in\N} q_{n+k}^{c}
r_{n}^{d}>0\right\}
\]
(if $r_n=0$ for some integer $n$, $\theta$ is not in $\Bad_k$). The sequence of sets $(\Bad_{k})_{k\geq 0}$ is clearly nondecreasing. The set
$\Bad_{0}$ is the usual set of badly approximable matrices, i.e., the set of matrices $\theta\in M_{d,c}(\theta)$ such that 
\[
\inf\left\{\frac{\|\theta Q-P\|_{\R^d}}{\|Q\|_{\R^c}}:Q\in\Z^c\setminus\{0\},P\in\Z^{d}\right\}>0.
\]
 When $d=c=1$, the
classical inequality $q_{n+1}r_{n}\geq\frac{1}{2}$ shows that $\Bad_{1}
=\mathbb{R}\setminus\mathbb{Q}$ while in \cite{Chev-Khin} it is shown
that for $c=1$ and $d\geq 2$, $\Bad_{1}$ is negligible. Our first goal is to
show that $\Bad_{1}\setminus \Bad_{0}$ is nonempty for $c=1$ and $d=2$. Next we
will prove that the set
\[
\mathcal{B}(d,c)=\mathcal{B}=\cup_{k\geq 0}\Bad_{k}
\]
is negligible and does not depend on the choice of the norm.

\begin{proposition}\label{prop:notbad}
If $c=1$ and $d=2$ then $\Bad_{1}\setminus \Bad_{0}$ contains uncountably many elements.
\end{proposition}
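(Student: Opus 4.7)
The two conditions defining $\theta \in Bad_1 \setminus Bad_0$, namely $\inf_n q_{n+1}(\theta) r_n(\theta)^2 \geq c_0 > 0$ and $\liminf_n q_n(\theta) r_n(\theta)^2 = 0$, share the factor $r_n^2$, so they force the ratio $q_{n+1}/q_n \to \infty$ along a subsequence. Morally, I seek $\theta \in \R^2$ whose sequence of best simultaneous approximation denominators exhibits arbitrarily large \emph{jumps} at a prescribed sparse subsequence of indices, while the Minkowski-type lower bound $q_{n+1} r_n^2 \geq c_0$ is preserved at every index; Lemma~\ref{lem:Minkowski} already provides the matching upper bound $q_{n+1} r_n^2 \leq C_{2,1}$.

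I would construct such $\theta$ via a nested-rectangles, Cantor-type recursion. Fix constants $0 < c_0 < C_{2,1}$ and a rapidly increasing sequence $(n_j)_{j \geq 1}$ of positive integers. Inductively choose integer vectors $Y_n = (P_n, Q_n) \in \Z^2 \times \Z$ with $q_n = |Q_n|$ strictly increasing, positive reals $r_n$ strictly decreasing satisfying $c_0 \leq q_{n+1} r_n^2 \leq C_{2,1}$, and nested closed rectangles $R_1 \supset R_2 \supset \cdots$ in $\R^2$ of diameter tending to $0$, arranged so that for every $\theta \in R_n$ the vectors $Y_1, \dots, Y_n$ are exactly the first $n$ best simultaneous approximation vectors of $\theta$, with $\|\theta Q_k - P_k\|_{\R^2} = r_k$. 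At each stage $n = n_j$ take $q_{n+1}$ very large relative to $q_n$, so that $q_n r_n^2 \leq (q_n/q_{n+1}) C_{2,1} \to 0$ along $(n_j)$. The resulting $\theta \in \bigcap_n R_n$ then lies in $Bad_1 \setminus Bad_0$ by construction. To obtain \emph{uncountably many} such $\theta$, I retain a binary branching at each stage in the placement of $Y_{n+1}$ (for instance two admissible integer points $P_{n+1}$ near $\theta Q_{n+1}$ yield two disjoint admissible sub-rectangles of $R_n$), producing a Cantor subset of $\bigcap_n R_n$ of cardinality $2^{\aleph_0}$.

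The main obstacle is verifying that no intermediate integer vector $(P,Q)$ with $q_k < |Q| < q_{k+1}$ ever beats $Y_k$. Via the minimal-vector dictionary of Lemma~\ref{lem:min-best}, such an intruder would correspond to a lattice vector of $\Lambda_\theta$ inside the cylinder $C(X_k, X_{k+1})$, which is ruled out by shrinking $R_n$ in the direction transverse to $\theta Q_k - P_k$ (width of order $r_k/q_{k+1}$). The genuinely delicate case is the jump stage $n = n_j$, where the range $(q_{n_j}, q_{n_j+1})$ of forbidden intermediate denominators is huge: for each candidate denominator $Q$ in that range and each plausible integer target $P \in \Z^2$, the inequality $\|\theta Q - P\|_{\R^2} < r_{n_j}$ confines $\theta$ to a rectangle of area $\ll r_{n_j}^2$. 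A counting argument bounds the total area of these forbidden rectangles by $\ll q_{n_j+1}^{1+o(1)} r_{n_j}^2 \asymp q_{n_j+1}\cdot q_{n_j+1}^{-1} = o(1)$ times the area of $R_{n_j}$ (using $r_{n_j}^2 \asymp 1/q_{n_j+1}$), so that $R_{n_j+1}$ can be carved out of the complement. Handling this exclusion uniformly over the binary choices of the Cantor construction, and across the jump stages, is the heart of the argument.
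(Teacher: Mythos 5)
Your overall plan --- a nested Cantor-type construction with binary branching, forcing large jumps $q_{n_j+1}/q_{n_j} \to \infty$ while preserving $q_{n+1}r_n^2 \ge c_0$ at every index --- is a natural strategy, and the soft observations (Minkowski gives the matching upper bound; the jump makes $q_n r_n^2 \to 0$ along a subsequence) are correct. The gap is in the counting step you rightly identify as ``the heart of the argument,'' and as written that step does not go through. For a fixed $Q$ with $q_{n_j} < Q < q_{n_j+1}$, the set $\{\theta : \dd(\theta Q, \Z^2) < r_{n_j}\}$ has \emph{density} about $\pi r_{n_j}^2$ inside $R_{n_j}$ (it is a union of disks of radius $r_{n_j}/Q$ centred on the scaled grid $\frac{1}{Q}\Z^2$). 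Summing over the $\approx q_{n_j+1}$ intermediate denominators, the forbidden fraction of $R_{n_j}$ is $\asymp q_{n_j+1} r_{n_j}^2 \asymp c_0$ --- a constant fraction, not $o(1)$. The displayed arithmetic ``$\,q_{n_j+1}\cdot q_{n_j+1}^{-1} = o(1)$'' is also simply false (it equals $1$); and ``$q_{n_j+1}^{1+o(1)}r_{n_j}^2 = q_{n_j+1}^{o(1)}$'' is not small. One could try to rescue this by taking $c_0$ tiny so the forbidden fraction is, say, $1/10$, but then one still has to show that the good set contains \emph{two disjoint rectangles} of the right shape compatible with all the other constraints (in particular the positive-lower-bound constraint $\dd(\theta Q_{n_j+1},\Z^2) \ge \sqrt{c_0/q_{n_j+2}}$ carving out an annulus, and the recursion over $j$). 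None of this is impossible, but it is not established, and the heavy lifting is all still to do.

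The paper takes a genuinely different route that sidesteps the counting entirely. It builds a sequence of \emph{rational} vectors $\theta_n$ with least common denominator $Q_n$ and lattices $\Lambda_n = \Z^2 + \Z\theta_n$, and chooses $\theta_{n+1} = \theta_n + \varepsilon_n/Q_n$ with $\varepsilon_n$ a small multiple of a primitive lattice vector $\alpha_n$ of $\Lambda_n$. The key structural fact is that $\Lambda_{n+1}$ is then contained in the family of lines $\mathcal{H}_n = \Lambda_n + \R\alpha_n$, with consecutive-line spacing $d_n = 1/(Q_nL_n)$ and with $\pm\varepsilon_n$ as the shortest vectors. This \emph{forces}, by geometry rather than by measure, that no denominator strictly between $Q_n$ and $Q_{n+1}$ can produce a better approximation: any $q\theta_{n+1}$ is either within $\|\varepsilon_n\|$ of an $\mathcal H_n$-line (hence far unless $q\theta_{n+1}\equiv\pm\varepsilon_n$, giving $q = Q_n$ or $q = Q_{n+1}-Q_n$), or at distance $\ge d_n \gg \|\varepsilon_n\|$. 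The $Bad_1$ property comes from condition $7$, $2\lambda_1(\Lambda_n) \le \lambda_2(\Lambda_n) \le 30\lambda_1(\Lambda_n)$, combined with $\det\Lambda_n = 1/Q_n$ and $\lambda_1(\Lambda_n) \asymp r_{n-1}$, which gives $Q_n r_{n-1}^2 \asymp 1$; the failure of $Bad_0$ comes from condition $2$, $Q_n > 2nQ_{n-1}$; uncountability comes from retaining two admissible choices of $\alpha_n$ (hence $p_n$, hence $Q_{n+1}$) at each stage, exactly as you envisaged. In short, the paper replaces your measure-theoretic exclusion step with an algebraic/geometric one that makes the intermediate-denominator analysis exact rather than statistical.
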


\begin{remark}The set $\mathbb{Z\theta+Z}^{2}$ is everywhere dense in
$\mathbb{R}^{2}$ for all in $\theta\in \Bad_{1}$. Indeed it is known that the
first minimum of the lattice
\[
\Lambda_{n}=\mathbb{Z}^{2}+\mathbb{Z}\frac{p_{n}}{q_{n}}
\]
is $\asymp r_{n-1}$ where $p_{n}$ is an integer vector such that $r_{n}
=\dd(q_{n}\theta,\mathbb{Z}^{2})$. This implies that the second minimum of this
lattice $\lambda_{2}(\Lambda_{n})$ is $\asymp\frac{1}{q_{n}r_{n-1}}$. Now, if $q_{n}r_{n-1}^{2}\geq\alpha>0$ then $\frac{1}{q_{n}r_{n-1}}
\leq\frac{r_{n-1}}{\alpha}$ which goes to zero when $n\rightarrow\infty$. Therefore, $\lambda_{2}(\Lambda_{n})$ goes to zero which  implies that
$\mathbb{Z\theta+Z}^{2}$ is everywhere dense in $\mathbb{R}^{2}$ (see
\cite{Chev-Mosc} or \cite{Cheu-Chev}).
\end{remark}

\begin{proof}
We assume that $\mathbb{R}^{2}$ is equipped with the standard Euclidean norm $\|.\|=\|.\|_{\mathbb{R}^2}$.
Set $\theta_{0}=(0,0)$ and $\theta_{1}=(\frac{1}{5},\frac{1}{5})$. We
construct inductively a sequence $(\theta_{n})_{n\geq 0}$ of rational vectors
in $\mathbb{R}^{2}$. For each $n$ in $\mathbb{N}$, let $\Lambda_{n}
=\mathbb{Z}^{2}+\theta_{n}\mathbb{Z}$ be the lattice associated with
$\theta_{n}$. Observe that the least common denominator $Q_{n}$ of the coordinates of the 
rational vector $\theta_{n}$ is the inverse of the volume of the lattice
$\Lambda_{n}$,\ $\det\Lambda_{n}=\frac{1}{Q_{n}}$ (even for $n=0$). For $1\leq i\leq n$, set
\begin{align*}
M_{i,n}  &  =\min\{\dd(q\theta_{n},\mathbb{Z}^{2})-\dd(Q_{i-1}\theta
_{n},\mathbb{Z}^{2}):Q_{i-1}<q<Q_{i}\},\\
m_{i,n}  &  =\dd(Q_{i-1}\theta_{n},\mathbb{Z}^{2})-\dd(Q_{i}\theta_{n}
,\mathbb{Z}^{2})
\end{align*}
The sequence $(\theta_{n})_{n\geq 0}$ is constructed such that the following
properties hold for all $n\geq 1$:
\begin{enumerate}
\item $Q_{0}=1<Q_{1}=5<Q_2<\dots<Q_{n}$ are the best approximations (denominators) of $\theta_{n}$,
\item $Q_{n}>2nQ_{n-1}$ and given $\theta_{0},\theta_{1},\,\dots,\theta_{n-1}$,
there are at least two possible choices of $\theta_{n}$ leading to two
different values of $Q_{n}$ (to ensure that we construct an uncountable set),
\item for all $1\leq i\leq n-1$, $M_{i,n}>0$ (we need to avoid the situation
where $\dd(q\theta_{n},\mathbb{Z}^{2})=\dd(Q_{i-1}\theta_{n},\mathbb{Z}^{2})$ for
some $q$ between $Q_{i-1}$ and $Q_{i}$),
\item $\left\Vert \theta_{n}-\theta_{n-1}\right\Vert \leq\frac{1}{8Q_{n-1}
}\min\{M_{i,j}:1\leq i<j\leq n-1\},$
\item $\left\Vert \theta_{n}-\theta_{n-1}\right\Vert \leq\frac{1}{8Q_{n-1}
}\min\{m_{i,j}:1\leq i\leq j\leq n-1\},$
\item $\varepsilon_{n-1}=Q_{n-1}(\theta_{n}-\theta_{n-1})$ is a shortest
vector of $\Lambda_{n}$, i.e. $\lambda_{1}(\Lambda_{n})=\left\Vert
\varepsilon_{n-1}\right\Vert $, and $(-1)^{n-1}\varepsilon_{n-1}$ has positive
coordinates,

\item $ 2\lambda_1(\Lambda_n)\leq\lambda_{2}(\Lambda_{n})\leq 30\lambda_{1}(\Lambda_{n})$.
\end{enumerate}

Observe that, with our choices of $\theta_0$ and $\theta_1$ all these conditions hold for $n=1$ (the conditions 3 and 4 are empty for $n=1$).\vspace{0.5cm}

First, let us show that the above conditions imply that the sequence
$(\theta_{n})_{n\in\mathbb{N}}$ converges to $\theta$ in $\Bad_{1}\setminus
\Bad_{0}$. By 2 and 4, the sequence $\left\Vert \theta_{n-1}-\theta
_{n}\right\Vert $ converges to $0$ at least at a geometric rate, hence the sequence
$(\theta_{n})_{n\geq 1}$ converge to $\theta\in\mathbb{R}^{2}$. Furthermore, by
4, for all $n\geq 2$,
\begin{align*}
\left\Vert \theta-\theta_{n}\right\Vert  &  \leq\sum_{p\geq n+1}\left\Vert
\theta_{p}-\theta_{p-1}\right\Vert \\
&  \leq\sum_{p\geq n+1}\frac{1}{8Q_{p-1}}\min\{M_{i,j}:1\leq i<j\leq p-1\}\\
&  \leq\frac{1}{4Q_{n}}\min\{M_{i,n}:1\leq i<n\}.
\end{align*}
Using 5 instead of 4, we obtain
\[
\left\Vert \theta-\theta_{n}\right\Vert \leq\frac{1}{4Q_{n}}\min
\{m_{i,n}:1\leq i\leq n\}
\]
as well. It follows that for\ all $1\leq i\leq n-1$ and all $Q_{i-1}<q<Q_{i}$,
we have
\begin{align*}
\dd(q\theta,\mathbb{Z}^{2})  &  \geq \dd(q\theta_{n},\mathbb{Z}^{2})-q\left\Vert
\theta-\theta_{n}\right\Vert \\
&  \geq \dd(Q_{i-1}\theta_{n},\mathbb{Z}^{2})+M_{i,n}-q\left\Vert \theta
-\theta_{n}\right\Vert \\
&  \geq \dd(Q_{i-1}\theta,\mathbb{Z}^{2})-Q_{i-1}\left\Vert \theta-\theta
_{n}\right\Vert +M_{i,n}-q\left\Vert \theta-\theta_{n}\right\Vert \\
&  \geq \dd(Q_{i-1}\theta,\mathbb{Z}^{2})+M_{i,n}-2Q_{i}\left\Vert \theta
-\theta_{n}\right\Vert.
\end{align*}
Since $\left\Vert \theta-\theta_{n}\right\Vert \leq\frac{1}{4Q_{n}}M_{i,n}$,
$\dd(q\theta,\mathbb{Z}^{2})>\dd(Q_{i-1}\theta,\mathbb{Z}^{2})$.
For all $1\leq i\leq n-1$, we also have
\begin{align*}
\dd(Q_{i}\theta,\mathbb{Z}^{2})  &  \leq \dd(Q_{i}\theta_{n},\mathbb{Z}^{2}
)+Q_{i}\left\Vert \theta-\theta_{n}\right\Vert \\
&  \leq \dd(Q_{i-1}\theta_{n},\mathbb{Z}^{2})-m_{i,n}+\frac{Q_{i}}{4Q_{n}
}m_{i,n}\\
&  \leq \dd(Q_{i-1}\theta,\mathbb{Z}^{2})+Q_{i-1}\left\Vert \theta-\theta
_{n}\right\Vert -m_{i,n}+\frac{Q_{i}}{4Q_{n}}m_{i,n}\\
&  \leq \dd(Q_{i-1}\theta,\mathbb{Z}^{2})+\frac{Q_{i-1}}{4Q_{n}}m_{i,n}
-m_{i,n}+\frac{Q_{i}}{4Q_{n}}m_{i,n}\\
&  <\dd(Q_{i-1}\theta,\mathbb{Z}^{2}).
\end{align*}
It follows that $Q_{0},Q_{1},...,Q_{n-1}$ are the first $n$ best
approximations of $\theta$. Therefore, $(Q_{n})_{n\geq 0}$ is the sequence of
best approximations of $\theta$ and  $\theta_n$ is the rational approximation of $\theta$ associated with $Q_n$. The standard inequality (see for instance
\cite{Chev-Mosc})

\[
\lambda_{1}(\Lambda_{n})\asymp \dd(Q_{n-1}\theta,\mathbb{Z}^{2})
\]
together with 7 imply that $\theta\in \Bad_{1}\setminus \Bad_{0}$.
\newline

Let $n$ be integer $\geq 1$. Let us  explain the construction $\theta_{n+1}$ given that $\theta
_{0},...,\theta_{n}$ are already constructed. First, choose a primitive point
$\alpha_{n}=k_{n}\theta_{n}+(a_{n},b_{n})$ of $\Lambda_{n}$ with $0\leq
k_{n}<Q_{n}$ and $(a_n,b_n)\in\Z^2$, in either $\mathbb{R}_{>0}^{2}$ when $n$ is even or in
$\mathbb{R}_{<0}^{2}$ when $n$ is odd. Just take $\alpha_n$
a point of $\Lambda_n$ in a square $[x,x+1[\times]0,1]$ with
minimal ordinate when $n$ is even and a point of $\Lambda_n$ in
a square $[x,x+1[\times[-1,0[$ with
maximal ordinate when $n$ is odd. Observe that $\left\Vert\alpha_n\right\Vert$ can be made arbitrarily large by choosing $|x|$ arbitrarily large.

Call $L_{n}=\left\Vert \alpha
_{n}\right\Vert $ the length of the segment $[0,\alpha_{n}]$. The (Euclidean) distance
between two consecutive lines of the set $\mathcal{H}_{n}=\Lambda
_{n}+\mathbb{R\alpha}_{n}$ is
\[
d_{n}=\frac{\det\Lambda_{n}}{L_{n}}=\frac{1}{Q_{n}L_{n}}.
\]
We can choose $\alpha_{n}$ such that
\[
L_{n}^{2}\geq n\det\Lambda_{n},
\]
hence $\frac{L_{n}}{d_{n}}=\frac{L_{n}^{2}}{\det\Lambda_{n}}\geq n$. There
are at least two integers $p_{n}\geq 2$ such that
\[
10\frac{L_{n}}{d_{n}}\leq p_{n}\leq 20\frac{L_{n}}{d_{n}}.
\]
Suppose $p_n$ is one of these and set
\[
\varepsilon_{n}=\frac{1}{p_{n}-\frac{k_{n}}{Q_{n}}}\alpha_{n},
\]
\[
\theta_{n+1}=\theta_{n}+\frac{\varepsilon_{n}}{Q_{n}},
\]
and
\[
Q_{n+1}=Q_{n}p_{n}-k_{n}.
\]
Since by 1, $Q_{0}=1<Q_{1}=5<...<Q_{j}$ are the best approximations of
$\theta_{j}$, $j=1,...,n$, the real number $\min\{m_{i,j}:1\leq i\leq j\leq
n\}$ is strictly positive. Moreover,
\[
\left\Vert \varepsilon_{n}\right\Vert \leq\frac{L_{n}}{p_{n}-1}\leq\frac
{L_{n}}{\frac{p_{n}}{2}}\leq 2\frac{L_{n}}{10\frac{L_{n}}{d_{n}}}\leq
\frac{d_{n}}{5}
\]
and $\alpha_{n}$ can be chosen in order that $d_{n}$ is arbitrarily small,
hence we can choose $\alpha_{n}$ such that $\left\Vert \varepsilon
_{n}\right\Vert <\left\Vert \varepsilon_{n-1}\right\Vert $ and
\[
\left\Vert \theta_{n+1}-\theta_{n}\right\Vert =\frac{1}{Q_{n}}\left\Vert
\varepsilon_{n}\right\Vert \leq\frac{1}{8Q_{n}}\min\{m_{i,j}:1\leq i\leq j\leq
n\}
\]
which is condition 5. Next by 3, $\min\{M_{i,j}:1\leq i<j\leq n\}$ is strictly
positive. As above, it follows that $\alpha_{n}$ can be chosen such that
\[
\left\Vert \theta_{n+1}-\theta_{n}\right\Vert =\frac{1}{Q_{n}}\left\Vert
\varepsilon_{n}\right\Vert \leq\frac{1}{8Q_{n}}\min\{M_{i,j}:1\leq i<j\leq
n\}
\]
which is condition 4.
Clearly $Q_{n+1}\geq Q_{n}(p_{n}-1)\geq 2nQ_{n}$.
Notice that the lattice $\Lambda_{n+1}=\mathbb{Z\theta}_{n+1}+\mathbb{Z}^{2}$
is included in $\mathcal{H}_{n}$. Next observe that
\begin{align*}
Q_{n+1}\theta_{n+1}  &  =(Q_{n}p_{n}-k_{n})(\theta_{n}+\frac{\varepsilon_{n}
}{Q_{n}})\\
&  =(Q_{n}p_{n}-k_{n})(\theta_{n}+\frac{\alpha_{n}}{Q_{n}(p_{n}-\frac{k_{n}
}{Q_{n}})})\\
&  =Q_{n}p_{n}\theta_{n}-k_{n}\theta_{n}+\alpha_{n}\\
&  =p_{n}Q_{n}\theta_{n}+(a_{n},b_{n})\in\mathbb{Z}^{2}.
\end{align*}
It follows that $Q_{n+1}\det\Lambda_{n+1}=l\in\mathbb{N}$. On the other hand,
consider the one dimensional lattice $\Lambda_{n+1}\cap\mathbb{R\alpha}_{n}$.
Because $Q_{n}\theta_{n}\in\mathbb{Z}^{2}$ and $Q_{n}\theta_{n+1}=Q_{n}
\theta_{n}+\varepsilon_{n}$, it contains $\varepsilon_{n}$ and is generated by a
vector $v_{n}=\frac{\varepsilon_{n}}{m}$ where $m$ is an integer. Next observe that $\theta_n\in\Lambda_{n+1}+\R\alpha_n$, hence $\Lambda_{n+1}+\R\alpha_n=\mathcal H_n$. It follows that
\begin{align*}
\frac{l}{Q_{n+1}}  &  =\det\Lambda_{n+1}=\left\Vert v_{n}\right\Vert d_{n}\\
&  =\frac{\left\Vert \varepsilon_{n}\right\Vert }{m}d_{n}=\frac{Q_{n}L_{n}
}{mQ_{n+1}}d_{n}\\
&  =\frac{1}{mQ_{n+1}},
\end{align*}
which implies $m=l=1$. Therefore, $\det\Lambda_{n+1}=\frac{1}{Q_{n+1}}$ and
\[
\Lambda_{n+1}=\{0,...,Q_{n}-1\}\theta_{n+1}+\mathbb{Z\varepsilon}
_{n}+\mathbb{Z}^{2}.
\]
Since $\left\Vert \varepsilon_{n}\right\Vert \leq\frac{d_{n}}{5}$, and
$\Lambda_{n+1}\subset\mathcal{H}_{n}$, $\varepsilon_{n}$ is the shortest
vector of $\Lambda_{n+1}$. The choice of the signs for $\alpha_n$ now implies  that condition 6 holds. Next
\begin{align*}
\lambda_{1}(\Lambda_{n+1})  &  =\left\Vert \varepsilon_{n}\right\Vert ,\\
5\left\Vert \varepsilon_{n}\right\Vert  &  \leq d_{n}\leq\lambda_{2}
(\Lambda_{n+1})\leq d_{n}+\left\Vert \varepsilon_{n}\right\Vert .
\end{align*}
Since
\[
\left\Vert \varepsilon_{n}\right\Vert \geq\frac{L_{n}}{p_{n}}\geq\frac{L_{n}
}{20\frac{L_{n}}{d_{n}}}=\frac{d_{n}}{20},
\]
we obtain
\[
5\lambda_{1}(\Lambda_{n+1})\leq\lambda_{2}(\Lambda_{n+1})\leq 21\left\Vert
\varepsilon_{n}\right\Vert \leq 30\lambda_{1}(\Lambda_{n+1})
\]
which implies condition 7.
Let us show that $Q_{0},...,Q_{n-1}$ are the first best approximations of
$\theta_{n+1}$.\newline For\ all $1\leq i\leq n-1$ and all $Q_{i-1}<q<Q_{i}$,
we have
\begin{align*}
\dd(q\theta_{n+1},\mathbb{Z}^{2})  &  \geq \dd(q\theta_{n},\mathbb{Z}
^{2})-q\left\Vert \theta_{n+1}-\theta_{n}\right\Vert \\
&  \geq \dd(Q_{i-1}\theta_{n},\mathbb{Z}^{2})+M_{i,n}-q\left\Vert \theta
_{n+1}-\theta_{n}\right\Vert \\
&  \geq \dd(Q_{i-1}\theta_{n+1},\mathbb{Z}^{2})-Q_{i-1}\left\Vert \theta
_{n+1}-\theta_{n}\right\Vert +M_{i,n}-q\left\Vert \theta_{n+1}-\theta
_{n}\right\Vert \\
&  \geq \dd(Q_{i-1}\theta_{n+1},\mathbb{Z}^{2})+M_{i,n}-2Q_{i}\left\Vert
\theta_{n+1}-\theta_{n}\right\Vert .
\end{align*}
Since $\left\Vert \theta_{n+1}-\theta_{n}\right\Vert \leq\frac{1}{8Q_{n}
}M_{i,n}$, $\dd(q\theta_{n+1},\mathbb{Z}^{2})>\dd(Q_{i-1}\theta_{n+1}
,\mathbb{Z}^{2})$ and hence $M_{i,n+1}>0$. We also have
\begin{align*}
\dd(Q_{i}\theta_{n+1},\mathbb{Z}^{2})  &  \leq \dd(Q_{i}\theta_{n},\mathbb{Z}
^{2})+Q_{i}\left\Vert \theta_{n+1}-\theta_{n}\right\Vert \\
&  \leq \dd(Q_{i-1}\theta_{n},\mathbb{Z}^{2})-m_{i,n}+\frac{Q_{i}}{8Q_{n}
}m_{i,n}\\
&  \leq \dd(Q_{i-1}\theta_{n+1},\mathbb{Z}^{2})+Q_{i-1}\left\Vert \theta
_{n+1}-\theta_{n}\right\Vert -m_{i,n}+\frac{Q_{i}}{8Q_{n}}m_{i,n}\\
&  \leq \dd(Q_{i-1}\theta_{n+1},\mathbb{Z}^{2})+\frac{Q_{i-1}}{8Q_{n}}
m_{i,n}-m_{i,n}+\frac{Q_{i}}{8Q_{n}}m_{i,n}\\
&  <\dd(Q_{i-1}\theta_{n+1},\mathbb{Z}^{2}).
\end{align*}
It follows that $Q_{0},Q_{1},...,Q_{n-1}$ are the first $n$ best
approximations of $\theta_{n+1}$. The proof will be done once we will have
explained that $Q_{n}$ and $Q_{n+1}$ are the only best approximations that
follow $Q_{n-1}$ and that $M_{n,n+1}>0$. These are the places where the sign
condition 6 plays a role. First observe that $\varepsilon_{n}$ and
$-\varepsilon_{n}$ are the only two shortest vectors of $\Lambda_{n+1}$ and
that
\[
\varepsilon_{n}=Q_{n}\theta_{n+1}-Q_{n}\theta_{n}
\]
and
\begin{align*}
-\varepsilon_{n}&=(Q_{n+1}-Q_{n})\theta_{n+1}-Q_{n+1}\theta_{n+1}+Q_{n}\theta_{n}\\
&=(Q_{n+1}-Q_{n})\theta_{n+1}+\text{ a vector in }\Z^2
.
\end{align*}
Together with the inequality $Q_{n+1}-Q_{n}>Q_{n}$ this implies that $Q_{n}$ is a best approximation
of $\theta_{n+1}$ and that there is no best approximation of $\theta_{n+1}$
between $Q_{n}$ and $Q_{n+1}$.
Next, denoting by $\equiv$ the equivalence $\operatorname{mod}\mathbb{Z}^{2}$,
we have
\begin{align*}
Q_{n-1}\theta_{n+1}  &  =Q_{n-1}(\theta_{n}+\frac{\varepsilon_{n}}{Q_{n}
})=Q_{n-1}(\theta_{n-1}+\frac{\varepsilon_{n-1}}{Q_{n-1}}+\frac{\varepsilon
_{n}}{Q_{n}})\\
&  \equiv\varepsilon_{n-1}+\frac{Q_{n-1}}{Q_{n}}\varepsilon_{n}
\end{align*}
and
\begin{align*}
(Q_{n}-Q_{n-1})\theta_{n+1}  &  =(Q_{n}-Q_{n-1})(\theta_{n}+\frac
{\varepsilon_{n}}{Q_{n}})\\
&  \equiv-Q_{n-1}\theta_{n}+(1-\frac{Q_{n-1}}{Q_{n}})\varepsilon_{n}\\
&  =-Q_{n-1}(\theta_{n-1}+\frac{\varepsilon_{n-1}}{Q_{n-1}})+(1-\frac{Q_{n-1}
}{Q_{n}})\varepsilon_{n}\\
&  \equiv-\varepsilon_{n-1}+(1-\frac{Q_{n-1}}{Q_{n}})\varepsilon_{n},
\end{align*}
by the choice of the signs we obtain that
\[
\dd(Q_{n-1}\theta_{n+1},\mathbb{Z}^{2})
<\dd((Q_{n}-Q_{n-1})\theta_{n+1},\mathbb{Z}^{2}).
\]
If $q$ is an integer $\neq Q_{n}-Q_{n-1}$ lying in $]Q_{n-1},Q_{n}[$, then
$q\theta_{n}$ cannot be $\equiv\pm\varepsilon_{n-1}$ which are the shortest vectors of $\Lambda_n$. Hence
\[
\dd(q\theta_{n},\mathbb{Z}^{2})\geq\min(2\left\Vert \varepsilon_{n-1}\right\Vert
,\lambda_{2}(\Lambda_{n}))=2\left\Vert \varepsilon_{n-1}\right\Vert .
\]
It follows that
\begin{align*}
\dd(q\theta_{n+1},\mathbb{Z}^{2})  &  \geq \dd(q\theta_{n},\mathbb{Z}
^{2})-q\left\Vert \theta_{n+1}-\theta_{n}\right\Vert \\
&  \geq 2\left\Vert \varepsilon_{n-1}\right\Vert -\frac{q}{Q_{n}}\left\Vert
\varepsilon_{n}\right\Vert \\
&  \geq 2\left\Vert \varepsilon_{n-1}\right\Vert -\left\Vert \varepsilon
_{n}\right\Vert >\left\Vert \varepsilon_{n-1}\right\Vert \\
&  \geq\|\varepsilon_{n-1}+\frac{Q_{n-1}}{Q_n}\varepsilon_n\|=\dd(Q_{n-1}\theta_{n+1},\mathbb{Z}^{2})
\end{align*}
which implies both that $Q_{n-1}$ and $Q_{n}$ are consecutive best
approximations of $\theta_{n+1}$ and that $M_{n,n+1}>0$.
\end{proof}

It is not clear whether the set $\Bad_{1}$ depends on the norm. However, using
an easy result about the relations between best approximation vectors
associated with two norms, we can prove:

\begin{proposition}\label{prop:bad}
The set $\mathcal{B}(d,c)$ does not depend on the norms.
\end{proposition}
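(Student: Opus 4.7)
The plan is as follows. Let $\|\cdot\|$ and $\|\cdot\|'$ be two pairs of norms on $\R^c$ and $\R^d$, equivalent with a common constant $C \ge 1$, so that $C^{-1}\|\cdot\|' \le \|\cdot\| \le C\|\cdot\|'$ on both factors. Denote by $(Q_n), q_n, r_n$ and $(Q'_m), q'_m, r'_m$ the corresponding best-approximation data of $\theta$. By symmetry it suffices to prove that $\theta \in Bad_k(\|\cdot\|)$ implies $\theta \in \mathcal{B}(\|\cdot\|')$ for every $k$.

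First I would establish the ``easy'' comparison result alluded to in the statement. For each $m$, let $n(m)$ be the largest $n$ with $\|Q_n\| \le \|Q'_m\|$. Then
\[
q_{n(m)} \le C q'_m, \qquad r_{n(m)} \le C r'_m, \qquad q_{n(m)+1} > q'_m/C.
\]
The first and third inequalities follow from norm equivalence on $\R^c$ and the maximality of $n(m)$. For the middle one, the Dirichlet function $\phi_1(T) := \inf\{\dd(\theta Q, \Z^d) : 0 < \|Q\| \le T\}$ is constant equal to $r_{n(m)}$ on $[q_{n(m)}, q_{n(m)+1})$, an interval containing $\|Q'_m\|$; hence $r_{n(m)} \le \dd(\theta Q'_m, \Z^d) \le C r'_m$ by norm equivalence on $\R^d$. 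Swapping the two pairs of norms yields a symmetric map $m(n)$ satisfying the analogous inequalities.

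Now, assuming $\theta \in Bad_k(\|\cdot\|)$, so that $r_n q_{n+k}^{c/d} \ge \alpha^{1/d}$ for all $n$ and some $\alpha > 0$, for each $m$ set $n = n(m)$ and $M = m(n+k)$. Combining $r'_m \ge r_n/C$ with $q'_{M+1} > q_{n+k}/C$ gives
\[
r'_m (q'_{M+1})^{c/d} \;\ge\; \frac{r_n q_{n+k}^{c/d}}{C^{1+c/d}} \;\ge\; \frac{\alpha^{1/d}}{C^{1+c/d}}.
\]
The conclusion $\theta \in Bad_{K+1}(\|\cdot\|')$ will therefore follow once we have a uniform bound $M - m \le K$ for some $K = K(k, C, d, c)$ independent of $m$, because then $q'_{m+K+1} \ge q'_{M+1}$.

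The hard part is precisely this uniform bound on $M - m$. Decomposing $M - m = [m(n+k) - m(n)] + [m(n) - m]$, Lemma~\ref{lem:lowerrate} applied to $(q'_j)$ controls the second bracket by $K_1 = A'\lceil \log_2 C^2 \rceil$, since $\|Q_n\|' \le C^2 q'_m$. The first bracket equals the number of norm-$\|\cdot\|'$ best-approximation indices $j$ with $q'_j \in (\|Q_n\|', \|Q_{n+k}\|']$, and doubling bounds this count by $A' \log_2(C^2 q_{n+k}/q_n) + O(1)$. To conclude, my plan is to extract a uniform upper bound on $\log(q_{n+k}/q_n)$ from the hypothesis: the inequalities $r_n q_{n+k}^{c/d} \ge \alpha^{1/d}$ and Minkowski $r_n q_{n+1}^{c/d} \le C_{d,c}^{1/d}$ (Lemma~\ref{lem:Minkowski}) give easily $q_{n+k}/q_{n+1} \ge (\alpha/C_{d,c})^{1/c}$; the needed upper bound on $q_{n+k}/q_n$ requires a more delicate simultaneous use of Minkowski at the $k$ nearby indices $n, n+1, \ldots, n+k-1$ together with the $r$-doubling from Lemma~\ref{lem:lowerrate}. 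This last step is where I expect the main difficulty, and completing it would finish the argument.
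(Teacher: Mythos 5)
Your framework --- the two comparison maps $n(m)$ and $m(n)$ together with the elementary estimates $q_{n(m)} \le Cq'_m$, $r_{n(m)} \le Cr'_m$, $q'_{m(n)+1} > q_n/C$ --- is sound, and if you could establish the uniform bound $M-m\le K$ it would indeed complete the argument. But the route you propose for that bound is a dead end, and for a structural reason, not merely because the estimate is delicate: the ratio $q_{n+k}/q_n$ is \emph{not} uniformly bounded for $\theta\in Bad_k$ when $k\ge 1$. Already for $d=c=1$ and $k=1$, $Bad_1$ is all of $\R\setminus\Q$ and $q_{n+1}/q_n$ is essentially the partial quotient $a_{n+1}$, hence unbounded for a.e.\ $\theta$. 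The hypothesis $r_n q_{n+k}^{c/d}\ge\alpha^{1/d}$, combined with Minkowski, only produces the \emph{lower} bound on $q_{n+k}/q_{n+1}$ you wrote down; no amount of ``simultaneous use of Minkowski at $k$ nearby indices'' will yield an upper bound, because none exists. So the count of $q'_j$ in $(\|Q_n\|',\|Q_{n+k}\|']$ cannot be controlled via $\log(q_{n+k}/q_n)$.

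What actually controls that count --- and what the paper's proof rests on --- is the interleaving theorem from \cite{Chev-Mosc} (quoted in the paper's proof for $c=1$): there is an integer $\kappa$ depending only on the two norms such that every interval $]q_n,q_{n+\kappa}]$ contains a best-approximation denominator for the other norm. Applying this with the roles of the norms swapped, one gets that at most $\kappa'$ of the $q'_j$ can lie strictly between two consecutive $q_n$'s, hence the number of $q'_j$ in $(q_n,q_{n+k}]$ is $O(k\kappa')$ plus a term of order $A'\log_2 C$ for the boundary pieces coming from the norm-equivalence constant. Substituting this into your decomposition of $M-m$ would repair the gap. The paper itself bypasses your comparison maps entirely: using the interleaving result directly, it picks a $q'_{n_j}$ in each interval $]q_{n+(j-1)\kappa},q_{n+j\kappa}]$ for $j=0,\dots,p$ and obtains $q'_{n_0+p}\,r'^{\,d}_{n_0}\le C^d\,q_{n+\kappa p}\,r_n^d$, giving $Bad'_p\subset Bad_{\kappa p}$ in one line. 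Your approach, once repaired with the Chev-Mosc input, is heavier but reaches the same conclusion. Finally, note that the paper gives the proof only for $c=1$ and explicitly flags that the $c>1$ case requires first extending the interleaving result to matrices; your write-up does not acknowledge that this extra ingredient is needed.
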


\begin{proof}
We give the proof only in the case $c=1$. When $c>1$ one has to extend first
the following result about best approximations:

Consider two norms $N$ and
$N^{\prime}$ on $\mathbb{R}^{d}$. For $\theta \in \mathbb R^d$, call $(q_{n})_{n\in\N}$  the sequence of best approximation denominators
associated with the norm $N$ and $(q_{n}^{\prime})_{n\in\N}$  the
sequence associated with the norm $N^{\prime}$. Then (see \cite{Chev-Mosc}
)  there exists an integer $k$ depending only on the norms $N$ and
$N^{\prime}$ such that each interval $]q_{n},q_{n+k}]$, contains a best
approximation denominator $q^{\prime}_m$ associated with the norm $N^{\prime}$.

It is enough to prove that $\mathbb{R}^{d}\setminus \Bad_{kp}\subset
\mathbb{R}^{d}\setminus \Bad_{p}^{\prime}$ for all $p$. Let $\theta$ be in
$\mathbb{R}^{d}$ and $n\geq k$ be an integer. By the above result, there
exist at least one best approximation denominator in each interval $]q_{n+(j-1)k}
,q_{n+jk}]$, $j=0,...,p$. Let $q_{n_{j}}^{\prime}$ be the largest best
approximation denominator in each of these intervals $]q_{n+(j-1)k},q_{n+jk}]$. For each
$j$, we have
\[
r_{n_{j}}^{\prime}\leq Cr_{n+jk}
\]
where $C$ is the constant involved in the norm equivalence. Making use of the
above inequality with $j=0$, we obtain $q_{n_{0}+p}^{\prime}r_{n_{0}}^{\prime
d}\leq C^{d}q_{n_{0}+p}^{\prime}r_{n}^{d}$. Next $q_{n_{0}+p}^{\prime}\leq
q_{n+kp}$, hence,
\[
q_{n_{0}+p}^{\prime}r_{n_{0}}^{\prime d}\leq C^{d}q_{n+kp}r_{n}^{d}.
\]
It follows that $\lim\inf_{n\rightarrow\infty}q_{n+kp}r_{n}^{d}=0$ implies
$\lim\inf_{n\rightarrow\infty}q_{n+p}^{\prime}r_{n}^{\prime d}=0$.
\end{proof}

\begin{theorem}
\label{thm:badk} The set $\mathcal{B}(d,c)=\cup_{k\geq 0}\Bad_{k}$ has zero Lebesgue measure.
\end{theorem}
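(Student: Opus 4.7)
First I reduce: by $\sigma$-subadditivity it suffices to show $\mu(Bad_k)=0$ for each fixed $k\ge 0$, and by Proposition~\ref{prop:bad} I may work throughout with the fixed Euclidean norms. For each $k\ge 0$ I introduce the function
\[
F_k \colon S\setminus\mathcal N \to \mathbb R_{>0},\qquad F_k(\Lambda)\;=\;q_k(\Lambda)^{c}\,r_0(\Lambda)^{d}\;=\;|X_k(\Lambda)|_-^c\,|X_0(\Lambda)|_+^d,
\]
which is continuous on $S\setminus\mathcal N$ since the minimal vectors depend continuously on $\Lambda$ outside $\mathcal N$. Using Lemma~\ref{lem:flow-minimal} and the numbering convention of section~\ref{sec:visit} (which gives $X_j(R\Lambda)=g_\tau X_{j+1}(\Lambda)$), the $g_t$-invariance of the product $|X|_-^c|X|_+^d$ yields the key identity
\[
F_k(R^{n}(\Lambda))\;=\;|X_{n+k}(\Lambda)|_-^c\,|X_n(\Lambda)|_+^d\qquad\text{for }\Lambda\in S\setminus\mathcal N.
\]
Combining this with Lemmas~\ref{lem:min-best} and~\ref{lem:return-best}, for almost every $\theta\in\M_{d,c}(\mathbb R)$ there is an integer $k_0=k_0(\theta)$ such that $F_k(R^{n}(\Lambda_\theta))=q_{n+k-k_0}(\theta)^{c}\,r_{n-k_0}(\theta)^{d}$ for all sufficiently large $n$.

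The crux of the argument is the positivity
\[
\mu_S\bigl(\{\Lambda\in S:F_k(\Lambda)<\varepsilon\}\bigr)\;>\;0\qquad\text{for every $\varepsilon>0$.}
\]
Since $F_k$ is continuous on $S\setminus\mathcal N$ and $\mu_S$ has full support, it suffices to exhibit, for each $\varepsilon$, a single $\Lambda\in S\setminus\mathcal N$ with $F_k(\Lambda)<\varepsilon$: an open $S$-neighborhood of $\Lambda$ will then sit inside $\{F_k<\varepsilon\}$ and carry positive $\mu_S$-measure. The case $k=1$ is precisely Theorem~\ref{thm:jager}.2 because $F_1=\lambda_1^{c+d}$ on $S$ and $\lambda_1$ is unbounded below in $\mathcal L_{d+c}$. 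For general $k$ the construction I have in mind starts from two shortest vectors $X_0,X_1$ of norm $\delta\to 0$ chosen so that their horizontal projections are nearly proportional over $\mathbb Q$; an appropriately normalized integer combination $aX_0+bX_1$ (possibly augmented by a carefully chosen
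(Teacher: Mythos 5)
Your plan is sound in outline and runs parallel to the paper's argument, but as written it has two serious holes — one of them literally a truncation.

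The first and most glaring problem is that the proposal stops mid-sentence exactly at the crux. You correctly identify that everything hinges on exhibiting, for each $k$ and each $\varepsilon>0$, a single lattice $\Lambda\in S\setminus\mathcal N$ with $q_k(\Lambda)^c r_0(\Lambda)^d<\varepsilon$, but the construction ("starts from two shortest vectors $X_0,X_1$ of norm $\delta\to0$ chosen so that their horizontal projections are nearly proportional over $\mathbb Q$; an appropriately normalized integer combination $aX_0+bX_1$ (possibly augmented by a carefully chosen") is cut off and never completed. This is precisely the content of the paper's Lemma~\ref{lem:smallvectors}, which embeds a fixed two-dimensional lattice $\Gamma\in S_2\setminus\mathcal N_2$ into a $(d+c)$-dimensional lattice $M_\delta\mathbb Z^{d+c}$ whose first $k$ minimal vectors live (up to a perturbation via Lemma~\ref{lem:perturbation}) in the $e_1,e_{d+c}$-plane and have all their norms scaled down by $\delta$, while the remaining directions are blown up by $\delta^{-2/(d+c-2)}$ so that they do not interfere. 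Note that this construction crucially requires $d+c\ge 3$ (the exponent $-2/(d+c-2)$ only makes sense then); indeed, as the paper points out before the theorem, $Bad_1=\mathbb R\setminus\mathbb Q$ when $d=c=1$, so the statement fails in that case, and any purported proof must somewhere use $d+c\ge 3$. Your sketch does not acknowledge this, and the remark that "$F_1=\lambda_1^{c+d}$ on $S$ and $\lambda_1$ is unbounded below" does not by itself show that $\lambda_1$ takes arbitrarily small values on the codimension-one set $S$ without the $d+c\ge 3$ assumption.

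The second gap is the transfer from $\mu_S(\{F_k<\varepsilon\})>0$ to "almost all $\theta\in\M_{d,c}(\mathbb R)$." You assert that Lemmas~\ref{lem:min-best} and~\ref{lem:return-best} suffice, but those only relate the minimal vectors of $\Lambda_\theta$ to the best approximations of $\theta$; they say nothing about Birkhoff averages along the $\mathcal H_{>0}$-orbit. You either need to invoke Proposition~\ref{prop:reduction} with a continuous bump function $\varphi$ supported inside $\{F_k<\varepsilon\}$ (noting $1_{\{F_k<\varepsilon\}}$ is not continuous, so a direct application is illegal), or reproduce the paper's final "thickening" argument: take a relatively compact open $U$ with $\overline U$ inside the good set, then observe that the set $\mathcal V$ of $\theta$ whose forward orbit eventually avoids the corresponding $W$ gets thickened by $g_{[0,1]}$ and a small $\mathcal H_\le$-neighborhood into a positive-measure subset of $\mathcal L_{d+c}$ that is eventually avoided, contradicting Birkhoff for the flow. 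Without one of these, the conclusion for almost all $\theta$ does not follow. So while your skeleton is aligned with the paper's, both decisive steps — the existence of small-$F_k$ lattices in $S$, and the descent to $\M_{d,c}(\mathbb R)$ — remain unproved.
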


By the above proposition, $\mathcal{B}(d,c)$ doesn't depend on the norms and we can suppose that $\mathbb{R}^{d}$
and $\mathbb{R}^{c}$ are equipped with the standard Euclidean norms. Let us
show that for each $k$, $\Bad_{k}$ has zero measure. We need two lemmas.

\begin{lemma}
\label{lem:perturbation} 1.
Let $a<b$  be two integers, let
$\Lambda=M\mathbb{Z}^{d+c}$ be a lattice in $\mathcal L_{d+c}$ and let $(Y_{n})_{n=a,...,b}$
be a sequence of vectors in $\mathbb{Z}^{d+c}$. Suppose that for $n=a,...,b$,
\begin{itemize}
\item $X_{n}(\Lambda)=MY_{n}$,
\item the only nonzero points of
$\Lambda$ in the cylinder $C(X_{n}(\Lambda),X_{n+1}(\Lambda))$ are $\pm
X_{n}(\Lambda)$ and $\pm X_{n+1}(\Lambda).$
\end{itemize}
Then there exists a open
neighborhood $W$ of $M$ in $\SL(d+c,\R)$ such that for all lattices $\Lambda^{\prime}
=M^{\prime}\mathbb{Z}^{d+c}\in \mathcal L_{d+c}$ with
$M^{\prime}$ in $W$, the vectors $Z_n=M'Y_n$ are consecutive
minimal vectors of $\Lambda'$ and
\begin{align*}
\left\vert Z_n\right\vert_+  &  \in[\frac{1}{2}r_{n}(\Lambda),2r_{n}(\Lambda)],\\
\left\vert Z_n\right\vert_- &  \in[\frac{1}{2}q_{n}(\Lambda),2q_{n}(\Lambda)]
\end{align*}
for $n=a,...,b$.\newline
2. Suppose furthermore that $a<0$, $b>1$ and $\Lambda\in S$.
Then for all lattices $\Lambda^{\prime}
=M^{\prime}\mathbb{Z}^{d+c}\in S$ with $M^{\prime}$ in $W$, we have $X_{n}(\Lambda^{\prime})  =M^{\prime}Y_{n}$ for $n=a,...,b$..
\end{lemma}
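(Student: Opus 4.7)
The plan is a continuity argument combined with the local finiteness of lattices, split according to the two parts of the statement.

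\emph{Part 1.} For any prescribed $\varepsilon > 0$, I can first shrink $W$ so that $\|M'Y_n - MY_n\|_{\mathbb{R}^{d+c}} < \varepsilon$ for every $n = a,\dots,b+1$, using the continuity of $M' \mapsto M'Y_n$ for each fixed $Y_n$. Taking $\varepsilon$ small compared to $\min_n\{r_n(\Lambda), q_n(\Lambda)\}$ yields the magnitude estimates $|Z_n|_\pm \in [\tfrac{1}{2}|X_n(\Lambda)|_\pm,\, 2|X_n(\Lambda)|_\pm]$ and guarantees that every cylinder $C(Z_n, Z_{n+1})$ lies in the fixed ball $K = B_{\mathbb{R}^{d+c}}(0, 2\max(r_a(\Lambda), q_b(\Lambda)))$.

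The substantive step is to exclude nonzero vectors of $\Lambda' = M'\mathbb{Z}^{d+c}$ other than $\pm Z_n, \pm Z_{n+1}$ from $C(Z_n, Z_{n+1})$. I would split $\mathbb{Z}^{d+c}$ via the finite set $F = \{Y \in \mathbb{Z}^{d+c} : MY \in 3K\}$. For $Y \in F \setminus \{\pm Y_n, \pm Y_{n+1}\}$ and each relevant index $n$, the hypothesis forces $MY \notin C(X_n(\Lambda), X_{n+1}(\Lambda))$, so strictly either $|MY|_+ > |X_n(\Lambda)|_+$ or $|MY|_- > |X_{n+1}(\Lambda)|_-$; the finiteness of $F$ together with strictness let these inequalities survive a further shrinking of $W$, giving $M'Y \notin C(Z_n, Z_{n+1})$. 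For $Y \notin F$, writing $M' = (I + E)M$ one has $\|M'Y\|_{\mathbb{R}^{d+c}} \geq (1 - \|E\|)\|MY\|_{\mathbb{R}^{d+c}} > 2\max(r_a(\Lambda), q_b(\Lambda))$ for $\|E\|$ small enough, so $M'Y \notin K$ and a fortiori $M'Y \notin C(Z_n, Z_{n+1})$. The cylinder condition for $\Lambda'$ is then established; minimality of each individual $Z_n$ follows from the inclusion $C(Z_n) \subset C(Z_n, Z_{n+1})$, and consecutiveness from the cylinder condition applied to successive pairs.

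\emph{Part 2.} Under the extra hypotheses $a < 0 < 1 < b$ and $\Lambda' \in S$, Part 1 yields $Z_a,\dots,Z_b$ as $b-a+1$ consecutive minimal vectors of $\Lambda'$, so $Z_n = \pm X_{n+j}(\Lambda')$ for a unique integer shift $j$; the remaining task is to pin down $j = 0$. Because $\Lambda \in S$, the strict inequalities $|X_{-1}(\Lambda)|_+ > |X_0(\Lambda)|_+ = |X_1(\Lambda)|_-$, $|X_2(\Lambda)|_- > |X_1(\Lambda)|_-$, and $|X_0(\Lambda)|_- < |X_0(\Lambda)|_+$ hold; by Part 1 they propagate, after possibly shrinking $W$, to $|Z_{-1}|_+ > |Z_0|_+$, $|Z_2|_- > |Z_1|_-$, and $|Z_0|_- < |Z_0|_+$. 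Consequently, within the window the $\mathbb{R}^{d+c}$-norms $\|Z_n\|_{\mathbb{R}^{d+c}} = \max(|Z_n|_+, |Z_n|_-)$ attain their minimum exactly at $n = 0$ and $n = 1$; and every minimal vector of $\Lambda'$ with index outside the window has strictly larger norm (its projection exceeds $|Z_a|_+ \geq |Z_{-1}|_+ > |Z_0|_+$ on the low side, or its height exceeds $|Z_b|_- \geq |Z_2|_- > |Z_1|_-$ on the high side). Since $\Lambda' \in S$ has exactly $\pm v_0^S(\Lambda'), \pm v_1^S(\Lambda')$ as shortest vectors, necessarily $\{\pm Z_0, \pm Z_1\} = \{\pm v_0^S(\Lambda'), \pm v_1^S(\Lambda')\}$. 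The inequality $|Z_0|_+ > |Z_0|_-$ singles out $Z_0$ as the $v_0^S$-type vector, so the numbering convention forces $X_0(\Lambda') = Z_0$ (the sign of $Y_0$ being fixed consistently with the choice in $\Lambda$), and therefore $X_n(\Lambda') = Z_n$ for all $n$ in the window.

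\emph{Main obstacle.} The most delicate step is the $Y \notin F$ case in Part 1: the perturbation $E = M'M^{-1} - I$ is small in operator norm, yet $MY$ can be arbitrarily large. The argument succeeds because one only needs to exclude $M'Y$ from the fixed bounded ball $K$, so the crude estimate $\|M'Y\|_{\mathbb{R}^{d+c}} \geq (1 - \|E\|)\|MY\|_{\mathbb{R}^{d+c}}$ suffices once $\|MY\|_{\mathbb{R}^{d+c}}$ is sufficiently large.
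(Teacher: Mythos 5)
Your proof follows essentially the same route as the paper's: Part 1 uses the same compactness/finite-set decomposition (your set $F$ plays the role of the paper's ball $B(0,8R)$) together with preservation of finitely many strict inequalities under a small perturbation, and Part 2 pins down the index shift $j=0$ by preserving the strict inequalities coming from the numbering convention and the defining equality $|v_0^S|_+ = |v_1^S|_-$ of the section $S$. The only cosmetic difference is that in Part 2 the paper argues directly via the increasing difference $|X_{n+1}|_- - |X_n|_+$ and the numbering convention, whereas you phrase the same facts through the $\mathbb{R}^{d+c}$-norm-minimality of $Z_0,Z_1$; both are correct and interchangeable.
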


\begin{proof}
1. Consider a ball $B_{d,c}(0,R)$ that contains all the points $MY_{n}$,
$n=a,...,b$. There is a neighborhood $\omega$ of the identity matrix $I_{d+c}$ such
that for all $A$ in $\omega$ \ and all $X$ in $\mathbb{R}^{d+c}$,
\[
\frac{1}{2}\left\Vert X\right\Vert _{d,c}\leq\left\Vert
AX\right\Vert _{d,c}\leq 2\left\Vert X\right\Vert _{d,c},
\]
so that
\begin{align*}
X  &  \notin B_{d,c}(0,8R)\Rightarrow AX\notin B_{d,c}(0,4R)\\
X  &  \in B_{d,c}(0,R)\Rightarrow AX\in B_{d,c
}(0,2R).
\end{align*}
Two vectors $Z_{n}=AMY_{n}$ and $Z_{n+1}=AMY_{n+1}$ are consecutive
minimal vectors of $A\Lambda$ as soon as
\[
\left\vert Z_{n+1}\right\vert _{-}>\left\vert Z_{n}\right\vert _{-}
,\ \left\vert Z_{n+1}\right\vert _{+}<\left\vert Z_{n}\right\vert _{+}
\]
and the cylinder $C(Z_{n},Z_{n+1})$ contains no other nonzero vector of
$A\Lambda$ than $\pm Z_{n}$ and $\pm Z_{n+1}$. Since, $\left\vert
X_{n+1}\right\vert _{-}>\left\vert X_{n}\right\vert _{-},\ \left\vert
X_{n+1}\right\vert _{+}<\left\vert X_{n}\right\vert _{+}$, by reducing
$\omega$, we can assume $\left\vert Z_{n+1}\right\vert _{-}>\left\vert
Z_{n}\right\vert _{-}\ $\ and $\left\vert Z_{n+1}\right\vert _{+}<\left\vert
Z_{n}\right\vert _{+}$, $n=a,\dots,b$. Since $C(Z_{n},Z_{n+1})=C(AX_{n},AX_{n+1})\subset
B_{d,c}(0,2R)$, the image by $A$ of a vector of $\Lambda$ that is
not in the ball $B_{d,c}(0,8R)$, cannot enter in the cylinder
$C(AX_{n},AX_{n+1})$. Therefore, there are only finitely many $X$ in $\Lambda$
such that $AX$ is in $C(AX_{n},AX_{n+1})$. Since by assumption all these
vectors $X$, except $\pm X_n$ and $\pm X_{n+1}$, are at a positive distance from $C(X_{n},X_{n+1})$, we obtain that
$Z_{n}$ and $Z_{n+1}$ are consecutive minimal vectors by reducing once again
$\omega$. It follows that $Z_a,...,Z_b$ are consecutive minimal vectors of the lattice $A\Lambda$. A new reduction of $\omega$ ensures that the two  inequalities of the
lemma hold.

2. We want to see that there is no shift on the indices when $\Lambda\in S$, $a<0$ and $b>1$. By the numbering 
convention (see Section \ref{sec:visit}),
\[
\left\vert X_{0}(\Lambda)\right\vert _{+}=\left\vert X_{1}(\Lambda)\right\vert
_{-},\ \left\vert X_{-1}(\Lambda)\right\vert _{+}>\left\vert X_{0}
(\Lambda)\right\vert _{-},\ \left\vert X_{1}(\Lambda)\right\vert
_{+}<\left\vert X_{2}(\Lambda)\right\vert _{-}.
\]
By a further reduction of $\omega$, we can assume that the two inequalities
hold for the vectors $Z_{-1}=AMY_{-1}$, $Z_{0}=AMY_{0}$, $Z_{1}=AMY_{1}$ and $Z_{2}=AMY_{2}$.
Therefore, if $AM\mathbb{Z}^{d+c}$ is in $S$, we must have
\[
X_{0}(A\Lambda)=AMY_{0}\text{ and }X_{1}(A\Lambda)=AMY_{1}
\]
which implies that $X_{n}(A\Lambda)=Z_{n}$ for $n=a,....,b$.
\end{proof}

\begin{lemma}
\label{lem:smallvectors}Assume that $d+c\geq3$. Let $\Gamma$ be a two dimensional lattice in
$\mathcal{L}_{2}\setminus\mathcal{N}_{1,1}$ which is in the transversal $S_{1,1}\subset \mathcal L_2$ and let $k$ be a
non negative integer. Then for all positive real numbers $\delta$, there
exists a lattice $\Gamma_{\delta}$ in
$S\setminus\mathcal{N}$ such that
\begin{align*}
r_{n}(\Gamma_{\delta})  &  \leq2\delta r_{n}(\Gamma)\\
q_{n}(\Gamma_{\delta})  &  \leq 2\delta q_{n}(\Gamma)
\end{align*}
for $n=0,...,k$.
\end{lemma}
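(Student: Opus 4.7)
The plan is to isometrically embed $\Gamma$ into a coordinate $2$-plane of $\R^{d+c}$ spanned by one vector of $E_+$ and one of $E_-$, scale by a small factor $\rho$, complete to a unimodular lattice by adjoining a rescaled standard lattice in the $(d+c-2)$-dimensional orthogonal complement (this is where the hypothesis $d+c\ge 3$ enters), and then perturb generically to land in $S\setminus\mathcal N$ while preserving the minimal vector structure.

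Concretely, I write $\R^{d+c}=\R e_1\oplus\R f_1\oplus W$ with $W=\mathrm{span}\{e_2,\dots,e_d,f_2,\dots,f_c\}$, and let $\iota:\R^2\to\R e_1\oplus\R f_1$ be the isometry $(a,b)\mapsto a e_1+b f_1$, which preserves plus and minus norms. For $\rho>0$ small, put $\rho'=\rho^{-2/(d+c-2)}$ and
\[
\Lambda_0=\rho\,\iota(\Gamma)\,\oplus\,\rho'\bigl(\Z e_2+\cdots+\Z e_d+\Z f_2+\cdots+\Z f_c\bigr),
\]
which is unimodular by the choice of $\rho'$. Set $\widetilde X_n=\rho\,\iota(X_n(\Gamma))$ for $n=-1,0,\dots,k+2$; these exist because $\Gamma\notin\mathcal N_2$ has a bi-infinite sequence of minimal vectors. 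The key estimate is that for $\rho$ chosen so small that $\rho'>\rho\max(r_{-1}(\Gamma),q_{k+2}(\Gamma))$, the only nonzero vectors of $\Lambda_0$ in any cylinder $C(\widetilde X_n,\widetilde X_{n+1})$ are $\pm\widetilde X_n,\pm\widetilde X_{n+1}$: any nonzero $V\in\Lambda_0$ decomposes as $\rho\,\iota(\gamma)+\rho'(u,v)$ with $(u,v)\in\Z^{d-1}\times\Z^{c-1}$, and if $(u,v)\ne 0$ then $\max(|V|_+,|V|_-)\ge\rho'$, exceeding every cylinder radius, while if $(u,v)=0$ the claim reduces to the corresponding property of $\Gamma\in S_2\setminus\mathcal N_2$. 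The same bound gives $\Lambda_0\in S$ since $|\widetilde X_0|_+=\rho r_0(\Gamma)=\rho q_1(\Gamma)=|\widetilde X_1|_-$ is the first (tied) minimum of $\Lambda_0$.

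I then apply Lemma \ref{lem:perturbation}.2 with $a=-1$, $b=k+2$, and integer vectors $Y_n$ defined by $M_0 Y_n=\widetilde X_n$, where $M_0$ denotes the defining matrix of $\Lambda_0$. This yields a neighborhood $W_0$ of $M_0$ in $\SL(d+c,\R)$ such that, for every $M'\in W_0$ with $M'\Z^{d+c}\in S$, the vectors $Z_n=M'Y_n$ are the indexed minimal vectors $X_n(M'\Z^{d+c})$ for $n=0,\dots,k$ and satisfy $|Z_n|_+\le 2\rho\, r_n(\Gamma)$ and $|Z_n|_-\le 2\rho\, q_n(\Gamma)$. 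Because $\mathcal N\cap S$ is negligible in $S$ while $W_0\cap S$ is open and of positive measure, I can pick $M'$ so that $\Lambda_\varepsilon:=M'\Z^{d+c}\in S\setminus\mathcal N$. Setting $\varepsilon=2\rho$ and shrinking $\rho$ below $\delta$ (and below the smallness thresholds of the previous step) delivers $r_n(\Lambda_\varepsilon)\le\varepsilon r_n(\Gamma)$ and $q_n(\Lambda_\varepsilon)\le\varepsilon q_n(\Gamma)$ for $n=0,\dots,k$, with $\varepsilon<2\delta$.

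The delicate point, and what I expect to be the main obstacle, is that $\Lambda_0$ itself lies in $\mathcal N$: the generators $\rho' e_i$ and $\rho' f_j$ sit in $E_+$ and $E_-$ respectively, so the canonical numbering convention for the minimal vectors of $\Lambda_0$ is not well defined and Lemma \ref{lem:perturbation} must be invoked slightly beyond its stated hypotheses. The reason this is harmless is that its proof relies only on the cylinder uniqueness property established above (which does not invoke any numbering of $\Lambda_0$) together with continuity of the basis under perturbation; after the perturbation the numbering is well defined, and the inequalities $\rho r_0(\Gamma)=\rho q_1(\Gamma)<\rho q_{k+2}(\Gamma)<\rho'$ together with the smallness of the perturbation push every orthogonal minimal vector either to a strictly negative index (its perturbed height is tiny) or to an index greater than $k$ (its height is near $\rho'$), so the numbering-convention index $n=0$ lands precisely on $Z_0$ and $Z_0,\dots,Z_k$ become the first $k+1$ indexed minimal vectors of $\Lambda_\varepsilon$.
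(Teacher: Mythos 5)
Your construction coincides with the paper's: scale $\Gamma$ by $\delta$ inside the $\mathbb{R}e_1\oplus\mathbb{R}e_{d+c}$ plane, pad the $(d+c-2)$-dimensional orthogonal complement with a $\delta^{-2/(d+c-2)}$-scaled integer lattice (so $d+c\geq 3$ is exactly what makes this possible), check $\Lambda_\delta\in S$ and the cylinder-uniqueness property, and then perturb into $S\setminus\mathcal N$ via Lemma~\ref{lem:perturbation} together with the negligibility of $\mathcal N$. The concern in your final paragraph is a false alarm: Lemma~\ref{lem:perturbation}.2 hypothesizes only $\Lambda\in S$, not $\Lambda\notin\mathcal N$, and since $\Lambda_\delta\in S$ the numbering convention already fixes $X_0(\Lambda_\delta)=v_0^S(\Lambda_\delta)$ unambiguously (the heights of its finitely many minimal vectors are pairwise distinct), so the lemma applies directly, exactly as the paper tacitly uses it.
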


\begin{proof}
Let $\Gamma=A\mathbb{Z}^{2}$ be a lattice in $S_{1,1}\setminus\mathcal{N}_{1,1}$
where
\[
A=\left(
\begin{array}
[c]{cc}
a_{11} & a_{12}\\
a_{21} & a_{22}
\end{array}
\right)  .
\]
Consider the matrix $M_{\delta}\in\SL(d+c,\R)$ defined by
\[
M_{\delta}=\left(
\begin{array}
[c]{cccccc}
\delta a_{11} & 0 & 0 & \ldots & 0 & \delta a_{12}\\
0 & \delta^{-\frac{2}{d+c-2}} & 0 & \ldots & \ldots & 0\\
0 & 0 & \delta^{-\frac{2}{d+c-2}} & 0 & \ldots & 0\\
\vdots & \vdots & \ddots~~ & \ddots~~~~ & \ddots & \vdots\\
0 & 0 & \ldots & 0 & \delta^{-\frac{2}{d+c-2}} & 0\\
\delta a_{21} & 0 & \ldots& \ldots & 0 & \delta a_{22}
\end{array}
\right)  .
\]
\qquad
Let $(U_{n}=(u_{n,1},u_{n,2}))_{n\in\mathbb{Z}}$ be the sequence of vectors in $\mathbb{Z}^{2}$
such that $(X_{n}(\Gamma)=AU_{n})_{n\in\Z}$ is the sequence of minimal vectors of
$\Gamma$. For each $n\in\mathbb{Z}$, let $Y_{n}$ be the element of
$\mathbb{Z}^{d+c}$ defined by $y_{1}=u_{n,1},\ y_{2}=...=y_{d+c-1}=0$ and
$y_{d+c}=u_{n,2}$. If $\mathbb{\delta}>0$ is small enough, then for all
$Z\in\mathbb{Z}^{d+c}$ not in the $\R e_{1}+\R e_{d+c}$-plane, we have
\[
\left\Vert M_{\delta}Z\right\Vert _{d,c}=\max(\left\vert
M_{\delta}Z\right\vert _{+},\left\vert M_{\delta}Z\right\vert _{-})\geq \delta^{-\frac{2}{d+c-2}}\geq
\max(2\mathbb{\delta}r_{-1}(\Gamma),2\mathbb{\delta}q_{k+1}(\Gamma)).
\]
It follows that none of these vectors $M_{\mathbb{\delta}}Z$ are in one of the
cylinders $C(M_{\mathbb{\delta}}Y_{n},M_{\delta}Y_{n+1})$, $n=-1,...,k$.
Therefore the vectors $X_{n}=M_{\mathbb{\delta}}Y_{n}$, $n=-1,...,k+1$ are all
consecutive minimal vectors of $\Lambda_{\mathbb{\delta}}=M_{\mathbb{\delta}
}\mathbb{Z}^{d+1}$ and $\Lambda_{\delta}$ is in $S$. With the numbering
convention, we have $X_{n}(\Lambda_{\mathbb{\delta}})=X_{n}$ for all
$n=-1,...,k+1$.
Now we fix $\delta$ small enough. By the previous lemma applied to
$\Lambda_{\delta}$, there is a sequence of matrices $(M_{p})_{p}$ in
$S\setminus\mathcal{N}$ which converges to $M_{\mathbb{\delta}}$ and such that for all
$p$,
\[
X_{n}(M_{p}\mathbb{Z}^{d+1})=M_{p}Y_{n}
\]
$n=-1,...,k$. When $p$ goes to infinity,
\begin{align*}
r_{n}(M_{p}\mathbb{Z}^{d+1})  &  =\left\vert M_{p}Z_{n}\right\vert
_{+}\rightarrow\left\vert M_{\delta}Z_{n}\right\vert _{+}=\delta r_{n}
(\Gamma),\\
q_{n}(M_{p}\mathbb{Z}^{d+1})  &  =\left\vert M_{p}Z_{n}\right\vert
_{-}\rightarrow\left\vert M_{\delta}Z_{n}\right\vert _{-}=\delta q_{n}(\Gamma)
\end{align*}
for $n=-1,...,k$. So we can take $\Gamma_{\delta}=M_{p}\mathbb{Z}^{d+1}$
for some $p$ large enough.
\end{proof}

\begin{proof}[End of proof of  Theorem \ref{thm:badk}]
Let $k$ and $\eta>0$ be fixed. We want to prove that the set of $\theta$ in
$\M_{d,c}(\mathbb{R})$ such that
\[
\lim\inf_{n\rightarrow\infty}q_{n+k}^{c}(\theta)r_{n}^{d}(\theta)\leq\eta
\]
has full measure.
By Lemma \ref{lem:min-best}, it is enough to show that
\[
\lim\inf_{n\rightarrow\infty}q_{n+k}^{c}(\Lambda_{\theta})r_{n}^{d}(\Lambda
_{\theta})\leq\eta
\]
for almost all $\theta$.
Fix a two-dimensional lattice $\Gamma$ in $S_{1,1}\setminus \mathcal{N}_{1,1}$
and let $\delta$ be a positive real number with $\delta\leq\frac{1}{4}
(\frac{\eta}{q_{k}^{c}(\Gamma)r_{0}^{d}(\Gamma)})^{\frac{1}{d+c}}$. By 
Lemma \ref{lem:smallvectors}, there exists a
lattice $\Lambda_{\delta}$  in $S\setminus\mathcal{N}$ such that
\begin{align*}
r_{n}(\Lambda_{\delta}) &  \leq2\delta r_{n}(\Gamma)\\
q_{n}(\Lambda_{\delta}) &  \leq 2\delta q_{n}(\Gamma)
\end{align*}
for $n=0,...,k$. Hence,
\[
q_{k}^{c}(\Lambda_{\delta})r_{0}^{d}(\Lambda_{\delta})\leq\frac
{\eta}{2^{d+c}}.
\]
By Lemma \ref{lem:perturbation}, there exists an open neighborhood $W$ of
$\Lambda_{\delta}$ such that for all $\Lambda$ in $W$ and some integer $m(\Lambda)$, we have both
\[
r_{m(\Lambda)}(\Lambda)\leq 4\delta r_0(\Gamma)
\]
and 
\[
q_{m(\Lambda)+k}^{c}(\Lambda)r_{m(\Lambda)}^{d}(\Lambda)\leq\eta.
\]

Let us show that, if for a given lattice $\Lambda$, there exists a sequence $(t_{n}
)_{n\in\mathbb{N}}$ going to infinity such that $g_{t_{n}}\Lambda\in$ $W$ for
all $n\in\mathbb{N}$, then
\[
\lim\inf_{n\rightarrow\infty}q_{n+k}^{c}(\Lambda)r_{n}^{d}(\Lambda)\leq\eta.
\]
Indeed, if $g_{t_n}\Lambda \in W$, then for some integer $m(\Lambda,t_n)$, we have 
\[
(e^{-dt_n}q_{m(\Lambda,t_n)+k}(\Lambda))^{c}(e^{ct_n}r_{m(\Lambda,t_n)}(\Lambda))^{d}\leq\eta.
\] 
So, the only thing to see is that $m(\Lambda,t_n)\rightarrow\infty$ when $n\rightarrow\infty$. 
Now $e^{dt_n}r_{m(\Lambda,t_n)}(\Lambda)\leq 4\delta r_0(\Gamma) $,
hence $r_{m(\Lambda,t_n)}(\Lambda)$ goes to zero when $m$ goes
to infinity which implies that $m(\Lambda,t_n)$ goes to infinity.

Making use of the Birkhoff ergodic theorem with the flow $g_t$, the proof
would be already finished if our goal were
$\lim\inf_{n\rightarrow\infty}q_{n+k}^{c}(\Lambda)r_{n}^{d}(\Lambda)\leq\eta$
for almost all lattices. However we want an ``almost all" with respect of the Lebesgue measure of $\M_{d,c}(\R)$.

Let $U$  be a relatively compact nonempty open set in $\mathcal L_{d+c}$ such that
$\overline{U}\subset W$. 
One can find
a neighborhood $V$ of $I_{d+c}$ in $\mathcal{H}_{\leq}$ such
that for all
$\theta\in \M_{d,c}(\mathbb{R)}$, all $t\geq 0$ and all $h\in V$, we have
\[
g_{t}h\Lambda_{\theta}=(g_{t}hg_{-t})g_{t}\Lambda_{\theta}\in U\Longrightarrow
g_{t}\Lambda_{\theta}\in W.
\]
Call $\mathcal{V}$ the set of $\theta$ such that $g_{t}\Lambda_{\theta}\notin
W$ for all $t$ large enough. By the choices of $U$ and $V$, for all $h\in V$ and all
$\theta\in\mathcal{V}$, $g_{t}h\Lambda_{\theta}\notin U$ for all $t$ large
enough. If the Lebesgue measure of $\mathcal{V}$ were nonzero then the set of
lattices of the form $g_{s}h\Lambda_{\theta}$ with $s\in[0,1]$, $h\in
V$ and $\theta\in\mathcal{V}$, would have a nonzero measure. Now, by the Birkhoff ergodic theorem,
for almost all lattices $\Lambda$, there exists a sequence $t_{n}
\rightarrow\infty$ such that $g_{t_{n}}\Lambda\in U$ for all $n$, therefore
$\mathcal{V}$ has zero measure.
\end{proof}

\begin{proof}[Proof of Theorem \ref{thm:jager}. 2] By the proof of the first part of Theorem \ref{thm:jager}, we know that the measure $\nu_{d,c}$ is the image of the measure $\frac{1}{\mu_S(S)}\mu_S$ by the map $F:S\rightarrow \R$ defined by 
	\[
	F(\Lambda)=q_1^c(\Lambda)r_0^d(\Lambda)=|v_1^S(\Lambda)|_-^c|v_0^S(\Lambda)|^d_+.
	\]
We want to prove that the support of the measure $\nu_{d,c}$ contains zero, {\it i.e.}, that $\nu_{d,c}([0,\eta])>0$ for all $\eta>0$. By the Birkhoff ergodic theorem and by the definition of $\nu_{d,c}$, it is enough to prove that 
\[
\lim_{n\rightarrow\infty}\frac{1}{n}\sum_{i=0}^{n-1}1_{[0,\eta]}(F(R^i(\Lambda)))>0
\]
for almost all $\Lambda\in S$. 
By Lemmas \ref{lem:perturbation} and \ref{lem:smallvectors},
there exists a non empty  open set $W$ in $S$ such that 
\[
\left\vert v_1^S(\Lambda)\right\vert^c_-\left\vert v_0^S(\Lambda)\right\vert^d_+\leq\eta
\]
for all $\Lambda\in W$. Hence $1_W\leq 1_{[0,\eta]}\circ F$.
 By the Birkhoff ergodic theorem,
 for almost
all $\Lambda$ in $S$
\begin{align*}
\lim_{n\rightarrow\infty}\frac{1}{n}\sum_{i=1}^{n}1_W\circ R^{i}
(\Lambda)=\frac{1}{\mu_{S}(S)}\int_{S}1_W~d\mu_{S}=a>0.
\end{align*}
therefore,
\[
\lim_{n\rightarrow\infty}\frac{1}{n}\sum_{i=0}^{n-1}1_{[0,\eta]}(F(R^i(\Lambda)))\geq a>0.
\]
\end{proof}

\section{Miscellaneous Questions}\label{sec:question}
\begin{enumerate}
\item 

In Theorems \ref{thm:levy-d} and \ref{thm:jager}, we assume that $\R^d$ and $\R^c$ are equipped with standard Euclidean norms. Clearly, Theorems 1 and 2 are valid for any pair of Euclidean norms and by  Theorem \ref{thm:Birkhoff}, the Levy constant does not depend on the particular choices of the Euclidean norms. However, we do not know what happens for a pair of general norms. Are these theorems valid when $\R^d$ and $\R^c$ are equipped with any norm?
If Theorem \ref{thm:levy-d} is valid for any norm, does the Levy's constant depend on the norm?

\item Is the measure $\nu_{d,c}$ in Theorem \ref{thm:jager}, absolutely continuous with respect to Lebesgue measure?\\
Is the support of $\nu_{d,c}$ an interval? Compute the measure $\nu_{2,1}$.

\item Suppose $c=1$. 
Consider a flow $(g_t)_{t\in\R}$ defined by the matrices
\[
g_{t}=\operatorname{diag}(e^{a_1t},\dots,e^{a_dt},e^{-dt}) \in \SL (d+1,\mathbb R)
\]
where the $a_is$ are positive real numbers with sum $d$.
Best approximation vectors of $\theta\in\R^d$ with respect to the flow $(g_t)_{t\in\R}$  can be defined as follows. A nonzero vector $X$ in $\Z^{d+1}$ is a best approximation vector of $\theta$ if there exists $t\geq0$ such that the interior of the ball $B(0,\|g_tM_{\theta}X\|_{\R^{d+1}})\subset \R^d\times\R$ contains no nonzero vector  of the lattice $g_tM_\theta\Z^{d+1}$ (equivalently $\|g_tM_{\theta}X\|_{\R^{d+1}}=\lambda_1(g_tM_{\theta}\Z^{d+1}$)). Arranging the set of best approximation vector according to their heights, we obtain a sequence $(X_n(\theta))_{n\in\N}$ of best approximation vectors associated with $\theta$. Does Theorem \ref{thm:levy-d} hold for these new sequences of best approximation vectors?
\item 
For a fixed $k\geq 1$, does the set
\[
\Bad_{k}(d,c)=\Bad_{k}=\{\theta\in\M_{d,c}(\mathbb{R})\setminus\M_{d,c}(\mathbb{Q}):\inf_{n\in\N} q_{n+k}^{c}(\theta)
r_{n}^{d}(\theta)>0\}
\]
depend on the norm used to define best approximations vectors?\\
Observe that by Proposition \ref{prop:bad}, the union $\cup_{k\geq 1}\Bad_k$ does not depend on the choice  of the norm.
\item Since  $\Bad_1(d,c)$ contains the set of badly approximable matrices $\Bad_0(d,c)$, it has full Hausdorff dimension. But what are the Hausdorff dimensions of  $\Bad_1(d,c)\setminus \Bad_0(d,c)$ and of $\cup_{k\geq 0}\Bad_k(d,c)\setminus \Bad_0(d,c)$?
Has $\Bad_1(d,c)\setminus \Bad_0(d,c)$  full Hausdorff dimension? 
\end{enumerate}

\end{document}